\theoremstyle{plain}
\newtheorem{theorem}{Theorem}[section]
\newtheorem{lemma}[theorem]{Lemma}
\newtheorem{proposition}[theorem]{Proposition}
\theoremstyle{definition}
\newtheorem{definition}[theorem]{Definition}
\newtheorem{notation}[theorem]{Notation}
\theoremstyle{remark}
\newtheorem{remark}[theorem]{Remark}
\newcommand{\N}{\mathbb{N}}
\newcommand{\Q}{\mathbb{Q}}
\newcommand{\R}{\mathbb{R}}
\newcommand{\A}{\mathbb{A}}
\newcommand{\B}{\mathbb{B}}
\newcommand{\I}{\mathbb{I}}
\newcommand{\X}{\mathbb{X}}
\newcommand{\Y}{\mathbb{Y}}
\newcommand{\GG}{\mathcal{G}}
\newcommand{\KK}{\mathcal{K}}
\newcommand{\MM}{\mathcal{M}}
\newcommand{\OO}{\mathcal{O}}
\newcommand{\TT}{\mathcal{T}}
\newcommand{\set}[2]{\left\{#1:#2\right\}}
\newcommand{\compl}[1]{#1^{\textnormal{c}}}
\newcommand{\cl}[1]{\overline{#1}}
\newcommand{\cupdot}{\mathbin{\dot{\cup}}}
\newcommand{\abs}[2][]{\left|#2\right|_{#1}}
\DeclareMathOperator{\id}{id}
\DeclareMathOperator{\im}{Im} 
\DeclareMathOperator{\Img}{Im}
\DeclareMathOperator{\Dom}{Dom}
\DeclareMathOperator{\Sym}{Sym}
\DeclareMathOperator{\Aut}{Aut}
\DeclareMathOperator{\Inj}{Inj}
\DeclareMathOperator{\Emb}{Emb}
\DeclareMathOperator{\End}{End}
\DeclareMathOperator{\Surj}{Surj}
\DeclareMathOperator{\Dc}{Dc}
\DeclareMathOperator{\Cont}{Cont}
\DeclareMathOperator{\LP}{LP}
\newcommand{\PPXb}{Pseudo\nobreakdash-Property~$\mathbf{\overline{X}}$}
\theoremstyle{plain}
\newtheorem{theoremalph}{Theorem}
\newtheorem{questionalph}{Question}[section]
\title{The semigroup of increasing functions on the rational~numbers
 has a unique Polish topology}
\author{Michael Pinsker}
\thanks{This research was funded in whole or in part by the Austrian
  Science Fund (FWF) [P 32337, I 5948]. For the purpose of Open
  Access, the authors have applied a CC BY public copyright licence to
  any Author Accepted Manuscript (AAM) version arising from this
  submission. This research is also funded by the European Union (ERC,
  POCOCOP, 101071674). Views and opinions expressed are however those
  of the author(s) only and do not necessarily reflect those of the
  European Union or the European Research Council Executive Agency.
  Neither the European Union nor the granting authority can be held
  responsible for them.}
\author{Clemens Schindler}
\thanks{The second author is a recipient of a DOC Fellowship of the
  Austrian Academy of Sciences at the Institute of Discrete
  Mathematics and Geometry, TU Wien.}
\address{Institut für Diskrete Mathematik und Geometrie, FG Algebra,
  TU Wien, Austria}
\email{marula@gmx.at}
\address{Institut für Diskrete Mathematik und Geometrie, FG Algebra,
  TU Wien, Austria}
\email{clemens.schindler@tuwien.ac.at}
\keywords{Reconstruction, Polish topology, endomorphism monoid,
  automatic continuity, pointwise convergence topology}
\begin{document}
\begin{abstract}
  The set of increasing functions on the rational numbers, equipped
  with the composition operation, naturally forms a topological
  semigroup with respect to the topology of pointwise convergence in
  which a sequence of increasing functions converges if and only if it
  is eventually constant at every argument. We develop new techniques
  to prove there is no other Polish topology turning this semigroup
  into a topological one, and show that previous techniques are
  insufficient for this matter.
\end{abstract}
\maketitle

\section{Introduction}
\label{sec:introduction}

\subsection{The result}
\label{subsec:result}

The space $\MM_{\Q}$ of all -- not necessarily strictly -- increasing
functions on the rational numbers $\Q$ (with respect to the usual
order) is equipped with several interesting kinds of structure. We
focus on a topological and an algebraic one: on the one hand, it
carries the subspace topology of the product topology on $\Q^{\Q}$
where each copy of $\Q$ is endowed with the discrete topology (not the
standard Euclidean topology on $\Q$). In this topology, a sequence
$(f_{n})_{n\in\N}$ converges to $f$ if and only if for each $x\in\Q$,
the sequence $(f_{n}(x))_{n\in\N}$ is eventually constant with value
$f(x)$. It will be referred to as the \emph{pointwise topology} and
has several interesting properties: most importantly, it turns out to
be a Polish (completely metrisable and second countable) topology. On
the other hand, $\MM_{\Q}$ carries the composition operation $\circ$
of functions which forms a monoid structure (semigroup structure with
a neutral element). Furthermore, the two structures are compatible in
the sense that the operation is continuous as a function
$\circ\colon \MM_{\Q}\times \MM_{\Q}\to \MM_{\Q}$, where
$\MM_{\Q}\times \MM_{\Q}$ is endowed with the product topology. We say
that the pointwise topology is a \emph{semigroup topology} and that
$\MM_{\Q}$ together with the pointwise topology is a \emph{topological
  semigroup}.

At this point, it is natural to ask how much information about a
topology can be \emph{reconstructed} from the knowledge that it is
compatible with a given algebraic structure. This problem has been
studied from various angles and for many classes of algebraic
structures over the years, using techniques from several areas of
mathematics. Particular interest has been given to additional
requirements on the topology, for instance that the topology be Polish
or Hausdorff. We give two examples. Concerning vector spaces, it is a
folklore result in functional analysis that the finite dimensional
$\R$-vector space $\R^{n}$ carries a unique Hausdorff vector space
topology\footnote{A topology $\TT$ on $\R^{n}$ is called a
  \emph{vector space topology} if it is a group topology with respect
  to addition and if the scalar multiplication
  $(\lambda,x)\mapsto \lambda x$ is continuous as a map
  $(\R,\TT_{\text{Eucl}})\times (\R^{n},\TT)\to (\R^{n},\TT)$; note
  that the scalar field $\R$ is to carry the standard Euclidean
  topology.}, namely the standard Euclidean topology. However, its
additive group does carry multiple Hausdorff -- even Polish -- group
topologies already for $n=1$: the groups $(\R,+)$ and $(\R^{2},+)$ are
algebraically isomorphic (consider them as additive groups of vector
spaces over $\Q$ of equal dimension) but $\R$ and $\R^{2}$ equipped
with the Euclidean topologies are not homeomorphic. Hence, if we pull
back the Euclidean topology on $\R^{2}$ to $\R$ via the isomorphism
$(\R,+)\to(\R^{2},+)$, we obtain a Polish group topology different
from the Euclidean topology on $(\R,+)$. Note that this construction
requires the axiom of choice (to find the algebraic isomorphism
$(\R,+)\to (\R^{2},+)$). This is not a coincidence since
Solovay~\cite{Solovay} and Shelah~\cite{Shelah84} showed the
consistency of ZF (without choice) with the fact that \emph{any}
Polish group has a unique Polish group topology. In the realm of
semigroups, it was shown in~\cite{EJMMMP-zariski} that the pointwise
topology on the so-called full transformation monoid of all functions
on a countably infinite set is the unique Polish semigroup topology
(with respect to function composition), meaning that the topological
and the algebraic structure on this monoid are closely connected.

With this paradigm of reconstruction in mind, we arrive at the
following problem:

\begin{questionalph}\label{q:end-Q-upp}
  Is the pointwise topology the only Polish semigroup topology on the
  space $\MM_{\Q}$ of increasing functions on $\Q$?
\end{questionalph}
Note that a natural alternative topology is the subspace topology of
the product topology on~$\Q^{\Q}$ where instead each copy of $\Q$ is
endowed with the Euclidean topology on $\Q$. However, this topology is
not Polish. And indeed, the goal of the present paper is to prove the
following result:
\begin{theoremalph}\label{thm:end-Q-upp}
  The pointwise topology is the unique Polish semigroup topology on
  $\MM_{\Q}$.
\end{theoremalph}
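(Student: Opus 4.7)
The plan is to fix an arbitrary Polish semigroup topology $\TT$ on $\MM_{\Q}$ and to show that it coincides with the pointwise topology, which I will call $\TT_{\text{p}}$. I would split the proof into the two inclusions $\TT_{\text{p}}\subseteq\TT$ and $\TT\subseteq\TT_{\text{p}}$, since the two directions require completely different techniques.

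For $\TT_{\text{p}}\subseteq\TT$ I would argue via the constant functions. Let $L\subseteq\MM_{\Q}$ denote the set of all constants; these are algebraically characterised as the left zeros $c\in\MM_{\Q}$ satisfying $c\circ f=c$ for every $f\in\MM_{\Q}$, as is seen by testing the condition against $f=c_{y}$. Since multiplication in $\TT$ is jointly, hence separately, continuous and $\TT$ is Hausdorff, the set $\{c:c\circ f=c\}$ is $\TT$-closed for each fixed $f$, so $L$ is $\TT$-closed and $(L,\TT|_{L})$ is Polish. Left multiplication by any $g\in\Aut(\Q,\leq)\subseteq\MM_{\Q}$ restricts to a self-homeomorphism of $L$ sending $c_{x}$ to $c_{g(x)}$, and because $\Aut(\Q,\leq)$ acts transitively on $\Q$, the Polish space $(L,\TT|_{L})$ is topologically homogeneous. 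Being countable, non-empty, and Baire, it must have an isolated point; by homogeneity, every point is isolated, so $\TT|_{L}$ is the discrete topology. Hence for each $y\in\Q$ there is a $\TT$-open $V$ with $V\cap L=\{c_{y}\}$, and the continuous map $\rho_{c_{x}}\colon f\mapsto f\circ c_{x}=c_{f(x)}$ has its image contained in $L$, so
\begin{equation*}
\{f\in\MM_{\Q}:f(x)=y\}\;=\;\rho_{c_{x}}^{-1}(\{c_{y}\})\;=\;\rho_{c_{x}}^{-1}(V)
\end{equation*}
is $\TT$-open for every $x,y\in\Q$. Finite intersections of these sets form a base for $\TT_{\text{p}}$, so $\TT_{\text{p}}\subseteq\TT$.

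The converse inclusion $\TT\subseteq\TT_{\text{p}}$ is where I expect the genuine difficulty. Equivalently, we must show that the identity map $(\MM_{\Q},\TT_{\text{p}})\to(\MM_{\Q},\TT)$ is continuous, and it would suffice to establish a full automatic continuity principle stating that every semigroup homomorphism from $(\MM_{\Q},\TT_{\text{p}})$ into a second countable topological semigroup is continuous. The natural tool for this is a Steinhaus-type property in the spirit of \PPXb: whenever $(U_{n})_{n\in\N}$ is a countable family of subsets of $\MM_{\Q}$ which are sufficiently large in a sense visible to the algebra (for instance, not meagre, or having a non-meagre preimage under a fixed syntactic construction), some finite composition $U_{n_{1}}\circ\cdots\circ U_{n_{k}}$ ought to contain a $\TT_{\text{p}}$-basic open neighbourhood of a controllable point. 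Given such a property, standard arguments would then produce continuity of any homomorphism into a topological semigroup, and in particular of the identity in the required direction.

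The main obstacle, and the point at which I expect the authors' new techniques to come in, is the combinatorial proof of this Steinhaus-type statement. For Polish groups, Pettis's lemma delivers the analogous statement immediately, and for the full transformation monoid on a countable set one can freely extend finite functions, keeping the combinatorics manageable and allowing the methods of~\cite{EJMMMP-zariski} to go through. On $\MM_{\Q}$, however, every extension of a finite partial map must respect the dense linear order on $\Q$, which severely restricts the available moves and prevents a direct transfer of the previous techniques; presumably this is precisely why the authors emphasise that earlier methods are \emph{insufficient}. I would therefore attempt a back-and-forth construction for order-preserving partial functions, interleaved with a careful bookkeeping of which $U_{n}$ each extension is forced to hit. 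The crux, and likely the technical heart of the paper, is to arrange that this inductive procedure can simultaneously meet prescribed pointwise constraints and land in a prescribed composition of the $U_{n}$, using only the freedom that the order type of $\Q$ actually affords.
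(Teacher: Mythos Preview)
Your argument for $\TT_{\text{p}}\subseteq\TT$ is correct and pleasant; it is a minor variant of the paper's Theorem~\ref{thm:ejm-pointw-coarser}, which instead writes $\{f:f(x)\neq y\}$ as a preimage of a two-point set under $f\mapsto h_{y}\circ f\circ c_{x}$. Both arguments work.

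The plan for $\TT\subseteq\TT_{\text{p}}$, however, cannot succeed as stated. You propose to prove that $(\MM_{\Q},\TT_{\text{p}})$ has automatic continuity with respect to second countable topological semigroups, but the paper establishes precisely that this is \emph{false} (Proposition~\ref{prop:end-Q-non-auto-cont}): the submonoid $N=\{f\in\MM_{\Q}:\inf f=-\infty,\ \sup f=+\infty\}$ is a two-sided prime submonoid that is not $\TT_{\text{p}}$-closed, so the quotient map $\MM_{\Q}\to\{0,1\}$ collapsing $N$ is a discontinuous homomorphism into a finite discrete semigroup. Consequently no Steinhaus-type property of the kind you describe can hold for $(\MM_{\Q},\TT_{\text{p}})$, and \PPXb{} for $(\MM_{\Q},\TT_{\text{p}})$ relative to $(\GG_{\Q},\TT_{\text{p}})$ provably fails as well (this is observed right after Proposition~\ref{prop:ppxb-auto-cont-lifting}). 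The obstruction is exactly the ``boundedness type'' information, which is invisible to any $\TT_{\text{p}}$-neighbourhood.

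The paper's route around this is a genuine two-stage manoeuvre that your sketch does not anticipate. First, one introduces a strictly finer \emph{rich} topology $\TT_{rich}\supsetneq\TT_{\text{p}}$ on $\MM_{\Q}$ (generated by pointwise conditions together with boundedness types, half-line image constraints, and ``avoiding an interval'' conditions), and proves that $(\MM_{\Q},\TT_{rich})$ \emph{does} have \PPXb{} of length~$2$ relative to $(\GG_{\Q},\TT_{\text{p}})$ (Proposition~\ref{prop:rich-top-ppxb}); combined with automatic continuity of $\GG_{\Q}$ this yields $\TT\subseteq\TT_{rich}$. Second, one shows (Proposition~\ref{prop:reduct-pointw-topol}) that any Polish semigroup topology sandwiched between $\TT_{\text{p}}$ and $\TT_{rich}$ must collapse to $\TT_{\text{p}}$. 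This reduction is not a Property~$\mathbf{\overline{X}}$ argument throughout: the elimination of boundedness types (Lemma~\ref{lem:023opn-squig-03opn}) uses regularity and the Baire property of $\TT$ in an essential way, via density of surjections (Lemma~\ref{lem:023opn-density-surj}). That step is exactly where the assumption ``$\TT$ is Polish'' does real work, and it is the piece that cannot be replaced by any automatic-continuity statement for $(\MM_{\Q},\TT_{\text{p}})$.
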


\subsection{Context}
\label{subsec:context}

The reconstruction problem has been considered in a significantly
wider setting: if $A$ is a countably infinite set, the pointwise
topology on $A^{A}$ induces a Polish semigroup topology on any
subsemigroup of $A^{A}$ which is a $G_{\delta}$ set; we refer to this
subspace topology also as \emph{pointwise topology}. The most
prominent examples of such $G_{\delta}$ subsemigroups are $\Sym(A)$,
the space of all permutations of $A$, as well as -- more generally --
the automorphism group $\Aut(\A)$ and the endomorphism monoid
$\End(\A)$ of any given (model-theoretic) structure $\A$. In the first
two cases, the algebraic structure is even a group and the topology is
a \emph{group topology}, meaning that the inversion map on the group
is also continuous. Our question concerning the increasing functions
on the rational numbers fits into this framework since
$\MM_{\Q}=\End(\Q,\leq)$. As it turns out, the algebraic and the
topological structure on many subsemigroups of $A^{A}$ are very
closely intertwined, yielding results of the following shape (which
our Theorem~\ref{thm:end-Q-upp} is parallel to):
\begin{quote}
  On the (semi\nobreakdash-)group $S\subseteq A^{A}$, the pointwise
  topology is the unique Polish (semi\nobreakdash-)group topology.

  \noindent\qquad (hereafter: $S$ has the Unique Polish Property or
  UPP for short)
\end{quote}
However, Question~\ref{q:end-Q-upp} escaped existing techniques.

For the class of groups, UPP has been extensively studied; examples
include the full symmetric group $\Sym(A)$ (\cite{Gaughan} combined
with~\cite{Lascar}) and the automorphism group of the random
(di\nobreakdash-)graph (\cite{HodgesHodkinsonLascarShelah} combined
with~\cite{KechrisRosendal}). Additionally, $\Aut(\Q,\leq)$ --
explicitly: the space of all increasing permutations of $\Q$ -- also
has UPP (\cite{RosendalSolecki} combined with~\cite{Lascar}).

Recent years brought results for the case of endomorphism monoids as
well; apart from the full transformation monoid $A^{A}$ mentioned
above, it turns out that the endomorphism monoids of the random graph,
the random digraph and the equivalence relation with countably many
equivalence classes of countable size all have UPP,
see~\cite{EJMPP-polish}. One notices that the examples from these
lists either contain only bijective functions (the groups) or contain
both non-injective and non-surjective functions. This is essential for
UPP to hold: by constructions given in~\cite{EJMMMP-zariski}, both the
monoid $\Inj(A)$ of all injective functions on $A$ and the monoid
$\Surj(A)$ of all surjective functions on $A$ carry multiple Polish
semigroup topologies. The construction on $\Inj(A)$ also applies to
the so-called self-embedding monoid $\Emb(\A)$ of any
\emph{$\omega$\nobreakdash-categorical} (defined
e.g.~in~\cite{Hodges}) structure $\A$, see~\cite{EJMPP-polish}. This
in particular encompasses the monoid of all strictly increasing (but
not necessarily surjective) functions on $\Q$. Thus, if we were to
replace $\MM_{\Q}=\End(\Q,\leq)$ by $\End(\Q,<)$ in
Question~\ref{q:end-Q-upp}, the resulting question would have a simple
(negative) answer.

\subsection{Known technique}
\label{subsec:known-technique}

The papers~\cite{EJMMMP-zariski} and subsequently~\cite{EJMPP-polish}
show uniqueness of Polish semigroup topologies in two natural steps:
\begin{enumerate}[label=(\arabic*)]
\item Show that the pointwise topology is coarser than any Polish
  semigroup topology.
\item Show that the pointwise topology is finer than any Polish
  semigroup topology.
\end{enumerate}
Usually, Step~(2) takes considerably more work than Step~(1). The
essential tool for the first step is often the so-called \emph{Zariski
  topology}, a topology guaranteed to be coarser than any Hausdorff
semigroup topology; it then clearly is sufficient to prove that the
Zariski topology coincides with the pointwise topology. The second
step is accomplished by means of lifting from a subset, usually the
automorphism group, to the endomorphism monoid. To this end, the
following crucial instrument called \emph{Property~\textbf{X}} was
introduced in~\cite{EJMMMP-zariski}:
\begin{definition}\label{def:prop-X}
  Let $(S,\TT)$ be a topological semigroup and let $D\subseteq S$ be a
  subset of $S$. Then $(S,\TT)$ has \emph{Property~\textbf{X}} with
  respect to $D$ if for all $s\in S$ there exist $f_{s},g_{s}\in S$
  and $a_{s}\in D$ such that
  \begin{enumerate}[label=(\roman*)]
  \item $s=g_{s}a_{s}f_{s}$
  \item for every $\TT$-neighbourhood $O\subseteq S$ of $a_{s}$, the
    set $g_{s}(O\cap D)f_{s}$ is a $\TT$-neighbourhood of $s$.
  \end{enumerate}
\end{definition}
When trying to use the above outline on $\MM_{\Q}$, Step~(1) works
smoothly via a direct construction and has already been executed
in~\cite{EJMPP-polish} (see also Theorem~\ref{thm:ejm-pointw-coarser};
our proof implicitly shows that the Zariski topology coincides with
the pointwise topology). However, the common technique to perform
Step~(2) is not directly applicable since Property~\textbf{X} does not
hold.

\subsection{The proof}
\label{subsec:proof}

Our strategy to prove Theorem~\ref{thm:end-Q-upp} is a threefold
generalisation of the technique involving Property~\textbf{X}, two of
these aspects being essential extensions and one merely a technical
one.  First, we consider topologies that are finer than the pointwise
topology in intermediate steps, showing that $\MM_{\Q}$ endowed with a
finer topology has a form of Property~\textbf{X} with respect to the
automorphism group -- evidently, we subsequently have to reduce from
that richer topology to the pointwise topology in an additional step;
this is not necessary in the previous proofs using
Property~\textbf{X}.  Second, we admit the postcomposition of a
specific endomorphism (left-invertible is the key) on the left hand
side of the term $s=g_{s}a_{s}f_{s}$ serving as basis for
Property~\textbf{X}. Third, and this is only a technical complication,
we increase the length of the right hand side of the term. All in all,
we will consider $e_{s}s=h_{s}b_{s}g_{s}a_{s}f_{s}$, leading to a
generalisation of Property~\textbf{X} that we call \emph{\PPXb}; see
Definition~\ref{def:ppxb}.

Examining our proof more closely, we in fact show a stronger result:
We do not apply the full power of Polishness to prove that a Polish
semigroup topology $\TT$ on $\MM_{\Q}$ coincides with the pointwise
topology. Instead, we only use that $\TT$ is a semigroup topology
which is second countable, Hausdorff and regular, as well as that a
countable intersection of dense open sets is dense, i.e. that
$(\MM_{\Q},\TT)$ is a \emph{Baire space} (the conclusion of Baire's
theorem holds). By Urysohn's metrisation theorem (see
e.g.~\cite{Willard-topology}), a second countable space is metrisable
if and only if it is Hausdorff and regular, so we can reformulate our
result in the following form: The pointwise topology is the unique
second countable metrisable Baire semigroup topology on
$\MM_{\Q}$. Comparing with~\cite{EJMPP-polish}, most examples treated
there (in particular, all examples listed above) do not need full
Polishness either; the results instead yield that the respective
pointwise topologies are the unique second countable metrisable
semigroup topologies. Hence, no completeness-type assumption is
necessary for these structures. For $\MM_{\Q}$, we additionally need
that the topology is Baire, namely in the new reduction from the rich
topology to the pointwise topology which does not occur
in~\cite{EJMPP-polish}.

The paper is structured as follows: In
Section~\ref{sec:prelim-known-facts}, we introduce some relevant
notions and state the known results which we will use in the sequel,
in particular the fact that the pointwise topology on $\MM_{\Q}$ is
coarser than any Polish semigroup topology on $\MM_{\Q}$. The
remainder of this paper is devoted to showing that, conversely, the
pointwise topology is finer than any Polish semigroup topology on
$\MM_{\Q}$. We start by presenting the proof strategy more thoroughly
and defining the so-called \emph{rich} topology on~$\MM_{\Q}$ in
Section~\ref{sec:finest-topol-strat}. The details of the proof are
contained in Sections~\ref{sec:rich-top-ppxb}
and~\ref{sec:reduct-pointw-topol}, where the former proves that the
rich topology has \PPXb{} with respect to the pointwise topology on
the automorphism group and the latter focuses on reducing the rich
topology to the pointwise topology.
\subsection{Related notions}
\label{subsec:related-notions}

Several notions related to the existence of a unique Polish
(semi\nobreakdash-)group topology have been considered over the years,
all of them capturing various degrees to which the topological
structure can be \emph{reconstructed} from the algebraic one. For the
purposes of this paper, the most important ones are \emph{automatic
  homeomorphicity} as studied
e.g.~in~\cite{BodirskyEvansKompatscherPinsker,Reconstruction,behrisch-truss-vargas,EvansHewitt,PechPech-HomeoMon,pech-saturated}
and \emph{automatic continuity} as discussed
e.g.~in~\cite{BPP-projective-homomorphisms,EJMMMP-zariski,EJMPP-polish,Herwig98,HodgesHodkinsonLascarShelah,KechrisRosendal,Lascar,PaoliniShelahCountStbl,RosendalSolecki,Truss}. Similar
concepts have also been studied,
e.g.~in~\cite{MacphersonBarbina,behrisch-vargasgarcia-stronger,maissel-rubin,PaoliniShelahReconstructing,PaoliniShelahSSIPFreeHom,Rubin}.

Automatic homeomorphicity, the former notion, means that any algebraic
isomorphism from the closed sub(\nobreakdash-semi\nobreakdash-)group
$S$ of $\Sym(A)$ (or $A^{A}$) in question to another closed
sub(\nobreakdash-semi\nobreakdash-)group $T$ of $\Sym(A)$ (or $A^{A}$)
is indeed a homeomorphism between the respective pointwise topologies
on $S$ and $T$. This property clearly is a weakening of UPP; it can be
paraphrased as ``unique \emph{pointwise-like} semigroup
topology''. In~\cite{behrisch-truss-vargas}, it was shown that
$\MM_{\Q}$ has automatic homeo\-morphicity -- this result as well as the
fact that UPP holds for $\Aut(\Q,\leq)$ form the central motivation
for the present paper.

Automatic continuity, the latter notion, is in fact a template for
results, parametrised by a class $\KK$ of topological
(semi\nobreakdash-)groups; traditionally:
\begin{quote}
  For the topological (semi\nobreakdash-)group $S$, all algebraic
  homomorphisms $S\to H$, where $H\in\KK$, are continuous.
\end{quote}
As it turns out, automatic continuity and the known techniques to show
UPP revolving around Property~\textbf{X} are very closely connected:
if the automorphism group of the structure in question has automatic
continuity (with respect to the class of second countable topological
(semi\nobreakdash-)groups) and the techniques shall be applicable,
then the endomorphism monoid necessarily has automatic continuity as
well. This fact plays an important role in the present paper:
structures whose endomorphism monoid do not have automatic continuity
while the automorphism group does (like $\langle\Q,\leq\rangle$, see
Proposition~\ref{prop:aut-Q-auto-cont-semigrp} and
Proposition~\ref{prop:end-Q-non-auto-cont}) \emph{require}
generalisations of the techniques, like the ones we develop
here. Another such structure is the \emph{dual graph} (or
\emph{complement graph} without loops) of the equivalence relation
with countably many equivalence classes of countable size. This is one
of the \emph{homogeneous graphs} on countably infinite domains as
classified in~\cite{LachlanWoodrow}. In~\cite{EJMPP-polish}, it is
asked which of these graphs have endomorphism monoids satisfying UPP;
we conjecture that the dual graph of the equivalence relation with
countably many equivalence classes of countable size does, however, a
proof cannot rely only the known techniques. We suspect that it could
be within the scope of our methods.

\section{Preliminaries and known facts}
\label{sec:prelim-known-facts}

In this section, we make precise the terminology used in the following
and give some known results crucial to our reasoning.

\subsection{Structures and functions}
\label{subsec:str-funct}
A \emph{(relational) structure} $\A=\langle A,(R_{i})_{i\in I}\rangle$
consists of a \emph{domain} $A$ endowed with relations $R_{i}$ on $A$
of arity $m_{i}$, i.e.~$R_{i}\subseteq A^{m_{i}}$. If no
misunderstandings can arise, we will not strictly distinguish between
the structure $\A$ and its domain $A$. Contrary to some works in the
area (e.g.~\cite{EJMMMP-zariski,EJMPP-polish}), we write the
application of a function $f$ to an element $a$ as $f(a)$ and compose
functions from right to left, i.e.~$fg:=f\circ g:=(a\mapsto
f(g(a)))$. If $\bar{a}=(a_{1},\dots,a_{m})$ is a tuple of elements of
the domain of $f$, we write $f(\bar{a})=(f(a_{1}),\dots,f(a_{m}))$. A
function $f\colon A\to A$ and a relation $R$ on $A$ are
\emph{compatible} if whenever $\bar{a}\in R$, we have
$f(\bar{a})\in R$. An \emph{endomorphism} of a structure
$\A=\langle A,(R_{i})_{i\in I}\rangle$ is a function $f\colon A\to A$
which is compatible with all the relations $R_{i}$. The semigroup of
all endomorphisms is denoted by $\End(\A)$. An \emph{automorphism} of
$\A$ is a bijective function $f\colon A\to A$ such that both $f$ and
$f^{-1}$ are endomorphisms. The group of all automorphisms is denoted
by $\Aut(\A)$. In the sequel, we will focus on
$\A=\langle\Q,\leq\rangle$, the rational numbers equipped with the
non-strict order. As already noted in the introduction,
\begin{align*}
  \MM_{\Q}&:=\End(\Q,\leq)=\left\{f\colon\Q\to\Q\mid f\text{ increasing}\right\},\\
  \GG_{\Q}&:=\Aut(\Q,\leq)=\left\{f\colon\Q\to\Q\mid f\text{ bijective, (strictly) increasing}\right\}.
\end{align*}
Additionally, it will be useful to embed $\Q$ into the real numbers
$\R$. Consequently, we will allow intervals with irrational boundary
points as well. Differing from standard notation, we only consider the
\emph{rational} points in this interval, unless explicitly mentioned
otherwise: for $\gamma_{1},\gamma_{2}\in\R\cup\{\pm\infty\}$, we put
$(\gamma_{1},\gamma_{2}):=\set{q\in\Q}{\gamma_{1}<q<\gamma_{2}}$. To
avoid lengthy typesetting, we will denote $s((-\infty,\gamma))$ by
$s(-\infty,\gamma)$ et cetera. In the same spirit, we will write
$\sup\Img(s)$ as $\sup s$ and $\inf\Img(s)$ as $\inf s$. Finally, we
abbreviate $\I:=\R\setminus\Q$ and distinguish intervals as follows:
An interval is \emph{rational} if its boundary points are contained in
$\Q\cup\{\pm\infty\}$, and \emph{irrational} if its boundary points
are contained in $\I\cup\{\pm\infty\}$.

\subsection{Topologies}
\label{sec:topologies}

We endow $A^{A}$ with the \emph{pointwise topology} $\TT_{pw}$, that
is the product topology where each copy of $A$ carries the discrete
topology. Explicitly, a basis for the pointwise topology is given by
the sets $\set{f\in A^{A}}{f(x_{1})=y_{1},\dots,f(x_{n})=y_{n}}$,
where $n\geq 1$ and $x_{1},\dots,x_{n},y_{1},\dots,y_{n}\in A$. If $A$
is countable, it is a folklore fact that the pointwise topology on
$A^{A}$ is \emph{Polish} (second countable and completely metrisable)
as a countable product of Polish topologies. As can be easily seen,
the composition operation $\circ\colon A^{A}\times A^{A}\to A$ is
continuous with respect to this topology; hence, the pointwise
topology is a \emph{semigroup topology}. The induced topology on any
$G_{\delta}$ subsemigroup of $A^{A}$ with respect to the pointwise
topology is a Polish semigroup topology as well; notable examples are
the spaces $\End(\A)$ and $\Aut(\A)$ -- the former is closed in
$A^{A}$ with respect to the pointwise topology, the latter is closed
in the set $\Sym(A)$ of all permutations on $A$, which in turn is
readily seen to be $G_{\delta}$ in $A^{A}$. On any of these spaces, we
will refer to the induced topology also as \emph{pointwise topology}
and always denote it by $\TT_{pw}$, unless the underlying set is not
clear from the context.

We will make frequent use of the left and right translations, defined
on any semigroup $S$ as follows: Given a fixed $t\in S$, let
\begin{align*}
  \lambda_{t}&\colon S\to S,\quad \lambda_{t}(s):=ts\\
  \rho_{t}&\colon S\to S, \quad \rho_{t}(s):=st
\end{align*}
denote the \emph{left} and \emph{right translation} on $S$ by $t$,
respectively. If $S$ is a topological semigroup, then $\lambda_{t}$
and $\rho_{t}$ are continuous maps for any $t\in S$.

In the sequel, we will have to distinguish multiple topologies on the
same set; whenever the topology is not clear from the context, we will
write $(S,\TT)$ for the space $S$ endowed with the topology $\TT$.

\subsection{Automatic continuity}
\label{subsec:automatic-continuity}
In Subsection~\ref{subsec:related-notions}, we presented a version of
automatic continuity for topological (semi\nobreakdash-)groups which
we now generalise in a straightforward way:
\begin{definition}
  Let $S$ be a (semi\nobreakdash-)group and let $\TT$ be a topology on
  $S$ (which need not be a (semi\nobreakdash-)group topology). Given a
  class $\KK$ of topological (semi\nobreakdash-)groups, we say that
  $(S,\TT)$ has \emph{automatic continuity} with respect to $\KK$ if
  for any $(H,\OO)\in\KK$, all algebraic homomorphisms $S\to H$ are
  continuous as maps $(S,\TT)\to (H,\OO)$.
\end{definition}
The present subsection contains references to results which imply that
$(\MM_{\Q},\TT_{pw})$ does not have automatic continuity while
$(\GG_{\Q},\TT_{pw})$ satisfies a very strong version of automatic
continuity. We begin with the negative result, reformulating it to
match our terminology.
\begin{proposition}[{\cite[Proposition~9]{Reconstruction}}]\label{prop:bpp-non-auto-cont}
  Let $M$ be a $\TT_{pw}$-closed submonoid of $A^{A}$ for a countable
  set $A$. Suppose that $M$ contains a submonoid $N$ such that
  \begin{enumerate}[label=(\arabic*)]
  \item $N$ is not $\TT_{pw}$-closed in $M$;
  \item composing any element of $M$ with an element outside $N$
    yields an element outside $N$.
  \end{enumerate}
  Then $(M,\TT_{pw})$ does not have automatic continuity with respect
  to the class of all subsemigroups of~$A^{A}$, equipped with the
  respective pointwise topologies.
\end{proposition}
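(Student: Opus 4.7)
My plan is to exploit the hypotheses to produce an explicit discontinuous algebraic homomorphism from $(M,\TT_{pw})$ into a subsemigroup of $B^{B}$ for a suitably chosen countable set $B$. The starting observation is that condition~(2) says precisely that $I := M \setminus N$ is a two-sided ideal in $M$, so the Rees-type quotient collapsing $I$ to a single absorbing element while fixing $N$ pointwise should be a semigroup homomorphism; condition~(1) will then be responsible for its discontinuity.

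For the construction, I would put $B := A \cupdot \{*\}$ with $*$ a new point. For each $n \in N$ let $\hat{n} \in B^{B}$ be the extension of $n$ that fixes $*$, and let $\bm{0} \in B^{B}$ denote the constant function with value $*$. The candidate target is
\[
  H := \{\hat{n} : n \in N\} \cup \{\bm{0}\},
\]
which is straightforwardly a subsemigroup of $B^{B}$ because $\hat{n_{1}}\hat{n_{2}} = \widehat{n_{1}n_{2}}$ and $\bm{0}$ absorbs on both sides. The candidate homomorphism is
\[
  \varphi \colon M \to H, \qquad
  \varphi(x) := \begin{cases} \hat{x}, & x \in N,\\ \bm{0}, & x \in I. \end{cases}
\]
To check that $\varphi$ really is a homomorphism, I would split into cases: if $x,y \in N$ then $xy \in N$ and $\varphi(xy) = \widehat{xy} = \hat{x}\hat{y} = \varphi(x)\varphi(y)$; and if at least one of $x,y$ lies in $I$, condition~(2) forces $xy \in I$, so $\varphi(xy) = \bm{0} = \varphi(x)\varphi(y)$, using that $\bm{0}$ is absorbing in $H$.

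For discontinuity, I would pick any $a_{0} \in A$ and consider the subbasic open subset $U := \{f \in H : f(a_{0}) = *\}$ of $H$ in the pointwise topology inherited from $B^{B}$. Since $\hat{n}(a_{0}) = n(a_{0}) \in A$ for every $n \in N$, this reduces to $U = \{\bm{0}\}$, so $\varphi^{-1}(U) = I = M \setminus N$. By hypothesis~(1), $N$ is not $\TT_{pw}$-closed in $M$, hence $\varphi^{-1}(U)$ is not $\TT_{pw}$-open, witnessing that $\varphi$ is discontinuous at every point of $\overline{N} \setminus N$.

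I do not anticipate any serious obstacle in executing this plan: the only real design choice is how to realise the Rees quotient $M/I$ as a transformation monoid, and adjoining a single fresh ``sink'' point $*$ to $A$ seems the most economical way. Conditions~(1) and~(2) are tailored exactly to these two steps — condition~(2) makes $\varphi$ well defined as a homomorphism, and condition~(1) makes the preimage of the neighbourhood $\{\bm{0}\}$ of the zero fail to be open.
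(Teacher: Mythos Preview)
The paper does not prove this proposition; it is quoted from \cite{Reconstruction} and used as a black box. Your argument is a correct and standard proof of the cited result: condition~(2) makes $M\setminus N$ a two-sided ideal, the Rees-type map collapsing it to an absorbing zero is a semigroup homomorphism, and condition~(1) makes the preimage of the isolated zero non-open.

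One cosmetic point worth a sentence: the statement asks for the target to lie among subsemigroups of $A^{A}$, while your $H$ lives in $B^{B}$ with $B=A\cupdot\{*\}$. Condition~(1) forces $A$ to be infinite (otherwise $\TT_{pw}$ is discrete and every subset is closed), so $|B|=|A|$ and conjugation by any bijection $A\to B$ yields a topological semigroup isomorphism $B^{B}\to A^{A}$; composing $\varphi$ with this isomorphism lands in $A^{A}$ as required.
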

By a straightforward application of this observation, we obtain:
\begin{proposition}\label{prop:end-Q-non-auto-cont}
  $(\MM_{\Q},\TT_{pw})$ does not have automatic continuity with
  respect to the class of second countable topological semigroups.
\end{proposition}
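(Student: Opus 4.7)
The plan is a direct application of Proposition~\ref{prop:bpp-non-auto-cont} with $A:=\Q$ and $M:=\MM_{\Q}$; it requires identifying a submonoid $N\subseteq M$ that is not $\TT_{pw}$-closed in $M$ and whose complement $I:=M\setminus N$ is a two-sided ideal. I propose to take
\[
  N:=\{f\in\MM_{\Q}\ :\ \sup f=+\infty\text{ and }\inf f=-\infty\},
\]
so that $I$ consists of the increasing functions whose image is bounded above or bounded below.

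Verifying the three requisite facts should be routine. First, $N$ is a submonoid: given $f,g\in N$ and $y\in\Q$, pick $x\in\Q$ with $f(x)\geq y$ and then $x'\in\Q$ with $g(x')\geq x$; monotonicity of $f$ gives $fg(x')\geq y$, so $\sup(fg)=+\infty$, and the symmetric argument yields $\inf(fg)=-\infty$. Second, $I$ is a two-sided ideal of $M$: if $n\in I$ with, say, $\Img(n)\subseteq(-\infty,b]$ for some $b\in\Q$, and $m\in M$, then $\Img(mn)\subseteq(-\infty,m(b)]$ and $\Img(nm)\subseteq\Img(n)\subseteq(-\infty,b]$, so both products remain in $I$; the case of a lower bound is analogous. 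Third, $N$ is not $\TT_{pw}$-closed in $M$: the constant-zero function lies in $I$, but is $\TT_{pw}$-approximated from inside $N$. Indeed, given any finite $F\subseteq\Q$ with $p:=\min F$ and $q:=\max F$, the function $h\colon\Q\to\Q$ defined by $h(x):=x-p$ for $x<p$, $h(x):=0$ for $p\leq x\leq q$, and $h(x):=x-q$ for $x>q$, is increasing with $\Img(h)=\Q$, so $h\in N$ and $h$ vanishes on~$F$.

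By Proposition~\ref{prop:bpp-non-auto-cont}, there is a discontinuous algebraic homomorphism from $(\MM_{\Q},\TT_{pw})$ into some subsemigroup of $\Q^{\Q}$ endowed with the pointwise topology. Since $(\Q^{\Q},\TT_{pw})$ is Polish, every such subsemigroup is second countable, and hence this same homomorphism witnesses the failure of automatic continuity with respect to the broader class of all second countable topological semigroups, as desired. I expect no real obstacle; the only creative step is choosing $N$ whose complement is absorptive under multiplication on either side yet is still large enough that $N$ itself fails to be $\TT_{pw}$-closed, and functions with image unbounded in both directions are a natural fit because monotonicity makes the notion of ``unbounded image'' behave well under composition.
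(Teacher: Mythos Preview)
Your proposal is correct and follows essentially the same approach as the paper: both take $N$ to be the set of increasing functions unbounded on both sides, verify the hypotheses of Proposition~\ref{prop:bpp-non-auto-cont} (including that the constant-zero function lies in $\cl{N}^{\TT_{pw}}\setminus N$), and then pass from subsemigroups of $\Q^{\Q}$ to all second countable topological semigroups. The only cosmetic differences are your explicit check that $N$ is a submonoid and your slightly different choice of approximating functions.
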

\begin{proof}
  We set $M:=\MM_{\Q}$ as well as
  \begin{displaymath}
    N:=\set{f\in\MM_{\Q}}{\inf f=-\infty\text{ and }\sup f=+\infty}
  \end{displaymath}
  and check the assumptions of
  Proposition~\ref{prop:bpp-non-auto-cont}. Clearly, $N$ is a
  submonoid of $M$. If we define $f_{n}\in\MM_{\Q}$ by
  \begin{displaymath}
    f_{n}(x):=
    \begin{cases}
      0,& -n<x<n\\
      x,& x\geq -n\text{ or }x\leq n
    \end{cases}
  \end{displaymath}
  we have $f_{n}\in N$ but the sequence $(f_{n})_{n\in\N}$ converges
  with respect to the pointwise topology, namely to the constant
  function with value $0$ -- which is not in $N$. Hence, $N$ is not
  $\TT_{pw}$-closed. Finally, if $g\in M$ and $f\notin N$,
  i.e. $\Img(f)\subseteq [u,v]$, then $\Img(fg)\subseteq [u,v]$ and
  $\Img(gf)\subseteq [g(u),g(v)]$, so $fg,gf\notin N$.

  By Proposition~\ref{prop:bpp-non-auto-cont}, the topological
  semigroup $(\MM_{\Q},\TT_{pw})$ does not have automatic continuity
  with respect to the class of all subsemigroups of~$A^{A}$. Since all
  these subsemigroups are second countable, $(\MM_{\Q},\TT_{pw})$ in
  particular does not have automatic continuity with respect to the
  class of second countable topological semigroups.
\end{proof}
On the other hand, $\GG_{\Q}$ with the pointwise topology does have
automatic continuity by the following result by Rosendal and Solecki
(which we again reformulate to fit our notation).
\begin{theorem}[{\cite[Corollary~5]{RosendalSolecki}} combined with
  the remarks
  before~{\cite[Corollary~3]{RosendalSolecki}}]\label{thm:aut-Q-auto-cont-grp}
  $(\GG_{\Q},\TT_{pw})$ has automatic continuity with respect to the
  class of second countable topological groups.
\end{theorem}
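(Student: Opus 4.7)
The plan is to follow the general blueprint that Rosendal and Solecki laid out for proving automatic continuity via Baire category: combine a combinatorial ``Steinhaus-type'' property of the topological group $(\GG_{\Q},\TT_{pw})$ with a soft category argument.

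First, I would establish the Steinhaus property for $(\GG_{\Q},\TT_{pw})$: there is a fixed integer $k$ such that whenever $W\subseteq\GG_{\Q}$ is symmetric, contains the identity, and is Baire-measurable and non-meager with respect to $\TT_{pw}$, the set $W^{k}$ is a $\TT_{pw}$-neighbourhood of the identity. This is the heart of the proof and the main obstacle, since it requires a genuinely combinatorial input about $\langle\Q,\leq\rangle$. The standard approach exploits the ultrahomogeneity of $\langle\Q,\leq\rangle$: one shows via a back-and-forth construction that the pointwise stabiliser of any finite tuple $\bar a$ of rationals can be written as a bounded product of conjugates of elements of arbitrarily small $\TT_{pw}$-neighbourhoods of the identity. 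Concretely, given a symmetric non-meager $W$, a Kuratowski--Ulam style argument produces a non-empty open set $U$ with $U\subseteq W\cdot W$, and then the ultrahomogeneity lets one express every element of the stabiliser of a suitable finite tuple as a product of a bounded number of conjugates of elements of $U$, all conjugates themselves lying in a bounded power of $W$; this yields $W^{k}\supseteq\Stab(\bar a)$, which is a basic $\TT_{pw}$-neighbourhood of the identity.

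Second, assuming the Steinhaus property, I would derive automatic continuity by a short argument: let $\phi\colon\GG_{\Q}\to H$ be any abstract group homomorphism with $H$ a second countable topological group. It suffices to prove continuity at the identity. Fix an open identity neighbourhood $V\subseteq H$, pick a symmetric open identity neighbourhood $V'\subseteq H$ with $(V')^{k}\subseteq V$, and use second countability of $H$ to cover $H$ by countably many left translates of $V'$. Pulling back via $\phi$, countably many left translates of $\phi^{-1}(V')$ cover $\GG_{\Q}$. Since $(\GG_{\Q},\TT_{pw})$ is Polish and hence Baire, one of these translates -- and therefore $\phi^{-1}(V')$ itself -- is non-meager. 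Consequently $W:=\phi^{-1}(V'(V')^{-1})$ is symmetric, contains the identity and is non-meager; being a preimage under an algebraic homomorphism it has the Baire property modulo technicalities one handles by replacing $\phi^{-1}(V')$ by a Borel subset with the same category. The Steinhaus property then provides a $\TT_{pw}$-neighbourhood $O$ of the identity with $O\subseteq W^{k}\subseteq\phi^{-1}((V')^{k})\subseteq\phi^{-1}(V)$, witnessing continuity of $\phi$ at the identity.

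The soft part is the second step; it is essentially the standard deduction of automatic continuity from Steinhaus, and requires no specific features of $\GG_{\Q}$ beyond being a Polish group. The hard part, and the only place where the particular structure $\langle\Q,\leq\rangle$ enters, is the Steinhaus property itself, whose proof rests on exploiting the rich supply of order-automorphisms produced by the back-and-forth method to realise prescribed stabilisers as short products of conjugates.
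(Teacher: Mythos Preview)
The paper does not prove this theorem; it merely records it as a citation to Rosendal--Solecki, so there is nothing to compare against at the level of a written proof. Your sketch is in the spirit of the cited argument and captures its two-step shape (a combinatorial Steinhaus-type property for $\GG_{\Q}$, followed by a soft category deduction), so in that sense it matches what the paper relies on.

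There is, however, a genuine gap in your formulation. You state the Steinhaus property for sets $W$ that are \emph{Baire-measurable} and non-meager, and then in the second step you need $\phi^{-1}(V')$ to be Baire-measurable. You acknowledge this as a ``technicality'' to be fixed by ``replacing $\phi^{-1}(V')$ by a Borel subset with the same category'', but this replacement is not available: an arbitrary algebraic homomorphism into a second countable group gives no reason for preimages of open sets to have the Baire property, and one cannot in general approximate an arbitrary set by a Borel set of the same category without already knowing it has the Baire property. The correct formulation, which is what Rosendal--Solecki actually prove, is stronger and avoids any measurability hypothesis: there exists $k$ such that for every $W\subseteq\GG_{\Q}$ whose countably many left translates cover $\GG_{\Q}$, the product $W^{k}$ contains an open neighbourhood of the identity. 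With this version, the second step goes through cleanly, since $\phi^{-1}(V')$ always satisfies the covering condition by second countability of $H$, with no appeal to the Baire property of preimages.
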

Explicitly, this means: If $(H,\OO)$ is a second countable topological
group, then any group homomorphism
$\varphi\colon (\GG_{\Q},\TT_{pw})\to (H,\OO)$ is
continuous. Considering $\GG_{\Q}$ as a semigroup and forgetting that
it is in fact a group, one can ask whether it is sufficient that $H$
be a second countable topological semigroup and $\varphi$ be
multiplicative. By~\cite[Proposition~4.1]{EJMMMP-zariski}, the notions
of automatic continuity with respect to the classes of second
countable topological groups and second countable topological
semigroups are indeed equivalent, so we obtain:
\begin{proposition}\label{prop:aut-Q-auto-cont-semigrp}
  $(\GG_{\Q},\TT_{pw})$ has automatic continuity with respect to the
  class of second countable topological semigroups, explicitly: If
  $(H,\OO)$ is a second countable topological semigroup, then any
  semigroup homomorphism $(\GG_{\Q},\TT_{pw})\to (H,\OO)$ is
  continuous.
\end{proposition}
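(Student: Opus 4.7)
The plan is to read off the statement as a direct combination of Theorem~\ref{thm:aut-Q-auto-cont-grp} with the general equivalence established in~\cite[Proposition~4.1]{EJMMMP-zariski}. Since $\GG_{\Q}$ is genuinely a group (not merely a semigroup), once we know that every group homomorphism from $(\GG_{\Q},\TT_{pw})$ into a second countable topological group is continuous, the passage from group targets to semigroup targets is purely formal and handled by the cited equivalence.

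First I would fix an arbitrary semigroup homomorphism $\varphi\colon(\GG_{\Q},\TT_{pw})\to(H,\OO)$ into a second countable topological semigroup $(H,\OO)$. The key algebraic observation is that, because $\GG_{\Q}$ is a group with identity $\id_{\Q}$, its image $\varphi(\GG_{\Q})$ is automatically a subgroup of $H$ in the algebraic sense, with identity $\varphi(\id_{\Q})$ and inverses $\varphi(g^{-1})$; in particular $\varphi$ is a group homomorphism onto this subgroup and not merely a semigroup homomorphism.

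Second, I would invoke~\cite[Proposition~4.1]{EJMMMP-zariski}, which is tailored to precisely this situation: it guarantees that automatic continuity of a topological group with respect to the class of second countable topological groups is equivalent to automatic continuity with respect to the class of second countable topological semigroups. Combined with Theorem~\ref{thm:aut-Q-auto-cont-grp} (the Rosendal--Solecki result for $(\GG_{\Q},\TT_{pw})$ in the group category), this yields the desired conclusion that $\varphi$ is continuous.

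There is no real obstacle beyond correctly applying the cited equivalence; the only thing to check is that the hypothesis of the equivalence -- that the source is a topological group -- is satisfied, which is clear since $(\GG_{\Q},\TT_{pw})$ is Polish and in particular a topological group. Hence the proof reduces to a one-line citation chain, and the proposition follows.
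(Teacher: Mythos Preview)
Your proposal is correct and matches the paper's approach exactly: the proposition is obtained by combining Theorem~\ref{thm:aut-Q-auto-cont-grp} with \cite[Proposition~4.1]{EJMMMP-zariski}, which upgrades automatic continuity with respect to second countable topological groups to automatic continuity with respect to second countable topological semigroups.
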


\subsection{Coarsest topology}
\label{subsec:coarsest-topology}

As it turns out, even very mild assumptions to a topology on
$\MM_{\Q}$ are enough for ascertaining that the pointwise topology is
coarser than the given topology. This was shown in~\cite{EJMPP-polish}
using a construction from~\cite{EJMMMP-zariski}. In order to keep the
present paper as self-contained as possible, we include a proof for
our special case (we refer to~\cite{EJMMMP-zariski} for more details,
in particular on the abstract properties that are used and on the link
to the Zariski topology mentioned in
Subsection~\ref{subsec:known-technique}):
\begin{theorem}[{\cite[Lemma~5.1]{EJMMMP-zariski}} and
  {\cite[Theorem~3.3]{EJMPP-polish}}]\label{thm:ejm-pointw-coarser}
  Let $\TT$ be a Polish semigroup topology on $\MM_{\Q}$. Then
  $\TT_{pw}\subseteq\TT$, i.e.~$\TT_{pw}$ is coarser than $\TT$.
\end{theorem}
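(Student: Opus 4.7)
The plan is to show that every subbasic $\TT_{pw}$-open set of the form $U_{x,y} := \set{f \in \MM_{\Q}}{f(x) = y}$, for $x,y\in\Q$, is $\TT$-open; since such sets form a subbasis of $\TT_{pw}$, this suffices. The key algebraic observation is that $\MM_{\Q}$ contains every constant function $c_y$ (for $y \in \Q$), and $f \circ c_x = c_{f(x)}$, whence $U_{x,y} = \rho_{c_x}^{-1}(\{c_y\})$. Because $\rho_{c_x}$ is $\TT$-continuous by the semigroup assumption and its image lies in the set of constants $\mathrm{Const} := \set{c_z}{z \in \Q}$, it will be enough to show that each singleton $\{c_y\}$ is $\TT$-open \emph{inside} $\mathrm{Const}$: then $\{c_y\} = W \cap \mathrm{Const}$ for some $W \in \TT$, and $\rho_{c_x}^{-1}(\{c_y\}) = \rho_{c_x}^{-1}(W) \in \TT$.

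First I would show $\mathrm{Const}$ is $\TT$-closed. It coincides with the fixed-point set $\set{f \in \MM_{\Q}}{\rho_{c_0}(f) = f}$ of the $\TT$-continuous idempotent $\rho_{c_0}$, which is closed in any Hausdorff topology. As a $\TT$-closed subspace of the Polish space $(\MM_{\Q},\TT)$, $\mathrm{Const}$ is itself Polish and in particular Baire. Since $\mathrm{Const}$ is countable and each singleton $\{c_z\}$ is $\TT$-closed by Hausdorffness, the Baire category theorem prevents all singletons from being nowhere dense in $\mathrm{Const}$; consequently there exists $y_0 \in \Q$ such that $c_{y_0}$ is $\TT$-isolated in $\mathrm{Const}$.

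Next I would propagate this isolation to every point of $\mathrm{Const}$ using the action of $\GG_{\Q}$ by left translation. For arbitrary $y \in \Q$, choose $g \in \GG_{\Q}$ with $g(y_0) = y$; the left translation $\lambda_g$ is a $\TT$-homeo\-morphism of $\MM_{\Q}$ (its inverse $\lambda_{g^{-1}}$ is $\TT$-continuous as well), satisfies $\lambda_g(c_z) = c_{g(z)}$, and thus restricts to a $\TT$-homeomorphism of $\mathrm{Const}$ sending $c_{y_0}$ to $c_y$. Hence every $c_y$ is $\TT$-isolated in $\mathrm{Const}$, which together with the reduction of the first paragraph gives $U_{x,y} \in \TT$ for all $x,y \in \Q$ and therefore $\TT_{pw} \subseteq \TT$.

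The only genuinely non-formal step is the Baire argument producing a $\TT$-isolated constant: this is precisely where Polishness of $\TT$ (or, more economically, just its Baire property) is used, converting the purely algebraic fact that $\MM_{\Q}$ possesses constant functions serving as right zeros into topological discreteness of $\mathrm{Const}$. Everything else rests only on continuity of composition in $\TT$ and on the transitivity of $\GG_{\Q}$ on $\Q$.
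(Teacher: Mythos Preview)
Your argument is correct. Both proofs begin from the same algebraic identity $f\circ c_x=c_{f(x)}$, but they diverge in how they extract openness of $U_{x,y}$ from it. The paper avoids the Baire step entirely: it introduces the three-valued endomorphism $h_y$ (sending everything below $y$ to $y-1$, $y$ to itself, and everything above $y$ to $y+1$) and observes that $h_yfc_x\in\{c_{y-1},c_y,c_{y+1}\}$ with value $c_y$ precisely when $f(x)=y$; hence $\set{f}{f(x)\neq y}=\lambda_{h_y}^{-1}\rho_{c_x}^{-1}(\{c_{y-1},c_{y+1}\})$ is a continuous preimage of a \emph{finite} set, closed by the $T_1$ axiom alone. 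Your route instead shows the whole set $\mathrm{Const}$ is closed, invokes Baire to find an isolated constant, and transports isolation via $\GG_\Q$. Both are valid; the paper's construction is more elementary in that it needs only $T_1$ rather than the Baire property, whereas your argument has the virtue of making transparent that what is really being used is discreteness of the subspace of constants (and would adapt verbatim to any transformation monoid on a countable set containing all constants and a transitive group of units).
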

\begin{proof}
  Consider the maps $h_{y}\colon\Q\to\Q$ defined for all $y\in\Q$ by
  \begin{displaymath}
    h_{y}(x):=
    \begin{cases}
      y-1,& x<y\\
      y,& x=y\\
      y+1,& x>y
    \end{cases}
  \end{displaymath} as well as the constant functions $c_{x}$
  with value $x\in\Q$. 
  Clearly, both $h_{y}$ and $c_{x}$ are elements of~$\MM_{\Q}$.

  It suffices to show that the subbasic open sets
  $\set{f\in\MM_{\Q}}{f(x)=y}$, where $x,y\in\Q$, are contained in
  $\TT$. In other words, we must prove that
  $\set{f\in\MM_{\Q}}{f(x)\neq y}$ is $\TT$-closed. For
  $f\in\MM_{\Q}$, we observe
  \begin{displaymath}
    f(x)\neq y\Leftrightarrow fc_{x}\neq c_{y}\Leftrightarrow
    h_{y}fc_{x}\neq c_{y}\Leftrightarrow h_{y}fc_{x}\in\{c_{y-1},c_{y+1}\}.
  \end{displaymath}
  The finite set $\{c_{y-1},c_{y+1}\}$ is $\TT$-closed since $\TT$ is
  Polish (and thus satisfies the first separation axiom). Using the
  $\TT$-continuity of the translations $\lambda_{h_{y}}$ and
  $\rho_{c_{x}}$, we obtain that
  \begin{displaymath}
    \set{f\in\MM_{\Q}}{f(x)\neq y}=\lambda_{h_{y}}^{-1}(\rho_{c_{x}}^{-1}(\{c_{y-1},c_{y+1}\}))
  \end{displaymath}
  is closed as well.
\end{proof}

\subsection{Back\&Forth}
\label{sec:back-and-forth}
In our proofs, we will repeatedly use the ``Back\&Forth'' method, see
for instance~\cite{Hodges}.
\begin{definition}\label{def:back-and-forth}
  Let $\X$ and $\Y$ be countably infinite structures in the same
  language and let $\mathcal{S}$ be a set of finite partial homomorphisms from
  $\X$ to $\Y$.
  \begin{enumerate}[label=(\roman*)]
  \item $\mathcal{S}$ is a \emph{Forth system} between $\X$ and $\Y$ if for
    all $m\in\mathcal{S}$ and all $x\in\X$ with $x\notin\Dom(m)$, there exists
    $m'\in\mathcal{S}$ such that $m'$~extends~$m$ and $x\in\Dom(m')$.
  \item $\mathcal{S}$ is a \emph{Back system} between $\X$ and $\Y$ if for all
    $m\in\mathcal{S}$ and all $y\in\Y$ with $y\notin\Img(m)$, there exists
    $m'\in\mathcal{S}$ such that $m'$~extends~$m$ and $y\in\Img(m')$.
  \item $\mathcal{S}$ is a \emph{Back\&Forth system} between $\X$ and $\Y$ if
    it is both a Back system and a Forth system.
  \end{enumerate}
\end{definition}
Iteratively extending finite partial homomorphisms so that their domains
exhaust the entire structure $\X$ (Forth) or in an alternating fashion
so that their domains and images exhaust $\X$ and $\Y$, respectively
(Back\&Forth), one obtains the following folklore result:
\begin{lemma}\label{lem:back-and-forth}
  Let $\X$ and $\Y$ be countably infinite structures in the same
  language.
  \begin{enumerate}[label=(\roman*)]
  \item If $\mathcal{S}$ is a Forth system between $\X$ and $\Y$ which is
    closed under restriction, then any $m\in\mathcal{S}$ can be extended to a
    total homomorphism $s\colon\X\to\Y$ such that every finite
    restriction of $s$ is contained in $\mathcal{S}$. In particular, if $\mathcal{S}$
    consists of injective finite partial homomorphisms, then $s$ can
    be picked to be injective as well.
  \item If $\mathcal{S}$ is a Back\&Forth system between $\X$ and $\Y$ which
    is closed under restriction, then any $m\in\mathcal{S}$ can be extended to
    a total and surjective homomorphism $s\colon\X\to\Y$ such that
    every finite restriction of $s$ is contained in $\mathcal{S}$. In
    particular, if $\mathcal{S}$ consists of finite partial isomorphisms, then
    $s$ can be picked to be an automorphism.
  \end{enumerate}
\end{lemma}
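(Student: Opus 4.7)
The plan is to prove both parts by iteratively extending $m$ along enumerations of the countable structures, which is the classical Cantor back-and-forth construction.

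For part~(i), I would fix an enumeration $\X=\{x_{0},x_{1},\ldots\}$ of the domain and inductively build a chain $m=m_{0}\subseteq m_{1}\subseteq\cdots$ of elements of $\mathcal{S}$. Given $m_{n}$, if $x_{n}\in\Dom(m_{n})$, set $m_{n+1}:=m_{n}$; otherwise, invoke the Forth property to obtain some $m_{n+1}\in\mathcal{S}$ extending $m_{n}$ with $x_{n}\in\Dom(m_{n+1})$. Setting $s:=\bigcup_{n\in\N}m_{n}$, we get $\Dom(s)=\X$ by construction. For any tuple $\bar{x}$ satisfying a relation $R$ of $\X$, all entries of $\bar{x}$ lie in $\Dom(m_{n})$ for $n$ large enough, and since $m_{n}$ is a partial homomorphism, $s(\bar{x})=m_{n}(\bar{x})\in R$. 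Any finite restriction $r$ of $s$ has its (finite) domain contained in $\Dom(m_{n})$ for some $n$, hence $r=m_{n}\upharpoonright\Dom(r)$ lies in $\mathcal{S}$ by the closure of $\mathcal{S}$ under restriction. When $\mathcal{S}$ consists of injective partial homomorphisms, each $m_{n}$ is injective, and a union of a chain of injective maps is injective, so $s$ is injective as required.

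For part~(ii), I would additionally fix an enumeration $\Y=\{y_{0},y_{1},\ldots\}$ and interleave Forth and Back steps: at stage $2n$ add $x_{n}$ to the domain via Forth, and at stage $2n+1$ add $y_{n}$ to the image via Back. The resulting $s$ is then both total and surjective, and the verification that $s$ is a homomorphism all of whose finite restrictions are in $\mathcal{S}$ is identical to part~(i). When $\mathcal{S}$ consists of finite partial isomorphisms, each $m_{n}$ is injective and its converse relation $m_{n}^{-1}$ is again a partial homomorphism (in $\mathcal{S}$ in the reverse direction); at the limit $s$ is therefore bijective, and applying the same ``any finite restriction is realised by some $m_{n}$'' argument to $s^{-1}$ shows that $s^{-1}$ is a homomorphism as well, so $s$ is an isomorphism (an automorphism when $\X=\Y$).

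There is no genuine obstacle here — this is a folklore result — and the only points that warrant care are the transfer of the homomorphism property from the finite level to the limit (which reduces to the observation that relations have finite arity, so only finitely many elements of $\X$ are ever involved in a single instance), and the use of closure of $\mathcal{S}$ under restriction, which is precisely what guarantees that every finite restriction of $s$ remains in $\mathcal{S}$.
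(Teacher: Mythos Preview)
Your proposal is correct and follows exactly the approach the paper indicates: the paper does not give a detailed proof but merely remarks that the result is folklore, obtained by ``iteratively extending finite partial homomorphisms so that their domains exhaust the entire structure $\X$ (Forth) or in an alternating fashion so that their domains and images exhaust $\X$ and $\Y$, respectively (Back\&Forth)''. Your argument spells out precisely this standard construction.
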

In the sequel, we will repeatedly need an answer to the following
question: given $s,f\in\MM_{\Q}$, under which conditions does there
exist a map $s'\in\MM_{\Q}$ such that $s=fs'$?
\begin{lemma}\label{lem:left-translation-preimage}
  Let $s,f\in\MM_{\Q}$ such that $\Img(s)\subseteq\Img(f)$.
  \begin{enumerate}[label=(\roman*)]
  \item\label{item:left-translation-preimage-i} Any finite partial
    increasing map $m_{0}$ from $\Q$ to $\Q$ satisfying
    $s(p)=fm_{0}(p)$ for all $p\in\Dom(m_{0})$ can be extended to
    $s'\in\MM_{\Q}$ with $s=fs'$.
  \item\label{item:left-translation-preimage-ii} Additionally suppose
    that for each $w\in\Img(f)$ the preimage $f^{-1}\{w\}$ is an
    irrational interval. Then any finite partial increasing
    \emph{injective} map $m_{0}$ from $\Q$ to $\Q$ satisfying
    $s(p)=fm_{0}(p)$ for all $p\in\Dom(m_{0})$ can be extended to an
    \emph{injective} $s'\in\MM_{\Q}$ with $s=fs'$.
  \end{enumerate}
\end{lemma}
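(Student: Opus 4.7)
The plan is to invoke the Forth clause of Lemma~\ref{lem:back-and-forth}(i) applied to
\[
  \mathcal{S} := \{\, m : m \text{ is a finite increasing partial map } \Q \to \Q \text{ with } f(m(p))=s(p) \text{ for every } p\in\Dom(m)\,\}
\]
in part~\ref{item:left-translation-preimage-i}, and to the subset $\mathcal{S}^{\mathrm{inj}} \subseteq \mathcal{S}$ of additionally injective such $m$ in part~\ref{item:left-translation-preimage-ii}. Both sets contain $m_{0}$ and are clearly closed under restriction. Any total increasing map $s'$ extending $m_0$ produced by the lemma then lies in $\MM_{\Q}$ and satisfies $f\circ s' = s$, since this holds on every finite restriction; the injectivity clause of Lemma~\ref{lem:back-and-forth}(i) moreover yields injective $s'$ in case~\ref{item:left-translation-preimage-ii}. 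The whole task therefore reduces to verifying the Forth property of $\mathcal{S}$ (resp.\ $\mathcal{S}^{\mathrm{inj}}$).

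To treat part~\ref{item:left-translation-preimage-i}, fix $m\in\mathcal{S}$ and $x\in\Q\setminus\Dom(m)$ and let $p_{1}$, $p_{2}$ be the immediate predecessor and successor of $x$ in $\Dom(m)$, adopting the conventions $p_{i}=\pm\infty$, $m(p_{i})=\pm\infty$ whenever the corresponding set is empty. Because $\Img(s)\subseteq\Img(f)$, pick any $y_{0}\in\Q$ with $f(y_{0})=s(x)$. If $y_{0}>m(p_{2})$, then $s(x)=f(y_{0})\geq f(m(p_{2}))=s(p_{2})\geq s(x)$, so all inequalities collapse to equalities and we may take $y:=m(p_{2})$; a symmetric argument handles $y_{0}<m(p_{1})$; in the remaining case $y_{0}\in[m(p_{1}),m(p_{2})]$ itself works. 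In every instance $m\cup\{(x,y)\}\in\mathcal{S}$, which concludes part~\ref{item:left-translation-preimage-i}.

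For part~\ref{item:left-translation-preimage-ii}, the Forth step must return $y\in(m(p_{1}),m(p_{2}))$ \emph{strictly}, since $m$ increasing and injective then forces such a $y$ to avoid every previously assigned value of $m$. When $s(p_{1})<s(x)<s(p_{2})$, monotonicity of $f$ already pushes any preimage of $s(x)$ strictly between $m(p_{1})$ and $m(p_{2})$, and the existence of such a preimage is guaranteed by $\Img(s)\subseteq\Img(f)$. It remains to handle the boundary cases $s(x)=s(p_{i})$; consider $s(x)=s(p_{1})$. Here the fibre $f^{-1}\{s(x)\}$ is an irrational interval containing the rational $m(p_{1})$; its right endpoint, being irrational, is strictly greater than $m(p_{1})$, hence the fibre contains rationals $y>m(p_{1})$. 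If $s(x)<s(p_{2})$, monotonicity automatically pushes each such $y$ below $m(p_{2})$; if $s(x)=s(p_{2})$ as well, then $m(p_{2})$ also belongs to the fibre, and by convexity the entire open rational interval $(m(p_{1}),m(p_{2}))$ lies in it, so any rational therein works. The case $s(x)=s(p_{2})$ is handled symmetrically.

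The main obstacle is precisely this last case of part~\ref{item:left-translation-preimage-ii}: when $s$ has already realised the value $s(x)$ at an immediate neighbour of $x$ in $\Dom(m)$, injectivity demands that we insert a new rational preimage \emph{strictly} on one side of $m(p_{i})$. It is exactly the irrationality of the endpoints of every fibre $f^{-1}\{w\}$ that supplies rational preimages on \emph{both} sides of any rational point in the fibre, thereby avoiding the collision that would otherwise break injectivity.
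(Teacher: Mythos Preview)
Your proposal is correct and follows exactly the approach indicated in the paper: define the system $\mathcal{S}$ (respectively $\mathcal{S}^{\mathrm{inj}}$) of finite partial increasing (respectively strictly increasing) maps $m$ with $fm=s$ on $\Dom(m)$, verify the Forth property, and invoke Lemma~\ref{lem:back-and-forth}(i). The paper's proof is a one-sentence sketch that leaves the Forth verification to the reader; you have supplied precisely that verification, including the case analysis showing how the irrational-interval hypothesis on the fibres of $f$ guarantees a strictly new rational preimage in part~\ref{item:left-translation-preimage-ii}.
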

\begin{proof}
  The proofs of both statements are almost parallel: one verifies that
  the system $\mathcal{S}$ of all finite partial increasing
  [for~\ref{item:left-translation-preimage-ii}: strictly increasing]
  maps $m$ from $\Q$ to $\Q$ satisfying $s(p)=fm(p)$ for all
  $p\in\Dom(m)$ is a Forth system and applies
  Lemma~\ref{lem:back-and-forth}.
\end{proof}

In Section~\ref{sec:reduct-pointw-topol}, we will also employ the
following variant.
\begin{definition}\label{def:back-and-forth-variant}
  Let $\X$ and $\Y$ be countably infinite structures in the same
  language and let $A\subseteq\X$ as well as $C\subseteq\Y$. Let
  further $\mathcal{S}$ be a set of finite partial homomorphisms from $\X$ to
  $\Y$.
  \begin{enumerate}[label=(\roman*)]
  \item $\mathcal{S}$ is an \emph{$(A,C)$-Back system} between $\X$ and $\Y$
    if the following holds:

    \noindent For all $m\in\mathcal{S}$ and all\footnote{Note: Contrary to
      ``Back'' from above, $y\in\Img(m)$ is in general possible!}
    $y\in C$, there exists $m'\in\mathcal{S}$ such that $m'$ extends $m$ and
    $\exists x\in A\cap\Dom(m')\colon m'(x)=y$.
  \item $\mathcal{S}$ is an \emph{$(A,C)$-Back\&Forth system} between $\X$ and
    $\Y$ if it is both an $(A,C)$-Back system and a Forth system.
  \end{enumerate}
\end{definition}
\begin{lemma}\label{lem:back-and-forth-variant}
  Let $\X$ and $\Y$ be countably infinite structures in the same
  language and let $A\subseteq\X$ as well as $C\subseteq\Y$. If $\mathcal{S}$
  is an $(A,C)$-Back\&Forth system between $\X$ and $\Y$, then any
  $m\in\mathcal{S}$ can be extended to a total homomorphism $s\colon\X\to\Y$
  such that
  \begin{displaymath}
    \forall y\in C\colon s^{-1}\{y\}\cap A\neq\emptyset.
  \end{displaymath}
\end{lemma}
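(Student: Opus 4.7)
The plan is to mimic the classical back-and-forth construction of Lemma~\ref{lem:back-and-forth}, alternating Forth steps (to exhaust~$\X$) with $(A,C)$-Back steps (to realise every $y\in C$ as $s(x)$ for some $x\in A$). Since $\X$ and $\Y$ are countably infinite, enumerate $\X=\{x_{n}:n\in\N\}$ and $C=\{y_{n}:n\in\N\}$ (with the obvious simplifications if $C$ is finite or empty).

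Starting from $m_{0}:=m$, I would recursively construct an increasing chain $m_{0}\subseteq m_{1}\subseteq m_{2}\subseteq\cdots$ in $\mathcal{S}$ as follows. At an odd stage $n=2k+1$, I would apply the Forth property to extend $m_{2k}$ to some $m_{2k+1}\in\mathcal{S}$ with $x_{k}\in\Dom(m_{2k+1})$ (no extension being necessary if $x_{k}$ already lies in the domain). At an even stage $n=2k+2$, I would apply the $(A,C)$-Back property to extend $m_{2k+1}$ to some $m_{2k+2}\in\mathcal{S}$ for which there exists $x\in A\cap\Dom(m_{2k+2})$ with $m_{2k+2}(x)=y_{k}$.

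Setting $s:=\bigcup_{n\in\N}m_{n}$, the map $s$ is a partial homomorphism from $\X$ to $\Y$ as the union of a chain of partial homomorphisms, and the Forth steps guarantee $\Dom(s)=\X$, so $s$ is a total homomorphism extending $m$. Moreover, for every $k\in\N$, the $(A,C)$-Back step executed at stage $2k+2$ provides some $x\in A$ with $s(x)=m_{2k+2}(x)=y_{k}$, showing $s^{-1}\{y_{k}\}\cap A\neq\emptyset$.

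I do not expect any serious obstacle: this is essentially the classical Back\&Forth construction, and the only novelty worth highlighting is that the $(A,C)$-Back condition is phrased so as to explicitly permit $y\in\Img(m)$, which is exactly what allows us to force the preimage into $A$ rather than merely into $\X$.
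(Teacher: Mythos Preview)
Your proposal is correct and follows essentially the same approach as the paper: the paper's proof is a one-sentence sketch saying that one runs the standard Back\&Forth construction but replaces Back steps over $\Y\setminus\Img(m)$ by $(A,C)$-Back steps over all of $C$ (even elements already in $\Img(m)$), which is exactly what your alternation accomplishes.
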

\begin{proof}
  The argument proceeds in almost the same way as a standard
  Back\&Forth construction: Instead of applying a Back step to all
  elements of $\Y\setminus\Img(m)$, one applies an $(A,C)$-Back step
  to all elements of $C$ (even if they are contained in~$\Img(m)$).
\end{proof}

\section{Finest topology -- strategy and definitions}
\label{sec:finest-topol-strat}
In this section, we elaborate on our strategy for the proof of
Theorem~\ref{thm:end-Q-upp} by introducing our generalisation of
Property~\textbf{X} called \emph{\PPXb{}} and defining the \emph{rich}
topology on $\MM_{\Q}$.
\subsection{Pseudo-Property~$\mathbf{\overline{X}}$}
\label{subsec:ppxb}
\begin{definition}\label{def:ppxb}
  Let $S$ be a monoid with neutral element $1_{S}$ endowed with a
  topology\footnote{Note: $(S,\TT)$ need not be a topological
    semigroup!} $\TT$, let $D\subseteq S$ be a subset of $S$ endowed
  with a topology $\TT_{D}$ and let $m\geq 1$. Then $(S,\TT)$ has
  \emph{\PPXb} of length $m$ with respect to $(D,\TT_{D})$ if the
  following holds: For all $s\in S$ there exist
  $e_{s},h_{s}^{(1)},\dots,h_{s}^{(m+1)}\in S$ and
  $a_{s}^{(1)},\dots,a_{s}^{(m)}\in D$ such that
  \begin{enumerate}[label=(\roman*)]
  \item\label{item:ppxb-i} $e_{s}$ is \emph{left-invertible} in $S$,
    i.e.~there exists $p\in S$ such that $pe_{s}=1_{S}$.
  \item\label{item:ppxb-ii}
    $e_{s}s=h_{s}^{(m+1)}a_{s}^{(m)}h_{s}^{(m)}a_{s}^{(m-1)}\dots
    a_{s}^{(1)}h_{s}^{(1)}$.
  \item\label{item:ppxb-iii} For all $V^{(1)},\dots,V^{(m)}\in\TT_{D}$
    with $a_{s}^{(i)}\in V^{(i)}$, there exists $U\in\TT$ with
    $s\in U$ such that
    \begin{displaymath}
      e_{s}U\subseteq h_{s}^{(m+1)}V^{(m)}h_{s}^{(m)}V^{(m-1)}\dots V^{(1)}h_{s}^{(1)}.
    \end{displaymath}
  \end{enumerate}
\end{definition}
\begin{remark}\label{rem:ppxb}
  \PPXb{} of length~$m$ can thus be verified as follows: Given
  $s\in S$, we find suitable
  $e_{s},h_{s}^{(1)},\dots,h_{s}^{(m+1)}\in S$ with $e_{s}$
  left-invertible and devise a method to write
  $e_{s}s=h_{s}^{(m+1)}a_{s}^{(m)}h_{s}^{(m)}a_{s}^{(m-1)}\dots
  a_{s}^{(1)}h_{s}^{(1)}$ (where $a_{s}^{(i)}\in D$) in such a way
  that for arbitrary $\TT_{D}$-neighbourhoods $V^{(i)}$ of
  $a_{s}^{(i)}$, there exists a $\TT$-neighbourhood $U$ of $s$ such
  that our method applied to any $\tilde{s}\in U$ yields
  $\tilde{a}^{(i)}\in V^{(i)}$ (not just $\tilde{a}^{(i)}\in D$) with
  $e_{s}\tilde{s}=h_{s}^{(m+1)}\tilde{a}^{(m)}h_{s}^{(m)}\tilde{a}^{(m-1)}\dots
  \tilde{a}^{(1)}h_{s}^{(1)}$. Thus, this neighbourhood $U$ must be
  small enough to ensure two properties: first, it must encode enough
  information about $s$ to make sure that the \emph{same} auxiliary
  elements $e_{s},h_{s}^{(1)},\dots,h_{s}^{(m+1)}$ can be used for
  $\tilde{s}$; second, it must ascertain that $\tilde{s}$ is ``close
  enough'' to $s$ so that the resulting elements $\tilde{a}^{(i)}$ are
  ``close enough'' to $a_{s}^{(i)}$. Note the following equilibrium at
  the heart of \PPXb: Increasing the length $m$, it becomes easier to
  decompose a large class of elements $s$ in the desired
  form. However, there are more conditions
  $\tilde{a}^{(i)}\in V^{(i)}$ to be taken care of, potentially
  interacting with each other and yielding a more complex situation.

  The notation $\mathbf{\overline{X}}$ instead of \textbf{X} refers to
  the arbitrary number $m$ of elements of $D$ on the right hand side,
  while the term ``Pseudo'' refers to the composition with the
  left-invertible element $e_{s}$ on the left hand side,
  see~\cite{BartoPinskerDichotomy,Pseudo-loop,Topo}. Thus, the
  ``traditional'' Property~\textbf{X} from Definition~\ref{def:prop-X}
  corresponds in our terminology to Property~$\mathbf{\overline{X}}$
  of length 1 (without ``Pseudo'').
\end{remark}
We will apply \PPXb{} via the following proposition which generalises
parts of~\cite[Theorem~3.1]{EJMMMP-zariski}:
\begin{proposition}\label{prop:ppxb-auto-cont-lifting}
  Let $S$ be a monoid endowed with a topology $\TT$ and let
  $D\subseteq S$ be a subset of $S$ endowed with a topology
  $\TT_{D}$. If $(S,\TT)$ has \PPXb{} with respect to $(D,\TT_{D})$,
  then the following statements hold:
  \begin{enumerate}[label=(\roman*)]
  \item\label{item:ppxb-auto-cont-lifting-i} If $(H,\OO)$ is a
    topological semigroup and $\varphi\colon S\to H$ is a homomorphism
    such that the restriction $\varphi\vert_{D}$ is continuous as a
    map $\varphi\vert_{D}\colon (D,\TT_{D})\to (H,\OO)$, then
    $\varphi$ is continuous as a map
    $\varphi\colon (S,\TT)\to (H,\OO)$.
  \item\label{item:ppxb-auto-cont-lifting-ii} If $D$ is a semigroup
    such that $(D,\TT_{D})$ has automatic continuity with respect to a
    class~$\KK$ of topological semigroups, then $(S,\TT)$ also has
    automatic continuity with respect to~$\KK$.
  \end{enumerate}
\end{proposition}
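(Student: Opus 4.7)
The plan is to prove part (i) directly from the definition of \PPXb{} and then derive part (ii) as an immediate corollary. The key idea is that \PPXb{} lets me express $\varphi(s)$ as a fixed ``sandwich'' of the values $\varphi(a_s^{(i)})$ -- which are under control thanks to the assumed continuity of $\varphi|_D$ -- together with constants $\varphi(h_s^{(j)})$ and a factor absorbing the left-inverse of $e_s$.

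For part (i), I would fix $s\in S$ and invoke \PPXb{} to obtain $e_s$, the elements $h_s^{(j)}$, and $a_s^{(1)},\dots,a_s^{(m)}\in D$ satisfying conditions (i)--(iii), together with a left-inverse $p\in S$ of $e_s$. Multiplying the identity $e_s s = h_s^{(m+1)} a_s^{(m)} h_s^{(m)} \cdots a_s^{(1)} h_s^{(1)}$ on the left by $p$ and applying the homomorphism $\varphi$ yields
\begin{displaymath}
\varphi(s) = \varphi(p)\varphi(h_s^{(m+1)})\varphi(a_s^{(m)})\varphi(h_s^{(m)})\cdots\varphi(a_s^{(1)})\varphi(h_s^{(1)}).
\end{displaymath}
To verify continuity of $\varphi$ at $s$, I would take an arbitrary $\OO$-open neighborhood $W$ of $\varphi(s)$. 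Since $(H,\OO)$ is a topological semigroup, iterating continuity of multiplication yields $\OO$-open neighborhoods $W^{(i)}$ of $\varphi(a_s^{(i)})$ such that
\begin{displaymath}
\varphi(p)\varphi(h_s^{(m+1)})W^{(m)}\varphi(h_s^{(m)})\cdots W^{(1)}\varphi(h_s^{(1)})\subseteq W.
\end{displaymath}
Continuity of $\varphi|_D$ then converts these into $\TT_D$-open neighborhoods $V^{(i)}:=(\varphi|_D)^{-1}(W^{(i)})$ of $a_s^{(i)}$, and condition (iii) of \PPXb{} produces a $\TT$-open neighborhood $U$ of $s$ with $e_s U \subseteq h_s^{(m+1)}V^{(m)}h_s^{(m)}\cdots V^{(1)}h_s^{(1)}$. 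For any $\tilde s\in U$, left-multiplying the resulting decomposition by $p$ and applying $\varphi$ places $\varphi(\tilde s)$ inside the chosen sandwich of the $W^{(i)}$, hence in $W$, proving $\varphi(U)\subseteq W$.

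Part (ii) will be immediate: if $\varphi\colon S\to H$ is a semigroup homomorphism with $(H,\OO)\in\KK$, then the restriction $\varphi|_D$ is a semigroup homomorphism from $(D,\TT_D)$ into a member of $\KK$, hence continuous by the assumed automatic continuity of $(D,\TT_D)$; part (i) then lifts this to continuity of $\varphi$ on all of $(S,\TT)$. I do not expect any genuine obstacle in this proof -- the substance has been deliberately packaged into the definition of \PPXb{} -- so the only point requiring care is the iterative unpacking of continuity of multiplication in $H$ to produce the nested neighborhoods $W^{(i)}$. This step is essential because $(H,\OO)$, unlike $(S,\TT)$, is assumed to be a topological semigroup; the whole point of \PPXb{} is precisely to transfer continuity through a purely topological (rather than semi-topological) target.
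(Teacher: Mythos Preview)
Your proof is correct and follows essentially the same route as the paper's. The only organisational difference is that you absorb the left inverse immediately --- working with $\varphi(s)=\varphi(p)\varphi(h_s^{(m+1)})\varphi(a_s^{(m)})\cdots\varphi(h_s^{(1)})$ and treating $\varphi(p)\varphi(h_s^{(m+1)})$ as the leftmost constant factor --- whereas the paper keeps $e_s$ in front, shows that $\lambda_{\varphi(e_s)}$ is a homeomorphism onto $\varphi(e_s)H$ (using the left inverse $\varphi(p)$), obtains $\varphi(e_s)\varphi(U)\subseteq\varphi(e_s)O$, and only then cancels; your variant is slightly more direct and avoids this detour, but the substance is identical.
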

\begin{proof}
  Since~\ref{item:ppxb-auto-cont-lifting-i} immediately
  implies~\ref{item:ppxb-auto-cont-lifting-ii}, we only prove the
  former.

  We denote the neutral element of $S$ by $1_{S}$. Without loss of
  generality, $\varphi$ is surjective. Therefore, $H$ can be assumed
  to be a monoid with neutral element $\varphi(1_{S})$. Let $O\in\OO$
  and $s\in S$ such that $\varphi(s)\in O$. We need to find $U\in\TT$
  such that $s\in U$ and $\varphi(U)\subseteq O$. Let $m$ be the
  length of \PPXb. Thus, there exist
  $e_{s},h_{s}^{(1)},\dots,h_{s}^{(m+1)}\in S$ and
  $a_{s}^{(1)},\dots,a_{s}^{(m)}\in D$ with $e_{s}$ left-invertible
  such that
  \begin{displaymath}
    e_{s}s=h_{s}^{(m+1)}a_{s}^{(m)}h_{s}^{(m)}a_{s}^{(m-1)}\dots
    a_{s}^{(1)}h_{s}^{(1)}
  \end{displaymath}
  and such that for arbitrary $V^{(1)},\dots,V^{(m)}\in\TT_{D}$ with
  $a_{s}^{(i)}\in V^{(i)}$, there exists $U\in\TT$ with $s\in U$
  satisfying
  \begin{displaymath}
    e_{s}U\subseteq h_{s}^{(m+1)}V^{(m)}h_{s}^{(m)}V^{(m-1)}\dots
    V^{(1)}h_{s}^{(1)}.
  \end{displaymath}
  Denote the left inverse of $e_{s}$ by $p$. The left translations
  \begin{displaymath}
    \lambda_{\varphi(e_{s})}\colon (H,\OO)\to (H,\OO)\quad\text{and}\quad\lambda_{\varphi(p)}\colon (H,\OO)\to (H,\OO)
  \end{displaymath}
  are continuous (since $\OO$ is a semigroup topology). Further,
  \begin{displaymath}
    \lambda_{\varphi(e_{s})}\colon (H,\OO)\to
    (\varphi(e_{s})H,\OO\vert_{\varphi(e_{s})H})\quad\text{and}\quad
    \lambda_{\varphi(p)}\colon
    (\varphi(e_{s})H,\OO\vert_{\varphi(e_{s})H})\to
    (H,\OO)
  \end{displaymath}
  form inverse maps because $\varphi(p)$ is a left inverse of
  $\varphi(e_{s})$ -- here we use that $H$ is a monoid with neutral
  element $\varphi(1_{S})$. Thus,
  $\lambda_{\varphi(e_{s})}\colon (H,\OO)\to
  (\varphi(e_{s})H,\OO\vert_{\varphi(e_{s})H})$ is a homeomorphism and
  we obtain
  $\varphi(e_{s})O=\lambda_{\varphi(e_{s})}(O)=P\cap\varphi(e_{s})H$
  for some $P\in\OO$. Consequently,
  \begin{displaymath}
    \varphi(h_{s}^{(m+1)})\varphi(a_{s}^{(m)})\varphi(h_{s}^{(m)})\varphi(a_{s}^{(m-1)})\dots\varphi(a_{s}^{(1)})\varphi(h_{s}^{(1)})=\varphi(e_{s})\varphi(s)\in
    P\cap\varphi(e_{s})H.
  \end{displaymath}
  Using that the map
  $(b^{(1)},\dots,b^{(m)})\mapsto\varphi(h_{s}^{(m+1)})b^{(m)}\varphi(h_{s}^{(m)})b^{(m-1)}\dots
  b^{(1)}\varphi(h_{s}^{(1)})$ is continuous with respect to $\OO$
  (since $\OO$ is a semigroup topology) yields sets $W^{(i)}\in\OO$
  such that $\varphi(a_{s}^{(i)})\in W^{(i)}$ and
  \begin{displaymath}
    \varphi(h_{s}^{(m+1)})W^{(m)}\varphi(h_{s}^{(m)})W^{(m-1)}\dots
    W^{(1)}\varphi(h_{s}^{(1)})\subseteq P.
  \end{displaymath}
  By the assumed continuity of
  $\varphi\vert_{D}\colon (D,\TT_{D})\to (H,\OO)$, the preimages
  $V^{(i)}:=\varphi\vert_{D}^{-1}(W^{(i)})$ are contained in
  $\TT_{D}$. Thus, we can invoke \PPXb{} to obtain a set $U\in\TT$
  such that $s\in U$ and
  \begin{displaymath}
    e_{s}U\subseteq h_{s}^{(m+1)}V^{(m)}h_{s}^{(m)}V^{(m-1)}\dots
    V^{(1)}h_{s}^{(1)}.
  \end{displaymath}
  Applying $\varphi$, we conclude
  \begin{displaymath}
    \varphi(e_{s})\varphi(U)\subseteq \varphi(h_{s}^{(m+1)})W^{(m)}\varphi(h_{s}^{(m)})W^{(m-1)}\dots
    W^{(1)}\varphi(h_{s}^{(1)})\subseteq P,
  \end{displaymath}
  and thus
  $\varphi(e_{s})\varphi(U)\subseteq
  P\cap\varphi(e_{s})H=\varphi(e_{s})O$. Multiplying with $\varphi(p)$
  from the left, we obtain $\varphi(U)\subseteq O$ as desired.
\end{proof}
The previous proposition implies the already mentioned fact that
$(\MM_{\Q},\TT_{pw})$ \emph{cannot} have \PPXb{} \emph{of any length}
with respect to $(\GG_{\Q},\TT_{pw})$: for otherwise,
Proposition~\ref{prop:ppxb-auto-cont-lifting} in tandem with
Proposition~\ref{prop:aut-Q-auto-cont-semigrp} would yield that
$(\MM_{\Q},\TT_{pw})$ has automatic continuity with respect to the
class of second countable topological semigroups, in violation of
Proposition~\ref{prop:end-Q-non-auto-cont}. Therefore, we have to
improve our approach by enriching the topology on $\MM_{\Q}$ to make
\PPXb{} possible. To motivate, consider $s\in\MM_{\Q}$ which is
``unbounded on both sides'', i.e.~$\inf s=-\infty$ and
$\sup s=+\infty$. We strive for a representation of the form
$e_{s}s=h_{s}^{(m+1)}a_{s}^{(m)}h_{s}^{(m)}a_{s}^{(m-1)}\dots
a_{s}^{(1)}h_{s}^{(1)}$ with $e_{s}$ left-invertible. In particular,
$e_{s}$ has to be unbounded on both sides as well, thus so too is the
right hand side
$h_{s}^{(m+1)}a_{s}^{(m)}h_{s}^{(m)}a_{s}^{(m-1)}\dots
a_{s}^{(1)}h_{s}^{(1)}$ and consequently each $h_{s}^{(i)}$. For any
$V^{(i)}\subseteq\GG_{\Q}$, the set
$h_{s}^{(m+1)}V^{(m)}h_{s}^{(m)}V^{(m-1)}\dots V^{(1)}h_{s}^{(1)}$
therefore only contains functions which are unbounded on both
sides. Hence, any set $U$ such that
$e_{s}U\subseteq h_{s}^{(m+1)}V^{(m)}h_{s}^{(m)}V^{(m-1)}\dots
V^{(1)}h_{s}^{(1)}$ must consist of such functions. Thus, in any
topology on $\MM_{\Q}$ which yields \PPXb, the set of all functions
which are unbounded on both sides must have nonempty interior. Similar
reasonings apply to the remaining kinds of ``boundedness behaviour''.

We define several types of subsets of $\MM_{\Q}$:
\begin{definition}\label{def:types}
  \hspace{0mm}
  \begin{enumerate}[label=(\arabic*),ref=\arabic*,start=0]
  \item\label{item:types-0}
    $O_{x,y}^{(0)}:=\set{s\in\MM_{\Q}}{s(x)=y}$;\hfill (pointwise)

    \quad$x,y\in\Q$
  \item\label{item:types-1}
    $O_{I,J}^{(1)}:=\set{s\in\MM_{\Q}}{s(I)\subseteq J}$;\hfill
    (generalised pointwise)

    \quad$I=(-\infty,p)$ and either $J=(-\infty,q]$ or $J=(-\infty,q)$
    OR

    \quad$I=(p,+\infty)$ and either $J=[q,+\infty)$ or
    $J=(q,+\infty)$\quad\quad\quad\quad for $p,q\in\Q$
  \end{enumerate}
  \begin{enumerate}[label=(\arabic*\textsuperscript{cls}),ref=\arabic*\textsuperscript{cls},start=1]
  \item\label{item:types-1cls}
    $O_{I,J}^{(1)}:=\set{s\in\MM_{\Q}}{s(I)\subseteq J}$;\hfill
    (generalised pointwise,

    \quad$I=(-\infty,p)$ and $J=(-\infty,q]$ OR\hfill closed image
    constraint)

    \quad$I=(p,+\infty)$ and $J=[q,+\infty)$ \quad\quad\quad\quad for
    $p,q\in\Q$
  \end{enumerate}
  \begin{enumerate}[label=(\arabic*),ref=\arabic*,start=2]
  \item\label{item:types-2}
    $O_{LU}^{(2)}:=\set{s\in\MM_{\Q}}{\inf\im(s)\in L,\ \sup\im(s)\in
      U}$;\hfill (boundedness types)

    \quad $L=\R$ or $L=\{-\infty\}$ \quad AND \quad $U=\R$ or
    $U=\{+\infty\}$
    
    \noindent Explicitly, these are the following four sets:
    
    \noindent
    $O_{\R,\R}^{(2)}:=\set{s\in\MM_{\Q}}{\inf\im(s)\in\R,\
      \sup\im(s)\in\R}$\hfill (bounded-bounded)

    \noindent
    $O_{-\infty,\R}^{(2)}:=\set{s\in\MM_{\Q}}{\inf\im(s)=-\infty,\
      \sup\im(s)\in\R}$\hfill (unbounded-bounded)

    \noindent
    $O_{\R,+\infty}^{(2)}:=\set{s\in\MM_{\Q}}{\inf\im(s)\in\R,\
      \sup\im(s)=+\infty}$\hfill (bounded-unbounded)

    \noindent
    $O_{-\infty,+\infty}^{(2)}:=\set{s\in\MM_{\Q}}{\inf\im(s)=-\infty,\
      \sup\im(s)=+\infty}$\hfill (unbounded-unbounded)
  \item\label{item:types-3}
    $O_{K}^{(3)}:=\set{s\in\MM_{\Q}}{\Img(s)\cap K=\emptyset}$;\hfill
    (avoiding)

    \quad $K=[q_1,q_2]$ or $K=[q_1,q_2)$ or

    \quad $K=(q_1,q_2]$ or $K=(q_1,q_2)$ \quad\quad\quad\quad for
    $q_{1},q_{2}\in\Q\cup\{\pm\infty\}$, $q_{1}\leq q_{2}$
  \end{enumerate}
  \begin{enumerate}[label=(\arabic*\textsuperscript{opn}),ref=\arabic*\textsuperscript{opn},start=3]
  \item\label{item:types-3opn}
    $O_{K}^{(3)}:=\set{s\in\MM_{\Q}}{\Img(s)\cap K=\emptyset}$;\hfill
    (avoiding, open constraint)

    \quad $K=(q_1,q_2)$\qquad for $q_1,q_2\in\Q\cup\{\pm\infty\}$,
    $q_1<q_2$
  \end{enumerate}
\end{definition}
We mention explicitly that the sets formed analogously to
type~\ref{item:types-1} but with closed intervals $I$ are already
encompassed by type~\ref{item:types-0}, i.e.~the pointwise
topology. For instance, if $I=(-\infty,p]$ and $J=(-\infty,q]$ or
$J=(-\infty,q)$, then
$\set{s\in\MM_{\Q}}{s(I)\subseteq J}=\bigcup_{y\in
  J}\set{s\in\MM_{\Q}}{s(p)=y}$. We will make use of this fact in
Section~\ref{sec:reduct-pointw-topol}.

The types of sets defined above yield a template for constructing
topologies.
\begin{definition}\label{def:rich-top}
  If $M\subseteq\{0,1,1^{cls},2,3,3^{opn}\}$, then $\TT_{M}$ is the
  topology generated by the sets of the types occurring in $M$. We
  further define the \emph{rich topology} $\TT_{rich}:=\TT_{0123}$;
  explicitly, this is the topology generated by
  \begin{align*}
    \set{O_{x,y}^{(0)}}{x,y\in\Q}&\cup\set{O_{I,J}^{(1)}}{I=(-\infty,p),J\in\{(-\infty,q],(-\infty,q)\},p,q\in\Q}\\
                                 &\cup\set{O_{I,J}^{(1)}}{I=(p,+\infty),J\in\{[q,+\infty),(q,+\infty)\},p,q\in\Q}\\
                                 &\cup\left\{O_{\R,\R}^{(2)},O_{-\infty,\R}^{(2)},O_{\R,+\infty}^{(2)},O_{-\infty,+\infty}^{(2)}\right\}\\
                                 &\cup\set{O_{K}^{(3)}}{K\in\{[q_{1},q_{2}],[q_{1},q_{2}),(q_{1},q_{2}],(q_{1},q_{2})\},
                                   q_{1},q_{2}\in\Q\cup\{\pm\infty\},q_{1}\leq q_{2}}.
  \end{align*}
  If $x_{1},\dots,x_{n},y_{1},\dots,y_{n}\in\Q$, it will sometimes be
  convenient to abbreviate
  $O_{\bar{x},\bar{y}}^{(0)}:=\bigcap_{i=1}^{n}O_{x_{i},y_{i}}^{(0)}$.
\end{definition}
With this terminology, we can formulate our main technical results.
\begin{proposition}\label{prop:rich-top-ppxb}
  $(\MM_{\Q},\TT_{rich})$ has \PPXb{} of length~2 with respect to
  $(\GG_{\Q},\TT_{pw})$.
\end{proposition}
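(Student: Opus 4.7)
The plan is to prove \PPXb{} via a case analysis on the ``boundedness type'' of $s\in\MM_\Q$, namely which of the four type-(\ref{item:types-2}) sets $O^{(2)}_{LU}$ contains $s$. For each of these four cases, I would exhibit explicit witnesses $e_s,h_s^{(1)},h_s^{(2)},h_s^{(3)}\in\MM_\Q$ and $a_s^{(1)},a_s^{(2)}\in\GG_\Q$ satisfying conditions \ref{item:ppxb-i}--\ref{item:ppxb-iii} of Definition~\ref{def:ppxb}. The intuition is that the rich topology is designed precisely to expose the order-theoretic skeleton of $s$ (its boundedness type, its behaviour on the two ends of $\Q$, and its image gaps), so the scaffolding $e_s,h_s^{(1)},h_s^{(2)},h_s^{(3)}$ should encode this skeleton while the two automorphisms $a_s^{(1)},a_s^{(2)}$ carry the ``floating'' coordinate information that can be grabbed by the pointwise topology on $\GG_\Q$.

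To construct the decomposition, I would first choose a strictly increasing (hence left-invertible, via Lemma~\ref{lem:left-translation-preimage}\ref{item:left-translation-preimage-ii}) $e_s\in\MM_\Q$ embedding $\im(s)$ into a canonical scattered copy of $\Q$ appropriate to the boundedness type — e.g.\ unbounded on both sides in the unbounded-unbounded case, or trapped within a fixed rational interval in the bounded-bounded case. Next, I would construct $h_s^{(1)}$ to map $\Q$ onto a fixed reference partition of intervals mirroring the level-set structure of $s$; the first automorphism $a_s^{(1)}$ then permutes these reference intervals in the order dictated by $s$. After an intermediate scaffolding $h_s^{(2)}$ that transfers to the image side, a second automorphism $a_s^{(2)}$ records the precise placement of the image values inside the canonical copy prepared by $e_s$, and finally $h_s^{(3)}$ reads off the result. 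Two automorphisms (length $m=2$) appear indispensable: a single one cannot simultaneously encode the domain-side collapse of level sets and the image-side placement of values while remaining bijective, so the intermediate $h_s^{(2)}$ is the essential new feature compared with the classical Property~\textbf{X}.

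For condition \ref{item:ppxb-iii}, given $\TT_{pw}$-neighbourhoods $V^{(i)}$ of $a_s^{(i)}$ each prescribing finitely many point values, I would translate these directly into constraints on $\tilde{s}\in U$: every prescribed value of $a_s^{(i)}$ imposes a type-(\ref{item:types-0}) pointwise condition on $\tilde{s}$, while preserving the global structural data requires a type-(\ref{item:types-2}) constraint (to fix the boundedness type), type-(\ref{item:types-1}) constraints (to align $\tilde{s}$'s end behaviour with that of $s$, in particular the location of $\inf s$ and $\sup s$ when they are finite), and type-(\ref{item:types-3}) avoidance constraints (to preserve the image gaps exploited by $e_s$ and $h_s^{(3)}$). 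Intersecting these finitely many open sets yields the required $U\in\TT_{rich}$, and for $\tilde{s}\in U$ the same recipe applied to $\tilde{s}$ produces $\tilde{a}^{(i)}\in V^{(i)}$, with Lemma~\ref{lem:left-translation-preimage} and standard Back\&Forth (Lemma~\ref{lem:back-and-forth}) providing the existence of the required $\tilde{a}^{(i)}\in\GG_\Q$ completing the factorisation $e_s\tilde{s}=h_s^{(3)}\tilde{a}^{(2)}h_s^{(2)}\tilde{a}^{(1)}h_s^{(1)}$. The main obstacle will be exactly this coordination: the fact that the \emph{same} scaffolding $e_s,h_s^{(i)}$ must work for every $\tilde{s}\in U$ forces us to pack enough structural information about $s$ into $U$ via types~(\ref{item:types-1}), (\ref{item:types-2}), (\ref{item:types-3}) — this is precisely why the pointwise topology on $\MM_\Q$ was insufficient (cf.\ Proposition~\ref{prop:end-Q-non-auto-cont}) and why the richer topology had to be introduced in the first place. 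Once this coordination is correctly set up in each of the four boundedness cases, the remaining verifications reduce to routine order-theoretic Back\&Forth arguments.
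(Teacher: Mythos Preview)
Your proposal misses the central technical obstacle and misdiagnoses why length~$2$ is necessary. The real issue is not a ``domain-side collapse versus image-side placement'' dichotomy; it is that writing $e_s s = f_s a \iota$ with $a\in\GG_\Q$ forces $\iota$ and $e_s s$ to have the \emph{same set of irrational discontinuity points} (this is the compatibility condition in the Sandwich Lemma~\ref{lem:sandwich}). No $\TT_{rich}$-neighbourhood can pin down $\Dc^{\I}(s)$, which is in general an arbitrary countable subset of $\I$, so a fixed $\iota$ cannot work for all $\tilde s$ in any $U\in\TT_{rich}$. The paper therefore writes $\iota = h_s b g_s$ with $h_s$ a \emph{sparse injection} (dense $\Dc^{\I}(h_s)$) and lets $b\in\GG_\Q$ vary so that $\bar b$ selects exactly which of the discontinuity points of $h_s$ are hit by $\Img(\bar g_s)$, thereby tuning $\Dc^{\I}(\iota)$ to match $\Dc^{\I}(e_s\tilde s)$ for each individual $\tilde s$ (Preconditioning Lemma~\ref{lem:preconditioning}). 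This adaptive role of the second automorphism is the reason length~$2$ is needed.

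Your scaffolding would fail concretely at the step ``$h_s^{(1)}$ maps $\Q$ onto a fixed reference partition of intervals mirroring the level-set structure of $s$''. The level-set structure of $s$ --- in particular the irrational points at which $s$ jumps --- is infinite data not expressible by finitely many type-(\ref{item:types-0}),(\ref{item:types-1}),(\ref{item:types-2}),(\ref{item:types-3}) constraints, so any $\tilde s$ in your proposed $U$ will generically have different irrational jump points, and your fixed $h_s^{(1)}$ will no longer be compatible. The paper's machinery (Sandwich, Preconditioning, and especially the Variation Lemma~\ref{lem:variation}, which handles the delicate interaction between the finitely many prescribed values of $\tilde a,\tilde b$ and the freedom needed to realign discontinuity points) is not ``routine Back\&Forth''; it is precisely where the work lies, and your sketch does not engage with it.
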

Let us note that we deem it unlikely that $\MM_{\Q}$ equipped with any
meaningful topology could have \PPXb{} of length~1 (so
Pseudo-Property~\textbf{X}) with respect to $(\GG_{\Q},\TT_{pw})$
since it is only the second automorphism which gives us enough
flexibility and control over discontinuity points (see
Definition~\ref{def:dis-continuity} for this notion).
\begin{proposition}\label{prop:reduct-pointw-topol}
  Let $\TT$ be a Polish semigroup topology on $\MM_{\Q}$ such that
  $\TT_{pw}\subseteq\TT\subseteq\TT_{rich}$. Then $\TT=\TT_{pw}$.
\end{proposition}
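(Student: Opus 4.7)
\emph{Proof proposal.} The goal is to establish $\TT\subseteq\TT_{pw}$, which together with the hypothesis $\TT_{pw}\subseteq\TT$ yields $\TT=\TT_{pw}$. Concretely, for each $U\in\TT$ and each $s\in U$, the plan is to exhibit a finite tuple $\bar{x}\subseteq\Q$ with $O^{(0)}_{\bar{x},s(\bar{x})}\subseteq U$.

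The central leverage point is that $\TT\subseteq\TT_{rich}$ and that $\TT_{rich}$ is second countable: every generator in Definition~\ref{def:rich-top} is parametrised by $\Q$ or by $\Q\cup\{\pm\infty\}$, so a sequence converging in $\TT_{rich}$ automatically converges in the coarser $\TT$. I would argue by contradiction, assuming that no such $\bar{x}$ exists, so that $U^{c}$ meets every $\TT_{pw}$-basic neighbourhood of $s$. Enumerating a countable sub-basic $\TT_{rich}$-neighbourhood system $(N_{k})_{k\in\N}$ of $s$, the first attempt is a diagonal construction: at each stage $n$ one selects $f_{n}\in(N_{1}\cap\cdots\cap N_{n})\setminus U$. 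Whenever every selection succeeds, the resulting sequence $(f_{n})_{n\in\N}$ converges to $s$ in $\TT_{rich}$, hence in $\TT$, while remaining in the $\TT$-closed set $U^{c}$; this contradicts $s\in U$.

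The diagonal construction fails precisely when for some $n_{0}$ one has $N_{1}\cap\cdots\cap N_{n_{0}}\subseteq U$. In that case the inclusion yields a $\TT_{rich}$-basic open of the form $W=O^{(0)}_{\bar{x},s(\bar{x})}\cap W^{(1)}\cap W^{(2)}\cap W^{(3)}$ sitting inside $U$, where the $W^{(i)}$ are finite intersections of sub-basic generators of types~(\ref{item:types-1}), (\ref{item:types-2}), (\ref{item:types-3}). The bulk of the proof is devoted to removing these extra factors and deducing the $\TT_{pw}$-basic inclusion $O^{(0)}_{\bar{x},s(\bar{x})}\subseteq U$, which immediately contradicts the contradiction hypothesis. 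The plan uses $\TT$-continuity of left and right translations together with the Back\&Forth constructions of Lemmas~\ref{lem:back-and-forth}, \ref{lem:back-and-forth-variant}, and~\ref{lem:left-translation-preimage} to transport witnesses between $W$ and $O^{(0)}_{\bar{x},s(\bar{x})}$, appealing to Baire category either on $(\MM_{\Q},\TT)$ itself or on its $\TT_{pw}$-open (hence $\TT$-Polish) subspaces.

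The main obstacle is precisely this reduction step. Pure semigroup manipulation is obstructed by structural asymmetries: both $\lambda_{g}$ and $\rho_{g}$ preserve boundedness of the image, so if $W^{(2)}$ forces $\sup=+\infty$ but some $f\in O^{(0)}_{\bar{x},s(\bar{x})}$ is bounded above, neither $gf$ nor $fg$ can land in $W$. The workaround is expected to involve a careful case analysis by the boundedness type of $s$ and by which type-(\ref{item:types-1}), (\ref{item:types-2}), (\ref{item:types-3}) generators appear in the $W^{(i)}$, using Baire to extract generic witnesses inside suitably chosen $\TT_{pw}$-basic opens and $\TT$-continuity of translations to pull such witnesses back to the arbitrary $f$. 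This is precisely the point at which the Baire hypothesis on $\TT$ is essential, as foreshadowed in Subsection~\ref{subsec:proof}.
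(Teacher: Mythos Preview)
Your diagonal argument is vacuous. Since $U\in\TT\subseteq\TT_{rich}$, the set $U$ is already $\TT_{rich}$-open, so case~(b) of your dichotomy---some $N_{1}\cap\cdots\cap N_{n_{0}}\subseteq U$---always occurs trivially. You have not used the contradiction hypothesis at all; you have simply restated that $s$ has a $\TT_{rich}$-basic neighbourhood $W\subseteq U$, which is the starting assumption. The entire content of the proposition lies in what you call ``the bulk of the proof,'' namely upgrading $W\subseteq U$ to $O^{(0)}_{\bar{x},s(\bar{x})}\subseteq U$, and this you do not carry out.

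The paper does this via a five-step chain of intermediate topologies
\[
\TT_{0123}\rightsquigarrow\TT_{01^{\mathrm{cls}}23^{\mathrm{opn}}}\rightsquigarrow\TT_{024}\rightsquigarrow\TT_{023^{\mathrm{opn}}}\rightsquigarrow\TT_{03^{\mathrm{opn}}}\rightsquigarrow\TT_{0},
\]
each step showing that if $\TT_{pw}\subseteq\TT\subseteq\TT_{a}$ then $\TT\subseteq\TT_{b}$. Most steps work by translating: one writes $s=fs'$ for a carefully chosen $f$ (typically with irrational-interval fibres), pulls $U$ back along $\lambda_{f}$ to get a $\TT$-open neighbourhood of $s'$, finds a $\TT_{a}$-basic open $O'$ there, and then computes $\lambda_{f}(O')$ explicitly (Lemma~\ref{lem:translate-stratified-basic-open}) to see it lands in $\TT_{b}$. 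The point you miss is that this uses $\lambda_{f}^{-1}(U)\in\TT$, not merely $\in\TT_{rich}$; one must re-invoke the hypothesis $\TT\subseteq\TT_{a}$ on this \emph{new} open set, which is why the argument is iterative rather than a single reduction from one $W$.

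Your instinct that the boundedness factors $W^{(2)}$ are the hard part is correct, and your observation that translations cannot change boundedness type (for unbounded-unbounded $g$) pinpoints exactly why. The paper handles this step ($\TT_{023^{\mathrm{opn}}}\rightsquigarrow\TT_{03^{\mathrm{opn}}}$) by first proving, via a Baire argument, that $\mathrm{Surj}(\Q)$ is $\TT$-dense, hence every nonempty $\TT$-open set has nonempty $\TT_{pw}$-interior; then regularity of $\TT$ is used to sandwich $s\in P\subseteq\overline{P}^{\TT}\subseteq U$ and one computes that the $\TT$-closure of a $\TT_{023^{\mathrm{opn}}}$-basic open simply drops the $O^{(2)}_{LU}$ factor. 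This step is \emph{not} a translation argument and cannot be phrased as a Property~$\overline{\mathbf{X}}$-type statement (cf.\ Remark~\ref{rem:023opn-squig-03opn}); your sketch, which relies solely on translations and Back\&Forth, would not produce it.
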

Before we get to their proofs, let us comment on how
Theorem~\ref{thm:end-Q-upp} follows from these results.
\begin{proof}[Proof (of Theorem~\ref{thm:end-Q-upp} given
  Propositions~\ref{prop:rich-top-ppxb}
  and~\ref{prop:reduct-pointw-topol}).]
  Let $\TT$ be a Polish semigroup topology on $\MM_{\Q}$. By
  Theorem~\ref{thm:ejm-pointw-coarser}, we obtain
  $\TT_{pw}\subseteq\TT$. On the other hand, we note that
  $(\GG_{\Q},\TT_{pw})$ has automatic continuity with respect to the
  class of second countable topological semigroups by
  Proposition~\ref{prop:aut-Q-auto-cont-semigrp}. Combining
  Proposition~\ref{prop:rich-top-ppxb} with
  Proposition~\ref{prop:ppxb-auto-cont-lifting}\ref{item:ppxb-auto-cont-lifting-ii}
  yields that $(\MM_{\Q},\TT_{rich})$ has automatic continuity with
  respect to the class of second countable topological semigroups as
  well. Since $(\MM_{\Q},\TT)$ is second countable, the identity map
  $\id\colon (\MM_{\Q},\TT_{rich})\to (\MM_{\Q},\TT)$ is therefore
  continuous, in other words $\TT\subseteq\TT_{rich}$. By
  Proposition~\ref{prop:reduct-pointw-topol}, we finally conclude
  $\TT=\TT_{pw}$.
\end{proof}
The proofs of Propositions~\ref{prop:rich-top-ppxb}
and~\ref{prop:reduct-pointw-topol} are the subject of
Sections~\ref{sec:rich-top-ppxb} and~\ref{sec:reduct-pointw-topol},
respectively.

In the former section, we will find \emph{generic} maps
$e,f,g,h\in\MM_{\Q}$ (with $e$ left-invertible) so that the
compositions $fahbg$ for $a,b\in\GG_{\Q}$ exhaust the maps $es$ for a
great variety of $s\in\MM_{\Q}$. Further, we will -- roughly speaking
-- analyse how the compositions $fahbg$ change with varying
$a,b\in\GG_{\Q}$. Some of the complexity arises from the requirement
that $a,b$ be automorphisms.

The latter section has a different flavour in that we can allow maps
to vary within $\MM_{\Q}$, yielding less intricate
constructions. Nonetheless, most (but not all) intermediate results
can be reformulated as a -- albeit easier --
Property~$\mathbf{\overline{X}}$-type statement, namely with respect
to the entire semigroup $\MM_{\Q}$ (equipped with different
topologies) instead of $\GG_{\Q}$. One major exception
(Proposition~\ref{lem:023opn-squig-03opn}) crucially employs
regularity of the topology $\TT$ in combination with Polishness and
cannot be reformulated as a
(Pseudo\nobreakdash-)Property~$\mathbf{\overline{X}}$-type statement,
for good reason: if the proof of
Proposition~\ref{prop:reduct-pointw-topol} just consisted of a series
of such statements, we could start from
Proposition~\ref{prop:rich-top-ppxb} and repeatedly apply
Proposition~\ref{prop:ppxb-auto-cont-lifting} to show that
$(\MM_{\Q},\TT_{pw})$ has automatic continuity with respect to the
class of second countable topological semigroups, contradicting
Proposition~\ref{prop:end-Q-non-auto-cont}.
\section{The rich topology has
  Pseudo-Property~$\mathbf{\overline{X}}$}
\label{sec:rich-top-ppxb}
This section is devoted to proving
Proposition~\ref{prop:rich-top-ppxb}. With Remark~\ref{rem:ppxb} in
mind, we want to find a decomposition
$e_{s}s=f_{s}a_{s}h_{s}b_{s}g_{s}$ of a given $s\in\MM_{\Q}$ with
$e_{s},f_{s},g_{s},h_{s}\in\MM_{\Q}$ and $a_{s},b_{s}\in\GG_{\Q}$ as
well as a $\TT_{rich}$-neighbourhood $U$ of $s$ such that for any
$\tilde{s}\in U$, we can similarly decompose
$e_{s}\tilde{s}=f_{s}\tilde{a}h_{s}\tilde{b}g_{s}$ with
$\tilde{a},\tilde{b}\in\GG_{\Q}$ and the \emph{same} maps
$e_{s},f_{s},g_{s},h_{s}$. Given $\TT_{pw}$-neighbourhoods $V$ and $W$
of $a_{s}$ and $b_{s}$, respectively, we additionally have to make
sure that $U$ can be taken small enough that for any
$\tilde{s}\in U$, we can pick $\tilde{a}\in V$
and $\tilde{b}\in W$. This means that $\tilde{a}$ and $a_{s}$ need to
have the same behaviour on a given finite set, as do $\tilde{b}$ and
$b_{s}$.

We will proceed in three steps. First, we will derive ``compatibility
conditions'' such that $e_{s}s$ can be written in the form
$f_{s} a_{s}\iota_{s}$. These conditions exhibit such a tight
connection between $s$ and $\iota_{s}$ that $U$ can never force
$e_{s}\tilde{s}$ to satisfy these conditions for all $\tilde{s}\in U$
and a fixed $\iota_{s}$. In a second step, we will therefore expand
$\iota_{s}$ in the form $\iota_{s}=h_{s}b_{s}g_{s}$ for fixed
$g_{s},h_{s}\in\MM_{\Q}$ and varying $b_{s}\in\GG_{\Q}$, yielding
indeed $e_{s}s=f_{s}a_{s}h_{s}b_{s}g_{s}$. For $\tilde{s}\in U$, it
turns out that we can pick $\tilde{\iota}=h_{s}\tilde{b}g_{s}$ which
is compatible with $e_{s}\tilde{s}$ to obtain
$e_{s}\tilde{s}=f_{s}\tilde{a}\tilde{\iota}=f_{s}\tilde{a}h_{s}\tilde{b}g_{s}$. All
the while, we have to make sure that $\tilde{a}$ and $a_{s}$ as well
as $\tilde{b}$ and $b_{s}$ coincide on given finite sets, resulting in
a third major step.


\subsection{Generic surjections, generic injections, sparse injections
and basic formulas}
\begin{definition}\label{def:dis-continuity}
  Let $s\in\MM_{\Q}$. We set
  \begin{align*}
    \Cont(s)&:=\set{\gamma\in\R}{\sup s(-\infty,\gamma)=\inf
              s(\gamma,+\infty)}\\
    \Dc(s)&:=\set{\gamma\in\R}{\sup s(-\infty,\gamma)<\inf
            s(\gamma,+\infty)},
  \end{align*}
  the sets of \emph{continuity points} and \emph{discontinuity points}
  of $s$, respectively. Additionally, we write
  $\Dc^{\I}(s):=\Dc(s)\cap\I$ for notational simplicity. Finally, we
  extend $s$ to an increasing map $\bar{s}\colon\R\to\R$ by setting
  $\bar{s}(\gamma):=\sup s(-\infty,\gamma)$ for all $\gamma\in\I$.
\end{definition}
We will frequently use the notion of limit points in the following
sense:
\begin{definition}
  Let $A\subseteq\Q$ and $\gamma\in\R$. We say that $\gamma$ is a
  \emph{limit point} of $A$ if $\gamma$ is contained in the closure of
  $A\setminus\{\gamma\}$ with respect to the standard topology on
  $\R$. The set of all limit points of $A$ will be denoted by
  $\LP(A)$. If $s\in\MM_{\Q}$, we will abbreviate $\LP(\Img(s))$ as
  $\LP(s)$ for better readability.
\end{definition}
We collect a few easy facts:
\begin{lemma}\label{lem:cont-lp-easy-facts}
  Let $s\in\MM_{\Q}$.
  \begin{enumerate}[label=(\roman*)]
  \item\label{item:cont-lp-easy-facts-i} $\Dc(s)$ is at most
    countable.
  \item\label{item:cont-lp-easy-facts-ii} If $s$ is injective and
    $\LP(s)\subseteq\I$, then $\Q\subseteq\Dc(s)$ and
    $\bar{s}(\I)\subseteq\I$. In fact,
    $\sup s(-\infty,q)<s(q)<\inf s(q,+\infty)$ for all
    $q\in\Q$. Additionally, $(\R\setminus\Img(\bar{s}))\cap\I$ is
    topologically dense in $\R$.
  \item\label{item:cont-lp-easy-facts-iii} If $b\in\GG_{\Q}$, then
    $\Dc(b)=\emptyset$ and $\bar{b}\colon\R\to\R$ is a strictly
    increasing bijection with $\bar{b}(\I)\subseteq\I$. Additionally,
    any increasing extension $\beta$ of $b$ to a set $M\subseteq\R$
    coincides with $\bar{b}\vert_{M}$.
  \end{enumerate}
\end{lemma}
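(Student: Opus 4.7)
Part (i) is easiest: for each $\gamma\in\Dc(s)$ the open interval $(\sup s(-\infty,\gamma),\inf s(\gamma,+\infty))\subseteq\R$ is nonempty and hence contains a rational $q_\gamma$. For $\gamma_1<\gamma_2$ in $\Dc(s)$, any rational $p\in(\gamma_1,\gamma_2)$ satisfies $\inf s(\gamma_1,+\infty)\leq s(p)\leq\sup s(-\infty,\gamma_2)$, so the two intervals are disjoint; thus $\gamma\mapsto q_\gamma$ is an injection into $\Q$.

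For part (iii), I would first prove $\Dc(b)=\emptyset$: any rational $r$ in a hypothetical gap $(\sup b(-\infty,\gamma),\inf b(\gamma,+\infty))$ (chosen different from $b(\gamma)$ when $\gamma\in\Q$) is of the form $r=b(q)$ for some $q\in\Q$ by surjectivity, and comparing $q$ with $\gamma$ contradicts one of the two bounds. Strict monotonicity of $\bar{b}$ is immediate by inserting two rationals between any two distinct reals. Surjectivity of $\bar{b}$ onto $\Q$ is inherited from $b$, and for $\beta\in\I$ it is obtained by setting $\gamma:=\sup\{q\in\Q:b(q)<\beta\}$ and using $\Dc(b)=\emptyset$ to verify $\bar{b}(\gamma)=\beta$. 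Injectivity of $\bar{b}$ combined with $b(\Q)=\Q$ then gives $\bar{b}(\I)\subseteq\I$. Finally, uniqueness of any increasing extension $\beta$ follows by sandwiching $\beta(\gamma)$ between $\sup_{p<\gamma}b(p)$ and $\inf_{p>\gamma}b(p)$, both of which equal $\bar{b}(\gamma)$.

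For part (ii), I would first derive the strict inequalities $\sup s(-\infty,q)<s(q)<\inf s(q,+\infty)$ for $q\in\Q$: an equality $\sup s(-\infty,q)=s(q)$ would exhibit $s(q)\in\Q$ as a limit point of strictly smaller values in $\Img(s)$ (strictness of the approach coming from injectivity), contradicting $\LP(s)\subseteq\I$; the right-hand inequality is symmetric, and both together yield $\Q\subseteq\Dc(s)$. To obtain $\bar{s}(\I)\subseteq\I$, I would assume $\gamma\in\I$ and $\bar{s}(\gamma)\in\Q$: either $\bar{s}(\gamma)=s(p_0)\in\Img(s)$, which forces $s$ to be constant on all rationals in $(p_0,\gamma)$ and contradicts injectivity, or $\bar{s}(\gamma)\notin\Img(s)$ is a rational limit point of $\Img(s)$, again violating $\LP(s)\subseteq\I$.

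The main obstacle is the density of $(\R\setminus\Img(\bar{s}))\cap\I$ in $\R$. Fix a nonempty open $(a,b)\subseteq\R$ and let $L':=\bar{s}^{-1}((a,b))$. If $L'=\emptyset$, any irrational in $(a,b)$ works. Otherwise $L'$ is a nonempty interval by monotonicity of $\bar{s}$; since $\bar{s}(\gamma_0)=\sup s(-\infty,\gamma_0)$ for $\gamma_0\in\I$ yields left-continuity of $\bar{s}$ at every irrational, $L'$ cannot reduce to a single irrational point, so $L'\cap\Q\neq\emptyset$. Picking $q\in L'\cap\Q$, the jump interval $J:=(\sup s(-\infty,q),\inf s(q,+\infty))$ contains $s(q)=\bar{s}(q)\in(a,b)$ strictly in its interior by the strict inequalities just established, and a brief monotonicity check shows $\Img(\bar{s})\cap J=\{s(q)\}$. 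Therefore $((a,b)\cap J)\setminus\{s(q)\}$ is a nonempty open subset of $(\R\setminus\Img(\bar{s}))\cap(a,b)$ and contains irrationals, finishing the argument.
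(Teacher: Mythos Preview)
Your proof is correct. The paper does not actually prove this lemma; it is introduced with the phrase ``We collect a few easy facts'' and stated without argument, so there is no approach to compare against. Your write-up supplies exactly the kind of routine verification the authors omit.

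One cosmetic remark on part~(ii): in the step showing $\bar{s}(\I)\subseteq\I$, when you write that $\bar{s}(\gamma)=s(p_0)$ ``forces $s$ to be constant on all rationals in $(p_0,\gamma)$'', you are tacitly assuming $p_0<\gamma$. The case $p_0>\gamma$ is handled by the symmetric observation that $s$ would be constant on $(\gamma,p_0)\cap\Q$ (since then $\bar{s}(\gamma)\le\inf s(\gamma,+\infty)\le s(p)\le s(p_0)=\bar{s}(\gamma)$ for every rational $p\in(\gamma,p_0)$). This is not a gap, just a place where the phrasing could be made symmetric.
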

We define three kinds of \emph{generic} maps in $\MM_{\Q}$.
\begin{definition}
  \hspace{0mm}
  \begin{enumerate}[label=(\roman*)]
  \item A map $f\in\MM_{\Q}$ is called a \emph{generic surjection} if
    it is surjective and for each $q\in\Q$, the preimage $f^{-1}\{q\}$
    is an irrational interval, i.e.~$f^{-1}\{q\}=(r_{q},t_{q})$ for
    $r_{q},t_{q}\in\I$.
  \item A map $g\in\MM_{\Q}$ is called a \emph{generic injection} if
    it is injective and unbounded-unbounded with
    $\Dc^{\I}(g)=\emptyset$ and $\LP(g)\subseteq\I$.
  \item A map $h\in\MM_{\Q}$ is called a \emph{sparse injection} if it
    is injective, $\Dc^{\I}(h)$ is topologically dense in $\R$ and
    $\LP(h)\subseteq\I$.
  \end{enumerate}
\end{definition}
It is an easy observation that such maps really exist.
\begin{lemma}\label{lem:existence-generic}
  \hspace{0mm}
  \begin{enumerate}[label=(\roman*)]
  \item\label{item:existence-generic-i} For every $A\subseteq\Q$,
    there exists a map $f\in\MM_{\Q}$ with $\Img(f)=A$ such that the
    $f$-preimages of single elements are irrational intervals. In
    particular, there exists a generic surjection.
  \item\label{item:existence-generic-ii} For every finite or countably
    infinite $A\subseteq\I$ and every boundedness type $O_{LU}^{(2)}$,
    there exists an injective map $\iota\in O_{LU}^{(2)}$ which
    satisfies $\Dc(\iota)=A\cupdot\Q$ as well as
    $\LP(\iota)\subseteq\I$.
  \item\label{item:existence-generic-iii} There exists a generic
    injection in $\MM_{\Q}$.
  \item\label{item:existence-generic-iv} There exists a sparse
    injection in $\MM_{\Q}$ of any boundedness type.
  \end{enumerate}
\end{lemma}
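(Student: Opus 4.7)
\emph{Plan.} Statements (iii) and (iv) will follow from (ii) as special cases: a generic injection is the instance of (ii) with $A=\emptyset$ and boundedness type $O_{-\infty,+\infty}^{(2)}$, the automatic inclusion $\Q\subseteq\Dc(\iota)$ being supplied by Lemma~\ref{lem:cont-lp-easy-facts}(ii); a sparse injection of a prescribed boundedness type is (ii) applied to any countable topologically dense subset $A\subseteq\I$. Hence the real work lies in (i) and (ii), both of which I plan to handle by back-and-forth constructions.

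For (i), the cases $|A|\leq 1$ are trivial. Otherwise I would enumerate $A=\{a_n\}$ and $\Q=\{q_n\}$ and inductively build finite partial plans, each consisting of a finite increasing sequence of irrational cut-points $\beta_0<\dots<\beta_k$ (with $\beta_0=-\infty$ and $\beta_k=+\infty$ permitted) together with an order-preserving labelling of the blocks $(\beta_i,\beta_{i+1})$ by elements of $A$. At stage $n$, ensure $q_n$ lies in a labelled block, subdividing by a freshly chosen irrational cut-point if required; then, if $a_n$ is not yet used as a label, carve out a new block for it by choosing two more irrational cut-points inside a current block. Density of $\I$ in $\R$ makes every such choice possible, and the direct limit is the desired $f\in\MM_\Q$ with image $A$ and irrational-interval preimages. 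A generic surjection is the case $A=\Q$.

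For (ii), I would first fix $\alpha^-,\alpha^+\in\R\cup\{\pm\infty\}$ encoding the boundedness type on the domain side ($\alpha^-:=-\infty$ if $L=\{-\infty\}$ and an arbitrary irrational otherwise, dually for $\alpha^+$), and on the image side choose $\beta^-,\beta^+\in\I\cup\{\pm\infty\}$ realising the same type. The construction is then a Forth system in the sense of Lemma~\ref{lem:back-and-forth}(i): let $\mathcal{S}$ consist of finite strictly increasing partial maps $m\colon\Q\to\Q$ with image in $(\beta^-,\beta^+)$, together with side conditions that, for each $\alpha\in A$ visible so far, record a small irrational ``jump gap'' in $\Img(m)$ straddling the position corresponding to $\alpha$ (to force a jump of $\bar\iota$ there) and, for each rational $q$ already in $\Img(m)$, record a small neighbourhood of $q$ that must be disjoint from all future image values (to force $q$ to be isolated in $\Img(\iota)$, and hence $\LP(\iota)\subseteq\I$). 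One verifies that $\mathcal{S}$ is a Forth system by, for a given new $q\in\Q$, locating the $A$-interval of $\R$ it sits in and placing $\iota(q)$ at an available rational inside the corresponding currently free subinterval of $(\beta^-,\beta^+)$, shrinking the reserved gaps only slightly. By Lemma~\ref{lem:back-and-forth}(i) the resulting $\iota$ is an injective increasing map of boundedness type $O_{LU}^{(2)}$ with $\Dc^{\I}(\iota)=A$ and $\LP(\iota)\subseteq\I$; together with Lemma~\ref{lem:cont-lp-easy-facts}(ii) this yields $\Dc(\iota)=A\cupdot\Q$.

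\emph{Main obstacle.} The delicate point is the design of the side conditions in $\mathcal{S}$ for (ii): they must simultaneously guarantee a jump at every $\alpha\in A$, continuity at every other irrational, and isolation of every rational in $\Img(\iota)$, all while still leaving enough room at each Forth step to place the image of the next rational. Once this is set up cleanly, (i), (iii) and (iv) are either routine back-and-forth or immediate specialisations.
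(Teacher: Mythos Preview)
Your derivations of (iii) and (iv) from (ii) match the paper exactly. Your back-and-forth plan for (i) is correct in spirit and would work, though the paper does it in one line: set $M:=A\times\Q$ with the lexicographic order, note that $M$ is a countable dense linear order without endpoints, take an order isomorphism $\alpha\colon\Q\to M$ by Cantor's theorem, and let $f:=\pi\circ\alpha$ where $\pi(w,q)=w$. Each fibre $f^{-1}\{a\}=\alpha^{-1}(\{a\}\times\Q)$ is then automatically an irrational interval.

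For (ii) there is a genuine gap in your plan, and it is exactly the one you flag as the ``main obstacle'' without resolving. A pure Forth system, even with reserved jump gaps at points of $A$ and isolation neighbourhoods around rational image values, does not by itself guarantee that $\iota$ is \emph{continuous} at every irrational $\gamma\notin A$: nothing in a Forth step forces $\sup\iota(-\infty,\gamma)=\inf\iota(\gamma,+\infty)$, because the image values you lay down need not accumulate correctly on both sides. You would need an additional Back-type mechanism ensuring the image is dense in each inter-gap region, and then argue that the only remaining discontinuities are precisely at $A\cup\Q$. This can be made to work, but it is fiddly and you have not said how.

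The paper sidesteps the whole issue with the same trick as in (i). For the boundedness type $O_{\R,+\infty}^{(2)}$, say, set
\[
  M:=(A\cupdot\Q\cupdot\{-\infty\})\times\Q
\]
with the lexicographic order, take an order isomorphism $\beta\colon M\to\Q$, and put $\iota:=\beta\circ j$ where $j(x):=(x,0)$. The point is that the structure of $M$ encodes \emph{all} the required behaviour at once: each copy $\{x\}\times\Q$ around $j(x)$ makes $\iota(x)$ isolated in $\Img(\iota)$ (so $\LP(\iota)\subseteq\I$ and $\Q\subseteq\Dc(\iota)$), each element of $A$ contributes an extra copy of $\Q$ inserted at the corresponding cut (forcing a jump there), and the absence of such a copy at any other irrational cut forces continuity there. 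The extra $\{-\infty\}\times\Q$ factor realises boundedness below. The other boundedness types are handled by adjoining $\{+\infty\}$ instead of or in addition to $\{-\infty\}$. This lexicographic-product argument replaces your entire side-condition apparatus with a single invocation of Cantor's theorem.
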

\begin{proof}
  $ $\par\nobreak\ignorespaces\textbf{(i).} We put $M:=A\times\Q$ and
  set $<_{M}$ to be the lexicographic order on $M$ where the first
  component is the significant one. Define $\pi\colon M\to\Q$ by
  $\pi(w,q):=w$. Since $(M,<_{M})$ is countably infinite and densely
  ordered without greatest or least element, there exists an order
  isomorphism $\alpha\colon\Q\to M$. Setting $f:=\pi\circ\alpha$, we
  obtain a map as desired.

  \textbf{(ii).} We only consider the case
  $O_{LU}^{(2)}=O_{\R,+\infty}$; the others are treated
  analogously. Put
  \begin{displaymath}
    M:=(A\cupdot\Q\cupdot\{-\infty\})\times\Q
  \end{displaymath}
  and set $<_{M}$ to be the lexicographic order on $M$ where the first
  component is the significant one. Define $j\colon\Q\to M$ by
  $j(x):=(x,0)$. Since $(M,<_{M})$ is countably infinite and densely
  ordered without greatest or least element, there exists an order
  isomorphism $\beta\colon M\to\Q$. Setting
  $\iota:=\beta\circ j\in\MM_{\Q}$, we obtain a map as desired.

  \textbf{(iii).} Setting $A=\emptyset$ as well as
  $O_{LU}^{(2)}=O_{-\infty,+\infty}^{(2)}$ and
  using~\ref{item:existence-generic-ii}, we obtain a generic
  injection.

  \textbf{(iv).} Setting $A\subseteq\I$ to be a countably infinite
  topologically dense set and using~\ref{item:existence-generic-ii},
  we obtain a sparse injection with any boundedness type.
\end{proof}

Another useful notion is given by the \emph{generalised inverse} of
maps in $\MM_{\Q}$.
\begin{definition}\label{def:gen-inv}
  Let $s\in\MM_{\Q}$ and $y\in\Q$. Define\footnote{We put
    $\sup\emptyset:=-\infty$ and $\inf\emptyset:=+\infty$.}
  $s^{L}(y):=\sup s^{-1}(-\infty,y)\in\R\cup\{\pm\infty\}$ and
  $s^{R}(y):=\inf s^{-1}(y,+\infty)\in\R\cup\{\pm\infty\}$. If
  $s^{L}(y)$ and $s^{R}(y)$ coincide, we define
  $s^{\dagger}(y):=s^{L}(y)=s^{R}(y)$ (the \emph{generalised inverse}
  of $s$ at $y$).
\end{definition}
The following observations are easily deduced directly from the
definitions.
\begin{lemma}\label{lem:easy-gen-inv}
  \hspace{0mm}
  \begin{enumerate}[label=(\roman*)]
  \item\label{item:easy-gen-inv-i} Let $s\in\MM_{\Q}$. If
    $y\in\Img(s)$ and if $x\in\Q$ is the only $s$-preimage of $y$,
    then $s^{\dagger}(y)=x$.
  \item\label{item:easy-gen-inv-ii} Let $s\in\MM_{\Q}$ be
    injective. Then $s^{\dagger}(y)$ is a welldefined real number for
    all elements $y\in (\inf s,\sup s)$.
  \item\label{item:easy-gen-inv-iii} Let $g\in\MM_{\Q}$ be a generic
    injection. Then $g^{\dagger}(y)\in\Q$ for all $y\in\Q$. In
    particular, $g$ is left-invertible in $\MM_{\Q}$ with left inverse
    $g^{\dagger}$. Moreover, for all rational intervals $J$ with
    boundary points $q_{1}$ and $q_{2}$ in $\Q\cup\{\pm\infty\}$, the
    preimage $g^{-1}(J)$ is again a rational interval with boundary
    points\footnote{Note, however, that e.g.~$g^{-1}[q_{1},q_{2}]$
      need not be closed.}  $g^{\dagger}(q_{1})$ and
    $g^{\dagger}(q_{2})$.
  \item\label{item:easy-gen-inv-iv} Let $g\in\MM_{\Q}$ be a generic
    injection. Then the translation $\lambda_{g}\colon s\mapsto gs$ is
    continuous as a map\footnote{Since $\TT_{rich}$ is not a semigroup
      topology -- a fact on which most of
      Section~\ref{sec:reduct-pointw-topol} hinges -- this cannot be
      taken for granted and depends on the genericity of $g$.}
    $(\MM_{\Q},\TT_{rich})\to (\MM_{\Q},\TT_{rich})$.

    \noindent (combine the $\TT_{pw}$-continuity of $\lambda_{g}$
    with~\ref{item:easy-gen-inv-iii} and the fact that $s$ and $gs$
    have the same boundedness type since $g$ is unbounded-unbounded)
  \end{enumerate}
\end{lemma}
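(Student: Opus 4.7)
The plan is to verify all four parts by unpacking the definitions, where the only substantial argument occurs in (iii) to rule out irrational values of $g^\dagger$, and in (iv) to translate subbasic open sets through $\lambda_g^{-1}$.

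For (i), I would use injectivity and monotonicity of $s$ to identify $s^{-1}(-\infty,y) = (-\infty,x) \cap \Q$ and $s^{-1}(y,+\infty) = (x,+\infty) \cap \Q$, giving $s^L(y) = s^R(y) = x$ at once. For (ii), the hypothesis $y \in (\inf s, \sup s)$ provides rational witnesses on both sides of $y$, so both sets are nonempty and bounded and hence $s^L(y), s^R(y) \in \R$. If $s^L(y) < s^R(y)$, any rational in the open interval between them would be forced to map to $y$ by the inf/sup characterisation, and density of $\Q$ would then produce multiple rational preimages of $y$, contradicting injectivity.

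The main step is (iii). Part (ii) yields $\gamma := g^L(y) = g^R(y) \in \R$ for every $y \in \Q$, using that $g$ is unbounded on both sides. Suppose for contradiction that $\gamma \in \I$. Monotonicity and the sup/inf definitions sandwich $\sup g(-\infty,\gamma) \leq y \leq \inf g(\gamma,+\infty)$, and $\Dc^\I(g) = \emptyset$ collapses these to a single value, necessarily $y$. If $y \in \Img(g)$, its unique preimage would equal $\gamma$ by (i), contradicting $\gamma \in \I$; otherwise $y$ is realised as $\sup g(-\infty,\gamma)$ without being attained, so $y$ is a limit point of $\Img(g)$, violating $\LP(g) \subseteq \I$. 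Monotonicity of $g^\dagger$ and the identity $g^\dagger g = \id$ then follow from (i) and set-theoretic monotonicity of preimages. The interval statement comes from a short case split on whether each boundary $q_i$ lies in $\Img(g)$, which determines whether $g^\dagger(q_i)$ belongs to $g^{-1}(J)$ and hence the open/closed nature of the resulting rational interval.

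For (iv), I would verify that $\lambda_g^{-1}$ sends each subbasic $\TT_{rich}$-open set to a $\TT_{rich}$-open set. Type~(\ref{item:types-0}) preimages are $\TT_{pw}$-open since $\lambda_g$ is always $\TT_{pw}$-continuous, hence $\TT_{rich}$-open. Type~(\ref{item:types-1}) preimages $\{s : s(I) \subseteq g^{-1}(J)\}$ are again of type~(\ref{item:types-1}) because (iii) guarantees that $g^{-1}(J)$ is a rational interval of one of the permitted forms (with the convention $g^\dagger(\pm\infty) = \pm\infty$, justified by the unboundedness of $g$). Type~(\ref{item:types-2}) preimages are unchanged because $g$ unbounded-unbounded implies that $s$ and $gs$ always share boundedness type: if $\inf s = -\infty$, choosing $x_n$ with $s(x_n) \to -\infty$ and applying $g$ gives $g s(x_n) \to -\infty$, and conversely if $s$ is bounded below by some rational $c$, then $g s$ is bounded below by $g(c)$. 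Type~(\ref{item:types-3}) preimages rewrite as $\{s : \Img(s) \cap g^{-1}(K) = \emptyset\}$, which is of type~(\ref{item:types-3}) by (iii). The main difficulty is the boundary bookkeeping in (iii) that then feeds the case analysis in (iv); beyond that, every verification is mechanical.
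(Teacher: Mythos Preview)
Your proposal is correct and matches the paper's approach. The paper does not supply a proof, stating only that the observations ``are easily deduced directly from the definitions,'' and the parenthetical hint for (iv) is exactly your argument: combine $\TT_{pw}$-continuity of $\lambda_g$ with part~(iii) for types~\ref{item:types-1} and~\ref{item:types-3}, and use unbounded-unboundedness for type~\ref{item:types-2}. One small wording point: in (i) you invoke ``injectivity'' of $s$, but the hypothesis is only that $y$ has a unique preimage; your identification $s^{-1}(-\infty,y)=(-\infty,x)$ follows from monotonicity together with this uniqueness alone, so the argument goes through unchanged.
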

As mentioned in the introduction of Section~\ref{sec:rich-top-ppxb},
we will derive compatibility conditions such that
$e_{s}s=f_{s}a_{s}\iota_{s}$ for maps
$s,e_{s},f_{s},\iota_{s}\in\MM_{\Q}$ and $a_{s}\in\GG_{\Q}$. It will
be convenient to consider a slightly more general situation and aim
for $\sigma=\pi a\iota$; this will then be applied for $\pi=f_{s}$ and
$\sigma:=e_{s}s$, and later on for
$\tilde{\sigma}:=e_{s}\tilde{s}$. Again by the introductory remarks,
we will need to make sure that the function $a$ maps
$\bar{x}\mapsto\bar{y}$ for given tuples $\bar{x},\bar{y}$.

First, we reformulate our problem in model-theoretic language. As a
starting point, note that $\sigma=\pi a\iota$ is equivalent to the
fact that $a(\iota(q))\in\pi^{-1}\{\sigma(q)\}$ for all
$q\in\Q$. Since $\pi$ is increasing, the preimage
$\pi^{-1}\{\sigma(q)\}$ is an interval. Thus, if
$\sigma(q)=\sigma(q')$ for some $q,q'\in\Q$, then not only do
$\iota(q)$ and $\iota(q')$ have to be mapped to the same interval, but
all points \emph{between} $\iota(q)$ and $\iota(q')$ have to be as
well. This motivates the following definition:
\begin{definition}\label{def:aux-str-prop-x}
  \hspace{0mm}
  \begin{enumerate}[label=(\roman*)]
  \item Let $\set{P_{q}}{q\in\Q}$ be a set of unary relation symbols
    and define the language $L$ by\footnote{Note: $<$ instead of
      $\leq$!}  $L:=\{<\}\cup\set{P_{q}}{q\in\Q}$.
  \item Let $\sigma,\pi,\iota\in\MM_{\Q}$. For $q\in\Q$, we set
    \begin{align*}
      P_{q}^{\A}&:=\text{convex hull of }\iota(\sigma^{-1}\{\sigma(q)\})\\
      P_{q}^{\B}&:=\pi^{-1}\{\sigma(q)\}
    \end{align*}
    and define $L$-structures
    $\A=\left\langle\Q,<,(P_{q}^{\A})_{q\in\Q}\right\rangle$ and
    $\B=\left\langle\Q,<,(P_{q}^{\B})_{q\in\Q}\right\rangle$. If
    $\sigma,\pi,\iota$ are not clear from the context, we will write
    $\A(\sigma,\pi,\iota)$ and $\B(\sigma,\pi,\iota)$.
  \end{enumerate}
\end{definition}
In the sequel, $L$, $\A$ and $\B$ will always denote the objects just
defined. Note that a surjective $L$-homomorphism $a\colon\A\to\B$ is
automatically contained in $\GG_{\Q}$ and satisfies
$\sigma=\pi a\iota$. Thus, our aim is to construct a surjective
$L$-homomorphism extending a given map $\bar{x}\mapsto\bar{y}$. We
will do so using the Back\&Forth method, see
Subsection~\ref{sec:back-and-forth}.
\begin{definition}\label{def:basic-formula}
  A formula $\psi(\bar{z})$ over $L$ is called \emph{basic} if it is
  one of the formulas
  \begin{enumerate}[label=(\roman*)]
  \item\label{item:basic-formula-i} $P_{q}(z_{i})$, \quad $q\in\Q$
  \item\label{item:basic-formula-ii} $z_{i}<z_{j}$
  \item\label{item:basic-formula-iii}
    $L_{q}(z_{i}):\leftrightarrow\exists u\colon u<z_{i}\land P_{q}(u)$, \quad
    $q\in\Q$
  \item\label{item:basic-formula-iv}
    $R_{q}(z_{i}):\leftrightarrow\exists u\colon u>z_{i}\land P_{q}(u)$, \quad
    $q\in\Q$
  \end{enumerate}
  For a basic formula $\psi(\bar{z})$ and a tuple $\bar{x}$ in $\A$,
  we write $\A\models\psi(\bar{x})$ if $\bar{x}$ satisfies the formula
  $\psi(\bar{z})$ in $\A$; we analogously define
  $\B\models\psi(\bar{y})$. If $m$ is a (potentially partial) map from
  $\A$ to $\B$, then $m$ is said to \emph{preserve} $\psi(\bar{z})$ if
  $\A\models\psi(\bar{x})$ implies $\B\models\psi(m(\bar{x}))$ for all
  tuples $\bar{x}$ in the domain of $m$.
\end{definition}
Note that basic formulas contain only existential and no universal
quantifiers, so total homomorphisms $\A\to\B$ always preserve all
basic formulas. In the following, we will work with partial maps from
$\A$ to $\B$ preserving all basic formulas, either extending maps
without losing that property or analysing when a given map indeed
preserves all basic formulas.

\subsection{Proving Proposition~\ref{prop:rich-top-ppxb}}
The crucial technical results necessary for the proof of
Proposition~\ref{prop:rich-top-ppxb} are three lemmas, one for each of
the steps mentioned in the introduction of
Section~\ref{sec:rich-top-ppxb}: the Sandwich
Lemma~\ref{lem:sandwich}, the Preconditioning
Lemma~\ref{lem:preconditioning} and the Variation
Lemma~\ref{lem:variation}. In this subsection, we formulate them and
demonstrate how they are used to show
Proposition~\ref{prop:rich-top-ppxb}. For proofs of the three lemmas,
we refer to the following subsections. We start by fixing some
notation for the sake of brevity:
\begin{definition}
  We say that $\sigma,\pi,\iota\in\MM_{\Q}$ are \emph{compatible} if
  \begin{enumerate}[label=(\alph*)]
  \item\label{item:compatible-a} $\sigma\in\MM_{\Q}$ satisfies
    $\LP(\sigma)\subseteq\I$,
  \item\label{item:compatible-b} $\pi\in\MM_{\Q}$ is a generic
    surjection,
  \item\label{item:compatible-c} $\iota\in\MM_{\Q}$ is injective with
    $\LP(\iota)\subseteq\I$, has the same boundedness type as
    $\sigma$ and satisfies $\Dc^{\I}(\iota)=\Dc^{\I}(\sigma)$.
  \end{enumerate}
\end{definition}
\begin{lemma}[Sandwich Lemma]\label{lem:sandwich} Let
  $\sigma,\pi,\iota\in\MM_{\Q}$ be \emph{compatible}.

  Then the following statements hold:
  \begin{enumerate}[label=(\roman*)]


  \item\label{item:sandwich-conclusion-i} The set of all finite
    partial $L$-homomorphisms $m$ from $\A$ to $\B$ preserving all
    basic formulas is a Back\&Forth system.
  \item\label{item:sandwich-conclusion-ii} There exists $a\in\GG_{\Q}$
    such that $\sigma=\pi a\iota$. Indeed, if $m$ is a finite partial
    $L$-homomorphism from $\A$ to $\B$ preserving all basic formulas,
    there exists $a\in\GG_{\Q}$ extending $m$ such that
    $\sigma=\pi a\iota$.
  \end{enumerate}
\end{lemma}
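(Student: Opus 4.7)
My plan is to derive (ii) directly from (i) via the Back\&Forth Lemma and concentrate the real work on (i). For the reduction, observe that the set $\mathcal{S}$ of finite partial $L$-homomorphisms from $\A$ to $\B$ preserving all basic formulas is trivially closed under restriction, so, granting (i), Lemma~\ref{lem:back-and-forth}(ii) extends any given $m \in \mathcal{S}$ to a total surjective $L$-homomorphism $a \colon \A \to \B$. Preservation of the basic formula $z_i < z_j$ makes $a$ strictly increasing on $\Q$ hence injective, so combined with surjectivity it lies in $\GG_{\Q}$. Since $\iota(q) \in P_q^{\A}$ is immediate from the definition of $P_q^{\A}$, homomorphicity yields $a(\iota(q)) \in P_q^{\B} = \pi^{-1}\{\sigma(q)\}$, which is exactly $\pi a \iota = \sigma$; the ``indeed'' clause is obtained by starting Back\&Forth from the given $m$ rather than the empty map.

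For (i), I would verify the Forth property carefully and handle the Back property by an analogous case analysis. Fix $m \in \mathcal{S}$ and $x \in \Q \setminus \Dom(m)$; the aim is to find $y$ such that $m \cup \{(x, y)\}$ remains in $\mathcal{S}$. This unpacks into four families of conditions on $y$: the order window $m(x_-) < y < m(x_+)$ given by the immediate $\Dom(m)$-neighbours $x_\pm$ of $x$ (with the obvious conventions if they fail to exist); $y \in P_q^{\B}$ for every $q$ with $x \in P_q^{\A}$; $\B \models L_q(y)$ whenever $\A \models L_q(x)$; and the analogue for $R_q$. Grouping the predicates by the value $w := \sigma(q) \in \Img(\sigma)$, the sets $P_q^{\A}$ become the convex hulls $J_w$ of $\iota(\sigma^{-1}\{w\})$ and the $P_q^{\B}$ become the irrational intervals $K_w := \pi^{-1}\{w\}$. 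Both families are linearly ordered by $w$ with pairwise disjoint interiors, and the compatibility hypotheses --- equal boundedness types together with $\Dc^{\I}(\sigma) = \Dc^{\I}(\iota)$ --- are precisely what align the ``gap geometry'' of the $J_w$ with that of the $K_w$ inside $\Q$.

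With this alignment in hand the Forth step splits into two main cases. If $x$ lies in some $J_w$, I place $y$ in the intersection of the order window with $K_w$; this intersection is non-empty because $m$ already preserves the relevant $L_q$ and $R_q$ formulas at $x_\pm$, forcing the window to meet $K_w$. If $x$ lies outside every $J_w$, the alignment supplies a matching gap between the $K_w$ into which $y$ can be placed by density of $\Q$. In either case the outstanding $L_q$ and $R_q$ conditions at $x$ follow automatically from how the $J_w$ and $K_w$ are aligned, since any $q'$ witnessing $\A \models L_{q'}(x)$ forces $J_{\sigma(q')}$ to lie weakly to the left of $x$, so $K_{\sigma(q')}$ is positioned to make $\B \models L_{q'}(y)$ hold. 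The main obstacle is the boundary sub-case in which $x$ coincides with a shared endpoint of two consecutive $J_w, J_{w'}$, or with an isolated endpoint of a single $J_w$: here the type of $x$ in $\A$ is governed by whether the boundary is rational or irrational and, in the irrational case, by whether $\sigma$ is discontinuous there, and it is precisely the identity $\Dc^{\I}(\sigma) = \Dc^{\I}(\iota)$ together with equal boundedness types that ensures a matching point $y$ on the $\B$-side, where the $K_w$ carry only irrational endpoints by genericity of $\pi$.
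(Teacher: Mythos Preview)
Your proposal is correct and follows essentially the same route as the paper: reduce (ii) to (i) via the Back\&Forth lemma, then for the Forth step split on whether $x$ lies in some $P_q^{\A}$ (your $J_w$) or in none, using preservation of $L_q,R_q$ at the neighbours $x_\pm$ to produce the order window in the first case and the compatibility conditions (matching boundedness types and $\Dc^{\I}(\sigma)=\Dc^{\I}(\iota)$) to produce a matching gap in the second. The paper's write-up is more explicit in Case~2, isolating the key strict inequality $\sup\sigma(J_-)<\inf\sigma(J_+)$ and running through the sub-cases $\iota^\dagger(x)\in\{\pm\infty\}$, $\iota^\dagger(x)\in\Q$, $\iota^\dagger(x)\in\I$ separately; your ``boundary sub-case'' paragraph gestures at this but would benefit from that same clean split rather than framing it in terms of shared endpoints of consecutive $J_w$.
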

Referring back to the overview presented in the introduction of
Section~\ref{sec:rich-top-ppxb}, we can now precisely state why our
approach requires aiming for \PPXb{} of length~2: to apply the
Sandwich Lemma~\ref{lem:sandwich}, we need that $\sigma=e_{s}s$ and
$\iota$ have the same irrational discontinuity points, so the
irrational discontinuity points of $s$ and $\iota$ need to be closely
connected. Since no $\TT_{rich}$-neighbourhood $U$ can encode
$\Dc^{\I}(s)$, we cannot use a fixed map $\iota$ for all $\tilde{s}$
in $U$. Thus, we need to adapt $\iota$ to $\tilde{s}$. We will write
$\iota=hbg$, where $b$ varies in $\GG_{\Q}$ and $g,h\in\MM_{\Q}$ are
fixed elements. As it will turn out, it is crucial that we are very
free in stipulating finite pointwise behaviour not only of $b$ on $\Q$
but also of the extension $\bar{b}$ on $\I$.
\begin{lemma}[Preconditioning Lemma]\label{lem:preconditioning}
  Let $g\in\MM_{\Q}$ be a generic injection, let $h\in\MM_{\Q}$ be a
  sparse injection and let $A\subseteq\I$ be finite or countably
  infinite\footnote{When applying this lemma, we will put either
    $A=\Dc(e_{s}s)$ or $A=\Dc(e_{s}\tilde{s})$.}.  Then there exists
  $b\in\GG_{\Q}$ such that $\iota:=hbg$ satisfies $\Dc^{\I}(\iota)=A$
  as well as $\LP(\iota)\subseteq\I$, namely any $b\in\GG_{\Q}$ with
  $\bar{b}^{-1}(\Dc^{\I}(h))\cap\Img(\bar{g})=\bar{g}(A)$. The
  boundedness type of $\iota$ coincides with the boundedness type of
  $h$.

  Moreover, suppose that $\bar{z}$ and $\bar{w}$ are tuples in $\Q$,
  that $\bar{z}'$ and $\bar{w}'$ are tuples in
  $(\R\setminus\Img(\bar{g}))\cap\I$ and $\Dc^{\I}(h)$, respectively,
  and that $\bar{z}''$ and $\bar{w}''$ are tuples in $\bar{g}(A)$ and
  $\Dc^{\I}(h)$, respectively. If the partial map sending
  $\bar{z}\mapsto\bar{w}$, $\bar{z}'\mapsto\bar{w}'$ and
  $\bar{z}''\mapsto\bar{w}''$ is strictly increasing, then
  $b\in\GG_{\Q}$ can be picked so that $\bar{b}$ extends this map.
\end{lemma}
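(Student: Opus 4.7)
The plan is to construct $b$ via a typed Back\&Forth argument on a carefully chosen countable dense suborder of $\R$, after first reducing the claim $\Dc^{\I}(\iota)=A$ to a single set-theoretic equation on $\bar{b}$.

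The first step is a reformulation. Since $\Dc^{\I}(h)\subseteq\I$ and $\bar{b}(\I)=\I$ by Lemma~\ref{lem:cont-lp-easy-facts}\ref{item:cont-lp-easy-facts-iii}, while $\Img(\bar{g})\cap\I=\bar{g}(\I)$ by Lemma~\ref{lem:cont-lp-easy-facts}\ref{item:cont-lp-easy-facts-ii}, the stated equation $\bar{b}^{-1}(\Dc^{\I}(h))\cap\Img(\bar{g})=\bar{g}(A)$ is equivalent to $\bar{b}^{-1}(\Dc^{\I}(h))\cap\bar{g}(\I)=\bar{g}(A)$. A short computation---using $\Dc^{\I}(g)=\emptyset$ and that $b$ is a strictly increasing bijection of $\Q$---shows that for every $\gamma\in\I$ the one-sided limits of $\iota=hbg$ at $\gamma$ coincide with the respective one-sided limits of $h$ at $\bar{b}(\bar{g}(\gamma))\in\I$. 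Hence $\gamma\in\Dc^{\I}(\iota)$ if and only if $\bar{b}(\bar{g}(\gamma))\in\Dc^{\I}(h)$, and by injectivity of $\bar{g}$ on $\I$ this is precisely the equation above.

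Next, I would use density of $(\R\setminus\Img(\bar{g}))\cap\I$ in $\R$ (Lemma~\ref{lem:cont-lp-easy-facts}\ref{item:cont-lp-easy-facts-ii}) to pick a countable dense subset $C\subseteq(\R\setminus\Img(\bar{g}))\cap\I$ containing all entries of $\bar{z}'$. Set $X:=\Q\cup\bar{g}(A)\cup C$ and $Y:=\Q\cup\Dc^{\I}(h)$; both are countable dense suborders of $\R$ without endpoints, partitioned into a rational and an irrational part. I would then construct, via a standard typed Back\&Forth enumeration, a partition-preserving order isomorphism $\phi\colon X\to Y$ with $\phi(\Q)=\Q$ and $\phi(\bar{g}(A)\cup C)=\Dc^{\I}(h)$, extending the strictly increasing finite partial map $\bar{z}\mapsto\bar{w}$, $\bar{z}'\mapsto\bar{w}'$, $\bar{z}''\mapsto\bar{w}''$. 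Each step requires locating an element of the appropriate type in some open interval of $\R$; this succeeds because each of $\Q$, $C$ (hence $C\cup\bar{g}(A)$), and $\Dc^{\I}(h)$ is dense in $\R$---the last by the defining property of a sparse injection.

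Setting $b:=\phi|_{\Q}$ yields $b\in\GG_{\Q}$, and by Lemma~\ref{lem:cont-lp-easy-facts}\ref{item:cont-lp-easy-facts-iii} the extension $\bar{b}$ agrees with $\phi$ on all of $X$. The equation then follows: the inclusion $\bar{g}(A)\subseteq\bar{b}^{-1}(\Dc^{\I}(h))$ is immediate from $\phi(\bar{g}(A))\subseteq\Dc^{\I}(h)$; conversely, any $\gamma\in\bar{g}(\I)$ with $\bar{b}(\gamma)\in\Dc^{\I}(h)$ must lie in $\Dom(\phi)=X$ (by surjectivity of $\phi$ onto $\Dc^{\I}(h)$ and uniqueness of the increasing extension), and since $\bar{g}(\I)$ is disjoint from both $\Q$ and $C\subseteq\R\setminus\Img(\bar{g})$, we obtain $\gamma\in\bar{g}(A)$. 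The boundedness type of $\iota$ matches that of $h$ because $bg$ is unbounded-unbounded, hence cofinal and coinitial in $\Q$; and $\LP(\iota)\subseteq\I$ follows from $\Img(\iota)\subseteq\Img(h)$ combined with $\LP(h)\subseteq\I$. The main technical obstacle is keeping track of the partition during the Back\&Forth---three types in the domain and two in the image, all while extending the prescribed finite partial map---but once the density facts above are in place this reduces to a routine enumeration argument.
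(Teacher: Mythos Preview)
Your proposal is correct and follows essentially the same route as the paper: both reduce the condition $\Dc^{\I}(\iota)=A$ to the equation $\bar{b}^{-1}(\Dc^{\I}(h))\cap\Img(\bar{g})=\bar{g}(A)$, pick a countable dense set in $(\R\setminus\Img(\bar{g}))\cap\I$ containing the entries of $\bar{z}'$, and run a partition-preserving Back\&Forth between $\Q\cup(\bar{g}(A)\cup C)$ and $\Q\cup\Dc^{\I}(h)$ to build $b$. Your verification of the equation from surjectivity of $\phi$ and uniqueness of the increasing extension is slightly more explicit than the paper's, but the argument is the same.
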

Combining the Preconditioning Lemma~\ref{lem:preconditioning} (putting
$A=\Dc^{\I}(e_{s}s)$, see the proof of
Proposition~\ref{prop:rich-top-ppxb} below) with the Sandwich
Lemma~\ref{lem:sandwich}, we can show that $\sigma:=e_{s}s$ can be
written in the form $\pi a_{s}\iota=f_{s}a_{s}h_{s}b_{s}g_{s}$ with
$a_{s},b_{s}\in\GG_{\Q}$ if $\pi=f_{s}$ is a generic surjection,
$g_{s}$ and $e_{s}$ are generic injections and $h_{s}$ is a sparse
injection with the same boundedness type as $s$ -- note in particular
that the choice of the maps $e_{s},f_{s},g_{s},h_{s}$ only depends on
the boundedness type of $s$. For the remaining part of \PPXb, we have
to prove the following: If $a_{s}(\bar{x})=\bar{y}$ as well as
$b_{s}(\bar{z})=\bar{w}$, there is a $\TT_{rich}$-neighbourhood $U$ of
$s$ such that for all $\tilde{s}\in U$ one can write
$e_{s}\tilde{s}=f_{s}\tilde{a}h_{s}\tilde{b}g_{s}$, where
$\tilde{a},\tilde{b}\in\GG_{\Q}$ with $\tilde{a}(\bar{x})=\bar{y}$ as
well as $\tilde{b}(\bar{z})=\bar{w}$. By the Preconditioning
Lemma~\ref{lem:preconditioning}, we could find $\tilde{b}\in\GG_{\Q}$
with $\tilde{b}(\bar{z})=\bar{w}$ such that
$\tilde{\sigma}:=e_{s}\tilde{s},f_{s},\tilde{\iota}:=h_{s}\tilde{b}g_{s}$
are compatible. Thus, the Sandwich Lemma~\ref{lem:sandwich} would
yield $\tilde{a}\in\GG_{\Q}$ with
$e_{s}\tilde{s}=f_{s}\tilde{a}h_{s}\tilde{b}g_{s}$ -- however, this
automorphism $\tilde{a}$ need not satisfy the condition
$\tilde{a}(\bar{x})=\bar{y}$. To improve upon this strategy, the final
statement of the Sandwich Lemma~\ref{lem:sandwich} suggests we
construct $\tilde{b}$ in such a way that the finite partial map
defined by $\bar{x}\mapsto\bar{y}$ preserves all basic formulas when
considered as a map from $\A(\tilde{\sigma},f_{s},\tilde{\iota})$ to
$\B(\tilde{\sigma},f_{s},\tilde{\iota})$.
\begin{lemma}[Variation Lemma]\label{lem:variation}
  Let $\sigma,f,g,h\in\MM_{\Q}$ and $a,b\in\GG_{\Q}$ such that
  $\sigma=fahbg$, where $\LP(\sigma)\subseteq\I$, $f$ is a generic
  surjection, $g$ is a generic injection, $h$ is a sparse injection
  with the same boundedness type as $\sigma$, and finally
  $\bar{b}^{-1}(\Dc^{\I}(h))\cap\Img(\bar{g})=\bar{g}(\Dc^{\I}(\sigma))$. Let
  further $\bar{x},\bar{y},\bar{z},\bar{w}$ be tuples in $\Q$ such
  that $a(\bar{x})=\bar{y}$ and $b(\bar{z})=\bar{w}$.

  \noindent Then there exists a $\TT_{rich}$-neighbourhood $O$ of
  $\sigma$ such that the following holds:
  \begin{quote}
    For any $\tilde{\sigma}\in O$ with
    $\LP(\tilde{\sigma})\subseteq\I$, there exist tuples $\bar{z}^{*}$
    and $\bar{w}^{*}$ in $\Q$ and tuples $\bar{z}'$ and $\bar{w}'$ in
    $(\R\setminus\Img(\bar{g}))\cap\I$ and $\Dc^{\I}(h)$,
    respectively, and tuples $\bar{z}''$ and $\bar{w}''$ in
    $\bar{g}(\Dc^{\I}(\tilde{\sigma}))$ and $\Dc^{\I}(h)$,
    respectively, such that
    \begin{itemize}
    \item the finite partial map $\bar{z}\mapsto\bar{w}$,
      $\bar{z}^{*}\mapsto\bar{w}^{*}$, $\bar{z}'\mapsto\bar{w}'$,
      $\bar{z}''\mapsto\bar{w}''$ is strictly increasing,
    \item if $\tilde{b}\in\GG_{\Q}$ satisfies
      $\tilde{b}(\bar{z})=\bar{w}$,
      $\tilde{b}(\bar{z}^{*})=\bar{w}^{*}$,
      $\bar{\tilde{b}}(\bar{z}')=\bar{w}'$,
      $\bar{\tilde{b}}(\bar{z}'')=\bar{w}''$ and is such that
      $\tilde{\sigma},f,\tilde{\iota}:=h\tilde{b}g$ are compatible,
      then $\bar{x}\mapsto\bar{y}$ preserves all basic formulas when
      considered as a finite partial map from
      $\A(\tilde{\sigma},f,\tilde{\iota})$ to
      $\B(\tilde{\sigma},f,\tilde{\iota})$.
    \end{itemize}
  \end{quote}

\end{lemma}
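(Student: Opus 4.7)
The overall plan is to use the three kinds of open sets in $\TT_{rich}$ to pin down the features of $\sigma$ that control the basic formulas at $\bar x, \bar y$, and then harness the freedom granted by the Preconditioning Lemma to force $\tilde\iota = h\tilde b g$ to replicate those features near $\bar x$.

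\emph{Identification of critical data.} First I would set $v_i := f(y_i)$ and $I_i := \sigma^{-1}\{v_i\}$ for each index $i$. Since $a\colon\A(\sigma,f,\iota)\to\B(\sigma,f,\iota)$ is an $L$\nobreakdash-homomorphism extending $\bar x\mapsto\bar y$ (because $\sigma = fa\iota$ and $a \in \GG_\Q$), the finitely many basic formulas that hold at $\bar x,\bar y$ translate into explicit conditions on the intervals $I_i$, on the position of each $x_i$ relative to $\iota(I_i)$, and on the infima and suprema of $\iota(\sigma^{-1}\{v\})$ for the finitely many values $v$ arising as some $f(y_j)$. These are exactly the data that must survive when passing from $\sigma,\iota$ to $\tilde\sigma,\tilde\iota$.

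\emph{Choice of the neighbourhood $O$.} I would assemble $O$ from pointwise sets $O^{(0)}_{q,\sigma(q)}$ at finitely many rational witnesses straddling the boundaries of the intervals $I_i$, generalised pointwise sets of type~$(1)$ controlling the behaviour of $\tilde\sigma$ towards $\pm\infty$ (hence preserving its boundedness type), and avoiding sets $O^{(3)}_K$ whose $K$'s are small rational intervals around each $v_i$ outside $\overline{I_i}$. This guarantees that for every $\tilde\sigma\in O$ the level set $\tilde I_i := \tilde\sigma^{-1}\{v_i\}$ contains the rational witnesses inherited from $I_i$, while $\tilde\sigma$ avoids the value $v_i$ on the complement of $\overline{I_i}$.

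\emph{Construction of the auxiliary tuples.} Given $\tilde\sigma\in O$, I would then pick $\bar z^{*}, \bar w^{*}$ in $\Q$ so that $\tilde b(\bar z^{*})=\bar w^{*}$ fixes $\tilde\iota$ at finitely many rational points inside and just outside each $\tilde I_i$; this locks in the relative position of each $x_i$ with respect to $\tilde\iota(\tilde I_i)$. The tuples $\bar z', \bar w'$ in $(\R\setminus\Img(\bar g))\cap\I$ and $\Dc^{\I}(h)$ respectively would shape $\bar{\tilde b}$ at auxiliary irrational points so that the extremal values of $\tilde\iota$ on each $\tilde I_i$ fall on the correct side of $x_i$; finally, $\bar z'', \bar w''$ would send $\bar g(\Dc^{\I}(\tilde\sigma))$ into $\Dc^{\I}(h)$ to secure compatibility via the Preconditioning Lemma. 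Strict monotonicity of the combined partial map is arranged using the density of $\Dc^{\I}(h)$ and $(\R\setminus\Img(\bar g))\cap\I$ in $\R$, consequences of $h$ being sparse and $g$ generic (Lemma~\ref{lem:cont-lp-easy-facts} together with Lemma~\ref{lem:existence-generic}).

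\emph{Verification and main obstacle.} For any admissible $\tilde b$, compatibility of $\tilde\sigma, f, \tilde\iota$ then follows from Lemma~\ref{lem:preconditioning}, and it remains to check that $\bar x\mapsto\bar y$ preserves each basic formula. The case $P_q$ reduces to $f(y_i)=\tilde\sigma(q)$, which is forced by the avoidance and pointwise constraints combined with the containment $x_i\in\operatorname{conv}\tilde\iota(\tilde I_i)$; the cases $L_q, R_q$ reduce to comparisons of $\inf$ and $\sup$ of $\tilde\iota(\tilde\sigma^{-1}\{\tilde\sigma(q)\})$ against $x_i$, witnessed by the forced values at $\bar z^{*}$ and the discontinuity data at $\bar z''$. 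The hard part is precisely this last reduction: $\tilde\sigma$ may have strictly more discontinuities than $\sigma$, so $\tilde I_i$ could shift or swell, and we must prevent the new discontinuities from dragging the infimum or supremum of $\tilde\iota(\tilde I_i)$ across some $x_i$. This is exactly where the sparsity of $h$ together with $\Dc^{\I}(h)$ being topologically dense in $\R$ is indispensable: it provides a reservoir of jump positions that can be slotted into any gap of $\bar g$, pinning the extremal behaviour of $\tilde\iota$ in any way consistent with the constraints built into $O$.
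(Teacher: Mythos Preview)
Your outline captures some of the right intuition (sparsity of $h$ as a ``reservoir of jump positions''), but it misses the organising principle and misidentifies the role of the tuples $\bar z'',\bar w''$, which leaves a genuine gap.

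First, the paper does not work with the level sets $I_i=\sigma^{-1}\{f(y_i)\}$ and the position of $x_i$ relative to $\iota(I_i)$. Instead it reformulates ``$x\mapsto y$ preserves all basic formulas'' (Lemma~\ref{lem:sandwich-reform}) as the pair of inclusions
\[
\tilde\sigma\bigl(\tilde\iota^{-1}(-\infty,x]\bigr)\subseteq(-\infty,f(y)]\quad\text{and}\quad
\tilde\sigma\bigl(\tilde\iota^{-1}[x,+\infty)\bigr)\subseteq[f(y),+\infty).
\]
Everything then turns on understanding the intervals $\tilde\iota^{-1}(-\infty,x]=g^{-1}\tilde b^{-1}h^{-1}(-\infty,x]$, which forces a case analysis on $h^{\dagger}(x)$ and, when $h^{\dagger}(x)\in\I$, on $\iota^{\dagger}(x)$. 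Your plan never makes this distinction; yet it is precisely what determines whether one can control $\tilde b$ by \emph{rational} conditions (Lemmas~\ref{lem:preserve-basic-pp-aux-1}--\ref{lem:preserve-basic-pp-aux-3}) or must instead \emph{redefine} $\tilde b$ at points depending on $\tilde\sigma$ (Lemmas~\ref{lem:preserve-basic-pp-aux-4}--\ref{lem:preserve-basic-pp-aux-5}).

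Second, you write that $\bar z'',\bar w''$ ``send $\bar g(\Dc^{\I}(\tilde\sigma))$ into $\Dc^{\I}(h)$ to secure compatibility''. That is not their purpose: compatibility is a global (countably infinite) condition $\bar{\tilde b}^{-1}(\Dc^{\I}(h))\cap\Img(\bar g)=\bar g(\Dc^{\I}(\tilde\sigma))$, and it is handled entirely by the Back\&Forth in the Preconditioning Lemma, not by finite tuples. The tuples $\bar z'',\bar w''$ arise only in the case $h^{\dagger}(x)\in\I$, $\iota^{\dagger}(x)\in\I$, $f(y)\notin\Img(\sigma)$ (Lemma~\ref{lem:preserve-basic-pp-aux-5}): there the split point $\tilde\sigma^{\dagger}(f(y))$ varies with $\tilde\sigma$, and one must place a single point $\tilde\rho\in\bar g(\Dc^{\I}(\tilde\sigma))\cup\bigl((\R\setminus\Img(\bar g))\cap\I\bigr)$ at the fixed value $\gamma=h^{\dagger}(x)\in\Dc^{\I}(h)$. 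This $\tilde\sigma$\nobreakdash-dependence is exactly why $\bar z''$ lives in $\bar g(\Dc^{\I}(\tilde\sigma))$ rather than in a set determined by $\sigma$ alone. Without this mechanism your construction cannot force $\tilde\iota^{-1}(-\infty,x]$ to land inside $\tilde\sigma^{-1}(-\infty,f(y)]$ when the relevant boundary is irrational, so the preservation of $L_q$ and $R_q$ is not established.
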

Combining these results, we can prove that $\MM_{\Q}$ equipped with
the rich topology has \PPXb{} of length~$2$ with respect to
$(\GG_{\Q},\TT_{pw})$:
\begin{proof}[Proof (of Proposition~\ref{prop:rich-top-ppxb} given
  Lemmas~\ref{lem:sandwich},~\ref{lem:preconditioning}
  and~\ref{lem:variation}).]
  Let $s\in\MM_{\Q}$. We follow the strategy outlined in
  Figure~\ref{fig:proof-rich-top-ppxb}. First, we construct a
  decomposition $e_{s}s=f_{s}a_{s}h_{s}b_{s}g_{s}$.

  We use Lemma~\ref{lem:existence-generic} to find a generic injection
  $e_{s}\in\MM_{\Q}$, a generic surjection $f_{s}\in\MM_{\Q}$, a
  generic injection $g_{s}\in\MM_{\Q}$ and a sparse injection
  $h_{s}\in\MM_{\Q}$ with the same boundedness type as $s$. By
  Lemma~\ref{lem:easy-gen-inv}\ref{item:easy-gen-inv-iii}, the map
  $e_{s}$ is left-invertible. Since $e_{s}$ is unbounded-unbounded,
  $\sigma:=e_{s}s$ has the same boundedness type as $s$ (and as
  $h_{s}$) and satisfies
  $\LP(\sigma)\subseteq\LP(e_{s})\subseteq\I$. Applying the
  Preconditioning Lemma~\ref{lem:preconditioning} with
  $A=\Dc^{\I}(\sigma)$, we obtain $b_{s}\in\GG_{\Q}$ such that
  $\iota_{s}:=h_{s}b_{s}g_{s}$ is compatible with $\sigma$ and
  $f_{s}$, namely $b_{s}\in\GG_{\Q}$ with
  $\bar{b}_{s}^{-1}(\Dc^{\I}(h))\cap\Img(\bar{g}_{s})=\bar{g}_{s}(\Dc^{\I}(\sigma))$. Using
  the Sandwich Lemma~\ref{lem:sandwich}, we obtain $a_{s}\in\GG_{\Q}$
  such that
  \begin{displaymath}
    e_{s}s=\sigma=f_{s}a_{s}\iota_{s}=f_{s}a_{s}h_{s}b_{s}g_{s}.
  \end{displaymath}
  This proves conditions~\ref{item:ppxb-i} and~\ref{item:ppxb-ii} in
  the definition of~\PPXb.

  For condition~\ref{item:ppxb-iii}, let $V,W\subseteq\GG_{\Q}$ be
  open sets in the pointwise topology on $\GG_{\Q}$ with $a_{s}\in V$
  and $b_{s}\in W$. We need to find $U\in\TT_{rich}$ with $s\in U$
  such that $e_{s}U\subseteq f_{s}Vh_{s}Wg_{s}$. By shrinking the sets
  if necessary, we can assume that
  $V=\set{\tilde{a}\in\GG_{\Q}}{\tilde{a}(\bar{x})=\bar{y}}$ and
  $W=\set{\tilde{b}\in\GG_{\Q}}{\tilde{b}(\bar{z})=\bar{w}}$ for
  tuples $\bar{x},\bar{y},\bar{z},\bar{w}$ in $\Q$. We apply the
  Variation Lemma~\ref{lem:variation} for
  $\sigma=f_{s}a_{s}h_{s}b_{s}g_{s}$ to obtain a
  $\TT_{rich}$-neighbourhood $O$ of $\sigma$ with the following
  property: If $\tilde{s}\in\MM_{\Q}$ is such that
  $\tilde{\sigma}:=e_{s}\tilde{s}\in O$, there exist tuples
  $\bar{z}^{*}$ and $\bar{w}^{*}$ in $\Q$ and tuples $\bar{z}'$ and
  $\bar{w}'$ in $(\R\setminus\Img(\bar{g}))\cap\I$ and $\Dc^{\I}(h)$,
  respectively, and tuples $\bar{z}''$ and $\bar{w}''$ in
  $\bar{g}(\Dc^{\I}(\tilde{\sigma}))$ and $\Dc^{\I}(h)$, respectively,
  such that $\bar{z}\mapsto\bar{w}$, $\bar{z}^{*}\mapsto\bar{w}^{*}$,
  $\bar{z}'\mapsto\bar{w}'$, $\bar{z}''\mapsto\bar{w}''$ is strictly
  increasing and, additionally, if $\tilde{b}\in\GG_{\Q}$ satisfies
  $\tilde{b}(\bar{z})=\bar{w}$, $\tilde{b}(\bar{z}^{*})=\bar{w}^{*}$,
  $\bar{\tilde{b}}(\bar{z}')=\bar{w}'$,
  $\bar{\tilde{b}}(\bar{z}'')=\bar{w}''$ and is such that
  $\tilde{\sigma},f,\tilde{\iota}:=h\tilde{b}g$ are compatible, then
  $\bar{x}\mapsto\bar{y}$ preserves all basic formulas when considered
  as a finite partial map from
  $\A(\tilde{\sigma},f_{s},\tilde{\iota})$ to
  $\B(\tilde{\sigma},f_{s},\tilde{\iota})$. Given such
  $\tilde{s}\in\MM_{\Q}$, the Preconditioning
  Lemma~\ref{lem:preconditioning} with $A=\Dc^{\I}(\tilde{\sigma})$ as
  well as $\bar{z}\cup\bar{z}^{*}$ and $\bar{w}\cup\bar{w}^{*}$ in
  place of $\bar{z}$ and $\bar{w}$, respectively, yields
  $\tilde{b}\in\GG_{\Q}$ with the above properties. Hence,
  $\bar{x}\mapsto\bar{y}$ preserves all basic formulas when considered
  as a finite partial map from
  $\A(\tilde{\sigma},f_{s},\tilde{\iota})$ to
  $\B(\tilde{\sigma},f_{s},\tilde{\iota})$, and the Sandwich
  Lemma~\ref{lem:sandwich} gives $\tilde{a}\in\GG_{\Q}$ with
  $\tilde{a}(\bar{x})=\bar{y}$ and
  $e_{s}\tilde{s}=f_{s}\tilde{a}\tilde{\iota}=f_{s}\tilde{a}h_{s}\tilde{b}g_{s}\in
  f_{s}Vh_{s}Wg_{s}$.

  In other words, setting $U:=\lambda_{e_{s}}^{-1}(O)$ gives
  $e_{s}U\subseteq f_{s}Vg_{s}Wh_{s}$ as desired. Noting that $U$ is a
  $\TT_{rich}$-neighbourhood of $s$ by
  Lemma~\ref{lem:easy-gen-inv}\ref{item:easy-gen-inv-iv} finishes the
  proof.
\end{proof}
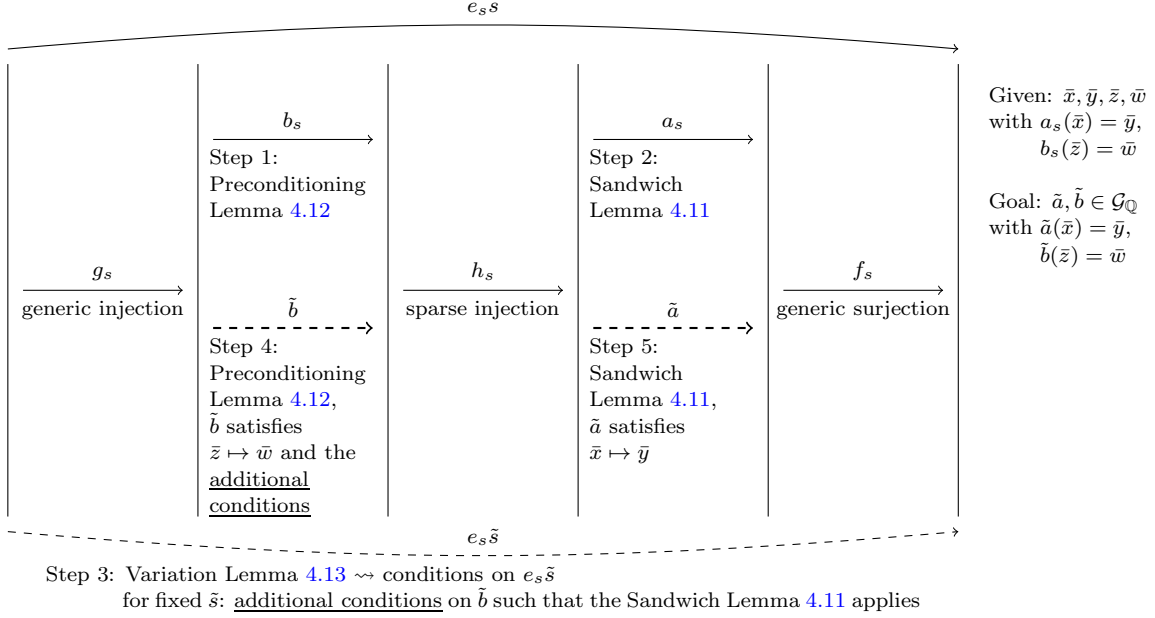
\begin{figure}[h]
  \begin{center}
    \begin{tikzpicture}
      \draw (0,-3) -- (0,3);
      \draw (2.5,-3) -- (2.5,3);
      \draw (5,-3) -- (5,3);
      \draw (7.5,-3) -- (7.5,3);
      \draw (10,-3) -- (10,3);
      \draw (12.5,-3) -- (12.5,3);
      \draw[->,font=\scriptsize] (0.2,0) -- node[above]{$g_{s}$} node[below]{generic injection} (2.3,0);
      \draw[->,font=\scriptsize] (2.7,2) -- node[above]{$b_{s}$}
      node[below]{\parbox{2.2cm}{Step 1:\\Preconditioning\\Lemma~\ref{lem:preconditioning}}} (4.8,2);
      \draw[->,dashed,font=\scriptsize,thick] (2.7,-0.5) --
            node[above]{$\tilde{b}$} node[below]{\parbox{2.2cm}{Step
                4:\\Preconditioning\\Lemma~\ref{lem:preconditioning},\\$\tilde{b}$
                satisfies\\$\bar{z}\mapsto\bar{w}$ and
                the\\\underline{additional}\\\underline{conditions}}}
            (4.8,-0.5);
      \draw[->,font=\scriptsize] (5.2,0) -- node[above]{$h_{s}$} node[below]{sparse injection} (7.3,0);
      \draw[->,font=\scriptsize] (7.7,2) -- node[above]{$a_{s}$}
      node[below]{\parbox{2.2cm}{Step 2:\\Sandwich\\Lemma~\ref{lem:sandwich}}} (9.8,2);
      \draw[->,dashed,font=\scriptsize,thick] (7.7,-0.5) --
            node[above]{$\tilde{a}$}
            node[below]{\parbox{2.2cm}{Step
                5:\\Sandwich\\Lemma~\ref{lem:sandwich},\\$\tilde{a}$
                satisfies\\$\bar{x}\mapsto\bar{y}$}} (9.8,-0.5);
      \draw[->,font=\scriptsize] (10.2,0) -- node[above]{$f_{s}$} node[below]{generic surjection}
      (12.3,0);
      \draw[->,font=\scriptsize] (0,3.2) to[bend left=5] node[above]{$e_{s}s$} (12.5,3.2);
      \draw[->,dashed,font=\scriptsize] (0,-3.2) to[bend right=5]
      node[above]{$e_{s}\tilde{s}$}node[below]{\parbox{11.5cm}{Step 3:
          Variation Lemma~\ref{lem:variation} $\rightsquigarrow $
          conditions on $e_{s}\tilde{s}$\\\textcolor{white}{Step 3: }for fixed $\tilde{s}$: \underline{additional conditions} on $\tilde{b}$ such that the Sandwich Lemma~\ref{lem:sandwich} applies}} (12.5,-3.2);
      \node at (14,1.5) {\scriptsize \parbox{2.2cm}{Given:
        $\bar{x},\bar{y},\bar{z},\bar{w}$\\with
        $a_{s}(\bar{x})=\bar{y}$,\\\textcolor{white}{with
        }$b_{s}(\bar{z})=\bar{w}$\\\\Goal: $\tilde{a},\tilde{b}\in\GG_{\Q}$\\with
        $\tilde{a}(\bar{x})=\bar{y}$,\\\textcolor{white}{with
        }$\tilde{b}(\bar{z})=\bar{w}$}};
    \end{tikzpicture}
  \end{center}
  \caption{Illustration of the proof of Proposition~\ref{prop:rich-top-ppxb}.}\label{fig:proof-rich-top-ppxb}
\end{figure}

\subsection{Proving the Sandwich Lemma~\ref{lem:sandwich}}
\label{sec:prov-sandw-lemma}
The proof of the Sandwich Lemma~\ref{lem:sandwich} requires two
additional auxiliary facts.

Since $\pi$ is a generic surjection, the preimages $\pi^{-1}\{z\}$
have neither a greatest nor a least element. This implies the
following simple yet crucial interpretation of the formulas
$P_{q}(z)$, $L_{q}(z)$ and $R_{q}(z)$ in $\B$:
\begin{lemma}\label{lem:aux-pred-interpretation-b}
  Let $\pi\in\MM_{\Q}$ be a generic surjection. Then the following
  holds for all $q,y\in\Q$:
  \begin{enumerate}[label=(\roman*)]
  \item $\B\models P_{q}(y)$ if and only if $\sigma(q)=\pi(y)$.
  \item $\B\models L_{q}(y)$ if and only if $\sigma(q)\leq\pi(y)$.
  \item $\B\models R_{q}(y)$ if and only if $\sigma(q)\geq\pi(y)$.
  \end{enumerate}
  In particular, $\B\models P_{q}(y)$ implies $\B\models L_{q}(y)$ as
  well as $\B\models R_{q}(y)$.
\end{lemma}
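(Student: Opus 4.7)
The proof is a direct unwinding of definitions, relying essentially on just two facts about the generic surjection $\pi$: it is surjective onto $\Q$, and each fibre $\pi^{-1}\{z\}$ is an irrational interval, hence contains no greatest or least element.

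For (i), I would simply observe that $\B\models P_q(y)$ means $y\in P_q^\B=\pi^{-1}\{\sigma(q)\}$, which is exactly $\pi(y)=\sigma(q)$. For the forward direction of (ii), if $\B\models L_q(y)$ then there is some $u<y$ with $\pi(u)=\sigma(q)$, and monotonicity of $\pi$ gives $\sigma(q)=\pi(u)\leq\pi(y)$; the forward direction of (iii) is symmetric.

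The non-trivial content lies in the converse of (ii), which splits into two cases. If $\sigma(q)<\pi(y)$, then by surjectivity of $\pi$ there exists $u\in\Q$ with $\pi(u)=\sigma(q)$; monotonicity forces $u\leq y$, and equality is ruled out by $\pi(u)\neq\pi(y)$, so $u<y$ witnesses $\B\models L_q(y)$. If instead $\sigma(q)=\pi(y)$, then $y$ itself lies in the fibre $\pi^{-1}\{\sigma(q)\}=(r,t)$ with $r,t\in\I$; here I would use the genericity assumption that $r$ is irrational and $y\in\Q$ to pick any rational $u$ with $r<u<y$, which still satisfies $\pi(u)=\sigma(q)$ (since $u\in(r,t)$) and $u<y$. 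The converse of (iii) is symmetric, using instead a rational $u$ with $y<u<t$.

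The closing ``In particular'' clause then follows from (i)--(iii) combined with the trivial inequalities $\sigma(q)\leq\sigma(q)$ and $\sigma(q)\geq\sigma(q)$. The only step that is not purely definitional is the use of the irrational endpoints of the fibres $\pi^{-1}\{\sigma(q)\}$ to produce a rational witness strictly below (or above) $y$ when $y$ itself lies in the fibre; this is precisely where genericity of $\pi$ is used and is the step worth flagging, though it presents no real obstacle.
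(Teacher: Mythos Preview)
Your proof is correct and matches the paper's approach exactly: the paper does not spell out a proof but simply notes beforehand that the preimages $\pi^{-1}\{z\}$ have neither a greatest nor a least element, which is precisely the fact you invoke (via the irrational endpoints) in the equality case of~(ii) and~(iii). Your write-up is a faithful and complete elaboration of what the paper leaves implicit.
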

The following straightforward lemma intuitively means that our
definition of $P_{q}^{\A}$ is the ``correct'' one:
\begin{lemma}\label{lem:aux-str-prop-x-coll-pred}
  Let $\sigma,\pi,\iota\in\MM_{\Q}$. For all $q,q'\in\Q$, we have
  \begin{displaymath}
    P_{q}^{\A}\cap P_{q'}^{\A}\neq\emptyset\Leftrightarrow
    P_{q}^{\A}=P_{q'}^{\A}\Leftrightarrow \sigma(q)=\sigma(q')\Leftrightarrow
    P_{q}^{\B}=P_{q'}^{\B}\Leftrightarrow P_{q}^{\B}\cap P_{q'}^{\B}\neq\emptyset.
  \end{displaymath}
\end{lemma}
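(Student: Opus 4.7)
The plan is to use $\sigma(q)=\sigma(q')$ as the central pivot of the chain of equivalences, since both $P_q^{\A}$ and $P_q^{\B}$ depend, by definition, only on the value $\sigma(q)$.

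The implications $\sigma(q)=\sigma(q')\Rightarrow P_q^{\A}=P_{q'}^{\A}$ and $\sigma(q)=\sigma(q')\Rightarrow P_q^{\B}=P_{q'}^{\B}$ are immediate from the definitions, because $\sigma^{-1}\{\sigma(q)\}=\sigma^{-1}\{\sigma(q')\}$ and $\pi^{-1}\{\sigma(q)\}=\pi^{-1}\{\sigma(q')\}$. From these equalities I would deduce the intersection statements by producing a witness in each set: on the $\A$-side, $q\in\sigma^{-1}\{\sigma(q)\}$ yields $\iota(q)\in P_q^{\A}=P_{q'}^{\A}$, so the intersection is non-empty; on the $\B$-side, I would invoke $\sigma(q)\in\Img(\pi)$ (which is the relevant case, as in the intended applications $\pi$ is a generic surjection) to obtain a point in $P_q^{\B}=P_{q'}^{\B}$.

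The reverse implication on the $\B$-side is essentially by unwinding definitions: if $y\in P_q^{\B}\cap P_{q'}^{\B}$, then $\pi(y)=\sigma(q)$ and $\pi(y)=\sigma(q')$, forcing $\sigma(q)=\sigma(q')$.

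The main obstacle is the $\A$-side reverse implication $P_q^{\A}\cap P_{q'}^{\A}\neq\emptyset\Rightarrow\sigma(q)=\sigma(q')$, because the convex hull construction can conceivably glue together fibers that happen to be collapsed by $\iota$. My plan is to argue by contraposition: assume $\sigma(q)\neq\sigma(q')$, without loss of generality $\sigma(q)<\sigma(q')$. Then $A:=\sigma^{-1}\{\sigma(q)\}$ and $B:=\sigma^{-1}\{\sigma(q')\}$ are disjoint, and monotonicity of $\sigma$ gives that every element of $A$ is strictly below every element of $B$. Applying the increasing map $\iota$, $\iota(A)$ lies weakly below $\iota(B)$, so the convex hulls of $\iota(A)$ and $\iota(B)$ can overlap only at a common value $c=\sup\iota(A)=\inf\iota(B)$ attained in both; strict monotonicity of $\iota$ in the intended applications rules this out, since then $\iota(a)<\iota(b)$ for every $a\in A$, $b\in B$, and the suprema/infima separate strictly in $\R$, giving disjoint convex hulls in $\Q$. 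This contraposition closes the chain.
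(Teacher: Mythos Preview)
The paper gives no proof (it calls the lemma ``straightforward''), so there is nothing to compare against; your approach via the pivot $\sigma(q)=\sigma(q')$ is the natural one, and you are right that the lemma as literally stated needs the tacit hypotheses that $\iota$ is injective and $\Img(\sigma)\subseteq\Img(\pi)$ --- both hold in every application, where $\sigma,\pi,\iota$ are compatible.

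There is one genuine slip in your $\A$-side argument. From $\iota(a)<\iota(b)$ for all $a\in A:=\sigma^{-1}\{\sigma(q)\}$ and $b\in B:=\sigma^{-1}\{\sigma(q')\}$ you \emph{cannot} conclude that $\sup\iota(A)<\inf\iota(B)$ in $\R$: take $\sigma$ with a jump at $1$ so that $A=(0,1)$, $B=(1,2)$, and $\iota=\id$; then $\sup\iota(A)=1=\inf\iota(B)$. What does follow directly --- and is all you need --- is disjointness of the convex hulls: if $x$ lay in both, there would exist $a\in A$ with $x\le\iota(a)$ and $b\in B$ with $\iota(b)\le x$, forcing $\iota(b)\le\iota(a)$, contradicting $\iota(a)<\iota(b)$. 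Replace the ``suprema/infima separate strictly'' sentence by this one-line argument and your proof is complete.
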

Now we can prove the Sandwich Lemma~\ref{lem:sandwich}:
\begin{proof}[Proof (of the Sandwich Lemma~\ref{lem:sandwich}).]
  Since~(ii) follows by combining~(i) with
  Lemma~\ref{lem:back-and-forth} to obtain a surjective
  $L$-homomorphism $a\colon\A\to\B$ extending~$m$, we only have to
  show~(i). We will verify that the set of all finite partial
  $L$-homomorphisms $m$ from $\A$ to $\B$ preserving all basic
  formulas has the Forth property and the Back property. Let $m$ be
  such a homomorphism.

  \textbf{Forth.} Given $x\in\A\setminus\Dom(m)$, we need to find
  $y\in\B\setminus\Img(m)$ such that the extension $m'$ of $m$ by
  $x\mapsto y$ is a finite partial $L$-homomorphism preserving all
  basic formulas. We will use the following general strategy: We first
  identify the desired position of $y$ with respect to the predicates
  $P_{q},L_{q},R_{q}$, and then employ the fact that $m$ preserves all
  basic formulas to find $y$ such that $m$ and $x\mapsto y$ are
  additionally order-compatible.

  Let $\bar{a}=(a_{1},\dots,a_{n})$ be an ascending enumeration of
  $\Dom(m)$ and let $\bar{b}:=m(\bar{a})$. Since $m$ is strictly
  increasing, $\bar{b}$ is an ascending enumeration of
  $\Img(m)$. Setting $a_{0}:=-\infty$ and $a_{n+1}:=+\infty$ as well
  as $b_{0}:=-\infty$ and $b_{n+1}:=+\infty$, there exists an index
  $i_{0}\in\{0,\dots,n\}$ such that $a_{i_{0}}<x<a_{i_{0}+1}$. We
  distinguish two cases:

  \textit{Case~1} ($\exists q_{0}\in\Q\colon x\in P_{q_{0}}^{\A}$):
  Since $\pi$ is a generic surjection
  (property~\ref{item:compatible-b} of compatibility), it suffices to
  find $y$ with
  \begin{displaymath}
    \sigma(q_{0})=\pi(y)\text{ and }b_{i_{0}}<y<b_{i_{0}+1};
  \end{displaymath}
  note that even though we do not know whether $x$ satisfies
  $L_{q_{0}}$ and $R_{q_{0}}$ in $\A$, the element $y$ certainly
  satisfies $L_{q_{0}}$ and $R_{q_{0}}$ in $\B$, see
  Lemma~\ref{lem:aux-pred-interpretation-b}. Applying that $m$
  preserves $R_{q_{0}}$ and $L_{q_{0}}$, one obtains
  $\pi(b_{i_{0}})\leq\sigma(q_{0})\leq\pi(b_{i_{0}+1})$ via
  Lemma~\ref{lem:aux-pred-interpretation-b} which yields the existence
  of $y$ with the desired properties (by
  property~\ref{item:compatible-b} of compatibility, the preimage
  $\pi^{-1}\{\sigma(q_{0})\}$ does not have a greatest or least
  element).

  \textit{Case~2} ($\nexists q\in\Q\colon x\in P_{q}^{\A}$): In this
  case, we have
  $J_{-}:=\set{q\in\Q}{\A\models L_{q}(x)}=\iota^{-1}(-\infty,x)$ as
  well as
  $J_{+}:=\set{q\in\Q}{\A\models R_{q}(x)}=\iota^{-1}(x,+\infty)$, and
  further $\Q=J_{-}\cupdot J_{+}$ where the common boundary point of
  $J_{-}$ and $J_{+}$ is $\iota^{\dagger}(x)$; note that $J_{\pm}$
  could be empty, in which case
  $\iota^{\dagger}(x)=\pm\infty$. Similarly to~Case~1, it suffices to
  find $y$ with (for $J_{-}=\emptyset$, we put
  $\sup\sigma(J_{-})=-\infty$; analogously for $J_{+}=\emptyset$)
  \begin{displaymath}
    \sup\sigma(J_{-})\leq\pi(y)\leq\inf\sigma(J_{+})\text{ and }b_{i_{0}}<y<b_{i_{0}+1}.
  \end{displaymath}
  This is accomplished by verifying
  \begin{align}
    \label{eq:proof-sandwich-i} \sup\sigma(J_{-})&<\inf\sigma(J_{+})\\
    \label{eq:proof-sandwich-ii} \pi(b_{i_{0}})&\leq\inf\sigma(J_{+})\\
    \label{eq:proof-sandwich-iii}
    \sup\sigma(J_{-})&\leq\pi(b_{i_{0}+1})\\
    \label{eq:proof-sandwich-iv}
    \exists u_{0}\in\Q\colon
    \max(\sup\sigma(J_{-}),\pi(b_{i_{0}}))&\leq u_{0}\leq\min(\inf\sigma(J_{+}),\pi(b_{i_{0}+1})).
  \end{align}
  If $u_{0}$ is as in~\eqref{eq:proof-sandwich-iv}, there exists
  $y\in\pi^{-1}\{u_{0}\}$ with $b_{i_{0}}<y<b_{i_{0}+1}$. Any such $y$
  has the desired properties.

  By our assumption for the current case, the element $x$ is in a
  ``gap'' of $\A$; the inequality~\eqref{eq:proof-sandwich-i}
  expresses that there exists a matching ``gap'' of $\B$. To verify,
  one distinguishes by $\iota^{\dagger}(x)$ and applies convenient
  parts of the properties~\ref{item:compatible-a}
  and~\ref{item:compatible-c} of compatibility: If
  $\iota^{\dagger}(x)=-\infty$, i.e.~$J_{-}=\emptyset$ and $J_{+}=\Q$,
  then $\iota$ is bounded below, so $\sigma$ is bounded below by
  property~\ref{item:compatible-c} of compatibility,
  yielding~\eqref{eq:proof-sandwich-i}. For
  $\iota^{\dagger}(x)=+\infty$, one argues analogously. If $J_{-}$ has
  a greatest element $q$, observe that $\sigma(J_{+})$ consists of
  elements strictly greater than $\sigma(q)$. Use
  $\LP(\sigma)\subseteq\I$ (property~\ref{item:compatible-a} of
  compatibility) combined with
  Lemma~\ref{lem:aux-str-prop-x-coll-pred} to see that
  $\inf\sigma(J_{+})$ is either contained in $\sigma(J_{+})$ or
  irrational. Conclude $\sigma(q)<\inf\sigma(J_{+})$ which
  yields~\eqref{eq:proof-sandwich-i}. If $J_{+}$ has a least element,
  one argues analogously. It remains to consider the case that
  $\iota^{\dagger}(x)\in\R$ and neither $J_{-}$ nor $J_{+}$ has a
  greatest or least element, respectively. Then
  $\iota^{\dagger}(x)\in\I$ and, since $x\notin\LP(\iota)$ by
  property~\ref{item:compatible-c} of compatibility, also
  $\iota^{\dagger}(x)\in\Dc(\iota)$. Another application of
  property~\ref{item:compatible-c} of compatibility yields
  $\iota^{\dagger}(x)\in\Dc(\sigma)$ and
  thus~\eqref{eq:proof-sandwich-i}.
      
  The inequalities~\eqref{eq:proof-sandwich-ii}
  and~\eqref{eq:proof-sandwich-iii} are clear since $m$ preserves all
  basic formulas, the inequality~\eqref{eq:proof-sandwich-iv} is
  immediate from the previous ones.

  \textbf{Back.} Given $y\in\B\setminus\Img(m)$, we need to find
  $x\in\A\setminus\Dom(m)$ such that the extension $m'$ of $m$ by
  $x\mapsto y$ is a finite partial $L$-homomorphism preserving all
  basic formulas. We proceed similarly to the Forth step.

  As before, let $\bar{a}=(a_{1},\dots,a_{n})$ be an ascending
  enumeration of $\Dom(m)$ and let $\bar{b}:=m(\bar{a})$ be the
  corresponding ascending enumeration of $\Img(m)$. We again set
  $a_{0}:=-\infty$ and $a_{n+1}:=+\infty$ as well as $b_{0}:=-\infty$
  and $b_{n+1}:=+\infty$, and define the index $i_{0}\in\{0,\dots,n\}$
  such that $b_{i_{0}}<y<b_{i_{0}+1}$. We further set
  $I_{-}:=\sigma^{-1}(-\infty,\pi(y))$, $I:=\sigma^{-1}\{\pi(y)\}$ and
  $I_{+}:=\sigma^{-1}(\pi(y),+\infty)$. If $x$ satisfies
  \begin{displaymath}
    \sup\iota(I_{-})<x<\inf\iota(I_{+})\text{ and }a_{i_{0}}<x<a_{i_{0}+1},
  \end{displaymath}
  then $m'$ extending $m$ by $x\mapsto y$ preserves all basic formulas
  since $\sup_{q\in I_{-}}P_{q}^{\A}=\sup\iota(I_{-})$ by
  Lemma~\ref{lem:aux-str-prop-x-coll-pred} (and analogously for
  $I_{+}$). If $I\neq\emptyset$, note that even though we cannot
  predict whether $x$ will be contained in $P_{q}^{\A}$ or will be
  below or above $P_{q}^{\A}$ for one (and thus for all) $q\in I$, the
  element $y$ satisfies $P_{q}$ as well as $L_{q},R_{q}$ in $\B$.

  One finds the desired element $x$ by verifying
  \begin{align}
    \label{eq:proof-sandwich-vi} \sup\iota(I_{-})&<\inf\iota(I_{+})\\
    \label{eq:proof-sandwich-vii} a_{i_{0}}&<\inf\iota(I_{+})\\
    \label{eq:proof-sandwich-viii} \sup\iota(I_{-})&<a_{i_{0}+1}
  \end{align}
  and picking any $x$ with
  $\max(\sup\iota(I_{-}),a_{i_{0}})<x<\min(\inf\iota(I_{+}),a_{i_{0}+1})$. Using
  the properties~\ref{item:compatible-a} and~\ref{item:compatible-c}
  of compatibility, the
  inequality~\eqref{eq:proof-sandwich-vi} follows just
  as~\eqref{eq:proof-sandwich-i} did in the Forth step. For the
  inequality~\eqref{eq:proof-sandwich-vii}, observe that
  $a_{i_{0}}<\iota(q)$ for all $q\in I_{+}$ (for otherwise, $m$ would
  not preserve $P_{q}$ and $R_{q}$) and that $\inf\iota(I_{+})$ is
  either contained in $\iota(I_{+})$ or irrational. The same argument
  yields the inequality~\eqref{eq:proof-sandwich-viii}.
\end{proof}

\subsection{Proving the Preconditioning
  Lemma~\ref{lem:preconditioning}}
\label{sec:prov-prec-lemma}
To find $\tilde{b}$, we use a Back\&Forth strategy.
\begin{proof}[Proof (of the Preconditioning
  Lemma~\ref{lem:preconditioning}).]
  Combining the facts that $g$ is continuous at all irrational points
  (by definition), that $\bar{g}(\I)\subseteq\I$ (by
  Lemma~\ref{lem:cont-lp-easy-facts}\ref{item:cont-lp-easy-facts-ii})
  and that any $\bar{b}$ for $b\in\GG_{\Q}$ is continuous at all
  irrational points (by
  Lemma~\ref{lem:cont-lp-easy-facts}\ref{item:cont-lp-easy-facts-iii}),
  we obtain that $bg$ will always be continuous at all irrational
  points as well. Thus,
  \begin{displaymath}
    \Dc^{\I}(hbg)=\bar{g}^{-1}\Big(\bar{b}^{-1}(\Dc(h))\cap\bar{g}(\I)\Big)\cap\I.
  \end{displaymath}
  If we use that $\bar{g}(\I),\bar{b}(\I)\subseteq\I$, we conclude
  \begin{displaymath}
    \Dc^{\I}(hbg)=\bar{g}^{-1}\left(\bar{b}^{-1}(\Dc^{\I}(h))\cap\Img(\bar{g})\right)\cap\I.
  \end{displaymath}
  Hence, it is sufficient to construct $b$ in such a way that
  \begin{equation}\label{eq:proof-preconditioning-i}
    \bar{b}^{-1}(\Dc^{\I}(h))\cap\Img(\bar{g})=\bar{g}(A);
  \end{equation}
  if we set $\iota:=hbg$, then $\LP(\iota)\subseteq\LP(h)\subseteq\I$
  and $\iota$ has the same boundedness type as $h$ since both $g$ and
  $b$ are unbounded-unbounded.
  
  To fulfil~\eqref{eq:proof-preconditioning-i}, note first that there
  exists a countable set $D\subseteq(\R\setminus\Img(\bar{g}))\cap\I$
  which is topologically dense in $\R$: by
  Lemma~\ref{lem:cont-lp-easy-facts}\ref{item:cont-lp-easy-facts-ii},
  the set $(\R\setminus\Img(\bar{g}))\cap\I$ is topologically dense in
  $\R$, so it suffices to pick $D$ to be topologically dense in
  $(\R\setminus\Img(\bar{g}))\cap\I$ (which is possible since the
  latter is a subset of a separable metric space and therefore
  separable itself). In doing so, we can make sure that $D$ contains
  all entries of $\bar{z}'$.

  Instead of directly constructing a map $b\colon\Q\to\Q$ which
  satisfies~\eqref{eq:proof-preconditioning-i}, we will find an order
  isomorphism
  $\beta\colon\Q\cupdot (\bar{g}(A)\cup D)\to \Q\cupdot \Dc^{\I}(h)$
  satisfying
  \begin{align*}
    \beta(\Q)=\Q\quad &\text{and}\quad \beta(\bar{g}(A)\cup
                        D)=\Dc^{\I}(h)\qquad\text{as well as}\\
    \beta(\bar{z})=\bar{w}\quad &\text{and}\quad
                                  \beta(\bar{z}')=\bar{w}', \beta(\bar{z}'')=\bar{w}''
  \end{align*}
  Setting $b:=\beta\vert_{\Q}$ then yields the map as
  in~\eqref{eq:proof-preconditioning-i} since
  $\bar{b}\vert_{\Q\cupdot (\bar{g}(A)\cup D)}=\beta$ by uniqueness of
  the increasing extension (see
  Lemma~\ref{lem:cont-lp-easy-facts}\ref{item:cont-lp-easy-facts-iii})
  and therefore
  \begin{displaymath}
    \bar{b}^{-1}(\Dc^{\I}(h))\cap\Img(\bar{g})=(\bar{g}(A)\cup
    D)\cap\Img(\bar{g})=\bar{g}(A).
  \end{displaymath}
  To obtain $\beta$, we show that the system $\mathcal{S}$ of all
  finite partial order isomorphisms $m$ from
  $\Q\cupdot (\bar{g}(A)\cup D)$ to $\Q\cupdot\Dc^{\I}(h)$ such that
  \begin{displaymath}
    m\big(\Q\cap\Dom(m)\big)=\Q\cap\Img(m)\quad \text{and}\quad m\big((\bar{g}(A)\cup
    D)\cap\Dom(m)\big)=\Dc^{\I}(h)\cap\Img(m)
  \end{displaymath}
  is a Back\&Forth system -- by Lemma~\ref{lem:back-and-forth}, the
  finite partial order isomorphism defined by $\bar{z}\mapsto\bar{w}$,
  $\bar{z}'\mapsto\bar{w}'$ and $\bar{z}''\mapsto\bar{w}''$ (which is
  a member of $\mathcal{S}$) can then be extended to a map $\beta$
  with the desired properties. For the Back step, suppose
  $m\in\mathcal{S}$ and $\gamma\notin\Img(m)$. Let $\gamma''$ be the
  greatest element of $\Img(m)\cap (-\infty,\gamma)$ (or~$-\infty$ if
  no such element exists) and, dually, let $\gamma'$ be the least
  element of $\Img(m)\cap (\gamma,+\infty)$ (or~$+\infty$ if no such
  element exists). If $\gamma\in\Q$,
  pick\footnote{$(m^{-1}(\gamma''),m^{-1}(\gamma'))_{\R}$ refers to
    the interval of all \emph{real} numbers between $m^{-1}(\gamma'')$
    and $m^{-1}(\gamma')$.}
  $\delta\in (m^{-1}(\gamma''),m^{-1}(\gamma'))_{\R}\cap\Q$; if
  $\gamma\in\Dc^{\I}(h)$, pick
  $\delta\in (m^{-1}(\gamma''),m^{-1}(\gamma'))_{\R}\cap
  (\bar{g}(A)\cup D)$ -- by topological density of $\Q$ and
  $\bar{g}(A)\cup D\supseteq D$ in $\R$, this is always possible. Then
  the extension of $m$ by $\delta\mapsto\gamma$ is an element of
  $\mathcal{S}$ as well. For the Forth step, one argues analogously.
\end{proof}

\subsection{Proving the Variation Lemma~\ref{lem:variation}, special
  cases}
\label{sec:prov-var-lemma-special-cases}
In a series of lemmas, we first consider the cases that can occur in
the special situation that $\bar{x}$ and $\bar{y}$ consist of a single
element. In Subsection~\ref{sec:prov-var-lemma-full}, we will then
amalgamate these special cases to a full proof. We will always
consider the same setup:
\begin{notation}\label{not:setup-ppxb}
  We say that $(*)$ holds if we are in the following situation:
  \begin{enumerate}[label=(\alph*)]
  \item $\sigma,\tilde{\sigma}\in\MM_{\Q}$ satisfy
    $\LP(\sigma),\LP(\tilde{\sigma})\subseteq\I$ and have the same
    boundedness type,
  \item $f\in\MM_{\Q}$ is a generic surjection,
  \item $g\in\MM_{\Q}$ is a generic injection,
  \item $h\in\MM_{\Q}$ is a sparse injection with the same boundedness
    type as $\sigma$ and $\tilde{\sigma}$,
  \item $a\in\GG_{\Q}$,
  \item $b\in\GG_{\Q}$ satisfies
    $\bar{b}^{-1}(\Dc^{\I}(h))\cap\Img(\bar{g})=\bar{g}(\Dc^{\I}(\sigma))$,
  \item $\sigma=fahbg$, so $a$ preserves all basic formulas as a map
    from $\A(\sigma,f,\iota)$ to $\B(\sigma,f,\iota)$, where
    $\iota:=hbg$.
  \end{enumerate}
\end{notation}
To simplify the arguments, we reformulate the property of preserving
all basic formulas central to the Sandwich Lemma~\ref{lem:sandwich}.
\begin{lemma}\label{lem:sandwich-reform}
  Let $\sigma,\pi,\iota\in\MM_{\Q}$ such that $\pi$ is a generic
  surjection. Then the map $x\mapsto y$ preserves all basic formulas
  when considered as a finite partial map from $\A$ to $\B$ if and
  only if the following two conditions hold:
  \begin{equation}\label{eq:sandwich-reform}
    \sigma\left(\iota^{-1}(-\infty,x]\right)\subseteq
    (-\infty,\pi(y)]\quad\text{and}\quad \sigma\left(\iota^{-1}[x,+\infty)\right)\subseteq[\pi(y),+\infty).
  \end{equation}
\end{lemma}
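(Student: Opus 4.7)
The plan is to prove both implications by directly unfolding the semantics of the basic formulas $P_q$, $L_q$, $R_q$ in the structures $\A$ and $\B$, after which the equivalence becomes essentially a bookkeeping exercise. On the $\B$-side I would simply invoke Lemma~\ref{lem:aux-pred-interpretation-b}, which is where the assumption that $\pi$ is a generic surjection enters. On the $\A$-side, I would first unpack the convex-hull definition of $P_q^{\A}$: a rational $u$ lies in $P_q^{\A}$ if and only if there exist $q_1,q_2\in\sigma^{-1}\{\sigma(q)\}$ with $\iota(q_1)\leq u\leq\iota(q_2)$. Consequently $\A\models L_q(x)$ iff some $q'\in\sigma^{-1}\{\sigma(q)\}$ satisfies $\iota(q')<x$, and $\A\models R_q(x)$ iff some $q'\in\sigma^{-1}\{\sigma(q)\}$ satisfies $\iota(q')>x$. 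The order formula $z_i<z_j$ is preserved vacuously, since the partial map has a single element in its domain.

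For the direction \eqref{eq:sandwich-reform}~$\Rightarrow$~preservation, assume the two set inclusions hold. If $\A\models L_q(x)$, a witness $q'\in\sigma^{-1}\{\sigma(q)\}$ with $\iota(q')<x$ also lies in $\iota^{-1}(-\infty,x]$; the first inclusion then yields $\sigma(q)=\sigma(q')\leq\pi(y)$, so $\B\models L_q(y)$ via Lemma~\ref{lem:aux-pred-interpretation-b}. The case of $R_q$ is symmetric, using the second inclusion. For $P_q$ with witnesses $q_1,q_2\in\sigma^{-1}\{\sigma(q)\}$ satisfying $\iota(q_1)\leq x\leq\iota(q_2)$, the two inclusions applied respectively to $q_1$ and $q_2$ combine to $\sigma(q)\leq\pi(y)\leq\sigma(q)$, hence $\pi(y)=\sigma(q)$ and $\B\models P_q(y)$.

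For the converse, assume $x\mapsto y$ preserves all basic formulas and let $q\in\iota^{-1}(-\infty,x]$. If $\iota(q)<x$ then $q$ itself witnesses $\A\models L_q(x)$, so preservation together with Lemma~\ref{lem:aux-pred-interpretation-b} gives $\sigma(q)\leq\pi(y)$. If instead $\iota(q)=x$, then taking $q_1=q_2=q$ witnesses $\A\models P_q(x)$, so preservation gives $\pi(y)=\sigma(q)$. In either case $\sigma(q)\leq\pi(y)$, establishing the first inclusion of~\eqref{eq:sandwich-reform}; the second follows by the symmetric argument using $R_q$ and $P_q$. The only subtle point in the whole argument is the convex-hull description of $P_q^{\A}$ — in particular that its endpoints need not themselves belong to $P_q^{\A}$ — but once this is correctly isolated the proof is a mechanical unfolding and I foresee no real obstacle.
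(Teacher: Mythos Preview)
Your proposal is correct and follows essentially the same route as the paper's proof: both directions are handled by unpacking the convex-hull definition of $P_q^{\A}$, invoking Lemma~\ref{lem:aux-pred-interpretation-b} on the $\B$-side, and case-splitting on $\iota(q)<x$ versus $\iota(q)=x$ for the converse. If anything, your treatment of the $L_q$ case in the forward direction is slightly cleaner than the paper's, since you isolate the characterisation of $\A\models L_q(x)$ explicitly rather than referring back to ``the previous argument''.
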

\begin{proof}
  Assume first that~\eqref{eq:sandwich-reform} holds. Since the finite
  partial map in question has a one-element domain and image, we do
  not have to consider formulas of the form $z_{i}<z_{j}$. If
  $\A\models P_{q}(x)$ for some $q\in\Q$, then there exist
  $q',q''\in\Q$ such that $\iota(q'')\leq x\leq\iota(q')$ and
  $\sigma(q'')=\sigma(q)=\sigma(q')$. By~\eqref{eq:sandwich-reform},
  we obtain $\sigma(q)=\sigma(q'')\leq\pi(y)\leq\sigma(q')=\sigma(q)$,
  so $y\in P_{q}^{\B}$. If $\A\models L_{q}(x)$ for some $q\in\Q$,
  there exists $u\in P_{q}^{\A}$ such that $u<y$. By the previous
  argument we have $\sigma(q)=\pi(u)\leq\pi(y)$, so
  $\B\models L_{q}(y)$ (see
  Lemma~\ref{lem:aux-pred-interpretation-b}). Finally, if
  $\A\models R_{q}(x)$ for some $q\in\Q$, we argue analogously.

  Now assume that $x\mapsto y$ preserves all basic formulas as a
  finite partial map from $\A$ to $\B$. We only show
  $\sigma\left(\iota^{-1}(-\infty,x]\right)\subseteq
  (-\infty,\pi(y)]$. Let $q\in \iota^{-1}(-\infty,x]$,
  i.e.~$\iota(q)\leq x$. If $\iota(q)=x$, then $x\in P_{q}^{\A}$, so
  $y\in P_{q}^{\B}$ by assumption and thus $\sigma(q)=\pi(y)$. If
  $\iota(q)<x$, then $x\in L_{q}^{\A}$, so $y\in L_{q}^{\B}$ and thus
  $\sigma(q)\leq\pi(y)$ (see
  Lemma~\ref{lem:aux-pred-interpretation-b}).
\end{proof}
After these preparations, we can state and prove the series of
auxiliary lemmas.
\begin{lemma}[see
  Figure~\ref{fig:preserve-basic-pp-aux-1}]\label{lem:preserve-basic-pp-aux-1}
  Let $\sigma,\tilde{\sigma},f,g,h,a,b$ such that $(*)$ holds. Let
  further $x,y\in\Q$ such that $a(x)=y$.

  Suppose that $x\notin (\inf h,\sup h)$. Then one of
  the following two cases occurs:
  \begin{enumerate}[label=(\arabic*)]
  \item\label{item:preserve-basic-pp-aux-1-i} \begin{enumerate}[label=(\roman*)]
    \item $\Img(h)\subseteq (-\infty,x)$ and
      $\Img(\sigma)\subseteq (-\infty,f(y)]$.
    \item If
      \begin{displaymath}
        \Img(\tilde{\sigma})\subseteq (-\infty,f(y)],\quad\text{i.e.~}\Img(\tilde{\sigma})\cap (f(y),+\infty)=\emptyset,
      \end{displaymath}
      then for any $\tilde{b}\in\GG_{\Q}$, the map $x\mapsto y$
      preserves all basic formulas as a finite partial
      map from $\A(\tilde{\sigma},f,\tilde{\iota})$ to
      $\B(\tilde{\sigma},f,\tilde{\iota})$ where
      $\tilde{\iota}:=h\tilde{b}g$.
    \end{enumerate}
  \item\label{item:preserve-basic-pp-aux-1-ii} \begin{enumerate}[label=(\roman*)]
    \item $\Img(h)\subseteq (x,+\infty)$ and
      $\Img(\sigma)\subseteq [f(y),+\infty)$.
    \item If
      \begin{displaymath}
        \Img(\tilde{\sigma})\subseteq [f(y),+\infty),
        \quad\text{i.e.~}\Img(\tilde{\sigma})\cap (-\infty,f(y))=\emptyset,
      \end{displaymath}
      then for any $\tilde{b}\in\GG_{\Q}$, the map $x\mapsto y$
      preserves all basic formulas as a finite partial
      map from $\A(\tilde{\sigma},f,\tilde{\iota})$ to
      $\B(\tilde{\sigma},f,\tilde{\iota})$ where
      $\tilde{\iota}:=h\tilde{b}g$.
    \end{enumerate}
  \end{enumerate}
\end{lemma}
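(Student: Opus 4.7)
The plan is to split into cases according to which side of $\Img(h)$ the point $x$ lies on, and then use the reformulation of ``preserves all basic formulas'' provided by Lemma~\ref{lem:sandwich-reform} to reduce everything to elementary inclusions. First I would note that, because $h\in\MM_{\Q}$ is strictly increasing on $\Q$, neither $\inf h$ nor $\sup h$ can be attained (an attained extremum would force a maximum or minimum of the domain $\Q$, which does not exist). Combined with the hypothesis $x\notin(\inf h,\sup h)$, this gives the dichotomy $\sup h\le x$ or $x\le\inf h$. These are exactly the two cases \ref{item:preserve-basic-pp-aux-1-i} and \ref{item:preserve-basic-pp-aux-1-ii} of the conclusion, and by symmetry I only treat the first; the second is analogous.

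For part~(i), the non-attainment of $\sup h$ yields $\Img(h)\subseteq(-\infty,\sup h)\subseteq(-\infty,x)$. Since $a\in\GG_{\Q}$ is strictly increasing and $a(x)=y$, applying $a$ gives $a(\Img(h))\subseteq(-\infty,y)$, and since $f$ is (non-strictly) increasing, I obtain $f((-\infty,y))\subseteq(-\infty,f(y)]$. Writing $\sigma=fahbg$ and using $\Img(hbg)\subseteq\Img(h)$, this chains to $\Img(\sigma)\subseteq(-\infty,f(y)]$, as desired.

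For part~(ii), I would invoke Lemma~\ref{lem:sandwich-reform} applied to the one-point map $x\mapsto y$, viewed as a finite partial map from $\A(\tilde{\sigma},f,\tilde{\iota})$ to $\B(\tilde{\sigma},f,\tilde{\iota})$ with $\tilde{\iota}:=h\tilde{b}g$. Thus it suffices to verify the two inclusions $\tilde{\sigma}(\tilde{\iota}^{-1}(-\infty,x])\subseteq(-\infty,f(y)]$ and $\tilde{\sigma}(\tilde{\iota}^{-1}[x,+\infty))\subseteq[f(y),+\infty)$. The first is immediate from the standing assumption $\Img(\tilde{\sigma})\subseteq(-\infty,f(y)]$. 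The second is vacuous: since $\Img(\tilde{\iota})\subseteq\Img(h)\subseteq(-\infty,x)$, independently of the choice of $\tilde{b}\in\GG_{\Q}$, the preimage $\tilde{\iota}^{-1}[x,+\infty)$ is empty. This explains the ``for any $\tilde{b}$'' clause.

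There is no genuine obstacle in this lemma; the only subtlety is the case split, which requires the non-attainment of $\sup h$ and $\inf h$ noted above, and everything else is monotonicity bookkeeping. The reason this step is worth isolating as a lemma is presumably that it handles a ``degenerate'' configuration where $\tilde{\iota}^{-1}[x,+\infty)$ (or its mirror image in Case~\ref{item:preserve-basic-pp-aux-1-ii}) is empty and the Sandwich Lemma's hypothesis becomes trivial, so that no constraints on $\tilde{b}$ beyond $\tilde{b}\in\GG_{\Q}$ need to be introduced when amalgamating the special cases in Subsection~\ref{sec:prov-var-lemma-full}.
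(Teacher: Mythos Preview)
Your proof is correct and follows essentially the same approach as the paper's: split into the two cases via injectivity of $h$, then verify the two inclusions of Lemma~\ref{lem:sandwich-reform}, the second being vacuous since $\tilde{\iota}^{-1}[x,+\infty)=\emptyset$. The only cosmetic difference is that for part~(i) the paper also invokes Lemma~\ref{lem:sandwich-reform} (noting $\iota^{-1}(-\infty,x]=\Q$), whereas you derive $\Img(\sigma)\subseteq(-\infty,f(y)]$ by a direct monotonicity chain through $\sigma=fahbg$; both are equally immediate.
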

\begin{figure}[h]
  \begin{center}
    \begin{tikzpicture}
      \draw (0,-3) -- (0,3);
      \draw (2.5,-3) -- (2.5,3);
      \draw (5,-3) -- (5,3);
      \draw (7,-3) -- (7,3);
      \draw (9,-3) -- (9,3);
      \draw (11.5,-3) -- (11.5,3);
      \draw[->,font=\scriptsize] (0.2,3.1) -- node[above]{$g$} (2.3,3.1);
      \draw[->,font=\scriptsize] (2.7,3.1) -- node[above]{$b$} (4.8,3.1);
      \draw[->,dashed,font=\scriptsize,thick] (2.7,2.7) --
            node[below]{$\tilde{b}$} (4.8,2.7);
      \draw[->,font=\scriptsize] (5.2,3.1) -- node[above]{$h$} (6.8,3.1);
      \draw[->,font=\scriptsize] (7.2,3.1) -- node[above]{$a$} (8.8,3.1);
      \draw[->,font=\scriptsize] (9.2,3.1) -- node[above]{$f$}
      (11.2,3.1);
      \coordinate (x) at (7,0);
      \coordinate (y) at (9,0);
      \coordinate (fy) at (11.5,0);
      \filldraw (x) circle (1pt) node[below right]{\scriptsize $x$};
      \filldraw (y) circle (1pt) node[below right]{\scriptsize $y$};
      \filldraw (fy) circle (1pt) node[right]{\scriptsize $f(y)$};
      \draw[->] (x) -- (y);
      \draw[->] (y) -- (fy);
      
      \draw[->] (5,2) -- (7,-0.5);
      \draw[->] (5,-2) -- (7,-2.5);
      \draw[decorate,decoration={brace,amplitude=5,raise=0.5}] (7,-2.9) --
      (7,-0.2) node[pos=0.5,left=5,black]{\scriptsize
        $\Img(h)$};
      \draw[decorate,decoration={brace,amplitude=5,raise=0.5}] (11.5,-2.9) --
      (11.5,-0.5) node[pos=0.5,left=5,black]{\scriptsize
        $\Img(\sigma)$};
      \draw[decorate,decoration={brace,mirror,amplitude=5,raise=0.5}] (11.5,-3) --
      (11.5,-0.2) node[pos=0.5,right=5,black]{\scriptsize $\Img(\tilde{\sigma})$};
    \end{tikzpicture}
  \end{center}
  \caption{Illustration of Lemma~\ref{lem:preserve-basic-pp-aux-1},
    Case~\ref{item:preserve-basic-pp-aux-1-i}.}\label{fig:preserve-basic-pp-aux-1}
\end{figure}

\begin{proof}
  Our assumption $x\notin (\inf h,\sup h)$ implies that either
  $h(r)\leq x$ for all $r\in\Q$ or $h(r)\geq x$ for all
  $r\in\Q$. Since $h$ is injective, $\Img(h)$ cannot have a greatest
  or least element. Thus, either $\Img(h)\subseteq (-\infty,x)$ or
  $\Img(h)\subseteq (x,+\infty)$. We only treat the former case which
  corresponds to~\ref{item:preserve-basic-pp-aux-1-i}.

  \textbf{(i).} We have to show that
  $\Img(\sigma)\subseteq (-\infty,f(y)]$ -- this follows directly from
  Lemma~\ref{lem:sandwich-reform} by noting
  $\iota^{-1}(-\infty,x]=g^{-1}(b^{-1}(h^{-1}(-\infty,x]))=\Q$.

  \textbf{(ii).} With the same argument as in~(i), observe
  $\tilde{\iota}^{-1}(-\infty,x]=\Q$ and
  $\tilde{\iota}^{-1}[x,+\infty)=\emptyset$. Thus, the statement
  follows by another application of Lemma~\ref{lem:sandwich-reform}.
\end{proof}
\begin{lemma}[see
  Figure~\ref{fig:preserve-basic-pp-aux-2}]\label{lem:preserve-basic-pp-aux-2}
  Let $\sigma,\tilde{\sigma},f,g,h,a,b$ such that $(*)$ holds. Let
  further $x,y\in\Q$ such that $a(x)=y$.

  Suppose that $x\in (\inf h,\sup h)$
  with\footnote{Note that this encompasses the case $x\in\Img(h)$ and
    in particular $x\in\Img(\iota)$.}  $r:=h^{\dagger}(x)\in\Q$, and
  set $p:=b^{-1}(r)$ as well as $I_{-}:=\iota^{-1}(-\infty,x]$ and
  $I_{+}:=\iota^{-1}[x,+\infty)$. Then the following holds:
  \begin{enumerate}[label=(\roman*)]
  \item $I_{-}$ and $I_{+}$ are rational intervals with
    $\sigma(I_{-})\subseteq (-\infty,f(y)]$ and
    $\sigma(I_{+})\subseteq [f(y),+\infty)$.
  \item If
    \begin{displaymath}
      \tilde{\sigma}\left(I_{-}\right)\subseteq
      (-\infty,f(y)] \quad\text{and}\quad       \tilde{\sigma}\left(I_{+}\right)\subseteq
      [f(y),+\infty),
    \end{displaymath}
    then for any $\tilde{b}\in\GG_{\Q}$ with
    $\tilde{b}(p)=b(p)\,(=r)$, the map $x\mapsto y$ preserves all
    basic formulas as a finite partial map from
    $\A(\tilde{\sigma},f,\tilde{\iota})$ to
    $\B(\tilde{\sigma},f,\tilde{\iota})$ where
    $\tilde{\iota}:=h\tilde{b}g$.
  \end{enumerate}
\end{lemma}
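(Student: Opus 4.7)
The plan is to deduce both parts from the reformulation in Lemma~\ref{lem:sandwich-reform} combined with the observation that the assumption $\tilde{b}(p) = b(p) = r$ forces the preimages $\tilde{\iota}^{-1}(-\infty,x]$ and $\iota^{-1}(-\infty,x]$ (and analogously the $[x,+\infty)$-preimages) to coincide as subsets of $\Q$, reducing (ii) to a direct application of the hypothesis on $\tilde{\sigma}$.

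First I would analyse $h^{-1}(-\infty,x]$ and $h^{-1}[x,+\infty)$. Using that $h$ is strictly increasing and injective and that $r = h^{\dagger}(x)\in\Q$, the standard case split (either $x\in\Img(h)$ with $h(r)=x$, or $x\notin\Img(h)$ with $h(r)<x$, or $x\notin\Img(h)$ with $h(r)>x$) shows that both sets are rational intervals with the single boundary point $r$ (with $r$ included or excluded depending on the case). Since $b\in\GG_{\Q}$ and $b(p)=r$, the $b$-preimage of such an interval is the rational interval with boundary $p$ (on the same side, included or excluded accordingly); the exact same description holds for $\tilde{b}$ under the assumption $\tilde{b}(p)=r$. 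Finally, applying Lemma~\ref{lem:easy-gen-inv}\ref{item:easy-gen-inv-iii} to the generic injection $g$ shows that $g^{-1}$ sends rational intervals to rational intervals. Composing the three steps yields that $I_{-}$ and $I_{+}$ are rational intervals, and crucially that
\[
\tilde{\iota}^{-1}(-\infty,x]=I_{-},\qquad\tilde{\iota}^{-1}[x,+\infty)=I_{+},
\]
where $\tilde{\iota}=h\tilde{b}g$ — this is the heart of the argument and the payoff for requiring $\tilde{b}(p)=b(p)$.

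For part~(i), the identity $\sigma=fahbg$ in $(*)$ says that $a$ preserves all basic formulas as a map $\A(\sigma,f,\iota)\to\B(\sigma,f,\iota)$. Since $a(x)=y$ and $f$ is a generic surjection, Lemma~\ref{lem:sandwich-reform} applied to the one-point map $x\mapsto y$ yields
\[
\sigma\left(\iota^{-1}(-\infty,x]\right)\subseteq(-\infty,f(y)]\quad\text{and}\quad\sigma\left(\iota^{-1}[x,+\infty)\right)\subseteq[f(y),+\infty),
\]
which is exactly $\sigma(I_{-})\subseteq(-\infty,f(y)]$ and $\sigma(I_{+})\subseteq[f(y),+\infty)$. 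For part~(ii), the coincidence of preimages derived above converts the hypothesis $\tilde{\sigma}(I_{-})\subseteq(-\infty,f(y)]$ and $\tilde{\sigma}(I_{+})\subseteq[f(y),+\infty)$ into the two inclusions in~\eqref{eq:sandwich-reform} for the triple $(\tilde{\sigma},f,\tilde{\iota})$; another invocation of Lemma~\ref{lem:sandwich-reform} (the other direction) concludes that $x\mapsto y$ preserves all basic formulas as a finite partial map $\A(\tilde{\sigma},f,\tilde{\iota})\to\B(\tilde{\sigma},f,\tilde{\iota})$.

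The main obstacle is bookkeeping rather than conceptual: one must carefully track whether the boundary point $r$ (respectively $p$) is included in each of the intervals obtained by pulling back through $h$, $b$ (or $\tilde{b}$), and $g$, and verify that the case analysis behaves uniformly enough that the only data distinguishing $\iota$ from $\tilde{\iota}$ — namely the value at $p$ — is equalised by the single condition $\tilde{b}(p)=b(p)=r$. Once this is noted, both statements reduce in one line each to Lemma~\ref{lem:sandwich-reform}.
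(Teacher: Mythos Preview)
Your proposal is correct and follows essentially the same approach as the paper: both arguments identify that $h^{-1}(-\infty,x]$ and $h^{-1}[x,+\infty)$ are rational intervals with boundary point $r$, pull back through $b$ (and $\tilde b$) to rational intervals with boundary $p$, then through $g$ via Lemma~\ref{lem:easy-gen-inv}\ref{item:easy-gen-inv-iii}, and conclude both parts from Lemma~\ref{lem:sandwich-reform} together with the key observation that $\tilde b(p)=b(p)=r$ forces $\tilde\iota^{-1}(-\infty,x]=I_{-}$ and $\tilde\iota^{-1}[x,+\infty)=I_{+}$. Your case split on the position of $h(r)$ relative to $x$ is slightly more explicit than the paper's phrasing, but the content is identical.
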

\begin{figure}[h]
  \begin{center}
    \begin{tikzpicture}
      \draw (0,-3) -- (0,3);
      \draw (2.5,-3) -- (2.5,3);
      \draw (5,-3) -- (5,3);
      \draw (7,-3) -- (7,3);
      \draw (9,-3) -- (9,3);
      \draw (11.5,-3) -- (11.5,3);
      \draw[->,font=\scriptsize] (0.2,3.1) -- node[above]{$g$} (2.3,3.1);
      \draw[->,font=\scriptsize] (2.7,3.1) -- node[above]{$b$} (4.8,3.1);
      \draw[->,dashed,font=\scriptsize,thick] (2.7,2.7) --
            node[below]{$\tilde{b}$} (4.8,2.7);
      \draw[->,font=\scriptsize] (5.2,3.1) -- node[above]{$h$} (6.8,3.1);
      \draw[->,font=\scriptsize] (7.2,3.1) -- node[above]{$a$} (8.8,3.1);
      \draw[->,font=\scriptsize] (9.2,3.1) -- node[above]{$f$}
      (11.2,3.1);
      \coordinate (x) at (7,0);
      \coordinate (y) at (9,0);
      \coordinate (fy) at (11.5,0);
      \filldraw (x) circle (1pt) node[below right]{\scriptsize $x$};
      \filldraw (y) circle (1pt) node[below right]{\scriptsize $y$};
      \filldraw (fy) circle (1pt) node[right]{\scriptsize $f(y)$};
      \draw[->] (x) -- (y);
      \draw[->] (y) -- (fy); 

      \coordinate (qq) at (0,-0.5);
      \coordinate (p) at (2.5,-0.5);
      \coordinate (r) at (5,-0.5);
      \coordinate (pl) at (2.5,-1);
      \coordinate (pu) at (2.5,0);
      \coordinate (xl) at (7,-0.5);
      \coordinate (xu) at (7,0.5);
      \filldraw (p) circle (1pt) node[below right] {\scriptsize $p$};
      \filldraw (r) circle (1pt) node[below right] {\scriptsize
        \!\!$r=b(p)=\tilde{b}(p)$};
      \filldraw (xl) circle (1pt) node[below right] {\scriptsize $h(r)$};
      \filldraw (xu) circle (1pt) node[right] {\scriptsize $\inf h(r,+\infty)$};
      \draw[->,thick,dotted] (qq) -- (pl);
      \draw[->,thick,dotted] (qq) -- (pu);
      \draw[->] (p) -- (r);
      \draw[->,dashed,thick] (p) to[bend left=10] (r);
      \draw[->] (r) -- (xl);
      \draw[->,thick,dotted] (r) -- (xu);

      \draw[decorate,decoration={brace,amplitude=5,raise=0.5}] (0,-3) --
      (qq) node[pos=0.5,left=5,black]{\scriptsize
        \parbox{1.5cm}{\hfill $I_{-}=$ \\ $\iota^{-1}(-\infty,x]$}};
      \draw[decorate,decoration={brace,mirror,amplitude=5,raise=0.5}] (0,-3) --
      (qq) node[pos=0.5,right=5,black]{\scriptsize
        $\tilde{\iota}^{-1}(-\infty,x]$};
      \draw[decorate,decoration={brace,amplitude=5,raise=0.5}] (qq) --
      (0,3) node[pos=0.5,left=5,black]{\scriptsize
        \parbox{1.5cm}{\hfill $I_{+}=$ \\ $\iota^{-1}[x,+\infty)$}};
      \draw[decorate,decoration={brace,mirror,amplitude=5,raise=0.5}] (qq) --
      (0,3) node[pos=0.5,right=5,black]{\scriptsize
        $\tilde{\iota}^{-1}[x,+\infty)$};
      \draw[decorate,decoration={brace,amplitude=5,raise=0.5}] (11.5,-2.8) --
      (11.5,-0.5) node[pos=0.5,left=5,black]{\scriptsize $\sigma(I_{-})$};
      \draw[decorate,decoration={brace,amplitude=5,raise=0.5}] (11.5,0.2) --
      (11.5,3) node[pos=0.5,left=5,black]{\scriptsize $\sigma(I_{+})$};
      \draw[decorate,decoration={brace,amplitude=5,mirror,raise=0.5}] (11.5,-3) --
      (11.5,-0.2) node[pos=0.5,right=5,black]{\scriptsize $\tilde{\sigma}(I_{-})$};
      \draw[decorate,decoration={brace,amplitude=5,mirror,raise=0.5}] (11.5,0.5) --
      (11.5,2.8) node[pos=0.5,right=5,black]{\scriptsize $\tilde{\sigma}(I_{+})$};
    \end{tikzpicture}
  \end{center}
  \caption{Illustration of Lemma~\ref{lem:preserve-basic-pp-aux-2}.}\label{fig:preserve-basic-pp-aux-2}
\end{figure}
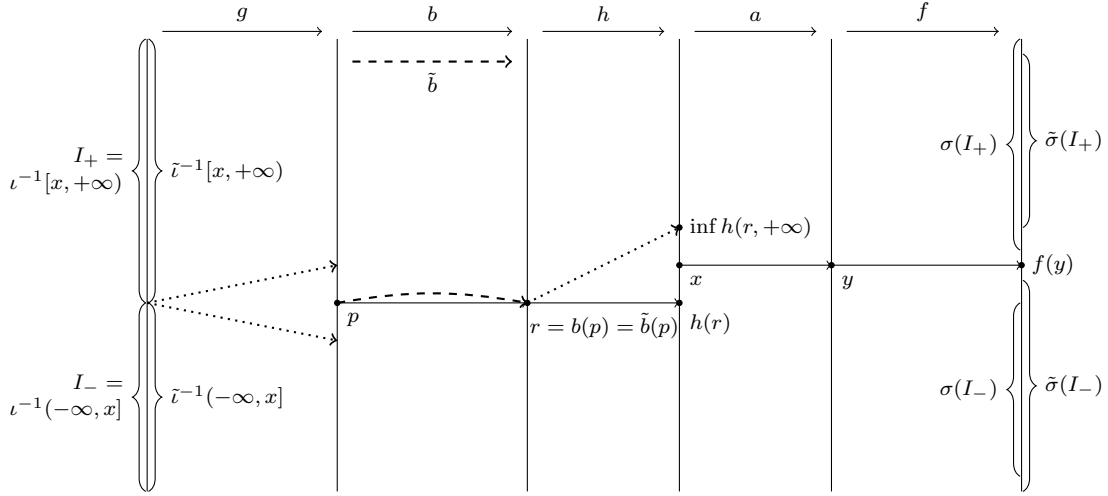
\begin{proof}
  $ $\par\nobreak\ignorespaces\textbf{(i).}  Since
  $r=h^{\dagger}(x)\in\Q$, we know that $h^{-1}(-\infty,x)$ and
  $h^{-1}(x,+\infty)$ are rational intervals, both with boundary point
  $r$. Hence, the intervals $h^{-1}(-\infty,x]$ and
  $h^{-1}[x,+\infty)$ also have boundary point $r$ (if $x\in\Img(h)$,
  the intervals become closed; otherwise, they do not change). By
  $b\in\GG_{\Q}$, the preimages $b^{-1}(h^{-1}(-\infty,x])$ and
  $b^{-1}(h^{-1}[x,+\infty))$ are rational intervals as well, both
  with boundary point $p=b^{-1}(r)$. Finally,
  $I_{-}=g^{-1}(b^{-1}(h^{-1}(-\infty,x]))$ and
  $I_{+}=g^{-1}(b^{-1}(h^{-1}[x,+\infty)))$ are rational intervals by
  applying Lemma~\ref{lem:easy-gen-inv}\ref{item:easy-gen-inv-iii}.

  The inclusions $\sigma(I_{-})\subseteq (-\infty,f(y)]$ and
  $\sigma(I_{+})\subseteq [f(y),+\infty)$ follow from
  Lemma~\ref{lem:sandwich-reform}.

  \textbf{(ii).} We claim that
  $\tilde{\iota}^{-1}(-\infty,x]=\iota^{-1}(-\infty,x]=I_{-}$ and
  $\tilde{\iota}^{-1}[x,+\infty)=\iota^{-1}[x,+\infty)=I_{+}$; the
  statement then follows by another application of
  Lemma~\ref{lem:sandwich-reform}. To this end, it suffices to note
  that $\tilde{b}^{-1}(h^{-1}(-\infty,x])$ coincides with
  $b^{-1}(h^{-1}(-\infty,x])$ since they have the same structure
  (open/closed) as $h^{-1}(-\infty,x]$ and the same boundary point,
  namely $\tilde{b}^{-1}(r)=p=b^{-1}(r)$. Analogously,
  $\tilde{b}^{-1}(h^{-1}[x,+\infty))$ coincides with
  $b^{-1}(h^{-1}[x,+\infty))$.
\end{proof}
\begin{lemma}[see Figures~\ref{fig:preserve-basic-pp-aux-3-case1}
  and~\ref{fig:preserve-basic-pp-aux-3-case2}]\label{lem:preserve-basic-pp-aux-3}
  Let $\sigma,\tilde{\sigma},f,g,h,a,b$ such that $(*)$ holds. Let
  further $x,y\in\Q$ such that $a(x)=y$.

  Suppose that $x\in (\inf h,\sup h)$ with
  $\gamma:=h^{\dagger}(x)\in\I$ and $q:=\iota^{\dagger}(x)\in\Q$. Then
  one of the following four cases occurs:
  \begin{enumerate}[label=(\arabic*)]
  \item\label{item:preserve-basic-pp-aux-3-case1}
    \begin{enumerate}[label=(\roman*)]
    \item $g(q)<\bar{b}^{-1}(\gamma)<\inf g(q,+\infty)$. Additionally,
      there
      exist $u,v\in\Q$ such that\\
      $g(q)<u<v<\inf g(q,+\infty)$ and $hb(u)<x<hb(v)$. Finally,
      $\sigma(q)\leq f(y)$ and
      $\sigma(q,+\infty)\subseteq [f(y),+\infty)$.
    \item If
      \begin{displaymath}
        \tilde{\sigma}(q)=\sigma(q) \quad\text{and}\quad \tilde{\sigma}(q,+\infty)\subseteq [f(y),+\infty),
      \end{displaymath}
      then for any $\tilde{b}\in\GG_{\Q}$ with $\tilde{b}(u)=b(u)$ and
      $\tilde{b}(v)=b(v)$, the map $x\mapsto y$ preserves all basic
      formulas as a finite partial map from
      $\A(\tilde{\sigma},f,\tilde{\iota})$ to
      $\B(\tilde{\sigma},f,\tilde{\iota})$ where
      $\tilde{\iota}:=h\tilde{b}g$.
    \end{enumerate}
  \item\label{item:preserve-basic-pp-aux-3-case2}
    \begin{enumerate}[label=(\roman*)]
    \item $g(q)<\bar{b}^{-1}(\gamma)=\inf g(q,+\infty)$. Additionally,
      there exists $u\in\Q$ such that\\
      $g(q)<u<\inf g(q,+\infty)$ and $hb(u)<x$. Finally,
      $\sigma(q)\leq f(y)$ and
      $\sigma(q,+\infty)\subseteq [f(y),+\infty)$ as well as
      $\bar{b}^{-1}(\gamma)\in(\R\setminus\Img(\bar{g}))\cap\I$ and
      $\gamma\in\Dc^{\I}(h)$.
    \item If
      \begin{displaymath}
        \tilde{\sigma}(q)=\sigma(q) \quad\text{and}\quad \tilde{\sigma}(q,+\infty)\subseteq [f(y),+\infty),
      \end{displaymath}
      then for any $\tilde{b}\in\GG_{\Q}$ with $\tilde{b}(u)=b(u)$ and
      $\bar{\tilde{b}}(\bar{b}^{-1}(\gamma))=\bar{b}(\bar{b}^{-1}(\gamma))=\gamma$
      (so $\bar{\tilde{b}}^{-1}(\gamma)=\bar{b}^{-1}(\gamma)$), the
      map $x\mapsto y$ preserves all basic formulas as a finite
      partial map from $\A(\tilde{\sigma},f,\tilde{\iota})$ to
      $\B(\tilde{\sigma},f,\tilde{\iota})$ where
      $\tilde{\iota}:=h\tilde{b}g$.
    \end{enumerate}
  \item\label{item:preserve-basic-pp-aux-3-case3}
    \begin{enumerate}[label=(\roman*)]
    \item $\sup g(-\infty,q)<\bar{b}^{-1}(\gamma)<g(q)$. Additionally,
      there
      exist $u,v\in\Q$ such that\\
      $\sup g(-\infty,q)<u<v<g(q)$ and $hb(u)<x<hb(v)$. Finally,
      $\sigma(-\infty,q)\subseteq (-\infty,f(y)]$ and
      $\sigma(q)\geq f(y)$.
    \item If
      \begin{displaymath}
        \sigma(-\infty,q)\subseteq (-\infty,f(y)]\quad\text{and}\quad\tilde{\sigma}(q)=\sigma(q),
      \end{displaymath}
      then for any $\tilde{b}\in\GG_{\Q}$ with $\tilde{b}(u)=b(u)$ and
      $\tilde{b}(v)=b(v)$, the map $x\mapsto y$ preserves all basic
      formulas as a finite partial map from
      $\A(\tilde{\sigma},f,\tilde{\iota})$ to
      $\B(\tilde{\sigma},f,\tilde{\iota})$ where
      $\tilde{\iota}:=h\tilde{b}g$.
    \end{enumerate}
  \item\label{item:preserve-basic-pp-aux-3-case4}
    \begin{enumerate}[label=(\roman*)]
    \item $\sup g(-\infty,q)=\bar{b}^{-1}(\gamma)<g(q)$. Additionally,
      there exists $v\in\Q$ such that\\
      $\sup g(-\infty,q)<v<g(q)$ and $x<hb(v)$. Finally,
      $\sigma(-\infty,q)\subseteq (-\infty,f(y)]$ and
      $\sigma(q)\geq f(y)$ as well as
      $\bar{b}^{-1}(\gamma)\in(\R\setminus\Img(\bar{g}))\cap\I$ and
      $\gamma\in\Dc^{\I}(h)$.
    \item If
      \begin{displaymath}
        \sigma(-\infty,q)\subseteq
        (-\infty,f(y)]\quad\text{and}\quad\tilde{\sigma}(q)=\sigma(q),
      \end{displaymath}
      then for any $\tilde{b}\in\GG_{\Q}$ with $\tilde{b}(v)=b(v)$ and
      $\bar{\tilde{b}}(\bar{b}^{-1}(\gamma))=\bar{b}(\bar{b}^{-1}(\gamma))=\gamma$
      (so $\bar{\tilde{b}}^{-1}(\gamma)=\bar{b}^{-1}(\gamma)$), the
      map $x\mapsto y$ preserves all basic formulas as a finite
      partial map from $\A(\tilde{\sigma},f,\tilde{\iota})$ to
      $\B(\tilde{\sigma},f,\tilde{\iota})$ where
      $\tilde{\iota}:=h\tilde{b}g$.
    \end{enumerate}
  \end{enumerate}
\end{lemma}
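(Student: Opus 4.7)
The plan is to carry out a case analysis based on the position of $\bar{b}^{-1}(\gamma)$ relative to the ``gap'' that $g$ exhibits at $q \in \Q \subseteq \Dc(g)$. First observe that $\iota(q) \neq x$: were $\iota(q) = h(b(g(q))) = x$, then by Lemma~\ref{lem:easy-gen-inv}\ref{item:easy-gen-inv-i} we would have $h^{\dagger}(x) = b(g(q)) \in \Q$, contradicting $\gamma \in \I$. Using the strict monotonicity of $h$ together with $\gamma = h^{\dagger}(x)$, the alternatives $\iota(q) < x$ and $\iota(q) > x$ correspond respectively to $g(q) < \bar{b}^{-1}(\gamma)$ and $g(q) > \bar{b}^{-1}(\gamma)$, with strictness forced by $b(g(q)) \in \Q$ versus $\gamma \in \I$. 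Since $q = \iota^{\dagger}(x)$, any $q' > q$ (resp.\ $q' < q$) whose $g$-image passed $\bar{b}^{-1}(\gamma)$ on the appropriate side would contradict the supremum/infimum characterising $\iota^{\dagger}(x)$; this pins $\bar{b}^{-1}(\gamma)$ into the gap $(g(q), \inf g(q,+\infty)]$ or $[\sup g(-\infty,q), g(q))$. The two endpoints of each gap split the analysis into the four cases.

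Cases~(1)--(3) and (2)--(4) are mutually symmetric, so I would treat (1) and (2) in full. For part~(i) of Case~(1), density of $\Q$ in $\R$ produces rationals $u, v$ with $g(q) < u < v < \inf g(q,+\infty)$, and $b(u) < \gamma < b(v)$ are strict because $b(u), b(v) \in \Q$ while $\gamma \in \I$; the characterisation of $\gamma$ as the common jump point of $h$ over $x$ then yields $hb(u) < x < hb(v)$. For the bounds on $\sigma$: any $q' > q$ has $g(q') \geq \inf g(q,+\infty) > \bar{b}^{-1}(\gamma)$, hence $b(g(q')) > \gamma$, hence $\iota(q') > x$; combined with $\iota(q) < x$, this places $q \in \iota^{-1}(-\infty, x]$ and $(q,+\infty) \cap \Q \subseteq \iota^{-1}[x,+\infty)$, so Lemma~\ref{lem:sandwich-reform} applied to $\sigma = fa\iota$ delivers both $\sigma(q) \leq f(y)$ and $\sigma(q,+\infty) \subseteq [f(y),+\infty)$.

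For part~(ii) of Case~(1), the constraints $\tilde{b}(u) = b(u)$ and $\tilde{b}(v) = b(v)$ propagate through the composition to give $\tilde{\iota}^{-1}(-\infty,x] = (-\infty,q] \cap \Q$ and $\tilde{\iota}^{-1}[x,+\infty) = (q,+\infty) \cap \Q$: for $q' \leq q$, $g(q') \leq g(q) < u$ forces $\tilde{b}(g(q')) < \tilde{b}(u) = b(u) < \gamma$ and hence $\tilde{\iota}(q') < x$, and dually $\tilde{\iota}(q') > x$ for $q' > q$. Combining the hypotheses $\tilde{\sigma}(q) = \sigma(q) \leq f(y)$ and $\tilde{\sigma}(q,+\infty) \subseteq [f(y),+\infty)$ with monotonicity of $\tilde{\sigma}$ then verifies both inclusions required by Lemma~\ref{lem:sandwich-reform}.

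The main obstacle is the boundary Case~(2) (and symmetrically (4)), where $\bar{b}^{-1}(\gamma) = \inf g(q,+\infty)$ and no rational $v$ fits strictly between $\bar{b}^{-1}(\gamma)$ and $\inf g(q,+\infty)$. The strengthened hypothesis $\bar{\tilde{b}}(\bar{b}^{-1}(\gamma)) = \gamma$ substitutes for the missing $v$: for any $q' > q$, $g(q') \geq \inf g(q,+\infty) = \bar{b}^{-1}(\gamma)$ with equality ruled out by $g(q') \in \Q$ and $\bar{b}^{-1}(\gamma) \in \I$, so $\bar{\tilde{b}}(g(q')) > \gamma$ and $\tilde{\iota}(q') > x$; the remainder proceeds as in Case~(1). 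The two structural side-claims of Case~(2) are verified separately: $\bar{b}^{-1}(\gamma) \in \I$ by Lemma~\ref{lem:cont-lp-easy-facts}\ref{item:cont-lp-easy-facts-iii}; $\bar{b}^{-1}(\gamma) \notin \Img(\bar{g})$ because $\bar{g}(\delta) > \inf g(q,+\infty)$ for every $\delta > q$ (as $g$ is strictly increasing with no minimum on $(q,+\infty) \cap \Q$) while $\bar{g}(\delta) \leq g(q) < \inf g(q,+\infty)$ for $\delta \leq q$; and $\gamma \in \Dc^{\I}(h)$ because Lemma~\ref{lem:cont-lp-easy-facts}\ref{item:cont-lp-easy-facts-ii} gives $\bar{h}(\gamma) \in \I$, whereas $x \in \Q$ forces $\bar{h}(\gamma) < x \leq \inf h(\gamma,+\infty)$ strictly, exhibiting a jump.
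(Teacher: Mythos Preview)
Your argument is correct and follows essentially the same route as the paper's proof: split by $\iota(q)\lessgtr x$, locate $\bar b^{-1}(\gamma)$ in the gap of $g$ at $q$, and verify the inclusions of Lemma~\ref{lem:sandwich-reform} via the constraints on $\tilde b$. The only cosmetic differences are that you argue pointwise (showing $\tilde\iota(q')\lessgtr x$ for $q'\lessgtr q$) where the paper computes preimages $\tilde\iota^{-1}(-\infty,x]\subseteq g^{-1}(-\infty,v)$ directly, and you derive $\gamma\in\Dc^{\I}(h)$ via $\bar h(\gamma)\in\I$ rather than via $x\notin\LP(h)$; both are equally valid.
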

\begin{figure}[h]
  \begin{center}
    \begin{tikzpicture}
      \draw (0,-3) -- (0,3);
      \draw (2.5,-3) -- (2.5,3);
      \draw (5,-3) -- (5,3);
      \draw (7,-3) -- (7,3);
      \draw (9,-3) -- (9,3);
      \draw (11.5,-3) -- (11.5,3);
      \draw[->,font=\scriptsize] (0.2,3.1) -- node[above]{$g$} (2.3,3.1);
      \draw[->,font=\scriptsize] (2.7,3.1) -- node[above]{$b$} (4.8,3.1);
      \draw[->,dashed,font=\scriptsize,thick] (2.7,2.7) --
            node[below]{$\tilde{b}$} (4.8,2.7);
      \draw[->,font=\scriptsize] (5.2,3.1) -- node[above]{$h$} (6.8,3.1);
      \draw[->,font=\scriptsize] (7.2,3.1) -- node[above]{$a$} (8.8,3.1);
      \draw[->,font=\scriptsize] (9.2,3.1) -- node[above]{$f$}
      (11.2,3.1);
      \coordinate (x) at (7,0);
      \coordinate (y) at (9,0);
      \coordinate (fy) at (11.5,0);
      \filldraw (x) circle (1pt) node[below right]{\scriptsize $x$};
      \filldraw (y) circle (1pt) node[below right]{\scriptsize $y$};
      \filldraw (fy) circle (1pt) node[right]{\scriptsize $f(y)$};
      \draw[->] (x) -- (y);
      \draw[->] (y) -- (fy);

      \coordinate (q) at (0,-1);
      \coordinate (gql) at (2.5,-1.5);
      \coordinate (binvgamma) at (2.5,0);
      \coordinate (infg) at (2.5,1.5);
      \coordinate (u) at (2.5,-1);
      \coordinate (v) at (2.5,1);
      \coordinate (gamma) at (5,0);
      \coordinate (bu) at (5,-1);
      \coordinate (bv) at (5,1);
      \coordinate (xl) at (7,-0.5);
      \coordinate (xu) at (7,0.5);
      \coordinate (sigmaq) at (11.5,-1);
      \filldraw (q) circle (1pt) node[left]{\scriptsize $q$};
      \filldraw (gql) circle (1pt) node[right]{\scriptsize $g(q)$};
      \filldraw (binvgamma) circle (1pt) node[left]
      {\scriptsize $b^{-1}(\gamma)$};
      \filldraw (infg) circle (1pt) node[right] {\scriptsize
        $\inf g(q,+\infty)$};
      \filldraw (u) circle (1pt) node[below right]{\scriptsize $u$};
      \filldraw (v) circle (1pt) node[below right]{\scriptsize $v$};
      \filldraw (gamma) circle (1pt) node[below right]{\scriptsize
        $\gamma$};
      \filldraw (bu) circle (1pt) node[right]{\scriptsize
        $b(u)=\tilde{b}(u)$};
      \filldraw (bv) circle (1pt) node[right]{\scriptsize
        $b(v)=\tilde{b}(v)$};
     \filldraw (sigmaq) circle (1pt) node[below left]{\scriptsize
       $\sigma(q)$} node[below right]{\scriptsize $=\tilde{\sigma}(q)$};

      \draw[->] (q) -- (gql);
      \draw[->,dotted,thick] (q) -- (infg);
      \draw[->] (binvgamma) -- (gamma);
      \draw[->] (u) -- (bu);
      \draw[->,dashed,thick] (u) to[bend left=10] (bu);
      \draw[->] (v) -- (bv);
      \draw[->,dashed,thick] (v) to[bend left=10] (bv);
      \draw[->,dotted,thick] (gamma) -- (xl);
      \draw[->,dotted,thick] (gamma) -- (xu);

      \draw[decorate,decoration={brace,amplitude=5,raise=0.5}] (0,-3) --
      (q) node[pos=0.5,left=5,black]{\scriptsize
        $\iota^{-1}(-\infty,x]$};
      \draw[decorate,decoration={brace,mirror,amplitude=5,raise=0.5}] (0,-3) --
      (q) node[pos=0.5,right=5,black]{\scriptsize
        $\tilde{\iota}^{-1}(-\infty,x]$};
      \draw[decorate,decoration={brace,amplitude=5,raise=0.5}] (q) --
      (0,3) node[pos=0.5,left=5,black]{\scriptsize $\iota^{-1}[x,+\infty)$};
      \draw[decorate,decoration={brace,mirror,amplitude=5,raise=0.5}] (q) --
      (0,3) node[pos=0.5,right=5,black]{\scriptsize $\tilde{\iota}^{-1}[x,+\infty)$};

      \draw[decorate,decoration={brace,amplitude=5,raise=0.5}] (11.5,0.5) --
      (11.5,3) node[pos=0.5,left=5,black]{\scriptsize $\sigma(q,+\infty)$};      
      \draw[decorate,decoration={brace,amplitude=5,mirror,raise=0.5}] (11.5,0.2) --
      (11.5,3) node[pos=0.5,right=5,black]{\scriptsize $\tilde{\sigma}(q,+\infty)$};
    \end{tikzpicture}
  \end{center}
  \caption{Illustration of Lemma~\ref{lem:preserve-basic-pp-aux-3}, Case~\ref{item:preserve-basic-pp-aux-3-case1}.}\label{fig:preserve-basic-pp-aux-3-case1}
  \begin{center}
    \begin{tikzpicture}
      \draw (0,-3) -- (0,3);
      \draw (2.5,-3) -- (2.5,3);
      \draw (5,-3) -- (5,3);
      \draw (7,-3) -- (7,3);
      \draw (9,-3) -- (9,3);
      \draw (11.5,-3) -- (11.5,3);
      \draw[->,font=\scriptsize] (0.2,3.1) -- node[above]{$g$} (2.3,3.1);
      \draw[->,font=\scriptsize] (2.7,3.1) -- node[above]{$b$} (4.8,3.1);
      \draw[->,dashed,font=\scriptsize,thick] (2.7,2.7) --
            node[below]{$\tilde{b}$} (4.8,2.7);
      \draw[->,font=\scriptsize] (5.2,3.1) -- node[above]{$h$} (6.8,3.1);
      \draw[->,font=\scriptsize] (7.2,3.1) -- node[above]{$a$} (8.8,3.1);
      \draw[->,font=\scriptsize] (9.2,3.1) -- node[above]{$f$}
      (11.2,3.1);
      \coordinate (x) at (7,0);
      \coordinate (y) at (9,0);
      \coordinate (fy) at (11.5,0);
      \filldraw (x) circle (1pt) node[below right]{\scriptsize $x$};
      \filldraw (y) circle (1pt) node[below right]{\scriptsize $y$};
      \filldraw (fy) circle (1pt) node[right]{\scriptsize $f(y)$};
      \draw[->] (x) -- (y);
      \draw[->] (y) -- (fy);

      \coordinate (q) at (0,-1);
      \coordinate (gql) at (2.5,-1.5);
      \coordinate (binvgamma) at (2.5,0);
      \coordinate (u) at (2.5,-0.5);
      \coordinate (gamma) at (5,0);
      \coordinate (bu) at (5,-1);
      \coordinate (xl) at (7,-0.5);
      \coordinate (xu) at (7,0.5);
      \coordinate (sigmaq) at (11.5,-1);
      \filldraw (q) circle (1pt) node[left]{\scriptsize $q$};
      \filldraw (gql) circle (1pt) node[right]{\scriptsize $g(q)$};
      \filldraw (binvgamma) circle (1pt) node[above right]{\scriptsize\parbox{2.5cm}{$\inf g(q,+\infty)=$ \\ $b^{-1}(\gamma)=\tilde{b}^{-1}(\gamma)$}};
      \filldraw (u) circle (1pt) node[below right]{\scriptsize $u$};
      \filldraw (gamma) circle (1pt) node[below right]{\scriptsize
        $\gamma$};
      \filldraw (bu) circle (1pt) node[right]{\scriptsize
        $b(u)=\tilde{b}(u)$};
     \filldraw (sigmaq) circle (1pt) node[below left]{\scriptsize
       $\sigma(q)$} node[below right]{\scriptsize $=\tilde{\sigma}(q)$};

      \draw[->] (q) -- (gql);
      \draw[->,dotted,thick] (q) -- (binvgamma);
      \draw[->] (binvgamma) -- (gamma);
      \draw[->,dashed,thick] (binvgamma) to[bend right=10] (gamma);
      \draw[->] (u) -- (bu);
      \draw[->,dashed,thick] (u) to[bend left=10] (bu);
      \draw[->,dotted,thick] (gamma) -- (xl);
      \draw[->,dotted,thick] (gamma) -- (xu);

      \draw[decorate,decoration={brace,amplitude=5,raise=0.5}] (0,-3) --
      (q) node[pos=0.5,left=5,black]{\scriptsize
        $\iota^{-1}(-\infty,x]$};
      \draw[decorate,decoration={brace,mirror,amplitude=5,raise=0.5}] (0,-3) --
      (q) node[pos=0.5,right=5,black]{\scriptsize
        $\tilde{\iota}^{-1}(-\infty,x]$};
      \draw[decorate,decoration={brace,amplitude=5,raise=0.5}] (q) --
      (0,3) node[pos=0.5,left=5,black]{\scriptsize $\iota^{-1}[x,+\infty)$};
      \draw[decorate,decoration={brace,mirror,amplitude=5,raise=0.5}] (q) --
      (0,3) node[pos=0.5,right=5,black]{\scriptsize $\tilde{\iota}^{-1}[x,+\infty)$};

      \draw[decorate,decoration={brace,amplitude=5,raise=0.5}] (11.5,0.5) --
      (11.5,3) node[pos=0.5,left=5,black]{\scriptsize $\sigma(q,+\infty)$};      
      \draw[decorate,decoration={brace,amplitude=5,mirror,raise=0.5}] (11.5,0.2) --
      (11.5,3) node[pos=0.5,right=5,black]{\scriptsize $\tilde{\sigma}(q,+\infty)$};
    \end{tikzpicture}
  \end{center}
  \caption{Illustration of Lemma~\ref{lem:preserve-basic-pp-aux-3}, Case~\ref{item:preserve-basic-pp-aux-3-case2}.}\label{fig:preserve-basic-pp-aux-3-case2}
\end{figure}
\begin{proof}
  First of all, note that $\iota^{\dagger}(x)$ is welldefined since
  $x\in (\inf h,\sup h)$ which coincides with
  $(\inf\iota,\sup\iota)$ by the unboundedness on
  either side of both $g$ and $b$. Since $\gamma=h^{\dagger}(x)$ is
  irrational and $h$ is injective, $x$ cannot be contained in
  $\Img(h)$, in particular not in $\Img(\iota)$. In any case, we have
  $h^{-1}(-\infty,x]=h^{-1}(-\infty,x)=(-\infty,\gamma)$ and
  $h^{-1}[x,+\infty)=h^{-1}(x,+\infty)=(\gamma,+\infty)$. The
  cases~\ref{item:preserve-basic-pp-aux-3-case1}
  and~\ref{item:preserve-basic-pp-aux-3-case2} correspond to
  $\iota(q)<x$, in other words $\iota^{-1}(-\infty,x]=(-\infty,q]$ and
  $\iota^{-1}[x,+\infty)=(q,+\infty)$, while the
  cases~\ref{item:preserve-basic-pp-aux-3-case3}
  and~\ref{item:preserve-basic-pp-aux-3-case4} correspond to
  $\iota(q)>x$, in other words $\iota^{-1}(-\infty,x]=(-\infty,q)$ and
  $\iota^{-1}[x,+\infty)=[q,+\infty)$. We only treat the former
  cases. Since $\LP(h)\subseteq\I$, the point $x$ cannot be a
  limit point of $h$, so we have
  $\sup h(-\infty,\gamma)<x<\inf h(\gamma,+\infty)$ and
  $\gamma\in\Dc^{\I}(h)$. Additionally, $x<\inf\iota(q,+\infty)$ by
  the same argument. If $\iota(q)<x$, we obtain
  $bg(q)<\gamma\leq\inf bg(q,+\infty)$,
  i.e.~$g(q)<\bar{b}^{-1}(\gamma)\leq\inf g(q,+\infty)$. The first two
  cases are distinguished by checking whether the latter inequality is
  strict or not.
  \begin{enumerate}[label=(\arabic*)]
  \item $g(q)<\bar{b}^{-1}(\gamma)<\inf g(q,+\infty)$.

    \noindent\textbf{(i).} Take any $u,v\in\Q$ with
    $g(q)<u<\bar{b}^{-1}(\gamma)<v<\inf g(q,+\infty)$ to satisfy\\
    $g(q)<u<v<\inf g(q,+\infty)$ and $hb(u)<x<hb(v)$. The remaining
    statements follows from Lemma~\ref{lem:sandwich-reform}:
    $\sigma(q)\in\sigma(-\infty,q]=\sigma\left(\iota^{-1}(-\infty,x]\right)\subseteq
    (-\infty,f(y)]$ and
    $\sigma(q,+\infty)=\sigma\left(\iota^{-1}[x,+\infty)\right)\subseteq
    [f(y),+\infty)$.

    \noindent\textbf{(ii).} We use $\tilde{b}(u)=b(u)$ and $\tilde{b}(v)=b(v)$ to verify the
    conditions in Lemma~\ref{lem:sandwich-reform}. Note that
    $h^{-1}(-\infty,x]\subseteq(-\infty,b(v))$ and
    $h^{-1}[x,+\infty)\subseteq (b(u),+\infty)$, so that
    \begin{align*}
      \tilde{\iota}^{-1}(-\infty,x]&=g^{-1}(\tilde{b}^{-1}(h^{-1}(-\infty,x]))\subseteq
                                     g^{-1}(-\infty,v)=(-\infty,q]\qquad\text{and}\\
      \tilde{\iota}^{-1}[x,+\infty)&=g^{-1}(\tilde{b}^{-1}(h^{-1}[x,+\infty)))\subseteq
                                     g^{-1}(u,+\infty)=(q,+\infty)
    \end{align*}
    which yields
    \begin{align*}
      \tilde{\sigma}\left(\tilde{\iota}^{-1}(-\infty,x]\right)&\subseteq\tilde{\sigma}(-\infty,q]\subseteq
                                                                (-\infty,\tilde{\sigma}(q)]=(-\infty,\sigma(q)]\subseteq
                                                                (-\infty,f(y)]\quad\text{and}\\
      \tilde{\sigma}\left(\tilde{\iota}^{-1}[x,+\infty)\right)&\subseteq\tilde{\sigma}(q,+\infty)\subseteq [f(y),+\infty).
    \end{align*}
  \item $g(q)<\bar{b}^{-1}(\gamma)=\inf g(q,+\infty)$.

    \noindent\textbf{(i).} Take any $u\in\Q$ with $g(q)<u<\bar{b}^{-1}(\gamma)$ to
    satisfy $g(q)<u<\inf g(q,+\infty)$ and $hb(u)<x$. The statements
    $\sigma(q)\leq f(y)$ and
    $\sigma(q,+\infty)\subseteq [f(y),+\infty)$ follow just as
    in~\ref{item:preserve-basic-pp-aux-3-case1}. We have already
    argued that $\gamma\in\Dc^{\I}(h)$, so it remains to show
    $\bar{b}^{-1}(\gamma)\in(\R\setminus\Img(\bar{g}))\cap\I$. We know
    that $\gamma$ is irrational, so $\bar{b}^{-1}(\gamma)$ is as well
    by
    Lemma~\ref{lem:cont-lp-easy-facts}\ref{item:cont-lp-easy-facts-iii}. Additionally,
    $\bar{b}^{-1}(\gamma)=\inf g(q,+\infty)$ cannot be contained in
    $\Img(\bar{g})$ since $g$ is injective.

    \noindent\textbf{(ii).} Similarly to~\ref{item:preserve-basic-pp-aux-3-case1}, we
    use $\tilde{b}(u)=b(u)$ and
    $\bar{\tilde{b}}^{-1}(\gamma)=\bar{b}^{-1}(\gamma)$ to verify the
    conditions in Lemma~\ref{lem:sandwich-reform}. Observe
    $h^{-1}(-\infty,x]=(-\infty,\gamma)$ and
    $h^{-1}[x,+\infty)\subseteq (b(u),+\infty)$, so that
    \begin{align*}
      \tilde{\iota}^{-1}(-\infty,x]&=g^{-1}(-\infty,\bar{\tilde{b}}^{-1}(\gamma))=\iota^{-1}(-\infty,x]=(-\infty,q]\quad\text{and}\\
      \tilde{\iota}^{-1}[x,+\infty)&\subseteq
                                     g^{-1}(u,+\infty)=(q,+\infty),
    \end{align*}
    which yields
    $\tilde{\sigma}(\tilde{\iota}^{-1}(-\infty,x])\subseteq
    (-\infty,f(y)]$ and
    $\tilde{\sigma}(\tilde{\iota}^{-1}[x,+\infty))\subseteq
    [f(y),+\infty)$ as
    in~\ref{item:preserve-basic-pp-aux-3-case1}.
  \end{enumerate}
\end{proof}
In the remaining case that $x\in (\inf h,\sup h)$ and
both $h^{\dagger}(x)$ and $\iota^{\dagger}(x)$ are irrational, we take
a similar but somewhat more involved route in that the automorphisms
$\tilde{b}\in\GG_{\Q}$ we are picking do not simply mimic the
behaviour of $b$ on sufficiently many elements. Instead, we redefine
$\tilde{b}$ on certain crucial points which are tuned to the specific
$\tilde{\sigma}$ being considered. In doing so, we have to make sure
that our desired redefinition does not violate the condition for
$\tilde{b}$ on finitely many points given by \PPXb{} and the previous
auxiliary lemmas. We split our treatment of this problem into two
subcases.
\begin{lemma}[see
  Figure~\ref{fig:preserve-basic-pp-aux-4}]\label{lem:preserve-basic-pp-aux-4}
  Let $\sigma,\tilde{\sigma},f,g,h,a,b$ such that $(*)$ holds. Let
  further $x,y\in\Q$ such that $a(x)=y$.

  Suppose that $x\in (\inf h,\sup h)$ with
  $\gamma:=h^{\dagger}(x)\in\I$ and
  $\delta:=\iota^{\dagger}(x)\in\I$. Additionally, suppose that
  $f(y)\in\Img(\sigma)$. Let $\bar{z}$ and $\bar{w}$ be tuples in $\Q$
  and let $\bar{z}'$ and $\bar{w}'$ be tuples in
  $(\R\setminus\Img(\bar{g}))\cap\I$ and $\Dc^{\I}(h)$, respectively,
  such that $b(\bar{z})=\bar{w}$ and
  $\bar{b}(\bar{z}')=\bar{w}'$. Assume that $\bar{z}\cup\bar{z}'$
  contains both an element greater and less than
  $\bar{g}(\delta)$. Put $z_{-}$ and $z_{+}$ to be the greatest entry
  of $\bar{z}\cup\bar{z}'$ less than $\bar{g}(\delta)$ and the least
  entry of $\bar{z}\cup\bar{z}'$ greater than $\bar{g}(\delta)$,
  respectively, and put $w_{-}$ and $w_{+}$ to be the corresponding
  entries of $\bar{w}\cup\bar{w}'$. Then one of the following two
  cases occurs\footnote{It is possible that both cases occur
    simultaneously; it this happens, pick one of them arbitrarily.}:
  \begin{enumerate}[label=(\arabic*)]
  \item\label{item:preserve-basic-pp-aux-4-case1}
    \begin{enumerate}[label=(\roman*)]
    \item\label{item:preserve-basic-pp-aux-4-case1-i} There exist
      $q,q'\in\Q$ such that $q<q'<\delta$ and
      $z_{-}<g(q)<g(q')<\bar{g}(\delta)<z_{+}$ as well as
      $\sigma(q)=\sigma(q')=f(y)$; further,
      $\gamma=\bar{b}(\bar{g}(\delta))$ and $w_{-}<\gamma<w_{+}$.
    \item\label{item:preserve-basic-pp-aux-4-case1-ii} If
      \begin{displaymath}
        \tilde{\sigma}(q)=\sigma(q)=f(y)\quad\text{and}\quad \tilde{\sigma}(q')=\sigma(q')=f(y),
      \end{displaymath}
      and if $\tilde{u},\tilde{v},\hat{u},\hat{v}\in\Q$ satisfy
      $g(q)<\tilde{u}<\tilde{v}<g(q')$ as well as
      $w_{-}<\hat{u}<\gamma<\hat{v}<w_{+}$, then the finite partial
      map $\bar{z}\mapsto\bar{w}$, $\bar{z}'\mapsto\bar{w}'$ and
      $\tilde{u}\mapsto\hat{u},\tilde{v}\mapsto\hat{v}$ is strictly
      increasing. Additionally, for any $b\in\GG_{\Q}$ such that
      $\bar{b}(\bar{z})=\bar{w}$, $\bar{b}(\bar{z}')=\bar{w}'$ and
      $b(\tilde{u})=\hat{u}$, $b(\tilde{v})=\hat{v}$, the map
      $x\mapsto y$ preserves all basic formulas as a finite partial
      map from $\A(\tilde{\sigma},f,\tilde{\iota})$ to
      $\B(\tilde{\sigma},f,\tilde{\iota})$ where
      $\tilde{\iota}:=h\tilde{b}g$.
    \end{enumerate}
  \item\label{item:preserve-basic-pp-aux-4-case2}
    \begin{enumerate}[label=(\roman*)]
    \item\label{item:preserve-basic-pp-aux-4-case2-i} There exist
      $q,q'\in\Q$ such that $\delta<q<q'$ and
      $z_{-}<\bar{g}(\delta)<g(q)<g(q')<z_{+}$ as well as
      $\sigma(q)=\sigma(q')=f(y)$; further,
      $\gamma=\bar{b}(\bar{g}(\delta))$ and $w_{-}<\gamma<w_{+}$.
    \item\label{item:preserve-basic-pp-aux-4-case2-ii} If
      \begin{displaymath}
        \tilde{\sigma}(q)=\sigma(q)=f(y)\quad\text{and}\quad \tilde{\sigma}(q')=\sigma(q')=f(y),
      \end{displaymath}
      and if $\tilde{u},\tilde{v},\hat{u},\hat{v}\in\Q$ satisfy
      $g(q)<\tilde{u}<\tilde{v}<g(q')$ as well as
      $w_{-}<\hat{u}<\gamma<\hat{v}<w_{+}$, then the finite partial
      map $\bar{z}\mapsto\bar{w}$, $\bar{z}'\mapsto\bar{w}'$ and
      $\tilde{u}\mapsto\hat{u},\tilde{v}\mapsto\hat{v}$ is strictly
      increasing. Additionally, for any $b\in\GG_{\Q}$ such that
      $\bar{b}(\bar{z})=\bar{w}$, $\bar{b}(\bar{z}')=\bar{w}'$ and
      $b(\tilde{u})=\hat{u}$, $b(\tilde{v})=\hat{v}$, the map
      $x\mapsto y$ preserves all basic formulas as a finite partial
      map from $\A(\tilde{\sigma},f,\tilde{\iota})$ to
      $\B(\tilde{\sigma},f,\tilde{\iota})$ where
      $\tilde{\iota}:=h\tilde{b}g$.
    \end{enumerate}
  \end{enumerate}
\end{lemma}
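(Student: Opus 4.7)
My approach mirrors that of Lemma~\ref{lem:preserve-basic-pp-aux-3}: the central tool is Lemma~\ref{lem:sandwich-reform}, which reduces preservation of basic formulas by $x \mapsto y$ to showing the two inclusions $\tilde{\sigma}(\tilde{\iota}^{-1}(-\infty, x]) \subseteq (-\infty, f(y)]$ and $\tilde{\sigma}(\tilde{\iota}^{-1}[x,+\infty)) \subseteq [f(y), +\infty)$. The new phenomenon compared to Lemma~\ref{lem:preserve-basic-pp-aux-3} is that both $\gamma := h^{\dagger}(x)$ and $\delta := \iota^{\dagger}(x)$ are irrational, so I cannot pin $\tilde{b}$ down at a single rational preimage of $\gamma$; instead, a \emph{crossing} of $\tilde{b}$ through $\gamma$ must be forced inside a chosen rational interval. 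This is why I need to trap $\bar{\tilde{b}}^{-1}(\gamma)$ between two rationals $\tilde{u} < \tilde{v}$ and, correspondingly, to secure a \emph{pair} of witnesses $q < q' \in \sigma^{-1}\{f(y)\}$ in part~$(i)$.

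For part~$(i)$, I first establish $\bar{b}(\bar{g}(\delta)) = \gamma$ directly from the irrationality of both $\gamma$ and $\delta$, without invoking $f(y) \in \Img(\sigma)$. Using $\bar{g}(\I), \bar{b}(\I) \subseteq \I$ from Lemma~\ref{lem:cont-lp-easy-facts}, a direct computation gives $\bar{\iota}(\delta) = \bar{h}(\bar{b}(\bar{g}(\delta)))$ and $\inf \iota(\delta, +\infty) = \inf h(\bar{b}(\bar{g}(\delta)), +\infty)$; combining $\bar{\iota}(\delta) \leq x \leq \inf \iota(\delta, +\infty)$ with $\bar{h}(\gamma) < x < \inf h(\gamma, +\infty)$ (from $\gamma \in \Dc^{\I}(h)$, itself a consequence of $\LP(h) \subseteq \I$ and $x \in \Q \setminus \Img(h)$), any strict inequality $\bar{b}(\bar{g}(\delta)) \neq \gamma$ would place a rational strictly between the two and yield $h$-values on the wrong side of $x$. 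Next, I invoke $f(y) \in \Img(\sigma)$ to select $q_0 \in \sigma^{-1}\{f(y)\}$; since $\delta \in \I$, either $q_0 < \delta$ (leading to Case~\ref{item:preserve-basic-pp-aux-4-case1}) or $q_0 > \delta$ (leading to Case~\ref{item:preserve-basic-pp-aux-4-case2}). Focusing on the former, Lemma~\ref{lem:sandwich-reform} applied to the original decomposition $\sigma = fahbg$ yields $\sigma((-\infty, \delta) \cap \Q) \subseteq (-\infty, f(y)]$, which together with monotonicity of $\sigma$ pins $\sigma(q) = f(y)$ for all $q \in [q_0, \delta) \cap \Q$. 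I then pick $q < q'$ in this interval sufficiently close to $\delta$ that additionally $g(q), g(q') > z_-$, which is feasible because $\bar{g}(\delta) > z_-$ and $g(q) \to \bar{g}(\delta)$ as $q \to \delta$ from below.

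For part~$(ii)$, strict monotonicity of the partial map $\bar{z} \mapsto \bar{w}$, $\bar{z}' \mapsto \bar{w}'$, $\tilde{u} \mapsto \hat{u}$, $\tilde{v} \mapsto \hat{v}$ is immediate from the explicit positional inequalities of part~$(i)$ together with $\bar{b}(z_-) = w_-$ and $\bar{b}(z_+) = w_+$. For preservation of basic formulas, I again apply Lemma~\ref{lem:sandwich-reform}: since $x \notin \Img(h)$ with $h^{\dagger}(x) = \gamma \in \I$, we have $\tilde{\iota}^{-1}(-\infty, x] = g^{-1}((-\infty, \bar{\tilde{b}}^{-1}(\gamma)))$, and the constraints $\tilde{b}(\tilde{u}) = \hat{u} < \gamma < \hat{v} = \tilde{b}(\tilde{v})$ confine $\bar{\tilde{b}}^{-1}(\gamma)$ to $(\tilde{u}, \tilde{v}) \subseteq (g(q), g(q'))$, so that $\tilde{\iota}^{-1}(-\infty, x] \subseteq (-\infty, q')$; monotonicity of $\tilde{\sigma}$ together with $\tilde{\sigma}(q') = f(y)$ then gives the first desired inclusion. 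The second follows by the symmetric calculation $\tilde{\iota}^{-1}[x, +\infty) \subseteq (q, +\infty)$ combined with $\tilde{\sigma}(q) = f(y)$, and Case~\ref{item:preserve-basic-pp-aux-4-case2} is handled by the mirror argument swapping the roles of $<$ and $>$.

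The main obstacle is the passage from a single witness $q_0$ to a \emph{pair} $q < q'$ satisfying all positional constraints simultaneously. The key observation unlocking this is that, once one $q_0 < \delta$ with $\sigma(q_0) = f(y)$ exists, Lemma~\ref{lem:sandwich-reform} applied to the \emph{given} decomposition $\sigma = fahbg$ already forces $\sigma$ to be constantly equal to $f(y)$ on the whole rational segment $[q_0, \delta) \cap \Q$, producing an abundance of witnesses accumulating at $\delta$ from which the residual requirement $g(q), g(q') > z_-$ can be accommodated.
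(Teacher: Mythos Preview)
Your proposal is correct and follows essentially the same approach as the paper's proof: both establish $\gamma=\bar{b}(\bar{g}(\delta))$, use Lemma~\ref{lem:sandwich-reform} on the original decomposition $\sigma=fahbg$ to deduce that $\sigma$ is locally constant with value $f(y)$ on one side of $\delta$, pick $q<q'$ in that constant region close enough to $\delta$ to satisfy the positional constraints, and then verify preservation via Lemma~\ref{lem:sandwich-reform} by trapping $\bar{\tilde{b}}^{-1}(\gamma)$ in $(\tilde{u},\tilde{v})\subseteq(g(q),g(q'))$. Your contradiction argument for $\gamma=\bar{b}(\bar{g}(\delta))$ is more elaborate than the paper's one-line deduction from the interval identities, and you leave $w_{-}<\gamma<w_{+}$ implicit (it follows at once from $z_{-}<\bar{g}(\delta)<z_{+}$ via $\bar{b}$), but these are cosmetic differences.
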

\begin{figure}[h]
  \begin{center}
    \begin{tikzpicture}
      \draw (0,-3) -- (0,3);
      \draw (2.5,-3) -- (2.5,3);
      \draw (5,-3) -- (5,3);
      \draw (7,-3) -- (7,3);
      \draw (9,-3) -- (9,3);
      \draw (11.5,-3) -- (11.5,3);
      \draw[->,font=\scriptsize] (0.2,3.1) -- node[above]{$g$} (2.3,3.1);
      \draw[->,font=\scriptsize] (2.7,3.1) -- node[above]{$b$} (4.8,3.1);
      \draw[->,dashed,font=\scriptsize,thick] (2.7,2.7) --
            node[below]{$\tilde{b}$} (4.8,2.7);
      \draw[->,font=\scriptsize] (5.2,3.1) -- node[above]{$h$} (6.8,3.1);
      \draw[->,font=\scriptsize] (7.2,3.1) -- node[above]{$a$} (8.8,3.1);
      \draw[->,font=\scriptsize] (9.2,3.1) -- node[above]{$f$}
      (11.3,3.1);
      \coordinate (x) at (7,0);
      \coordinate (y) at (9,0);
      \coordinate (fy) at (11.5,0);
      \filldraw (x) circle (1pt) node[below right]{\scriptsize $x$};
      \filldraw (y) circle (1pt) node[below right]{\scriptsize $y$};
      \filldraw (fy) circle (1pt) node[above left]{\scriptsize
        $f(y)=\sigma(q)$} node[below left]{\scriptsize
        $\textcolor{white}{f(y)}=\sigma(q')$} node[above right]{\scriptsize
        $=\tilde{\sigma}(q)$} node[below right]{\scriptsize $=\tilde{\sigma}(q')$};
      \draw[->] (x) -- (y);
      \draw[->] (y) -- (fy);

      \coordinate (delta) at (0,0);
      \coordinate (q) at (0,-2);
      \coordinate (qprime) at (0,-1);
      \coordinate (zminus) at (2.5,-2.5);
      \coordinate (zplus) at (2.5,1.5);
      \coordinate (gq) at (2.5,-2);
      \coordinate (gqprime) at (2.5,-0.5);
      \coordinate (gdelta) at (2.5,0);
      \coordinate (utilde) at (2.5,-1.5);
      \coordinate (vtilde) at (2.5,-1);
      \coordinate (wminus) at (5,-2.5);
      \coordinate (wplus) at (5,1.5);
      \coordinate (gamma) at (5,0);
      \coordinate (uhat) at (5,-1.5);
      \coordinate (vhat) at (5,1);
      \coordinate (xl) at (7,-0.5);
      \coordinate (xu) at (7,0.5);
      \filldraw (delta) circle (1pt) node[left] {\scriptsize $\delta$};
      \filldraw (q) circle (1pt) node[left] {\scriptsize $q$};
      \filldraw (qprime) circle (1pt) node[left] {\scriptsize $q'$};
      \filldraw (zminus) circle (1pt) node[below right] {\scriptsize $z_{-}$};
      \filldraw (zplus) circle (1pt) node[below right] {\scriptsize $z_{+}$};
      \filldraw (gq) circle (1pt) node[right] {\scriptsize $g(q)$};
      \filldraw (gqprime) circle (1pt) node[right] {\scriptsize $g(q')$};
      \filldraw (gdelta) circle (1pt) node[above right] {\scriptsize $\bar{g}(\delta)$};
      \filldraw (utilde) circle (1pt) node[left] {\scriptsize $\tilde{u}$};
      \filldraw (vtilde) circle (1pt) node[left] {\scriptsize $\tilde{v}$};
      \filldraw (wminus) circle (1pt) node[right] {\scriptsize $w_{-}$};
      \filldraw (wplus) circle (1pt) node[right] {\scriptsize
        $w_{+}$};
      \filldraw (gamma) circle (1pt) node[below right] {\scriptsize $\gamma$};
      \filldraw (uhat) circle (1pt) node[right] {\scriptsize $\hat{u}$};
      \filldraw (vhat) circle (1pt) node[right] {\scriptsize $\hat{v}$};
      \draw[->] (delta) -- (gdelta);
      \draw[->] (q) -- (gq);
      \draw[->] (qprime) -- (gqprime);
      \draw[->] (zplus) -- (wplus);
      \draw[->] (zminus) -- (wminus);
      \draw[->] (gdelta) -- (gamma);
      \draw[->,dashed,thick] (utilde) -- (uhat);
      \draw[->,dashed,thick] (vtilde) to[bend right=20] (vhat);
      \draw[->,dotted,thick] (gamma) -- (xl);
      \draw[->,dotted,thick] (gamma) -- (xu);

      \draw[decorate,decoration={brace,amplitude=5,raise=0.5}] (0,-3) --
      (delta) node[pos=0.5,left=5,black]{\scriptsize $\iota^{-1}(-\infty,x]$};
      \draw[decorate,decoration={brace,amplitude=5,raise=0.5}] (delta) --
      (0,3) node[pos=0.5,left=5,black]{\scriptsize
        $\iota^{-1}[x,+\infty)$};
      \draw[decorate,decoration={brace,mirror,amplitude=10,raise=0.5}] (0,-3) --
      (0,-1.5) node[pos=0.5,right=10,black]{\scriptsize $\tilde{\iota}^{-1}(-\infty,x]$};
      \draw[decorate,decoration={brace,mirror,amplitude=10,raise=0.5}] (0,-1.5) --
      (0,3) node[pos=0.5,right=10,black]{\scriptsize $\tilde{\iota}^{-1}[x,+\infty)$};
      \draw[decorate,decoration={brace,amplitude=5,mirror,raise=0.5}] (11.5,0) --
      (11.5,2.9) node[pos=0.5,right=5,black]{\scriptsize
        $\tilde{\sigma}(\tilde{\iota}^{-1}[x,+\infty))$};
      \draw[decorate,decoration={brace,amplitude=5,mirror,raise=0.5}] (11.5,-2.9) --
      (11.5,0) node[pos=0.5,right=5,black]{\scriptsize $\tilde{\sigma}(\tilde{\iota}^{-1}(-\infty,x])$};
    \end{tikzpicture}
  \end{center}
\caption{Illustration of Lemma~\ref{lem:preserve-basic-pp-aux-4},
  Case~\ref{item:preserve-basic-pp-aux-4-case1}.}\label{fig:preserve-basic-pp-aux-4}
\end{figure}
\begin{proof}
  As in the proof of Lemma~\ref{lem:preserve-basic-pp-aux-3}, the
  generalised inverse $\iota^{\dagger}(x)$ is welldefined and $x$
  cannot be contained in $\Img(h)$, in particular
  $\Img(\iota)$. Additionally, we have
  $h^{-1}(-\infty,x]=h^{-1}(-\infty,x)=(-\infty,\gamma)$ and
  $h^{-1}[x,+\infty)=h^{-1}(x,+\infty)=(\gamma,+\infty)$ as well as
  $\iota^{-1}(-\infty,x]=\iota^{-1}(-\infty,x)=(-\infty,\delta)$ and
  $\iota^{-1}[x,+\infty)=\iota^{-1}(x,+\infty)=(\delta,+\infty)$. This
  also yields that $\bar{b}(\bar{g}(\delta))=\gamma$. By
  Lemma~\ref{lem:sandwich-reform}, we conclude\footnote{Note that we
    cannot express
    $\tilde{\sigma}(-\infty,\delta)\subseteq (-\infty,f(y)]$ and
    $\tilde{\sigma}(\delta,+\infty)\subseteq [f(y),+\infty)$ using the
    rich topology from Definition~\ref{def:rich-top} since $\delta$ is
    irrational. This is one of the reasons why we need to redefine
    $\tilde{b}$ instead of transferring the behaviour of $b$ at
    sufficiently many points.}
  \begin{align*}
    \sigma(-\infty,\delta)&=\sigma\left(\iota^{-1}(-\infty,x]\right)\subseteq
                            (-\infty,f(y)]\quad\text{and}\\
    \sigma(\delta,+\infty)&=\sigma\left(\iota^{-1}[x,+\infty)\right)\subseteq
                            [f(y),+\infty).
  \end{align*}
  Since $f(y)\in\Img(\sigma)$ and since $\delta$ is irrational, this
  is only possible if $\sigma$ is locally constant with value $f(y)$
  either below or above $\delta$ (or both). These two situations form
  the cases~\ref{item:preserve-basic-pp-aux-4-case1}
  and~\ref{item:preserve-basic-pp-aux-4-case2}, respectively. We only
  treat the former option.

  \textbf{(i).} By our preparatory reasoning, there exist $q,q'\in\Q$
  with $q<q'<\delta$ and $\sigma(q)=\sigma(q')=f(y)$. The number
  $\bar{g}(\delta)$ is irrational by
  Lemma~\ref{lem:cont-lp-easy-facts}\ref{item:cont-lp-easy-facts-ii},
  and obviously not an element of $\R\setminus\Img(\bar{g})$. Thus,
  $\bar{g}(\delta)$ cannot be contained in
  $\bar{z}\cup\bar{z}'$. Consequently,
  $\gamma=\bar{b}(\bar{g}(\delta))$ cannot be contained in
  $\bar{w}\cup\bar{w}'$, and we conclude $w_{-}<\gamma<w_{+}$ from
  $z_{-}<\bar{g}(\delta)<z_{+}$. Since $\delta\in\I=\Cont(g)$, we can
  pick $q,q'$ close enough to $\delta$ to ascertain
  $z_{-}<g(q)<g(q')<\bar{g}(\delta)<z_{+}$.

  \textbf{(ii).} By the definitions of $z_{-}$ and $z_{+}$ and the
  fact that $\bar{z}\mapsto\bar{w}$, $\bar{z}'\mapsto\bar{w}'$ is
  strictly increasing, the finite partial map $\bar{z}\mapsto\bar{w}$,
  $\bar{z}'\mapsto\bar{w}'$ and
  $\tilde{u}\mapsto\hat{u},\tilde{v}\mapsto\hat{v}$ is strictly
  increasing. For the second statement, we check the assumptions of
  Lemma~\ref{lem:sandwich-reform}. Note that
  \begin{align*}
    \tilde{\iota}^{-1}(-\infty,x]&=g^{-1}(\tilde{b}^{-1}(-\infty,\gamma))\subseteq
                                   g^{-1}(-\infty,\tilde{v})\subseteq
                                   (-\infty,q']\qquad\text{and}\\
    \tilde{\iota}^{-1}[x,+\infty)&=g^{-1}(\tilde{b}^{-1}(\gamma,+\infty))\subseteq
                                   g^{-1}(\tilde{u},+\infty)\subseteq [q,+\infty),
  \end{align*}
  so
  \begin{align*}
    \tilde{\sigma}\left(\tilde{\iota}^{-1}(-\infty,x]\right)&\subseteq\tilde{\sigma}(-\infty,q']\subseteq
                                                              (-\infty,\tilde{\sigma}(q')]=(-\infty,\sigma(q')]=(-\infty,f(y)]\quad\text{and}\\
    \tilde{\sigma}\left(\tilde{\iota}^{-1}[x,+\infty)\right)&\subseteq\tilde{\sigma}[q,+\infty)\subseteq
                                                              [\tilde{\sigma}(q),+\infty)=[\sigma(q),+\infty)=[f(y),+\infty).\qedhere
  \end{align*}
\end{proof}
Our final auxiliary lemma treats the second subcase of
$x\in (\inf h,\sup h)$ and
$h^{\dagger}(x),\iota^{\dagger}(x)\in\I$.
\begin{lemma}[see
  Figure~\ref{fig:preserve-basic-pp-aux-5}]\label{lem:preserve-basic-pp-aux-5}
  Let $\sigma,\tilde{\sigma},f,g,h,a,b$ such that $(*)$ holds. Let
  further $x,y\in\Q$ such that $a(x)=y$.

  Suppose that $x\in (\inf h,\sup h)$ with
  $\gamma:=h^{\dagger}(x)\in\I$ and
  $\delta:=\iota^{\dagger}(x)\in\I$. Additionally, suppose that
  $f(y)\notin\Img(\sigma)$. Let $\bar{z}$ and $\bar{w}$ be tuples in
  $\Q$ and let $\bar{z}'$ and $\bar{w}'$ be tuples in
  $(\R\setminus\Img(\bar{g}))\cap\I$ and $\Dc^{\I}(h)$, respectively,
  such that $b(\bar{z})=\bar{w}$ and
  $\bar{b}(\bar{z}')=\bar{w}'$. Assume that $\bar{z}\cup\bar{z}'$
  contains both an element greater and less than
  $\bar{g}(\delta)$. Put $z_{-}$ and $z_{+}$ to be the greatest entry
  of $\bar{z}\cup\bar{z}'$ less than $\bar{g}(\delta)$ and the least
  entry of $\bar{z}\cup\bar{z}'$ greater than $\bar{g}(\delta)$,
  respectively, and put $w_{-}$ and $w_{+}$ to be the corresponding
  entries of $\bar{w}\cup\bar{w}'$. Then the following holds:
  \begin{enumerate}[label=(\roman*)]
  \item\label{item:preserve-basic-pp-aux-5-i}
    $\sigma^{\dagger}(f(y))=\delta$ and
    $\delta\in\Dc^{\I}(\sigma)$. Additionally, $\gamma\in\Dc^{\I}(h)$
    and $\gamma=\bar{b}(\bar{g}(\delta))$. Further,
    $z_{-}<\bar{g}(\delta)<z_{+}$ as well as
    $w_{-}<\gamma<w_{+}$. Finally,
    $\sigma\left(g^{-1}(-\infty,z_{-}]\right)\subseteq (-\infty,f(y))$
    and
    $\sigma\left(g^{-1}[z_{+},+\infty)\right)\subseteq
    (f(y),+\infty)$.
    
    If $z_{\pm}$ and $w_{\pm}$ are rational, then the intervals
    $I_{-}:=g^{-1}(-\infty,z_{-}]$ and $I_{+}:=g^{-1}[z_{+},+\infty)$
    are rational as well.
  \item\label{item:preserve-basic-pp-aux-5-ii} If $z_{\pm}$ and
    $w_{\pm}$ are rational and if
    \begin{displaymath}
      \Img(\tilde{\sigma})\cap\{f(y)\}=\emptyset
      \quad\text{and}\quad\tilde{\sigma}(I_{-})\subseteq
      (-\infty,f(y))
      \quad\text{and}\quad\tilde{\sigma}(I_{+})\subseteq
      (f(y),+\infty),
    \end{displaymath}
    then -- setting
    $\tilde{H}_{-}:=\tilde{\sigma}^{-1}(-\infty,f(y)]=\tilde{\sigma}^{-1}(-\infty,f(y))$
    and
    $\tilde{H}_{+}:=\tilde{\sigma}^{-1}[f(y),+\infty)=\tilde{\sigma}^{-1}(f(y),+\infty)$
    -- we have $\sup g(\tilde{H}_{-})<z_{+}$ as well as
    $z_{-}<\inf g(\tilde{H}_{+})$. Further, there exists
    $\tilde{\rho}\in ((\R\setminus\Img(\bar{g}))\cap\I)\cup
    \bar{g}(\Dc^{\I}(\tilde{\sigma}))$ such that
    $z_{-}<\tilde{\rho}<z_{+}$ and
    $g(\tilde{H}_{-})\subseteq (-\infty,\tilde{\rho})$ as well as
    $g(\tilde{H}_{+})\subseteq (\tilde{\rho},+\infty)$. The finite
    partial map $\bar{z}\mapsto\bar{w}$, $\bar{z}'\mapsto\bar{w}'$ and
    $\tilde{\rho}\mapsto\gamma$ is strictly increasing and,
    additionally, for any $\tilde{b}\in\GG_{\Q}$ such that
    $\tilde{b}(\bar{z})=\bar{w}$, $\bar{\tilde{b}}(\bar{z}')=\bar{w}'$
    and $\bar{\tilde{b}}(\tilde{\rho})=\gamma$, the map $x\mapsto y$
    preserves all basic formulas as a finite partial map from
    $\A(\tilde{\sigma},f,\tilde{\iota})$ to
    $\B(\tilde{\sigma},f,\tilde{\iota})$ where
    $\tilde{\iota}:=h\tilde{b}g$.
  \end{enumerate}
\end{lemma}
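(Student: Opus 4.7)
The plan is to follow the template of Lemma~\ref{lem:preserve-basic-pp-aux-4}, using Lemma~\ref{lem:sandwich-reform} to translate preservation of basic formulas into inclusions on $\iota$-preimages of half-lines ending at $x$. As in the earlier cases with $\gamma \in \I$, one has $x \notin \Img(h) \supseteq \Img(\iota)$, so $h^{-1}(-\infty, x] = (-\infty, \gamma)$ and $\iota^{-1}(-\infty, x] = (-\infty, \delta)$, with the analogous descriptions on the other side.

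For part~(i), applying Lemma~\ref{lem:sandwich-reform} to $\sigma = fahbg$ yields $\sigma(-\infty, \delta) \subseteq (-\infty, f(y)]$ and $\sigma(\delta, +\infty) \subseteq [f(y), +\infty)$. The hypothesis $f(y) \notin \Img(\sigma)$, combined with $\LP(\sigma) \subseteq \I$, upgrades both to strict inclusions (any attained supremum equal to $f(y)$ would place $f(y)$ in $\Img(\sigma)$ or in $\I$), and consequently $\sigma^{\dagger}(f(y)) = \delta \in \Dc^{\I}(\sigma)$. The claims $\gamma \in \Dc^{\I}(h)$ and $\gamma = \bar b(\bar g(\delta))$ are recycled verbatim from Lemma~\ref{lem:preserve-basic-pp-aux-3}. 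The strict bounds $z_- < \bar g(\delta) < z_+$ and $w_- < \gamma < w_+$ follow because $\bar g(\delta) \in \I \cap \Img(\bar g)$ cannot coincide with any entry of $\bar z \subseteq \Q$ nor of $\bar z' \subseteq \R \setminus \Img(\bar g)$, and $\bar b$ transports this obstruction to $\gamma$. The inclusions on $\sigma(I_\pm)$ are immediate from $I_- \subseteq (-\infty, \delta)$ and $I_+ \subseteq (\delta, +\infty)$, while Lemma~\ref{lem:easy-gen-inv}\ref{item:easy-gen-inv-iii} produces the rational interval structure of $I_\pm$ when $z_\pm$ are rational.

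For part~(ii), set $\tilde\delta := \tilde\sigma^{\dagger}(f(y)) \in \R$, which is welldefined because $f(y) \notin \Img(\tilde\sigma)$. The nonstrict bounds $\sup g(\tilde H_-) \leq z_+$ and $z_- \leq \inf g(\tilde H_+)$ come by contradiction: if $q \in \tilde H_-$ had $g(q) \geq z_+$, then $q \in I_+$ would force $\tilde\sigma(q) > f(y)$. Strictness then splits on $\tilde\delta$: if $\tilde\delta \in \Q$ the supremum is attained at a rational and equality with $z_+$ re-triggers the same contradiction; if $\tilde\delta \in \I$ then $\tilde\delta \in \Dc^{\I}(\tilde\sigma)$ by the trick from part~(i), and $\sup g(\tilde H_-) = \bar g(\tilde\delta) \in \I \cap \Img(\bar g)$ cannot match either possible form of $z_+$. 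To build $\tilde\rho$: when $\tilde\delta \in \I$, take $\tilde\rho := \bar g(\tilde\delta) \in \bar g(\Dc^{\I}(\tilde\sigma))$, and continuity of $g$ at $\tilde\delta$ gives the strict separation; when $\tilde\delta \in \Q$, the property $\LP(g) \subseteq \I$ ensures $g(\tilde\delta)$ is not a one-sided limit point of $\Img(g)$, so the real-interval gap between $\sup g(\tilde H_-)$ and $\inf g(\tilde H_+)$ is nondegenerate, and a short check using strict increase of $\bar g$ shows it is even disjoint from $\bar g(\I)$; pick any irrational in its intersection with $(z_-, z_+)$.

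Finally, for any $\tilde b \in \GG_{\Q}$ satisfying the stated pointwise conditions, $\tilde\iota^{-1}(-\infty, x] = g^{-1}(\bar{\tilde b}^{-1}(-\infty, \gamma)) = g^{-1}(-\infty, \tilde\rho)$. Since $\tilde\rho \in \I$ and $\Q = \tilde H_- \cupdot \tilde H_+$ with $g(\tilde H_-) \subseteq (-\infty, \tilde\rho)$ and $g(\tilde H_+) \subseteq (\tilde\rho, +\infty)$, this forces $\tilde\iota^{-1}(-\infty, x] = \tilde H_-$; analogously $\tilde\iota^{-1}[x, +\infty) = \tilde H_+$. Applying $\tilde\sigma$ lands each preimage in the correct open half-line, and Lemma~\ref{lem:sandwich-reform} concludes. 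The main obstacle, I expect, will be the case analysis for $\tilde\rho$ when $\tilde\delta \in \Q$: simultaneously enforcing irrationality, disjointness from $\Img(\bar g)$, strict separation of $g(\tilde H_\pm)$, and membership in $(z_-, z_+)$ rests precisely on the genericity of $g$ keeping the requisite real-interval gap open.
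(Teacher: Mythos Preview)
Your approach is essentially the same as the paper's, and the overall structure is correct. One small imprecision: in your strictness argument for $\sup g(\tilde H_-) < z_+$ you assert that when $\tilde\delta \in \Q$ ``the supremum is attained at a rational''. This fails in the subcase $\tilde\delta \in \tilde H_+$, where $\tilde H_- = (-\infty,\tilde\delta)$ has no maximum and $\sup g(\tilde H_-) = \sup g(-\infty,\tilde\delta)$ is an unattained limit point of $\Img(g)$, hence irrational by $\LP(g)\subseteq\I$ --- so strictness still holds, but for the other reason. The paper avoids splitting on $\tilde\delta$ here and argues uniformly: $\inf g(\tilde H_+)$ is either contained in $g(\tilde H_+)$ (then your contradiction with $I_-$ fires) or irrational (then $\neq z_- \in \Q$); likewise for the other bound. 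Everything else, including the construction of $\tilde\rho$ in both cases and the final verification via Lemma~\ref{lem:sandwich-reform}, matches the paper.
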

\begin{figure}[h]
  \begin{center}
    \begin{tikzpicture}
      \draw (0,-3) -- (0,3);
      \draw (2.5,-3) -- (2.5,3);
      \draw (5,-3) -- (5,3);
      \draw (7,-3) -- (7,3);
      \draw (9,-3) -- (9,3);
      \draw (11.5,-3) -- (11.5,3);
      \draw[->,font=\scriptsize] (0.2,3.1) -- node[above]{$g$} (2.3,3.1);
      \draw[->,font=\scriptsize] (2.7,3.1) -- node[above]{$b$} (4.8,3.1);
      \draw[->,dashed,font=\scriptsize,thick] (2.7,2.7) --
            node[below]{$\tilde{b}$} (4.8,2.7);
      \draw[->,font=\scriptsize] (5.2,3.1) -- node[above]{$h$} (6.8,3.1);
      \draw[->,font=\scriptsize] (7.2,3.1) -- node[above]{$a$} (8.8,3.1);
      \draw[->,font=\scriptsize] (9.2,3.1) -- node[above]{$f$}
      (11.3,3.1);
      \coordinate (x) at (7,0);
      \coordinate (y) at (9,0);
      \coordinate (fy) at (11.5,0);
      \filldraw (x) circle (1pt) node[below right]{\scriptsize $x$};
      \filldraw (y) circle (1pt) node[below right]{\scriptsize $y$};
      \filldraw (fy) circle (1pt) node[right]{\scriptsize $f(y)$};
      \draw[->] (x) -- (y);
      \draw[->] (y) -- (fy);

      \coordinate (delta) at (0,0);
      \coordinate (ginvzminus) at (0,-1.75);
      \coordinate (ginvzplus) at (0,1.5);
      \coordinate (Hsplit) at (0,-0.75);
      \coordinate (zminus) at (2.5,-2.5);
      \coordinate (zplus) at (2.5,1.5);
      \coordinate (gdelta) at (2.5,0);
      \coordinate (rhotilde) at (2.5,-1);
      \coordinate (rhotildel) at (2.5,-1.5);
      \coordinate (rhotildeu) at (2.5,-0.5);
      \coordinate (wminus) at (5,-2.5);
      \coordinate (wplus) at (5,1.5);
      \coordinate (gamma) at (5,0);
      \coordinate (xl) at (7,-0.5);
      \coordinate (xu) at (7,0.5);
      \filldraw (delta) circle (1pt) node[left] {\scriptsize $\delta$};
      \filldraw (zminus) circle (1pt) node[below right] {\scriptsize $z_{-}$};
      \filldraw (zplus) circle (1pt) node[below right] {\scriptsize $z_{+}$};
      \filldraw (gdelta) circle (1pt) node[above right] {\scriptsize
        $\bar{g}(\delta)$};
      \filldraw (rhotilde) circle (1pt) node[below right] {\scriptsize $\tilde{\rho}$};
      \filldraw (wminus) circle (1pt) node[right] {\scriptsize $w_{-}$};
      \filldraw (wplus) circle (1pt) node[right] {\scriptsize
        $w_{+}$};
      \filldraw (gamma) circle (1pt) node[below right] {\scriptsize $\gamma$};

      \draw[->] (delta) -- (gdelta);
      \draw[->,dotted,thick] (ginvzminus) -- (zminus);
      \draw[->,dotted,thick] (ginvzplus) -- (zplus);
      \draw[->,dotted,thick] (Hsplit) -- (rhotildel);
      \draw[->,dotted,thick] (Hsplit) -- (rhotildeu);      
      \draw[->] (gdelta) -- (gamma);
      \draw[->,dashed,thick] (rhotilde) -- (gamma);
      \draw[->] (zminus) -- (wminus);
      \draw[->] (zplus) -- (wplus);
      \draw[->,dotted,thick] (gamma) -- (xl);
      \draw[->,dotted,thick] (gamma) -- (xu);

      \draw[decorate,decoration={brace,aspect=0.6,amplitude=5,raise=0.5}] (0,-3) --
      (0,0) node[pos=0.6,left=5,black]{\scriptsize
        \parbox{2cm}{\hfill $\iota^{-1}(-\infty,x]=$ \\ $\sigma^{-1}(-\infty,f(y))$}};
      \draw[decorate,decoration={brace,aspect=0.25,amplitude=5,raise=0.5}] (0,0) --
      (0,3) node[pos=0.25,left=5,black]{\scriptsize
        \parbox{2cm}{\hfill $\iota^{-1}[x,+\infty)=$ \\
          $\sigma^{-1}(f(y),+\infty)$}};
      \draw[decorate,decoration={brace,amplitude=15,raise=0.5}] (0,-3) --
      (ginvzminus) node[pos=0.5,left=15,black]{\scriptsize $I_{-}$};
      \draw[decorate,decoration={brace,amplitude=15,raise=0.5}] (ginvzplus) --
      (0,3) node[pos=0.5,left=15,black]{\scriptsize $I_{+}$};

      \draw[decorate,decoration={brace,mirror,aspect=0.6,amplitude=15,raise=0.5}] (0,-3) --
      (Hsplit) node[pos=0.6,right=15,black]{\scriptsize $\tilde{H}_{-}$};
      \draw[decorate,decoration={brace,mirror,amplitude=15,raise=0.5}] (Hsplit) --
      (0,3) node[pos=0.5,right=15,black]{\scriptsize $\tilde{H}_{+}$};
      
      \draw[decorate,decoration={brace,amplitude=5,raise=0.5}] (11.5,-2.8) --
      (11.5,-0.5) node[pos=0.5,left=5,black]{\scriptsize
        \parbox{1.9cm}{\hfill $\sigma(-\infty,\delta)=$ \\ $\sigma(\iota^{-1}(-\infty,x])$}};
      \draw[decorate,decoration={brace,amplitude=5,raise=0.5}] (11.5,0.2) --
      (11.5,3) node[pos=0.5,left=5,black]{\scriptsize \parbox{1.9cm}{\hfill $\sigma(\delta,+\infty)=$ \\ $\sigma(\iota^{-1}[x,+\infty))$}};
      \draw[decorate,decoration={brace,amplitude=5,mirror,raise=0.5}] (11.5,-3) --
      (11.5,-0.2) node[pos=0.5,right=5,black]{\scriptsize $\tilde{\sigma}(\tilde{H}_{-})$};
      \draw[decorate,decoration={brace,amplitude=5,mirror,raise=0.5}] (11.5,0.5) --
      (11.5,2.8) node[pos=0.5,right=5,black]{\scriptsize $\tilde{\sigma}(\tilde{H}_{+})$};
    \end{tikzpicture}
  \end{center}
\caption{Illustration of Lemma~\ref{lem:preserve-basic-pp-aux-5}.}\label{fig:preserve-basic-pp-aux-5}
\end{figure}
\begin{proof}
  As in the proof of Lemma~\ref{lem:preserve-basic-pp-aux-4}, the
  generalised inverse $\iota^{\dagger}(x)$ is welldefined and $x$
  cannot be contained in $\Img(h)$, in particular
  $\Img(\iota)$. Additionally,
  $h^{-1}(-\infty,x]=h^{-1}(-\infty,x)=(-\infty,\gamma)$ and
  $h^{-1}[x,+\infty)=h^{-1}(x,+\infty)=(\gamma,+\infty)$ as well as
  $\iota^{-1}(-\infty,x]=\iota^{-1}(-\infty,x)=(-\infty,\delta)$ and
  $\iota^{-1}[x,+\infty)=\iota^{-1}(x,+\infty)=(\delta,+\infty)$. We
  again conclude $\bar{b}(\bar{g}(\delta))=\gamma$. Combining
  Lemma~\ref{lem:sandwich-reform} with $f(y)\notin\Img(\sigma)$, we
  obtain $\sigma(-\infty,\delta)\subseteq (-\infty,f(y))$ as well as
  $\sigma(\delta,+\infty)\subseteq (f(y),+\infty)$ which yields
  $\delta=\sigma^{\dagger}(f(y))$ and
  \begin{align*}
    \sigma\left(g^{-1}(-\infty,z_{-}]\right)&\subseteq\sigma\left(g^{-1}(-\infty,\bar{g}(\delta))\right)\subseteq
                                              (-\infty,f(y))\quad\text{as
                                              well as}\\
    \sigma\left(g^{-1}[z_{+},+\infty)\right)&\subseteq\sigma\left(g^{-1}(\bar{g}(\delta),+\infty)\right)\subseteq
                                              (f(y),+\infty).
  \end{align*}

  \textbf{(i).} We know that $f(y)\in\Q$ cannot be a limit point of
  $\sigma$, so
  $\sup\sigma(-\infty,\delta)<f(y)<\inf\sigma(\delta,+\infty)$, in
  particular $\delta\in\Dc^{\I}(\sigma)$. Using $\LP(h)\subseteq\I$ in
  the same fashion, we obtain $\gamma\in\Dc^{\I}(h)$.  The remaining
  statements $z_{-}<\bar{g}(\delta)<z_{+}$ as well as
  $w_{-}<\gamma<w_{+}$ follow just as in the proof of
  Lemma~\ref{lem:preserve-basic-pp-aux-4}.

  Finally, if $z_{\pm}$ and $w_{\pm}$ are rational, then the intervals
  $I_{-}:=g^{-1}(-\infty,z_{-}]$ and $I_{+}:=g^{-1}[z_{+},+\infty)$
  are rational by
  Lemma~\ref{lem:easy-gen-inv}\ref{item:easy-gen-inv-iii}.

  \textbf{(ii).} We have $\Q=\tilde{H}_{-}\cupdot\tilde{H}_{+}$, so
  $\sup\tilde{H}_{-}=\inf\tilde{H}_{+}$. By our assumption on
  $\tilde{\sigma}$, we know that $I_{-}\cap\tilde{H}_{+}=\emptyset$,
  so $\inf g(\tilde{H}_{+})\geq z_{-}$. In fact, this inequality is
  strict since $\inf g(\tilde{H}_{+})$ is either contained in
  $g(\tilde{H}_{+})$ or irrational by $\LP(g)\subseteq\I$. One argues
  analogously to show $\sup g(\tilde{H}_{-})<z_{+}$. To find
  $\tilde{\rho}$, we distinguish whether
  $\sup\tilde{H}_{-}=\inf\tilde{H}_{+}$ is rational or irrational.

  \textit{Case~1}
  ($\tilde{q}:=\tilde{\sigma}^{\dagger}(f(y))=\sup\tilde{H}_{-}=\inf\tilde{H}_{+}\in\Q$):
  We conclude $\sup g(\tilde{H}_{-})<\inf g(\tilde{H}_{+})$ from
  Lemma~\ref{lem:cont-lp-easy-facts}\ref{item:cont-lp-easy-facts-ii}.
  Combined with $\sup g(\tilde{H}_{-})<z_{+}$ and
  $z_{-}<\inf g(\tilde{H}_{+})$, this implies
  $\max\left(\sup g(\tilde{H}_{-}),z_{-}\right)<\min\left(\inf
    g(\tilde{H}_{+}),z_{+}\right)$. Any irrational $\tilde{\rho}$
  between these two numbers satisfies the requirements -- note that
  $\tilde{\rho}$ is contained in $\R\setminus\Img(\bar{g})$ by
  injectivity of $g$.

  \textit{Case~2}
  ($\tilde{\delta}:=\tilde{\sigma}^{\dagger}(f(y))=\sup\tilde{H}_{-}=\inf\tilde{H}_{+}\in\I$):
  We obtain
  $\sup g(\tilde{H}_{-})=\bar{g}(\tilde{\delta})=\inf
  g(\tilde{H}_{+})$ since $\tilde{\delta}\in\Cont(g)$, so
  $z_{-}<\bar{g}(\tilde{\delta})<z_{+}$. Since
  $\tilde{\sigma}(-\infty,\tilde{\delta})\subseteq (-\infty,f(y))$ and
  $\tilde{\sigma}(\tilde{\delta},+\infty)\subseteq (f(y),+\infty)$ and
  since $f(y)\in\Q$ cannot be a limit point of $\tilde{\sigma}$, we
  conclude $\tilde{\delta}\in\Dc^{\I}(\tilde{\sigma})$. Hence, we set
  $\tilde{\rho}:=\bar{g}(\tilde{\delta})$.

  \smallskip By construction, the finite partial map
  $\bar{z}\mapsto\bar{w}$, $\bar{z}'\mapsto\bar{w}'$ and
  $\tilde{\rho}\mapsto\gamma$ is strictly increasing. For the
  preservation statement, we verify the conditions in
  Lemma~\ref{lem:sandwich-reform}. Note that
  \begin{align*}
    \tilde{\iota}^{-1}(-\infty,x]&=
                                   g^{-1}(-\infty,\tilde{\rho})=\Q\setminus
                                   g^{-1}(\tilde{\rho},+\infty)\subseteq \Q\setminus
                                   g^{-1}(g(\tilde{H}_{+}))=\tilde{H}_{-}
                                   \qquad\text{and}\\
    \tilde{\iota}^{-1}[x,+\infty)&=g^{-1}(\tilde{\rho},+\infty)=\Q\setminus
                                   g^{-1}(-\infty,\tilde{\rho})\subseteq\Q\setminus
                                   g^{-1}(g(\tilde{H}_{-}))=\tilde{H}_{+},
  \end{align*}
  so
  \begin{align*}
    \tilde{\sigma}\left(\tilde{\iota}^{-1}(-\infty,x]\right)&\subseteq\tilde{\sigma}(\tilde{H}_{-})\subseteq
                                                              (-\infty,f(y))\quad\text{and}\\
    \tilde{\sigma}\left(\tilde{\iota}^{-1}[x,+\infty)\right)&\subseteq\tilde{\sigma}(\tilde{H}_{+})\subseteq
                                                              (f(y),+\infty).\qedhere
  \end{align*}
\end{proof}
\begin{remark}\label{rem:preserve-basic-pp-aux-5}
  Examining the last proof more closely, one observes that we never
  used $a(x)=y$ other than via the inclusion of intervals from
  Lemma~\ref{lem:sandwich-reform}. Hence, we in fact proved the
  following slightly stronger statement which will be useful when
  amalgamating the auxiliary lemmas: \medskip

  \it Let $\sigma,\tilde{\sigma},f,g,h,a,b$ such that $(*)$ holds. Let
  further $x,y\in\Q$ such that
  \begin{displaymath}
    \sigma\left(\iota^{-1}(-\infty,x]\right)\subseteq
    (-\infty,f(y)]\quad\text{and}\quad \sigma\left(\iota^{-1}[x,+\infty)\right)\subseteq[f(y),+\infty).
  \end{displaymath}
  Suppose that $x\in (\inf h,\sup h)$ with
  $\gamma:=h^{\dagger}(x)\in\I$ and
  $\delta:=\iota^{\dagger}(x)\in\I$. Additionally, suppose that
  $f(y)\notin\Img(\sigma)$. Let $\bar{z}$ and $\bar{w}$ be tuples in
  $\Q$ and let $\bar{z}'$ and $\bar{w}'$ be tuples in
  $(\R\setminus\Img(\bar{g}))\cap\I$ and $\Dc^{\I}(h)$, respectively,
  such that $b(\bar{z})=\bar{w}$ and
  $\bar{b}(\bar{z}')=\bar{w}'$. Assume that $\bar{z}\cup\bar{z}'$
  contains both an element greater and less than
  $\bar{g}(\delta)$. Put $z_{-}$ and $z_{+}$ to be the greatest entry
  of $\bar{z}\cup\bar{z}'$ less than $\bar{g}(\delta)$ and the least
  entry of $\bar{z}\cup\bar{z}'$ greater than $\bar{g}(\delta)$,
  respectively, and put $w_{-}$ and $w_{+}$ to be the corresponding
  entries of $\bar{w}\cup\bar{w}'$. Then the following holds:
  \begin{enumerate}[label=(\roman*)]
  \item $\sigma^{\dagger}(f(y))=\delta$ and
    $\delta\in\Dc^{\I}(\sigma)$. Additionally, $\gamma\in\Dc^{\I}(h)$
    and $\gamma=\bar{b}(\bar{g}(\delta))$.  Further,
    $z_{-}<\bar{g}(\delta)<z_{+}$ as well as
    $w_{-}<\gamma<w_{+}$. Finally,
    $\sigma\left(g^{-1}(-\infty,z_{-}]\right)\subseteq (-\infty,f(y))$
    and
    $\sigma\left(g^{-1}[z_{+},+\infty)\right)\subseteq
    (f(y),+\infty)$.

    If $z_{\pm}$ and $w_{\pm}$ are rational, then the intervals
    $I_{-}:=g^{-1}(-\infty,z_{-}]$ and $I_{+}:=g^{-1}[z_{+},+\infty)$
    are rational as well.
  \item If $z_{\pm}$ and $w_{\pm}$ are rational and if
    \begin{displaymath}
      \Img(\tilde{\sigma})\cap\{f(y)\}=\emptyset
      \quad\text{and}\quad\tilde{\sigma}(I_{-})\subseteq
      (-\infty,f(y))
      \quad\text{and}\quad\tilde{\sigma}(I_{+})\subseteq
      (f(y),+\infty),
    \end{displaymath}
    then -- setting
    $\tilde{H}_{-}:=\tilde{\sigma}^{-1}(-\infty,f(y)]=\tilde{\sigma}^{-1}(-\infty,f(y))$
    and
    $\tilde{H}_{+}:=\tilde{\sigma}^{-1}[f(y),+\infty)=\tilde{\sigma}^{-1}(f(y),+\infty)$
    -- we have $\sup g(\tilde{H}_{-})<z_{+}$ as well as
    $z_{-}<\inf g(\tilde{H}_{+})$. Further, there exists
    $\tilde{\rho}\in ((\R\setminus\Img(\bar{g}))\cap\I)\cup
    \bar{g}(\Dc^{\I}(\tilde{\sigma}))$ such that
    $z_{-}<\tilde{\rho}<z_{+}$ and
    $g(\tilde{H}_{-})\subseteq (-\infty,\tilde{\rho})$ as well as
    $g(\tilde{H}_{+})\subseteq (\tilde{\rho},+\infty)$. The finite
    partial map $\bar{z}\mapsto\bar{w}$, $\bar{z}'\mapsto\bar{w}'$ and
    $\tilde{\rho}\mapsto\gamma$ is strictly increasing and,
    additionally, for any $\tilde{b}\in\GG_{\Q}$ such that
    $\tilde{b}(\bar{z})=\bar{w}$, $\bar{\tilde{b}}(\bar{z}')=\bar{w}'$
    and $\bar{\tilde{b}}(\tilde{\rho})=\gamma$, we have
    \begin{displaymath}
      \tilde{\sigma}\left(\tilde{\iota}^{-1}(-\infty,x]\right)\subseteq
      (-\infty,f(y)]\quad\text{and}\quad\tilde{\sigma}\left(\tilde{\iota}^{-1}[x,+\infty)\right)\subseteq[f(y),+\infty),
    \end{displaymath}
    where $\tilde{\iota}:=h\tilde{b}g$.
  \end{enumerate}
\end{remark}

\subsection{Proving the Variation Lemma~\ref{lem:variation}, full}
\label{sec:prov-var-lemma-full}
Finally, we amalgamate the special cases.
\begin{proof}[Proof (of the Variation Lemma~\ref{lem:variation}).]
  We construct $O$ as an intersection of $\TT_{rich}$-subbasic open
  sets, i.e.~of sets of the
  types~\ref{item:types-0},~\ref{item:types-1},~\ref{item:types-2},~\ref{item:types-3}.

  By adding to the intersection $O$ the condition that
  $\tilde{\sigma}$ has the same boundedness type as $h$
  (type~\ref{item:types-2}), we can ascertain that $(*)$
  holds. Considering that $\bar{x}\mapsto\bar{y}$ automatically
  preserves the formulas $z_{i}<z_{j}$ since $a(\bar{x})=\bar{y}$ and
  that all the other basic formulas are unary, it suffices to pick the
  automorphism $\tilde{b}\in\GG_{\Q}$ in such a way that the map
  $x\mapsto y$ preserves all basic formulas for each corresponding
  pair $x,y$ in $\bar{x},\bar{y}$.

  First, we treat those corresponding pairs $x,y$ in $\bar{x},\bar{y}$
  for which
  \begin{enumerate}[label=(\alph*)]
  \item\label{item:proof-variation-i} $x\notin (\inf h,\sup h)$\quad
    OR
  \item\label{item:proof-variation-ii} $x\in (\inf h,\sup h)$ with
    $h^{\dagger}(x)\in\Q$\quad OR
  \item\label{item:proof-variation-iii} $x\in (\inf h,\sup h)$ with
    $h^{\dagger}(x)\in\I$ and $\iota^{\dagger}(x)\in\Q$.
  \end{enumerate}
  Applying
  Lemmas~\ref{lem:preserve-basic-pp-aux-1},~\ref{lem:preserve-basic-pp-aux-2}
  and~\ref{lem:preserve-basic-pp-aux-3} each yields a finite
  intersection of sets of
  types~\ref{item:types-0},~\ref{item:types-1},~\ref{item:types-2},~\ref{item:types-3}
  and additional conditions of the form $\tilde{b}(z)=w=b(z)$ for
  $z,w\in\Q$ or $\bar{\tilde{b}}(z')=w'=\bar{b}(z')$ for
  $z'\in (\R\setminus\Img(\bar{g}))\cap\I$ and $w'\in\Dc^{\I}(h)$
  under which $x\mapsto y$ always preserves all basic formulas as a
  finite partial map from $\A(\tilde{\sigma},f,\tilde{\iota})$ to
  $\B(\tilde{\sigma},f,\tilde{\iota})$ where
  $\tilde{\iota}:=h\tilde{b}g$. We add the sets of
  types~\ref{item:types-0},~\ref{item:types-1},~\ref{item:types-2},~\ref{item:types-3}
  to the intersection $O$, we add the points $z$ and $w$ to
  $\bar{z}^{*}$ and $\bar{w}^{*}$, respectively, and we add the points
  $z'$ and $w'$ to $\bar{z}'$ and $\bar{w}'$,
  respectively. Summarising, we obtain that if $\tilde{\sigma}$ is
  contained in the set $O$ constructed thus far and if
  $\tilde{b}(\bar{z}^{*})=\bar{w}^{*}$ and
  $\tilde{b}(\bar{z}')=\bar{w}'$, then $x\mapsto y$ preserves all
  basic formulas for each corresponding pair $x,y$ with one of the
  three
  properties~\ref{item:proof-variation-i}\nobreakdash-\ref{item:proof-variation-iii}.
  
  It remains to consider those corresponding pairs $x,y$ in
  $\bar{x},\bar{y}$ for which
  \begin{enumerate}[label=(\alph*),start=4]
  \item\label{item:proof-variation-iv} $x\in (\inf h,\sup h)$ with
    $\gamma:=h^{\dagger}(x)\in\I$ and
    $\delta:=\iota^{\dagger}(x)\in\I$.
  \end{enumerate}
  Put $z_{-}$ and $z_{+}$ to be the greatest entry of
  $\bar{z}\cup\bar{z}^{*}\cup\bar{z}'$ less than $\bar{g}(\delta)$ and
  the least entry of $\bar{z}\cup\bar{z}^{*}\cup\bar{z}'$ greater than
  $\bar{g}(\delta)$, respectively, and put $w_{-}$ and $w_{+}$ to be
  the corresponding entries of
  $\bar{w}\cup\bar{w}^{*}\cup\bar{w}'$. As a first step,
  Lemmas~\ref{lem:preserve-basic-pp-aux-4}
  and~\ref{lem:preserve-basic-pp-aux-5} yield that
  $z_{-}<\bar{g}(\delta)<z_{+}$ (as well as $w_{-}<\gamma<w_{+}$) and
  $\gamma=\bar{b}(\bar{g}(\delta))$. Hence, we can find
  \emph{rationals} $\hat{z}_{-},\hat{z}_{+}\in\Q$ such that
  $z_{-}<\hat{z}_{-}<\bar{g}(\delta)<\hat{z}_{+}<z_{+}$.  We add
  $\hat{z}_{\pm}$ to $\bar{z}^{*}$ and
  $\hat{w}_{\pm}:=b(\hat{z}_{\pm})$ to $\bar{w}^{*}$. In this way, we
  can assume that $z_{\pm}$ and $w_{\pm}$ are always rational for each
  corresponding pair $x,y$.

  If $x_{1},y_{1}$ and $x_{2},y_{2}$ are two such pairs (without loss
  of generality, let $x_{1}<x_{2}$) and if
  \begin{displaymath}
    \gamma_{1}:=h^{\dagger}(x_{1})<\gamma_{2}:=h^{\dagger}(x_{2}),
  \end{displaymath}
  we enrich $\bar{z}^{*}$ and $\bar{w}^{*}$ even further: putting
  $\delta_{1}:=\iota^{\dagger}(x_{1})$ and
  $\delta_{2}:=\iota^{\dagger}(x_{2})$, we know that
  \begin{displaymath}
    \bar{b}(\bar{g}(\delta_{1}))=\gamma_{1}<\gamma_{2}=\bar{b}(\bar{g}(\delta_{2}))
  \end{displaymath}
  by Lemmas~\ref{lem:preserve-basic-pp-aux-4}
  and~\ref{lem:preserve-basic-pp-aux-5}, and hence
  $\bar{g}(\delta_{1})<\bar{g}(\delta_{2})$. If we pick
  $\tilde{z},\tilde{w}\in\Q$ such that
  \begin{equation}\label{eq:proof-variation-1}
    \bar{g}(\delta_{1})<\tilde{z}<\bar{g}(\delta_{2})\text{ and }\tilde{w}:=b(\tilde{z}),
  \end{equation}
  then $\gamma_{1}<\tilde{w}<\gamma_{2}$. We add $\tilde{z}$ to
  $\bar{z}^{*}$ and $\tilde{w}$ to $\bar{w}^{*}$. If
  $z_{\pm,1},w_{\pm,1},z_{\pm,2},w_{\pm,2}$ denote the
  values\footnote{They are necessarily rational!}  $z_{\pm},w_{\pm}$
  for $x_{1},y_{1}$ and $x_{2},y_{2}$, respectively, we obtain
  $z_{+,1}\leq\tilde{z}\leq z_{-,2}$ and
  $w_{+,1}\leq\tilde{w}\leq w_{-,2}$. Distinguishing cases, we
  conclude that whichever combination of
  Lemmas~\ref{lem:preserve-basic-pp-aux-4}
  and~\ref{lem:preserve-basic-pp-aux-5} applies to $x_{1},y_{1}$ and
  $x_{2},y_{2}$, the resulting conditions on $\tilde{b}$ will be
  compatible, i.e.~strictly increasing. By way of example, consider
  the case that $x_{1},y_{1}$ fall into the scope of
  Lemma~\ref{lem:preserve-basic-pp-aux-4} and $x_{2},y_{2}$ fall into
  the scope of Lemma~\ref{lem:preserve-basic-pp-aux-5}. Then we are
  required to pick
  \begin{displaymath}
    \tilde{u},\tilde{v},\hat{u},\hat{v}\in\Q \quad\text{and}\quad
    \tilde{\rho}\in ((\R\setminus\Img(\bar{g}))\cap\I)\cup
    \bar{g}(\Dc^{\I}(\tilde{\sigma}))
  \end{displaymath}
  with (in particular)
  \begin{displaymath}
    z_{-,1}<\tilde{u}<\tilde{v}<z_{+,1}\leq
    z_{-,2}<\tilde{\rho}<z_{+,2}\quad\text{and}\quad
    w_{-,1}<\hat{u}<\hat{v}<w_{+,1}\leq w_{-,2}<\gamma_{2}<w_{+,2}.
  \end{displaymath}
  Thus, for any $\tilde{u},\tilde{v},\hat{u},\hat{v},\tilde{\rho}$ we
  could pick, the finite partial map $\bar{z}\mapsto\bar{w}$,
  $\bar{z}^{*}\mapsto\bar{w}^{*}$, $\bar{z}'\mapsto\bar{w}'$,
  $\tilde{u}\mapsto\hat{u}$, $\tilde{v}\mapsto\hat{v}$ and
  $\tilde{\rho}\mapsto\gamma_{2}$ is automatically strictly
  increasing.
  
  Finally, we treat the possibility that
  \begin{displaymath}
    h^{\dagger}(x_{1})=h^{\dagger}(x_{2}).
  \end{displaymath}
  We will show that we can reduce to a single application of
  Lemma~\ref{lem:preserve-basic-pp-aux-4},
  Lemma~\ref{lem:preserve-basic-pp-aux-5} or
  Remark~\ref{rem:preserve-basic-pp-aux-5}. First, let $x_{1},y_{1}$
  and $x_{2},y_{2}$ and $x_{3},y_{3}$ be three corresponding pairs
  with $x_{1}<x_{2}<x_{3}$ (and consequently $y_{1}<y_{2}<y_{3}$) but
  $h^{\dagger}(x_{1})=h^{\dagger}(x_{2})=h^{\dagger}(x_{3})$. Then
  \begin{displaymath}
    h^{-1}(-\infty,x_{1}]=h^{-1}(-\infty,x_{2}]=h^{-1}(-\infty,x_{3}]\text{
      and } h^{-1}[x_{1},+\infty)=h^{-1}[x_{2},+\infty)=h^{-1}[x_{3},+\infty),
  \end{displaymath}
  so
  \begin{displaymath}
    \tilde{\iota}^{-1}(-\infty,x_{1}]=\tilde{\iota}^{-1}(-\infty,x_{2}]=\tilde{\iota}^{-1}(-\infty,x_{3}]\text{
      and }
    \tilde{\iota}^{-1}[x_{1},+\infty)=\tilde{\iota}^{-1}[x_{2},+\infty)=\tilde{\iota}^{-1}[x_{3},+\infty)
  \end{displaymath}
  for \emph{all} $\tilde{\iota}=h\tilde{b}g$ we could pick in the
  sequel. It is immediate from Lemma~\ref{lem:sandwich-reform} that we
  can drop $x_{2},y_{2}$ from $\bar{x},\bar{y}$; more precisely: if
  $x_{1}\mapsto y_{1}$ and $x_{3}\mapsto y_{3}$ preserve all basic
  formulas, then so does $x_{2}\mapsto y_{2}$. Hence, we can assume
  that $\bar{x},\bar{y}$ contains only two corresponding pairs
  $x_{1},y_{1}$ and $x_{2},y_{2}$ with
  $h^{\dagger}(x_{1})=h^{\dagger}(x_{2})$. If additionally
  $f(y_{1})=f(y_{2})$, we can drop one of the pairs from
  $\bar{x},\bar{y}$ and apply Lemma~\ref{lem:preserve-basic-pp-aux-4}
  or~\ref{lem:preserve-basic-pp-aux-5} to the remaining one. If on the
  other hand $f(y_{1})<f(y_{2})$, we apply
  Lemma~\ref{lem:sandwich-reform} to $x_{1}\mapsto y_{1}$ and
  $x_{2}\mapsto y_{2}$ as finite partial maps from $\A$ to $\B$ to
  obtain
  \begin{displaymath}
    \sigma\left(\iota^{-1}(-\infty,x_{1}]\right)\subseteq
    (-\infty,f(y_{1})]\quad\text{and}\quad\sigma\left(\iota^{-1}[x_{1},+\infty)\right)\subseteq [f(y_{2}),+\infty).
  \end{displaymath}
  Since $\iota^{-1}(-\infty,x_{1}]$ and $\iota^{-1}[x_{1},+\infty)$
  partition the whole of $\Q$ (note that $x_{1}\notin\Img(\iota)$),
  this implies $\Img(\sigma)\cap (f(y_{1}),f(y_{2}))=\emptyset$. We
  add the condition
  \begin{displaymath}
    \Img(\tilde{\sigma})\cap (f(y_{1}),f(y_{2}))=\emptyset\quad \text{(type~\ref{item:types-3})}
  \end{displaymath}
  to the intersection $O$ and pick $\hat{y}\in\Q$ such that
  $f(y_{1})<f(\hat{y})<f(y_{2})$. Then
  \begin{displaymath}
    \sigma\left(\iota^{-1}(-\infty,x_{1}]\right)\subseteq
    (-\infty,f(\hat{y})]\quad\text{and}\quad\sigma\left(\iota^{-1}[x_{1},+\infty)\right)\subseteq [f(\hat{y}),+\infty).
  \end{displaymath}
  Applying Remark~\ref{rem:preserve-basic-pp-aux-5} to the pair
  $x_{1},\hat{y}$ one obtains
  \begin{displaymath}
    \tilde{\sigma}\left(\tilde{\iota}^{-1}(-\infty,x_{1}]\right)\subseteq
    (-\infty,f(\hat{y})]\quad\text{and}\quad\tilde{\sigma}\left(\tilde{\iota}^{-1}[x_{1},+\infty)\right)\subseteq [f(\hat{y}),+\infty)
  \end{displaymath}
  under suitable conditions on $\tilde{\sigma}$ and $\tilde{b}$ (see
  below). By our choice of $\hat{y}$, since
  $\tilde{\iota}^{-1}[x_{2},+\infty)=\tilde{\iota}^{-1}[x_{1},+\infty)$
  and since $\Img(\tilde{\sigma})\cap (f(y_{1}),f(y_{2}))=\emptyset$,
  Lemma~\ref{lem:sandwich-reform} yields that this is equivalent to
  $x_{1}\mapsto y_{1}$ and $x_{2}\mapsto y_{2}$ both preserving all
  basic formulas.

  To complete the proof, we apply either
  Lemma~\ref{lem:preserve-basic-pp-aux-4},
  Lemma~\ref{lem:preserve-basic-pp-aux-5} or
  Remark~\ref{rem:preserve-basic-pp-aux-5} (the latter only if we use
  the reduction from two instances of
  Lemmas~\ref{lem:preserve-basic-pp-aux-4}
  or~\ref{lem:preserve-basic-pp-aux-5} to a single instance of
  Remark~\ref{rem:preserve-basic-pp-aux-5} as derived above) to each
  corresponding pair $x,y$ in $\bar{x},\bar{y}$
  satisfying~\ref{item:proof-variation-iv}. This yields additional
  sets of
  types~\ref{item:types-0},~\ref{item:types-1},~\ref{item:types-2},~\ref{item:types-3}
  and additional tuples $\bar{\zeta}^{*},\bar{\eta}^{*}$ in $\Q$,
  $\bar{\zeta}'$ in $(\R\setminus\Img(\bar{g}))\cap\I$,
  $\bar{\zeta}''$ in $\bar{g}(\Dc^{\I}(\tilde{\sigma}))$ and
  $\bar{\eta}',\bar{\eta}''$ in $\Dc^{\I}(h)$ such that for all these
  pairs $x,y$, the map $x\mapsto y$ preserves all basic formulas as a
  finite partial map from $\A(\tilde{\sigma},f,\tilde{\iota})$ to
  $\B(\tilde{\sigma},f,\tilde{\iota})$ where
  $\tilde{\iota}:=h\tilde{b}g$, whenever $\tilde{\sigma}$ is contained
  in the the additional sets and $\tilde{b}\in\GG_{\Q}$ satisfies
  $\tilde{b}(\bar{z})=\bar{w}$, $\tilde{b}(\bar{z}^{*})=\bar{w}^{*}$,
  $\bar{\tilde{b}}(\bar{z}')=\bar{w}'$ as well as
  $\tilde{b}(\bar{\zeta}^{*})=\bar{\eta}^{*}$,
  $\bar{\tilde{b}}(\bar{\zeta}')=\bar{\eta}'$,
  $\bar{\tilde{b}}(\bar{\zeta}'')=\bar{\eta}''$. We add the additional
  sets to the intersection $O$ and add the tuples
  $\bar{\zeta}^{*},\bar{\eta}^{*}$ to $\bar{z}^{*},\bar{w}^{*}$, the
  tuples $\bar{\zeta}',\bar{\eta}'$ to $\bar{z}',\bar{w}'$ and the
  tuples $\bar{\zeta}'',\bar{\eta}''$ to $\bar{z}'',\bar{w}''$,
  respectively. Note that the resulting finite partial map
  $\bar{z}\mapsto\bar{w}$, $\bar{z}^{*}\mapsto\bar{w}^{*}$,
  $\bar{z}'\mapsto\bar{w}'$, $\bar{z}''\mapsto\bar{w}''$ is strictly
  increasing: different entries of the new tuples
  $\bar{\zeta}^{*},\bar{\zeta}',\bar{\zeta}'',\bar{\eta}^{*},\bar{\eta}',\bar{\eta}''$
  cannot interfere with each other since the generalised inverses
  $h^{\dagger}(x)$ are pairwise distinct and since we added the
  elements $\tilde{z}$ and $\tilde{w}$
  from~\eqref{eq:proof-variation-1} to $\bar{z}^{*}$ and
  $\bar{w}^{*}$.

  If $\tilde{\sigma}\in O$ and if $\tilde{b}\in\GG_{\Q}$ satisfies
  $\tilde{b}(\bar{z})=\bar{w}$, $\tilde{b}(\bar{z}^{*})=\bar{w}^{*}$,
  $\bar{\tilde{b}}(\bar{z}')=\bar{w}'$,
  $\bar{\tilde{b}}(\bar{z}'')=\bar{w}''$, then by our previous
  construction of $\bar{z}^{*},\bar{z}',\bar{w}^{*},\bar{w}'$, the
  finite partial map $x\mapsto y$ preserves all basic formulas as a
  map from $\A(\tilde{\sigma},f,\tilde{\iota})$ to
  $\B(\tilde{\sigma},f,\tilde{\iota})$ not only for for each pair
  $x,y$ with property~\ref{item:proof-variation-iv} but also for each
  pair $x,y$ with one of the
  properties~\ref{item:proof-variation-i}\nobreakdash-\ref{item:proof-variation-iii}
  -- thus completing the proof.
\end{proof}

\section{Reduction of the rich to the pointwise topology}
\label{sec:reduct-pointw-topol}
The aim of this section is to prove
Proposition~\ref{prop:reduct-pointw-topol}. We will argue in several
steps, each having the following general form:
\begin{notation}
  If $\TT_{a}$ and $\TT_{b}$ are topologies on $\MM_{\Q}$ with
  $\TT_{pw}\subseteq\TT_{a},\TT_{b}$, then
  $\TT_{a}\rightsquigarrow\TT_{b}$ shall denote the following
  statement\footnote{In many (but not all!) applications of this
    notation, we will have $\TT_{b}\subseteq\TT_{a}$.}:
  \begin{quote}
    Let $\TT$ be a Polish semigroup topology on $\MM_{\Q}$ such that
    $\TT_{pw}\subseteq\TT\subseteq\TT_{a}$.

    \noindent Then $\TT\subseteq\TT_{b}$.
  \end{quote}
\end{notation}

We will require an additional auxiliary type of subsets of $\MM_{\Q}$
which encompasses type~\ref{item:types-3} (see
Definition~\ref{def:types}).
\begin{definition}
  \hspace{0mm}
  \begin{enumerate}[label=(\arabic*),ref=\arabic*,start=4]
  \item\label{item:types-4}
    $O_{A}^{(4)}:=\set{s\in\MM_{\Q}}{\Img(s)\subseteq A}$ \quad for $A\subseteq\Q$\hfill
    (restricting)
  \end{enumerate}
\end{definition}
The proof will proceed along the following route:
\begin{displaymath}
  \TT_{rich}=\TT_{0123}\stackrel{\ref{lem:0123-squig-01cls23opn}}{\rightsquigarrow}\TT_{01^{cls}23^{opn}}\stackrel{\ref{lem:01cls23opn-squig-024}}{\rightsquigarrow}\TT_{024}\stackrel{\ref{lem:024-squig-023opn}}{\rightsquigarrow}\TT_{023^{opn}}\stackrel{\ref{lem:023opn-squig-03opn}}{\rightsquigarrow}\TT_{03^{opn}}\stackrel{\ref{lem:03opn-squig-pw}}{\rightsquigarrow}\TT_{0}=\TT_{pw}
\end{displaymath}
\begin{proof}[Proof (of Proposition~\ref{prop:reduct-pointw-topol}
  given
  Lemmas~\ref{lem:0123-squig-01cls23opn},~\ref{lem:01cls23opn-squig-024},~\ref{lem:024-squig-023opn},~\ref{lem:023opn-squig-03opn}
  and~\ref{lem:03opn-squig-pw}).]
  Let $\TT$ be a Polish semigroup topology on $\MM_{\Q}$ with
  $\TT_{pw}\subseteq\TT\subseteq\TT_{rich}=\TT_{0123}$. By
  Lemma~\ref{lem:0123-squig-01cls23opn}, we know that
  $\TT_{pw}\subseteq\TT\subseteq\TT_{01^{cls}23^{opn}}$. Analogously,
  we apply
  Lemmas~\ref{lem:01cls23opn-squig-024},~\ref{lem:024-squig-023opn},~\ref{lem:023opn-squig-03opn}
  and~\ref{lem:03opn-squig-pw} in sequence to finally conclude
  $\TT_{pw}\subseteq\TT\subseteq\TT_{pw}$, i.e.~$\TT=\TT_{pw}$ as
  claimed.
\end{proof}

\subsection{Reductions
  $\TT_{0123}\rightsquigarrow\TT_{01^{cls}23^{opn}}\rightsquigarrow\TT_{024}$}\label{sec:0123-squig-01cls23opn-squig-024}

For the first two reductions, we will need to determine the image of
$\TT_{0123}$-basic open (and in particular
$\TT_{01^{cls}23^{opn}}$-basic open) sets under a suitable left
translation $\lambda_{f}$. This requires a canonical representation of
basic open sets in $\TT_{0123}$ and $\TT_{01^{cls}23^{opn}}$ which
will later be also applied to $\TT_{023^{opn}}$-basic open sets.
\begin{definition}\label{def:stratified-repr}
  Let $O\neq\emptyset$ be a $\TT_{0123}$-basic open (or
  $\TT_{01^{cls}23^{opn}}$-basic open or $\TT_{023^{opn}}$-basic open)
  set, i.e.
  \begin{align}\label{eq:stratified-repr}
    O=\bigcap_{i=1}^{n}O_{x_{i},y_{i}}^{(0)}\cap\bigcap_{j=1}^{m}O_{I_{j},J_{j}}^{(1)}\cap\bigcap_{k=1}^{\widetilde{m}}O_{\tilde{I}_{k},\tilde{J}_{k}}^{(1)}\cap O_{LU}^{(2)}\cap\bigcap_{\ell=1}^{N}O_{K_{\ell}}^{(3)}
  \end{align}
  where $I_{j}=(-\infty,p_{j})$ and
  $\tilde{I}_{k}=(\tilde{p}_{k},+\infty)$ for
  $p_{j},\tilde{p}_{k}\in\Q$.  We call the
  representation~\eqref{eq:stratified-repr} \emph{stratified} if
  \begin{enumerate}[label=(S\arabic*)]
  \item\label{item:stratified-repr-i}
    $\forall i=1,\dots,n-1\colon x_{i}<x_{i+1}$ (then automatically,
    $y_{i}\leq y_{i+1}$ since $O\neq\emptyset$)
  \item\label{item:stratified-repr-ii}
    $\forall j=1,\dots,m-1\colon p_{j}<p_{j+1}\text{ and
    }J_{j}\subseteq J_{j+1}$
  \item\label{item:stratified-repr-iii}
    $\forall k=1,\dots,\widetilde{m}-1\colon
    \tilde{p}_{k}<\tilde{p}_{k+1}\text{ and }\tilde{J}_{k}\supseteq
    \tilde{J}_{k+1}$
  \item\label{item:stratified-repr-iv}
    $\forall\ell=1,\dots,N-1\colon \sup K_{\ell}\leq \inf
    K_{\ell+1}\text{ and }\left(\inf K_{\ell},\sup
      K_{\ell+1}\right)\setminus(K_{\ell}\cup
    K_{\ell+1})\neq\emptyset$
  \item\label{item:stratified-repr-v}
    $\forall j=1,\dots,m\,\forall i=1,\dots,n\colon p_{j}\leq
    x_{i}\Rightarrow y_{i}\notin J_{j}$
  \item\label{item:stratified-repr-vi}
    $\forall k=1,\dots,\widetilde{m}\,\forall i=1,\dots,n\colon
    \tilde{p}_{k}\geq x_{i}\Rightarrow y_{i}\notin\tilde{J}_{k}$
  \item\label{item:stratified-repr-vii}
    $\forall j=1,\dots,m\,\forall\ell=1,\dots,N\colon (J_{j}\cap
    K_{\ell}\neq\emptyset\Rightarrow\exists t\in J_{j}\colon
    t>K_{\ell})$
  \item\label{item:stratified-repr-viii}
    $\forall k=1,\dots,\widetilde{m}\,\forall\ell=1,\dots,N\colon
    (\tilde{J}_{k}\cap K_{\ell}\neq\emptyset\Rightarrow\exists t\in
    \tilde{J}_{k}\colon t<K_{\ell})$
  \end{enumerate}
\end{definition}
\begin{lemma}\label{lem:stratified-repr}
  Any $\TT_{0123}$-basic open set $O$ has a stratified representation.

  The same holds for a $\TT_{01^{cls}23^{opn}}$-basic open set, where
  the resulting representation again consists of sets of
  types~\ref{item:types-0},~\ref{item:types-1cls},~\ref{item:types-2}
  and~\ref{item:types-3opn}.

  The same holds for a $\TT_{023^{opn}}$-basic open set, where the
  resulting representation again consists of sets of
  types~\ref{item:types-0},~\ref{item:types-2}
  and~\ref{item:types-3opn}.
\end{lemma}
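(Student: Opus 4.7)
The plan is to start from the given representation~\eqref{eq:stratified-repr} and produce an equivalent representation of~$O$ satisfying S1--S8 by iterating a finite list of equivalence-preserving moves: consolidating two constraints of the same family, dropping a constraint implied by the remaining ones, replacing a constraint by a strictly stronger admissible one that still agrees with $O$ on the intersection (tightening), and merging two type-\ref{item:types-3} constraints whose union is still an admissible avoidance interval. First I would establish the intra-family conditions. S1 follows by consolidating pointwise constraints at the same~$x$ (non-emptiness of~$O$ forces equal~$y$), sorting by~$x$, and using that elements of~$O$ are increasing. For S2 (and symmetrically S3), the lower rays $J_{j}$ are totally ordered by inclusion, so same-$p$ constraints can be combined by intersecting their $J$'s (and an intersection of lower rays is again a lower ray, closed if both were, so the type~\ref{item:types-1cls} case is respected); after sorting by~$p$, any $(I_{j},J_{j})$ with $J_{j}\supseteq J_{j+1}$ is implied by $(I_{j+1},J_{j+1})$ since $I_{j}\subsetneq I_{j+1}$ and $s$ is increasing give $s(I_{j})\subseteq s(I_{j+1})\subseteq J_{j+1}\subseteq J_{j}$, hence may be dropped. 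For S4, sort and merge any two $K_{\ell}$'s whose union is again an admissible interval (which happens precisely when they overlap or share an endpoint belonging to one of them); when both are of type~\ref{item:types-3opn}, the merged interval stays open.

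Next come the cross-family conditions. For S5, if $p_{j}\leq x_{i}$ and $y_{i}\in J_{j}$, monotonicity gives $s(x)\leq s(x_{i})=y_{i}$ for all $x<p_{j}\leq x_{i}$, and since $J_{j}$ is downward-closed in~$\Q$ this forces $s(x)\in J_{j}$, so $(I_{j},J_{j})$ is implied by $(x_{i},y_{i})$ and may be dropped; S6 is symmetric. For S7, suppose $J_{j}\cap K_{\ell}\neq\emptyset$ and no $t\in J_{j}$ exceeds every element of~$K_{\ell}$; combined with avoidance this forces $s(I_{j})\subseteq J_{j}\setminus K_{\ell}$, and a short case analysis over the four bracketings of~$K_{\ell}$ and the two of~$J_{j}$ shows that $J_{j}\setminus K_{\ell}$ equals $(-\infty,\inf K_{\ell}]$ or $(-\infty,\inf K_{\ell})$ according to whether $K_{\ell}$ is open or closed at its lower endpoint---hence a lower ray of admissible form, strictly contained in~$J_{j}$. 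Replacing $(I_{j},J_{j})$ by $(I_{j},J_{j}\setminus K_{\ell})$ preserves~$O$; moreover, when $K_{\ell}$ is open its lower endpoint is in~$\Q$ and $J_{j}\setminus K_{\ell}$ is closed, so type~\ref{item:types-1cls} is preserved in the $\TT_{01^{cls}23^{opn}}$-setting. S8 is symmetric.

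Tightening a $J_{j}$ may re-open S2 and create new S5 opportunities, and dropping constraints may allow further merges in S4, so the whole procedure is iterated. Each drop reduces the count of constraints in a family, each merge reduces~$N$, and each tightening strictly shrinks $J_{j}$ among the finitely many admissible lower rays with rational endpoints drawn from the finite pool of rationals appearing in the current intersection; hence the iteration terminates. The $\TT_{01^{cls}23^{opn}}$-variant is handled verbatim since every operation preserves the bracketing restrictions as noted above; the $\TT_{023^{opn}}$-variant is immediate because $m=\widetilde{m}=0$ makes S2, S3, S5--S8 vacuous and only S1 and the openness-preserving merging for S4 are required. The main obstacle is the careful case analysis for S7/S8 verifying that $J_{j}\setminus K_{\ell}$ always coincides with an admissible lower ray of the correct bracketing type and that this bracketing is compatible with the variant under consideration.
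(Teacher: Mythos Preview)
Your proposal is correct and follows essentially the same approach as the paper: both proceed by applying, for each of (S1)--(S8), the natural consolidation/dropping/tightening/merging move that enforces that condition while preserving the set $O$, and both observe that in the $\TT_{01^{cls}23^{opn}}$ and $\TT_{023^{opn}}$ variants these moves preserve the required bracketing types. You supply somewhat more detail than the paper (an explicit termination argument for the iteration and the bracketing case analysis for (S7)/(S8)), but the underlying mechanism is identical.
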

\begin{proof}
  We start with any representation
  \begin{displaymath}
    O=\bigcap_{i=1}^{n}O_{x_{i},y_{i}}^{(0)}\cap\bigcap_{j=1}^{m}O_{I_{j},J_{j}}^{(1)}\cap\bigcap_{k=1}^{\widetilde{m}}O_{\tilde{I}_{k},\tilde{J}_{k}}^{(1)}\cap
    O_{LU}^{(2)}\cap\bigcap_{\ell=1}^{N}O_{K_{\ell}}^{(3)}
  \end{displaymath}
  and turn it into a stratified one in several steps, one for each
  item in Definition~\ref{def:stratified-repr}.

  \textbf{(S1).} Rearrange the $x_{i}$ in increasing order.

  \textbf{(S2).} Rearrange the $p_{j}$ in increasing order; if
  $p_{j}=p_{j+1}$, drop the larger set of $J_{j}$ and $J_{j+1}$. If
  $J_{j}$ is not a subset of $J_{j+1}$, then $J_{j+1}\subseteq J_{j}$
  and $O_{I_{j},J_{j}}^{(1)}\cap O_{I_{j+1},J_{j+1}}^{(1)}$ can be
  replaced by $O_{I_{j+1},J_{j+1}}^{(1)}$.

  \textbf{(S3).} Analogously to~(S2).

  \textbf{(S4).} Rearrange the $K_{\ell}$ by increasing order of
  $\inf K_{\ell}$.

  \noindent If $\sup K_{\ell}>\inf K_{\ell+1}$ or if
  $\left(\inf K_{\ell},\sup K_{\ell+1}\right)\subseteq K_{\ell}\cup
  K_{\ell+1}$, then $K_{\ell}\cup K_{\ell+1}$ is again an interval and
  $O_{K_{\ell}}^{(3)}\cap O_{K_{\ell+1}}^{(3)}$ can be replaced by
  $O_{K_{\ell}\cup K_{\ell+1}}^{(3)}$.

  \textbf{(S5).} If $p_{j}\leq x_{i}$ and $y_{i}\in J_{j}$, then
  $O_{x_{i},y_{i}}^{(0)}\cap O_{I_{j},J_{j}}^{(1)}$ can be replaced by
  $O_{x_{i},y_{i}}^{(0)}$.

  \textbf{(S6).} Analogously to~(S5).

  \textbf{(S7).} If $J_{j}\cap K_{\ell}\neq\emptyset$ but no element
  of $J_{j}$ is greater than $K_{\ell}$, then
  $J_{j}\setminus K_{\ell}$ is again a rational interval and
  $O_{I_{j},J_{j}}^{(1)}\cap O_{K_{\ell}}^{(3)}$ can be replaced by
  $O_{I_{j},J_{j}\setminus K_{\ell}}^{(1)}\cap O_{K_{\ell}}^{(3)}$.

  \textbf{(S8).} Analogously to~(S7).

  \medskip If all intervals $J_{j}$ and $\tilde{J}_{k}$ are closed and
  all intervals $K_{\ell}$ are open, then so are the respective
  intervals in the representation we obtain after going
  through~(S1)-(S8), proving the second statement.

  If additionally no sets of type~\ref{item:types-1cls} occur, then
  the representation will only contains sets of
  types~\ref{item:types-0},~\ref{item:types-2}
  and~\ref{item:types-3opn} since the above procedure never generates
  sets of type~\ref{item:types-1} if there are none in the original
  set.
\end{proof}
Now we can provide the result about images of basic open sets under
certain left translations (which can be seen as a generalisation of
Lemma~\ref{lem:left-translation-preimage}). Its proof is somewhat
technical, but the main idea is very straightforward: if
$f,s'\in\MM_{\Q}$ and if $s'$ satisfies
$s'(-\infty,p)\subseteq (-\infty,q)$, then one might be led to believe
that necessarily $fs'(-\infty,p)\subseteq (-\infty,f(q))$. However, in
general only the conclusion $fs'(-\infty,p)\subseteq (-\infty,f(q)]$
is true, namely if the preimage $f^{-1}\{f(q)\}$ contains not only $q$
but also elements less than $q$. This can be ensured by requiring that
$f^{-1}\{f(q)\}$ is an irrational interval. Indeed, if $f$ is also
surjective, then \emph{any} $s$ with
$s(-\infty,p)\subseteq (-\infty,f(q)]$ can be rewritten as $s=fs'$
where $s'(-\infty,p)\subseteq (-\infty,q)$. An analogous fact holds
for sets of type~\ref{item:types-3} -- if $s'$ avoids $[u,v]$, then
$fs'$ in general only avoids $(f(u),f(v))$ and, conversely, if $f$ is
surjective with $f^{-1}\{f(u)\}$, $f^{-1}\{f(v)\}$ irrational and if
$s$ avoids $(f(u),f(v))$, then $s$ can be rewritten as $s=fs'$ where
$s'$ avoids $[u,v]$. Combining these facts for the building blocks of
(stratified representations of) basic open sets requires thorough
bookkeeping.
\begin{lemma}\label{lem:translate-stratified-basic-open}
  Let $O\neq\emptyset$ be a nonempty $\TT_{0123}$-basic open set with
  stratified representation
  \begin{displaymath}
    O=\bigcap_{i=1}^{n}O_{x_{i},y_{i}}^{(0)}\cap\bigcap_{j=1}^{m}O_{(-\infty,p_{j}),J_{j}}^{(1)}\cap\bigcap_{k=1}^{\widetilde{m}}O_{(\tilde{p}_{k},+\infty),\tilde{J}_{k}}^{(1)}\cap
    O_{LU}^{(2)}\cap\bigcap_{\ell=1}^{N}O_{K_{\ell}}^{(3)}.
  \end{displaymath}
  Define
  \begin{displaymath}
    q_{j}:=\sup J_{j},\quad \tilde{q}_{k}:=\inf\tilde{J}_{k},\quad
    u_{\ell}:=\inf K_{\ell}\quad\text{and}\quad v_{\ell}:=\sup K_{\ell}.
  \end{displaymath}
  Let further $f\in\MM_{\Q}$ be unbounded-unbounded such that for all
  $w\in\Img(f)$, the preimage $f^{-1}\{w\}$ is an irrational
  interval. Then (putting $f(\pm\infty):=\pm\infty$) we have
  \begin{multline*}
    \lambda_{f}(O)=\set{s}{\Img(s)\subseteq\Img(f)}\cap\bigcap_{i=1}^{n}O_{x_{i},f(y_{i})}^{(0)}\cap\\
    \bigcap_{j=1}^{m}O_{(-\infty,p_{j}),(-\infty,f(q_{j})]}^{(1)}\cap\bigcap_{k=1}^{\widetilde{m}}O_{(\tilde{p}_{k},+\infty),[f(\tilde{q}_{k}),+\infty)}^{(1)}\cap
    O_{LU}^{(2)}\cap\bigcap_{\ell=1}^{N}O_{(f(u_{\ell}),f(v_{\ell}))}^{(3)}.
  \end{multline*}
\end{lemma}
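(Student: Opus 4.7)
The plan is to prove the two inclusions separately.

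For the direct inclusion $\lambda_{f}(O)\subseteq(\text{RHS})$, I would take $s'\in O$, set $s:=fs'$, and verify each subbasic constraint in turn. The properties $\Img(s)\subseteq\Img(f)$, $s(x_{i})=f(y_{i})$, and preservation of the boundedness type (using that $f$ is unbounded-unbounded) are immediate. The type-\ref{item:types-1} inclusions follow from $J_{j}\subseteq(-\infty,q_{j}]$ and $\tilde{J}_{k}\subseteq[\tilde{q}_{k},+\infty)$ by monotonicity of $f$. For the avoidance, suppose towards contradiction that $s(x)\in(f(u_{\ell}),f(v_{\ell}))$: then $f(s'(x))>f(u_{\ell})$ forces $s'(x)>u_{\ell}$ (because $s'(x)\leq u_{\ell}$ would imply $f(s'(x))\leq f(u_{\ell})$) and analogously $s'(x)<v_{\ell}$, so $s'(x)\in(u_{\ell},v_{\ell})\subseteq K_{\ell}$, contradicting $s'\in O_{K_{\ell}}^{(3)}$.

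For the reverse inclusion, given $s$ in the right-hand side, I plan to construct $s'\in O$ with $fs'=s$ through a strengthened version of the Back\&Forth argument underlying Lemma~\ref{lem:left-translation-preimage}. Define $\mathcal{S}$ to consist of all finite partial weakly increasing maps $m\colon\Q\to\Q$ containing $\{x_{1},\dots,x_{n}\}$ in their domain such that $s(p)=f(m(p))$ for all $p\in\Dom(m)$, $m(x_{i})=y_{i}$, $m(p)\in J_{j}$ whenever $p\in\Dom(m)\cap(-\infty,p_{j})$, $m(p)\in\tilde{J}_{k}$ whenever $p\in\Dom(m)\cap(\tilde{p}_{k},+\infty)$, and $m(p)\notin K_{\ell}$ for every $\ell$. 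Setting $m_{0}:=\{(x_{i},y_{i})\}_{i=1}^{n}$, membership $m_{0}\in\mathcal{S}$ follows by combining $s(x_{i})=f(y_{i})$ with non-emptiness of $O$: fixing any $s''\in O$, the values $y_{i}=s''(x_{i})$ automatically satisfy all type-\ref{item:types-1} and type-\ref{item:types-3} constraints applicable at $x_{i}$, and $(y_{1},\dots,y_{n})$ is weakly increasing because $s''$ is.

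The main technical obstacle will be verifying that $\mathcal{S}$ is a Forth system: given $m\in\mathcal{S}$ and $x\in\Q\setminus\Dom(m)$, one must choose $y$ in the non-empty irrational interval $I:=f^{-1}\{s(x)\}$ that lies in the monotonicity window delimited by the immediate neighbours $p^{\pm}$ of $x$ in $\Dom(m)$, inside each $J_{j}$ with $x<p_{j}$ and each $\tilde{J}_{k}$ with $x>\tilde{p}_{k}$, and outside each $K_{\ell}$. The hypothesis that every $f$-preimage is an irrational interval is crucial here: $I\cap\Q$ is Euclidean-dense in $I$, and each rational boundary point ($q_{j}$, $\tilde{q}_{k}$, $u_{\ell}$, $v_{\ell}$) lies strictly inside the corresponding irrational preimage, leaving room on both sides to realise any combination of open/closed inclusions or exclusions. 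I anticipate a case split on whether $s(x)$ coincides with $f(q_{j})$, $f(\tilde{q}_{k})$, $f(u_{\ell})$ or $f(v_{\ell})$ for some index, with the stratification properties of the representation organising these constraints coherently into a non-empty admissible set from which $y$ can be drawn. Once Forth is established, Lemma~\ref{lem:back-and-forth} extends $m_{0}$ to a total weakly increasing $s'\colon\Q\to\Q$ every finite restriction of which lies in $\mathcal{S}$; passing to the limit yields $fs'=s$ and $s'\in O$ as required.
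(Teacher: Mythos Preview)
Your approach is correct in spirit and shares the key insight with the paper---that the stratification conditions, together with the irrationality of the preimage intervals, render the constraints at each point compatible---but the organisation differs. The paper does not run a Forth construction: instead it observes that $s'$ can be defined independently on each fibre $s^{-1}\{w\}$ (monotonicity between distinct fibres being automatic), reduces the nontrivial work to the finitely many ``critical'' values $w\in\{f(q_j),f(\tilde q_k),f(u_\ell),f(v_\ell)\}$, and on each such fibre partitions the domain into finitely many equivalence classes (according to which of the intervals $(-\infty,p_j)$, $(\tilde p_k,+\infty)$ the point lies in) before choosing one value per class in increasing order. This sidesteps the monotonicity window $m(p^-)\le y\le m(p^+)$ that your Forth step must carry along; in your setup you will additionally need to verify that this window does not empty the admissible set, which is doable (the endpoints $m(p^\pm)$ avoid every $K_\ell$ and satisfy, respectively, the relevant left and right type-\ref{item:types-1} constraints, so the window intersects $J_j\cap\tilde J_k\setminus\bigcup_\ell K_\ell$ for the same stratification reasons) but adds a layer to the case split you anticipate. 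The paper's fibre-by-fibre reduction is thus somewhat cleaner, while your route is arguably more uniform.

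One small technical point: your system $\mathcal{S}$ is not closed under restriction (every member must contain $\bar x$ in its domain), so Lemma~\ref{lem:back-and-forth} does not literally apply; of course the iterative Forth extension from $m_0$ works regardless, so this is cosmetic.
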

\begin{proof}
  The inclusion ``$\subseteq$'' is immediate, so we deal only with
  ``$\supseteq$''.

  Take $s\in\MM_{\Q}$ such that $\Img(s)\subseteq\Img(f)$ and
  \begin{equation}\label{eq:proof-translate-stratified-basic-open-i}
    s\in\bigcap_{i=1}^{n}O_{x_{i},f(y_{i})}^{(0)}\cap\bigcap_{j=1}^{m}O_{(-\infty,p_{j}),(-\infty,f(q_{j})]}^{(1)}\cap\bigcap_{k=1}^{\widetilde{m}}O_{(\tilde{p}_{k},+\infty),[f(\tilde{q}_{k}),+\infty)}^{(1)}\cap
    O_{LU}^{(2)}\cap\bigcap_{\ell=1}^{N}O_{(f(u_{\ell}),f(v_{\ell}))}^{(3)}.
  \end{equation}
  We want to find $s'\in O$ such that $s=fs'$. The latter statement is
  equivalent to $s'(s^{-1}\{w\})\subseteq f^{-1}\{w\}$ for all
  $w\in\Img(s)$. Since $\Img(s)\subseteq\Img(f)$, we have
  $f^{-1}\{w\}\neq\emptyset$ for all $w\in\Img(s)$. Note that if one
  takes $s'\vert_{s^{-1}\{w\}}$ to be an increasing map
  $s^{-1}\{w\}\to f^{-1}\{w\}$ independently for each $w\in\Img(s)$,
  their union will be increasing as well since
  $s^{-1}\{w_{1}\}<s^{-1}\{w_{2}\}$ and
  $f^{-1}\{w_{1}\}<f^{-1}\{w_{2}\}$ for all
  $w_{1}<w_{2}$. Additionally requiring $s'\in O$ amounts to the
  following properties:
  \begin{enumerate}[label=(\roman*)]
  \item\label{item:proof-translate-stratified-basic-open-i}
    $\forall i=1,\dots,n\colon s'(x_{i})=y_{i}$
  \item\label{item:proof-translate-stratified-basic-open-ii}
    $s'\in O_{LU}^{(2)}$
  \item\label{item:proof-translate-stratified-basic-open-iii}
    $\forall j=1,\dots,m\,\forall w\in\Img(s)\colon
    s'\left(s^{-1}\{w\}\cap (-\infty,p_{j})\right)\subseteq J_{j}\cap
    f^{-1}\{w\}$
  \item\label{item:proof-translate-stratified-basic-open-iv}
    $\forall k=1,\dots,\widetilde{m}\,\forall w\in\Img(s)\colon
    s'\left(s^{-1}\{w\}\cap (\tilde{p}_{k},+\infty)\right)\subseteq
    \tilde{J}_{k}\cap f^{-1}\{w\}$
  \item\label{item:proof-translate-stratified-basic-open-v}
    $\forall\ell=1,\dots,N\,\forall w\in\Img(s)\colon
    s'\left(s^{-1}\{w\}\right)\cap K_{\ell}=\emptyset$
  \end{enumerate}
  To simplify the proof, we replace
  \ref{item:proof-translate-stratified-basic-open-i} by:
  \begin{enumerate}[label=(\roman*),start=6]
  \item\label{item:proof-translate-stratified-basic-open-vi}
    $\forall i=1,\dots,n\,\forall w\in\Img(s)\colon
    s'\left(s^{-1}\{w\}\cap (-\infty,x_{i})\right)\subseteq
    (-\infty,y_{i}]\cap f^{-1}\{w\}$
  \item\label{item:proof-translate-stratified-basic-open-vii}
    $\forall i=1,\dots,n\,\forall w\in\Img(s)\colon
    s'\left(s^{-1}\{w\}\cap (x_{i},+\infty)\right)\subseteq
    [y_{i},+\infty)\cap f^{-1}\{w\}$
  \end{enumerate}
  If we find $s'$ satisfying
  \ref{item:proof-translate-stratified-basic-open-ii}-\ref{item:proof-translate-stratified-basic-open-vii},
  then we can redefine $s'(x_{i}):=y_{i}$ to obtain $s'\in O$ --
  by~\ref{item:proof-translate-stratified-basic-open-vi}
  and~\ref{item:proof-translate-stratified-basic-open-vii}, the
  resulting map will still be an element of $\MM_{\Q}$; and since
  $O\neq\emptyset$, mapping $x_{i}\mapsto y_{i}$ cannot
  contradict~\ref{item:proof-translate-stratified-basic-open-ii}-\ref{item:proof-translate-stratified-basic-open-v}.

  As a first step, we show that the statements
  in~\ref{item:proof-translate-stratified-basic-open-ii}-\ref{item:proof-translate-stratified-basic-open-vii}
  are already implied by $s'(s^{-1}\{w\})\subseteq f^{-1}\{w\}$ for
  many values $w$ (and therefore automatically satisfied). If
  $w<f(q_{j})$, then $f^{-1}\{w\}\subseteq J_{j}$,
  so~\ref{item:proof-translate-stratified-basic-open-iii}
  automatically holds for $w<f(q_{j})$
  and~\ref{item:proof-translate-stratified-basic-open-vi} for
  $w<f(y_{i})$; a dual argument
  yields~\ref{item:proof-translate-stratified-basic-open-iv} for
  $w>f(\tilde{q}_{k})$
  and~\ref{item:proof-translate-stratified-basic-open-vii} for
  $w>f(y_{i})$. Finally, if $w<f(u_{\ell})$ or $w>f(v_{\ell})$,
  then~\ref{item:proof-translate-stratified-basic-open-v} is
  automatically satisfied as well since
  $f^{-1}\{w\}\cap K_{\ell}=\emptyset$.

  Using~\eqref{eq:proof-translate-stratified-basic-open-i}, we obtain
  that~\ref{item:proof-translate-stratified-basic-open-ii}-\ref{item:proof-translate-stratified-basic-open-vii}
  hold for many more values $w$. For
  instance,~\ref{item:proof-translate-stratified-basic-open-iii} holds
  for $w>f(q_{j})$: since
  $s(-\infty,p_{j})\subseteq (-\infty,f(q_{j})]$, we have
  $s^{-1}\{w\}\cap
  (-\infty,p_{j})=\emptyset$. Similarly,~\ref{item:proof-translate-stratified-basic-open-vi}
  holds for
  $w>f(y_{i})$,~\ref{item:proof-translate-stratified-basic-open-iv}
  holds for $w<f(\tilde{q}_{k})$
  and~\ref{item:proof-translate-stratified-basic-open-vii} holds for
  $w<f(y_{i})$. In~\ref{item:proof-translate-stratified-basic-open-v},
  we do not have to consider $f(u_{\ell})<w<f(v_{\ell})$ since
  $\Img(s)\cap (f(u_{\ell}),f(v_{\ell}))=\emptyset$. Finally, since
  $f$ is unbounded-unbounded, any function $s'$ with $s=fs'$ has the
  same boundedness type as $s$,
  i.e.~\ref{item:proof-translate-stratified-basic-open-ii} is
  automatically satisfied as well.

  Collecting the previous arguments and additionally
  reformulating~\ref{item:proof-translate-stratified-basic-open-v}, it
  suffices to ascertain the following properties (instead
  of~\ref{item:proof-translate-stratified-basic-open-ii}-\ref{item:proof-translate-stratified-basic-open-vii}):
  \begin{enumerate}[label=(\roman*'),start=3]
  \item\label{item:proof-translate-stratified-basic-open-iii'}
    $\forall j=1,\dots,m\colon s'\left(s^{-1}\{f(q_{j})\}\cap
      (-\infty,p_{j})\right)\subseteq J_{j}\cap f^{-1}\{f(q_{j})\}$
  \item\label{item:proof-translate-stratified-basic-open-iv'}
    $\forall k=1,\dots,\widetilde{m}\colon
    s'\left(s^{-1}\{f(\tilde{q}_{k})\}\cap
      (\tilde{p}_{k},+\infty)\right)\subseteq \tilde{J}_{k}\cap
    f^{-1}\{f(\tilde{q}_{k})\}$
  \item\label{item:proof-translate-stratified-basic-open-v'}
    $\forall\ell=1,\dots,N\colon
    s'\left(s^{-1}\{f(u_{\ell})\}\right)\subseteq
    f^{-1}\{f(u_{\ell})\}\setminus K_{\ell}\text{ and
    }s'\left(s^{-1}\{f(v_{\ell})\}\right)\subseteq
    f^{-1}\{f(v_{\ell})\}\setminus K_{\ell}$
  \item\label{item:proof-translate-stratified-basic-open-vi'}
    $\forall i=1,\dots,n\colon s'\left(s^{-1}\{f(y_{i})\}\cap
      (-\infty,x_{i})\right)\subseteq (-\infty,y_{i}]\cap
    f^{-1}\{f(y_{i})\}$
  \item\label{item:proof-translate-stratified-basic-open-vii'}
    $\forall i=1,\dots,n\colon s'\left(s^{-1}\{f(y_{i})\}\cap
      (x_{i},+\infty)\right)\subseteq [y_{i},+\infty)\cap
    f^{-1}\{f(y_{i})\}$
  \end{enumerate}
  We replace $O$ by
  \begin{equation}\label{eq:proof-translate-stratified-basic-open-ii}
    \bigcap_{i=1}^{n}O_{(-\infty,x_{i}),(-\infty,y_{i}]}^{(1)}\cap O_{(x_{i},+\infty),[y_{i},+\infty)}^{(1)}\cap\bigcap_{j=1}^{m}O_{(-\infty,p_{j}),J_{j}}^{(1)}\cap\bigcap_{k=1}^{\widetilde{m}}O_{(\tilde{p}_{k},+\infty),\tilde{J}_{k}}^{(1)}\cap
    O_{LU}^{(2)}\cap\bigcap_{\ell=1}^{N}O_{K_{\ell}}^{(3)},
  \end{equation}
  observing that this representation is still stratified (up to adding
  the elements $x_{i}$ to $\{p_{1},\dots,p_{m}\}$ as well as
  $\{\tilde{p}_{1},\dots,\tilde{p}_{\widetilde{m}}\}$ and
  rearranging).

  Since we have
  \begin{multline*}
    s\in\set{s'}{\Img(s')\subseteq\Img(f)}\cap\bigcap_{i=1}^{n}O_{(-\infty,x_{i}),(-\infty,f(y_{i})]}^{(1)}\cap
    O_{(x_{i},+\infty),[f(y_{i}),+\infty)}^{(1)}\cap\\
    \bigcap_{j=1}^{m}O_{(-\infty,p_{j}),(-\infty,f(q_{j})]}^{(1)}\cap\bigcap_{k=1}^{\widetilde{m}}O_{(\tilde{p}_{k},+\infty),[f(\tilde{q}_{k}),+\infty)}^{(1)}\cap
    O_{LU}^{(2)}\cap\bigcap_{\ell=1}^{N}O_{(f(u_{\ell}),f(v_{\ell}))}^{(3)},
  \end{multline*}
  we can, without loss of generality, subsume
  \ref{item:proof-translate-stratified-basic-open-vi'} and
  \ref{item:proof-translate-stratified-basic-open-vii'} in
  \ref{item:proof-translate-stratified-basic-open-iii'} and
  \ref{item:proof-translate-stratified-basic-open-iv'} so that we only
  have to deal with
  \ref{item:proof-translate-stratified-basic-open-iii'}-\ref{item:proof-translate-stratified-basic-open-v'}.

  If we can find an increasing map satisfying
  \ref{item:proof-translate-stratified-basic-open-iii'}-\ref{item:proof-translate-stratified-basic-open-v'},
  then any extension $s'$ of that map satisfying
  $s'(s^{-1}\{w\})\subseteq f^{-1}\{w\}$ for
  $w\neq f(q_{j}),f(\tilde{q}_{k}),f(u_{\ell}),f(v_{\ell})$
  ($j=1,\dots,m$; $k=1,\dots,\widetilde{m}$; $\ell=1,\dots,N$) will be
  an element of $\MM_{\Q}$ for which
  \ref{item:proof-translate-stratified-basic-open-ii}-\ref{item:proof-translate-stratified-basic-open-vii}
  hold -- thus completing the proof.

  Since the $f$-preimages of single elements are assumed to be
  irrational intervals, the right hand sides in
  \ref{item:proof-translate-stratified-basic-open-iii'}-\ref{item:proof-translate-stratified-basic-open-v'}
  are nonempty\footnote{This is the first half of the main observation
    behind the lemma!}: the preimage $f^{-1}\{f(q_{j})\}$ in
  \ref{item:proof-translate-stratified-basic-open-iii'} is an
  irrational interval which contains $q_{j}$, so $q_{j}$ must be
  contained in the interior of $f^{-1}\{f(q_{j})\}$; we conclude
  $J_{j}\cap f^{-1}\{f(q_{j})\}\neq\emptyset$. For the other items, we
  argue analogously, noting in
  \ref{item:proof-translate-stratified-basic-open-v'} that $u_{\ell}$
  and $v_{\ell}$ are limit points not only of $K_{\ell}$ but also of
  $\Q\setminus K_{\ell}$.

  In the remainder of the proof, we will use that the representation
  of $O$ is stratified to show that combinations of
  \ref{item:proof-translate-stratified-basic-open-iii'}-\ref{item:proof-translate-stratified-basic-open-v'}
  are not contradictory, either. This could only happen if they are
  making statements about the same $s$-preimage, i.e. if the
  $f$-images of some of the points
  $q_{j}, \tilde{q}_{k}, u_{\ell}, v_{\ell}$ coincide. For each
  \begin{displaymath}
    w\in\{f(q_{1}),\dots,f(q_{m}),f(\tilde{q}_{1}),\dots,f(\tilde{q}_{\widetilde{m}}),f(u_{1}),\dots,f(u_{N}),f(v_{1}),\dots,f(v_{N})\},
  \end{displaymath}
  we will define $s'$ on $s^{-1}\{w\}$.

  We first show that we can find an image $s'(z)$ satisfying
  \ref{item:proof-translate-stratified-basic-open-iii'}-\ref{item:proof-translate-stratified-basic-open-v'}
  for each indivdual $z\in s^{-1}\{w\}$. If $f(q_{j})=f(q_{j+1})$,
  then
  $J_{j}\cap f^{-1}\{f(q_{j})\}\subseteq J_{j+1}\cap
  f^{-1}\{f(q_{j+1})\}$ by \ref{item:stratified-repr-ii}. Therefore,
  it suffices to consider the \emph{least} $j$ such that
  $z\in (-\infty,p_{j})$ (if such a $j$ exists) and fulfil
  $s'(z)\in J_{j}$ -- the other conditions of the same type will then
  be automatically satisfied. Analogously, it is enough to consider
  the \emph{greatest} $k$ such that $z\in (\tilde{p}_{k},+\infty)$ (if
  it exists). For given $z$, we can thus reduce
  \ref{item:proof-translate-stratified-basic-open-iii'} and
  \ref{item:proof-translate-stratified-basic-open-iv'} to a
  \emph{single} condition of the respective types (if they occur at
  all). Therefore, we need to map $z$ to the intersection of
  $f^{-1}\{w\}$ and a combination of $J_{j}$ and $\tilde{J}_{k}$ and
  $\bigcap_{\ell=\ell_{1}}^{\ell_{2}}\Q\setminus K_{\ell}$ which
  respectively occur if $z\in (-\infty,p_{j})\land f(q_{j})=w$ and
  $z\in (\tilde{p}_{k},+\infty)\land f(\tilde{q}_{k})=w$ and
  $f(v_{\ell_{1}})=f(u_{\ell_{1}+1})=f(v_{\ell_{1}+1})=\dots,f(u_{\ell_{2}})=w$
  (and possibly $f(u_{\ell_{1}})=w$ or $f(v_{\ell_{2}})=w$ as well --
  we do not know whether the chain
  $f^{-1}\{w\}\cap(\set{u_{\ell}}{\ell=1,\dots,N}\cup\set{v_{\ell}}{\ell=1,\dots,N})$
  begins and ends with an element $u_{*}$ or $v_{*}$!). We distinguish
  cases by the types of sets actually occurring and show that this
  intersection is always nonempty\footnote{This is the second half of
    the main observation behind the lemma!}:
  \begin{itemize}
  \item We have already argued that
    $f^{-1}\{w\}\cap J_{j}\neq\emptyset$,
    $f^{-1}\{w\}\cap \tilde{J}_{k}\neq\emptyset$ and
    $f^{-1}\{w\}\cap\Q\setminus K_{\ell}\neq\emptyset$.
  \item If the sets $\Q\setminus K_{\ell}$ occur for
    $\ell=\ell_{1},\dots,\ell_{2}$, then
    $f^{-1}\{w\}\cap\bigcap_{\ell=\ell_{1}}^{\ell_{2}}\Q\setminus
    K_{\ell}\neq\emptyset$, since
    $f^{-1}\{w\}\subseteq\bigcup_{\ell=\ell_{1}}^{\ell_{2}}K_{\ell}$
    combined with~\ref{item:stratified-repr-iv} would yield
    $f^{-1}\{w\}\subseteq K_{\ell}$ for some $\ell$, contradicting the
    previous item.
  \item If both $J_{j}$ and $\tilde{J}_{k}$ occur, then
    $J_{j}\cap\tilde{J}_{k}\neq\emptyset$ since
    $s(z)\in J_{j}\cap\tilde{J}_{k}$ by $s\in O$. Therefore,
    $\tilde{q}_{k}\leq q_{j}$ and
    $\sup (J_{j}\cap\tilde{J}_{k})=q_{j}$ as well as
    $\inf (J_{j}\cap\tilde{J}_{k})=\tilde{q}_{k}$. We know that both
    $q_{j}$ and $\tilde{q}_{k}$ are contained in the interval
    $f^{-1}\{w\}$, whence
    $f^{-1}\{w\}\cap
    J_{j}\cap\tilde{J}_{k}=J_{j}\cap\tilde{J}_{k}\neq\emptyset$.
  \item If $J_{j}$ and
    $\bigcap_{\ell=\ell_{1}}^{\ell_{2}}\Q\setminus K_{\ell}$ occur,
    then
    $f^{-1}\{w\}\cap
    J_{j}\cap\bigcap_{\ell=\ell_{1}}^{\ell_{2}}\Q\setminus
    K_{\ell}=\emptyset$ would imply
    $f^{-1}\{w\}\cap J_{j}\subseteq K_{\ell}$ for some $\ell$, again
    via~\ref{item:stratified-repr-iv}. We pick any
    $r\in f^{-1}\{w\}\cap J_{j}\neq\emptyset$, and thus
    $r\in K_{\ell}$. By~\ref{item:stratified-repr-vii}, there exists
    $t\in J_{j}$ such that $t>K_{\ell}$. In particular,
    $t\in [r,q_{j}]\subseteq f^{-1}\{w\}$ which yields the
    contradiction $t\in f^{-1}\{w\}\cap J_{j}$ but $t\notin K_{\ell}$.
  \item If $\tilde{J}_{k}$ and
    $\bigcap_{\ell=\ell_{1}}^{\ell_{2}}\Q\setminus K_{\ell}$ occur,
    one argues analogously.
  \item If both $J_{j}$ and $\tilde{J}_{k}$ as well as
    $\bigcap_{\ell=\ell_{1}}^{\ell_{2}}\Q\setminus K_{\ell}$ occur, we
    again derive a contradiction from
    $f^{-1}\{w\}\cap
    J_{j}\cap\tilde{J}_{k}\cap\bigcap_{\ell=\ell_{1}}^{\ell_{2}}\Q\setminus
    K_{\ell}=\emptyset$. As in the previous
    cases,~\ref{item:stratified-repr-iv} yields
    $\emptyset\neq f^{-1}\{w\}\cap J_{j}\cap\tilde{J}_{k}\subseteq
    K_{\ell}$ for some $\ell$. By \ref{item:stratified-repr-vii},
    there exists $t\in J_{j}$ such that $t>K_{\ell}$. Increasing $t$
    if necessary, one obtains the contradiction
    $t\in J_{j}\cap\tilde{J}_{k}=f^{-1}\{w\}\cap
    J_{j}\cap\tilde{J}_{k}$ but $t\notin K_{\ell}$.
  \end{itemize}
  Finally, we combine our arguments for each individual
  $z\in s^{-1}\{w\}$ to a definition of $s'$ on the whole of
  $s^{-1}\{w\}$. We define an equivalence relation $\sim$ on
  $s^{-1}\{w\}$ by putting $z\sim z'$ if and only if $z$ and $z'$ are
  contained in the same intervals of the shapes $(-\infty,p_{j})$ and
  $(\tilde{p}_{k},+\infty)$. Clearly, there are only finitely many
  $\sim$-equivalence classes. Let $z_{1},\dots,z_{M}$ be a system of
  representatives of these equivalence classes which we assume to be
  arranged in increasing order. By the previous part of our proof, we
  can pick images $s'(z_{1}),\dots,s'(z_{M})\in f^{-1}\{w\}$ such that
  \ref{item:proof-translate-stratified-basic-open-iii'}-\ref{item:proof-translate-stratified-basic-open-v'}
  hold, where $s'(z_{1}),\dots,s'(z_{M})$ are in increasing order --
  the latter is possible by~\ref{item:stratified-repr-ii}
  and~\ref{item:stratified-repr-iii}. Defining $s'$ on the equivalence
  class represented by $z_{h}$ to be the constant function with value
  $s'(z_{h})$, we obtain an increasing function
  $s'\colon s^{-1}\{w\}\to f^{-1}\{w\}$ such that
  \ref{item:proof-translate-stratified-basic-open-iii'}-\ref{item:proof-translate-stratified-basic-open-v'}
  hold.
\end{proof}
Now we can prove the first two reductions.
\begin{lemma}\label{lem:0123-squig-01cls23opn}
  It holds that $\TT_{0123}\rightsquigarrow\TT_{01^{cls}23^{opn}}$.
\end{lemma}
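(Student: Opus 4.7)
The plan is to exploit that $\TT$ is a semigroup topology together with the left-translation computation Lemma~\ref{lem:translate-stratified-basic-open}, using as translator a generic surjection that is actually \emph{onto} $\Q$. Such a map exists by Lemma~\ref{lem:existence-generic}\ref{item:existence-generic-i} applied with $A=\Q$: it gives $f\in\MM_{\Q}$ with $\Img(f)=\Q$ and $f^{-1}\{w\}$ an irrational interval for every $w\in\Q$; in particular $f$ is unbounded-unbounded, so it satisfies the hypotheses of Lemma~\ref{lem:translate-stratified-basic-open}.

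First I would let $\TT$ be a Polish semigroup topology with $\TT_{pw}\subseteq\TT\subseteq\TT_{0123}$. To show $\TT\subseteq\TT_{01^{cls}23^{opn}}$, it suffices to prove that for every $\TT$-open set $U$ and every $s\in U$ there exists a $\TT_{01^{cls}23^{opn}}$-open set $V$ with $s\in V\subseteq U$. Fix such $U$ and $s$, and fix $f$ as above. Since $\Img(s)\subseteq\Q=\Img(f)$, Lemma~\ref{lem:left-translation-preimage}\ref{item:left-translation-preimage-i} yields some $s'\in\MM_{\Q}$ with $s=fs'$. Because $\TT$ is a semigroup topology, $\lambda_{f}$ is $\TT$-continuous, so $\lambda_{f}^{-1}(U)$ is a $\TT$-neighbourhood of $s'$. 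From $\TT\subseteq\TT_{0123}$ we obtain a $\TT_{0123}$-basic open set $O'$ with $s'\in O'\subseteq\lambda_{f}^{-1}(U)$, and by Lemma~\ref{lem:stratified-repr} we may assume the representation of $O'$ is stratified.

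Next, I would translate this back: from $O'\subseteq\lambda_{f}^{-1}(U)$ we get $s=fs'\in\lambda_{f}(O')\subseteq U$. Applying Lemma~\ref{lem:translate-stratified-basic-open} to $O'$ and $f$, the set $\lambda_{f}(O')$ equals
\begin{multline*}
  \{s'' : \Img(s'')\subseteq\Img(f)\}\cap\bigcap_{i}O^{(0)}_{x_{i},f(y_{i})}\cap\bigcap_{j}O^{(1)}_{(-\infty,p_{j}),(-\infty,f(q_{j})]}\\
  \cap\bigcap_{k}O^{(1)}_{(\tilde p_{k},+\infty),[f(\tilde q_{k}),+\infty)}\cap O^{(2)}_{LU}\cap\bigcap_{\ell}O^{(3)}_{(f(u_{\ell}),f(v_{\ell}))}.
\end{multline*}
The crucial observation is that $\Img(f)=\Q$ makes the image-restriction $\{s'':\Img(s'')\subseteq\Im(f)\}$ equal to all of $\MM_{\Q}$, and the remaining intersection involves only sets of types~\ref{item:types-0},~\ref{item:types-1cls},~\ref{item:types-2} and~\ref{item:types-3opn}. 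Hence $V:=\lambda_{f}(O')$ is a $\TT_{01^{cls}23^{opn}}$-basic open set with $s\in V\subseteq U$, finishing the proof.

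I do not expect any real obstacle here: all the heavy lifting was done in Lemmas~\ref{lem:translate-stratified-basic-open} and~\ref{lem:stratified-repr}, and the only genuinely new ingredient is the idea of choosing a generic surjection whose image is all of $\Q$, so that the extraneous image constraint appearing in the translation formula becomes vacuous. The only point deserving care is to verify that the chosen $f$ indeed meets all the hypotheses of Lemma~\ref{lem:translate-stratified-basic-open}, which is immediate from its construction.
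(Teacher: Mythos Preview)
Your proposal is correct and follows essentially the same approach as the paper: pick a generic surjection $f$ onto $\Q$, factor $s=fs'$ via Lemma~\ref{lem:left-translation-preimage}\ref{item:left-translation-preimage-i}, use $\TT$-continuity of $\lambda_{f}$ together with $\TT\subseteq\TT_{0123}$ to find a stratified $\TT_{0123}$-basic open set $O'\ni s'$ inside $\lambda_{f}^{-1}(U)$, and then apply Lemma~\ref{lem:translate-stratified-basic-open} with $\Img(f)=\Q$ to conclude that $\lambda_{f}(O')$ is $\TT_{01^{cls}23^{opn}}$-basic open. The paper's proof is identical in structure and in the lemmas invoked.
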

\begin{proof}
  Let $O\in\TT\subseteq\TT_{0123}$. We show that $O$ is a
  $\TT_{01^{cls}23^{opn}}$-neighbourhood of every element of $O$.

  Take $s\in O$ and, using Lemma~\ref{lem:existence-generic}, pick a
  generic surjection $f$. Since $\Img(s)\subseteq\Q=\Img(f)$, there
  exists $s'\in\MM_{\Q}$ such that $s=fs'=\lambda_{f}(s')$ by
  Lemma~\ref{lem:left-translation-preimage}\ref{item:left-translation-preimage-i}. Therefore,
  $s'\in\lambda_{f}^{-1}(O)$ where this set is $\TT$-open by
  continuity of $\lambda_{f}$. In particular,
  $\lambda_{f}^{-1}(O)\in\TT_{0123}$, so there exists
  \begin{equation}\label{eq:proof-0123-squig-01cls23opn}
    O'=O_{\bar{x},\bar{y}}^{(0)}\cap\bigcap_{j=1}^{m}O_{I_{j},J_{j}}^{(1)}\cap\bigcap_{k=1}^{\tilde{m}}O_{\tilde{I}_{k},\tilde{J}_{k}}^{(1)}\cap
    O_{LU}^{(2)}\cap\bigcap_{\ell=1}^{N}O_{K_{\ell}}^{(3)}
  \end{equation}
  such that $s'\in O'\subseteq\lambda_{f}^{-1}(O)$. We conclude
  $s=\lambda_{f}(s')\in\lambda_{f}(O')\subseteq O$. By
  Lemma~\ref{lem:stratified-repr}, we can assume the
  representation~\eqref{eq:proof-0123-squig-01cls23opn} to be
  stratified. Since $\Img(f)=\Q$,
  Lemma~\ref{lem:translate-stratified-basic-open} asserts that
  $\lambda_{f}(O')$ is a $\TT_{01^{cls}23^{opn}}$-basic open set, so
  $O$ is indeed a $\TT_{01^{cls}23^{opn}}$-neighbourhood of $s$.
\end{proof}
\begin{remark}\label{rem:0123-squig-01cls23opn}
  We can reformulate the proof of
  Lemma~\ref{lem:0123-squig-01cls23opn} as follows: We show that
  $(\MM_{\Q},\TT_{01^{cls}23^{opn}})$ has Property~\textbf{X} with
  respect to $(\MM_{\Q},\TT_{0123})$, using the decomposition
  $s=fs'\id_{\Q}$ for a fixed generic surjection $f$, the fixed map
  $\id_{\Q}$ and varying $s'$. Applying
  Proposition~\ref{prop:ppxb-auto-cont-lifting}\ref{item:ppxb-auto-cont-lifting-i}
  to the map $\id\colon (\MM_{\Q},\TT_{0123})\to (\MM_{\Q},\TT)$ --
  which is continuous since $\TT\subseteq\TT_{0123}$, note also that
  $(\MM_{\Q},\TT)$ is a topological semigroup -- yields the continuity
  of $\id\colon (\MM_{\Q},\TT_{01^{cls}23^{opn}})\to (\MM_{\Q},\TT)$,
  so $\TT\subseteq\TT_{01^{cls}23^{opn}}$.
\end{remark}
The second reduction is a slightly more involved application of
Lemma~\ref{lem:translate-stratified-basic-open}, picking both $f$ and
$s'$ in a more thoughtful way (tuned to the specific $s$ being
considered) by the following construction.
\begin{lemma}\label{lem:01cls23opn-squig-024-aux}
  Let $s,f\in\MM_{\Q}$ with
  $\Img(f)=(-\infty,\inf s)\cup\Img(s)\cup (\sup s,+\infty)$
  and such that the preimages $f^{-1}\{w\}$ are irrational intervals,
  i.e.~$f^{-1}\{w\}=(r_{w},t_{w})$ for all $w\in\Img(f)$, where
  $r_{w},t_{w}\in\I$. Then there exists $s'\in\MM_{\Q}$ such that
  $s=fs'$ and the following hold for all $p\in\Q$:
  \begin{enumerate}[label=(\roman*)]
  \item\label{item:01cls23opn-squig-024-aux-i} If
    $\sup s'(-\infty,p)<s'(p)$ then $\sup s'(-\infty,p)=r_{s(p)}$.
  \item\label{item:01cls23opn-squig-024-aux-ii} If
    $\inf s'(p,+\infty)>s'(p)$ then $\inf s'(p,+\infty)=t_{s(p)}$.
  \end{enumerate}
\end{lemma}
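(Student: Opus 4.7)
The plan is to construct $s'$ fiber-by-fiber. For each $w \in \Img(s)$ put $F_w := s^{-1}\{w\}$, a convex subset of $\Q$, and recall that $f^{-1}\{w\} = (r_w, t_w)$ with irrational $r_w, t_w$. I would enumerate $\Img(s) = \{w_i : i \in \N\}$ and, for each $i$, pick an order-isomorphism $\phi_{w_i} \colon F_{w_i} \to I_{w_i}$ onto a rational subinterval $I_{w_i} \subseteq (r_{w_i}, t_{w_i})$ whose endpoint type matches that of $F_{w_i}$ (so $I_{w_i}$ has a minimum iff $F_{w_i}$ does, and likewise for maxima). When $F_{w_i}$ has a maximum, additionally stipulate $\sup I_{w_i} \in (t_{w_i} - 2^{-i}, t_{w_i})$; when $F_{w_i}$ has no maximum, stipulate $\sup I_{w_i} = t_{w_i}$. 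Impose symmetric constraints for the minimum. Such order-isomorphisms exist by the classical back-and-forth for countable dense linear orders. Finally set $s'(q) := \phi_{s(q)}(q)$.

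The containment $s' \in \MM_\Q$ and the equality $fs' = s$ are immediate: between fibers $w_i < w_j$ the sets $I_{w_i}$ lie below $I_{w_j}$ since $t_{w_i} \leq r_{w_j}$; within a fiber, $\phi_{w_i}$ is strictly increasing; and $s'(q) \in f^{-1}\{s(q)\}$ for every $q$.

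To verify \ref{item:01cls23opn-squig-024-aux-i}, fix $p \in \Q$ and write $w := s(p)$. If $p$ is not the minimum of $F_w$, then the order-isomorphism $\phi_w$ together with the fact that $F_w \cap (-\infty, p)$ is dense up to $p$ gives $\sup_{q < p, q \in F_w} s'(q) = s'(p)$, and since contributions from other fibers $w' < w$ are bounded by $r_w < s'(p)$, we obtain $\sup s'(-\infty, p) = s'(p)$, so no jump occurs. If $p = \min F_w$ (which forces $w > \inf s$, otherwise $F_w$ would have no minimum), then $\sup s'(-\infty, p) = \sup\{M_{w'} : w' \in \Img(s),\ w' < w\}$ with $M_{w'} := \sup I_{w'}$. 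Using that $\Img(f)$ and $\Img(s)$ coincide on $[\inf s, \sup s]$ and that the preimages $(r_{w'}, t_{w'})$ partition $\Q$, one gets $\sup\{t_{w'} : w' \in \Img(s),\ w' < w\} = r_w$. Lifting from $t_{w'}$ to $M_{w'}$ splits into two cases. If $w_0 := \max(\Img(s) \cap (-\infty, w))$ exists, then $w_0$ and $w$ are adjacent in $\Img(f)$, so $t_{w_0} = r_w$, and a density argument shows that $F_{w_0}$ cannot have a maximum (otherwise a rational strictly between $\max F_{w_0}$ and $\min F_w$ would have its $s$-value in the empty set $\Img(s) \cap (w_0, w)$), whence $M_{w_0} = t_{w_0} = r_w$ by construction. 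Otherwise $\Img(s) \cap (-\infty, w)$ is cofinal below $w$, and for any $\delta > 0$ picking $j$ with $2^{-j} < \delta/2$ and $t_{w_j} > r_w - \delta/2$ gives $M_{w_j} > r_w - \delta$. Condition \ref{item:01cls23opn-squig-024-aux-ii} is verified symmetrically.

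The main difficulty is coordinating the endpoint choices $\sup I_{w_i}$ and $\inf I_{w_i}$ across different fibers so that their suprema and infima land exactly on the irrational boundary points $r_w$ and $t_w$; the crucial observation making this work is the adjacency density argument, which rules out the otherwise fatal configuration of two adjacent fibers in $\Img(s)$ having, respectively, a maximum and a minimum.
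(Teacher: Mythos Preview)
Your construction and verification follow the same idea as the paper's: build $s'$ fiber-by-fiber via order isomorphisms $\phi_w\colon F_w \to I_w \subseteq (r_w,t_w)$ with matching endpoint type, taking $\sup I_w = t_w$ exactly when $F_w$ has no maximum (and dually for $\inf I_w$). The paper splits cases by whether $s(-\infty,p)$ has a greatest element; you split by whether $p=\min F_w$ and then by whether $\Img(s)\cap(-\infty,w)$ has a maximum --- these agree once one unwinds the hypothesis.

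There is, however, one small but genuine oversight. Your constraint $\sup I_{w_i}\in(t_{w_i}-2^{-i},t_{w_i})$ when $F_{w_i}$ has a maximum, combined with the symmetric constraint on $\inf I_{w_i}$ when $F_{w_i}$ has a minimum, can be unsatisfiable: if $F_{w_i}$ is a singleton and $t_{w_i}-r_{w_i}>2^{1-i}$, then $I_{w_i}$ is a single rational point required to lie in the two disjoint intervals $(r_{w_i},r_{w_i}+2^{-i})$ and $(t_{w_i}-2^{-i},t_{w_i})$. This is easily repaired (replace $2^{-i}$ by $\min(2^{-i},\tfrac{1}{3}(t_{w_i}-r_{w_i}))$, say), but in fact the constraint is unnecessary. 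In your Case~B2 you can bypass it entirely: since $\Img(s)\cap(-\infty,w)$ has no maximum, for each $w'$ in this set choose $w''\in\Img(s)$ with $w'<w''<w$; then $M_{w''}>r_{w''}\geq t_{w'}$, so $\sup_{w''<w}M_{w''}\geq\sup_{w'<w}t_{w'}=r_w$. This is the paper's argument (phrased there on the domain side via intermediate points $p''$) and needs only $I_{w'}\subseteq(r_{w'},t_{w'})$.
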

\begin{proof}
  Defining $s'$ as the union of order isomorphisms between
  $s^{-1}\{w\}$ and either $[z,z']$ or $(r_{w},z']$ or $[z,t_{w})$ or
  $(r_{w},t_{w})$ where $z$ and $z'$ are fixed elements of
  $f^{-1}\{w\}$ -- depending on the order type of $s^{-1}\{w\}$ -- we
  obtain a map with the following properties:
  \begin{enumerate}[label=(\alph*)]
  \item\label{item:proof-01cls23opn-squig-024-aux-a} $s=fs'$
  \item\label{item:proof-01cls23opn-squig-024-aux-b}
    $\forall w\in\Img(s)\colon\left(s^{-1}\{w\}\text{ has no greatest
        element }\Rightarrow \sup s'(s^{-1}\{w\})=t_{w}\right)$ \quad
    and\footnote{In other words: $s'$ exhausts $f^{-1}\{w\}$ whenever
      possible.}

    \noindent
    $\forall w\in\Img(s)\colon\left(s^{-1}\{w\}\text{ has no least
        element }\Rightarrow \inf s'(s^{-1}\{w\})=r_{w}\right)$
  \item\label{item:proof-01cls23opn-squig-024-aux-c}
    $\forall w\in\Img(s)\colon s'\vert_{s^{-1}\{w\}}$ is
    continuous\footnote{Note: If $s^{-1}\{w\}$ has e.g.~a greatest
      element, this does \emph{not} mean that $s'$ is continuous at
      that point but rather that $s'$ is left-continuous there.}
  \end{enumerate}
  We only show~\ref{item:01cls23opn-squig-024-aux-i}, the second
  assertion follows analogously. Assuming $\sup s'(-\infty,p)<s'(p)$,
  we distinguish two cases:

  \textit{Case~1} ($s(-\infty,p)$ has a greatest element): We set
  $w:=\max s(-\infty,p)$. Then there exists $p_{0}<p$ such that
  $s\vert_{(p_{0},p)}\equiv w$. Observe first that $s(p)>w$, i.e.~$p$
  is the supremum of $s^{-1}\{w\}$ but not a greatest element -- for
  otherwise $(p_{0},p]\subseteq s^{-1}\{w\}$,
  so~\ref{item:proof-01cls23opn-squig-024-aux-c} would yield
  $\sup
  s'(-\infty,p)=s'(p)$. By~\ref{item:proof-01cls23opn-squig-024-aux-b},
  we have $\sup s'(s^{-1}\{w\})=t_{w}$. Since
  $\sup s'(-\infty,p)=\sup s'(s^{-1}\{w\})$, it remains to show
  $t_{w}=r_{s(p)}$, equivalently $(w,s(p))\cap\Img(f)=\emptyset$. It
  suffices to note that $(w,s(p))\cap\Img(s)=\emptyset$ and that
  $\Img(f)\setminus\Img(s)$ and the convex hull of $\Img(s)$ are
  disjoint by choice of $\Img(f)$.

  \textit{Case~2} ($s(-\infty,p)$ does not have a greatest element):
  For each $p'<p$, there exists $p''$ such that $p'<p''<p$ and
  $s(p')<s(p'')\leq s(p)$. We have $s'(p'')\in f^{-1}\{s(p'')\}$ and
  thus
  \begin{displaymath}
    \sup s'(-\infty,p)\geq s'(p'')\geq\inf
    f^{-1}\{s(p'')\}=r_{s(p'')}\geq t_{s(p')}.
  \end{displaymath}
  Hence, $\sup s'(-\infty,p)\geq\sup_{p'<p}t_{s(p')}$. We claim that
  $\sup_{p'<p}t_{s(p')}\geq r_{s(p)}$. The opposite would yield
  $f(q)\in \big(\sup s(-\infty,p),s(p)\big)$ for any
  $q\in\big(\sup_{p'<p}t_{s(p')},r_{s(p)}\big)$. However,
  $\big(\sup s(-\infty,p),s(p)\big)\cap\Img(f)=\emptyset$ with the
  same reasoning as in Case~1.

  On the other hand, $s'(p')\leq t_{s(p')}<r_{s(p)}$ for each $p'<p$
  since $s(p')<s(p)$, so $\sup s'(-\infty,p)\leq r_{s(p)}$.
\end{proof}
For our reduction, we take into account the following two
observations: on the one hand, if
$s'\in O_{(-\infty,p),(-\infty,q]}^{(1)}$ and $s'(p)\leq q$, then
$O_{(-\infty,p),(-\infty,q]}^{(1)}$ can be replaced by
$O_{p,s'(p)}^{(0)}$; compare with~\ref{item:stratified-repr-v}. On the
other hand, if $r:=\sup s'(-\infty,p)\in\I$, then no set of the form
$O_{(-\infty,p),(-\infty,q]}^{(1)}$ with $q\in\Q$ (!) containing $s'$
can prohibit that $\sup \tilde{s}'(-\infty,p)>r$ for some
$\tilde{s}'\in O_{(-\infty,p),(-\infty,q]}^{(1)}$.
\begin{lemma}\label{lem:01cls23opn-squig-024}
  It holds that $\TT_{01^{cls}23^{opn}}\rightsquigarrow\TT_{024}$.
\end{lemma}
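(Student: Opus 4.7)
The plan is to imitate the proof of Lemma~\ref{lem:0123-squig-01cls23opn} (cf.\ Remark~\ref{rem:0123-squig-01cls23opn}), but now tailoring both $f$ and $s'$ to the particular $s$ being considered via Lemma~\ref{lem:01cls23opn-squig-024-aux}. Given $O \in \TT$ and $s \in O$, I will construct $f \in \MM_\Q$ with $\Img(f) = A$, where
\begin{displaymath}
A := \bigl((-\infty, \inf s) \cup \Img(s) \cup (\sup s, +\infty)\bigr) \cap \Q,
\end{displaymath}
and with all $f$-preimages of singletons irrational intervals; this is possible by Lemma~\ref{lem:existence-generic}\ref{item:existence-generic-i}, and since $A$ is unbounded in $\Q$ both below and above (regardless of whether $\inf s,\sup s$ are finite), the map $f$ will be unbounded-unbounded and each $f^{-1}\{w\} = (r_w, t_w)$ will satisfy $r_w, t_w \in \I$. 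Lemma~\ref{lem:01cls23opn-squig-024-aux} then produces $s' \in \MM_\Q$ with $s = f s'$ and the two jump-location properties. Using $\TT$-continuity of $\lambda_f$ together with $\TT \subseteq \TT_{01^{cls}23^{opn}}$, Lemma~\ref{lem:stratified-repr} supplies a stratified $\TT_{01^{cls}23^{opn}}$-basic open $O'$ with $s' \in O' \subseteq \lambda_f^{-1}(O)$, and Lemma~\ref{lem:translate-stratified-basic-open} makes $\lambda_f(O')$ explicit; in particular $s \in \lambda_f(O') \subseteq O$.

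The task then reduces to exhibiting a $\TT_{024}$-basic open set $U$ with $s \in U \subseteq \lambda_f(O')$. With the parameters $x_i, y_i$, $p_j, q_j$, $\tilde{p}_k, \tilde{q}_k$, $u_\ell, v_\ell$ from the stratified representation of $O'$, I propose to take
\begin{displaymath}
U := O_{\tilde{A}}^{(4)} \cap \bigcap_{i=1}^{n} O_{x_i, s(x_i)}^{(0)} \cap \bigcap_{j=1}^{m} O_{p_j, s(p_j)}^{(0)} \cap \bigcap_{k=1}^{\widetilde{m}} O_{\tilde{p}_k, s(\tilde{p}_k)}^{(0)} \cap O_{LU}^{(2)},
\end{displaymath}
where $\tilde{A} := A \setminus \bigcup_\ell (f(u_\ell), f(v_\ell)) \subseteq \Q$. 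Membership $s \in U$ is immediate from $s \in \lambda_f(O')$. The condition $\Img(s'') \subseteq \Img(f) = A$ and all type-$3^{opn}$ conditions $\Img(s'') \cap (f(u_\ell), f(v_\ell)) = \emptyset$ from Lemma~\ref{lem:translate-stratified-basic-open} are jointly captured by $O_{\tilde{A}}^{(4)}$, while the type-$0$ and type-$2$ conditions match directly (noting $s(x_i) = f(s'(x_i)) = f(y_i)$).

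The main obstacle---and the reason Lemma~\ref{lem:01cls23opn-squig-024-aux} is indispensable---is verifying the type-$1^{cls}$ conditions $s''(-\infty, p_j) \subseteq (-\infty, f(q_j)]$ (and symmetrically $s''(\tilde{p}_k, +\infty) \subseteq [f(\tilde{q}_k), +\infty)$) for $s'' \in U$. Monotonicity of $s''$ combined with $s''(p_j) = s(p_j)$ gives $s''(-\infty, p_j) \subseteq (-\infty, s(p_j)]$, so it suffices to prove $s(p_j) \leq f(q_j)$. I split according to Lemma~\ref{lem:01cls23opn-squig-024-aux}\ref{item:01cls23opn-squig-024-aux-i}: if $\sup s'(-\infty, p_j) = s'(p_j)$, then $s'(p_j) \leq q_j$ from $s' \in O'$, so $s(p_j) = f(s'(p_j)) \leq f(q_j)$; otherwise $\sup s'(-\infty, p_j) = r_{s(p_j)} \in \I$, and $s' \in O'$ forces $r_{s(p_j)} \leq q_j$, which sharpens to $r_{s(p_j)} < q_j$ since $r_{s(p_j)}$ is irrational and $q_j$ rational, hence $q_j > \inf f^{-1}\{s(p_j)\}$ and monotonicity of $f$ gives $f(q_j) \geq s(p_j)$. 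The right-hand constraints are treated symmetrically via Lemma~\ref{lem:01cls23opn-squig-024-aux}\ref{item:01cls23opn-squig-024-aux-ii}. This is precisely the point where the combination of careful choices---the irrational-interval preimages of $f$ together with the jump-alignment of $s'$---makes the reduction go through.
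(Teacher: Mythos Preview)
Your proof is correct and follows essentially the same approach as the paper: construct $f$ with $\Img(f)=(-\infty,\inf s)\cup\Img(s)\cup(\sup s,+\infty)$ and irrational-interval preimages, invoke Lemma~\ref{lem:01cls23opn-squig-024-aux} to obtain a suitable $s'$, pull back through $\lambda_f$ to a stratified $\TT_{01^{cls}23^{opn}}$-basic $O'$, and then use the jump-alignment property together with the irrationality of $r_{s(p_j)}$ to replace the type-\ref{item:types-1cls} constraints by type-\ref{item:types-0} ones. The only cosmetic difference is that the paper first modifies $O'$ so as to assume $s'(p_j)>q_j$ throughout (re-stratifying after replacing redundant type-\ref{item:types-1cls} sets by type-\ref{item:types-0} sets), whereas you keep $O'$ fixed and handle the two cases $\sup s'(-\infty,p_j)=s'(p_j)$ versus $\sup s'(-\infty,p_j)<s'(p_j)$ directly; both routes arrive at $s(p_j)\leq f(q_j)$ and hence at the same $\TT_{024}$-neighbourhood.
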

\begin{proof}
  Let $O\in\TT\subseteq\TT_{01^{cls}23^{opn}}$. We show that $O$ is a
  $\TT_{024}$-neighbourhood of every element of $O$.

  Take $s\in O$ and, using Lemma~\ref{lem:existence-generic}, pick
  $f\in\MM_{\Q}$ such that
  \begin{displaymath}
    \Img(f)=(-\infty,\inf s)\cup\Img(s)\cup (\sup s,+\infty)
  \end{displaymath}
  and all the preimages $f^{-1}\{w\}$ are irrational intervals,
  i.e.~$f^{-1}\{w\}=(r_{w},t_{w})$ for all $w\in\Img(f)$, where
  $r_{w},t_{w}\in\I$ (note that $r_{w}=-\infty$ or $t_{w}=+\infty$ is
  impossible since $f$ is unbounded-unbounded). By
  Lemma~\ref{lem:01cls23opn-squig-024-aux}, there exists
  $s'\in\MM_{\Q}$ satisfying $s=fs'$ and the following for all
  $p\in\Q$:
  \begin{enumerate}[label=(\roman*)]
  \item\label{item:proof-01cls23opn-squig-024-i} If
    $\sup s'(-\infty,p)<s'(p)$ then $\sup s'(-\infty,p)=r_{s(p)}$.
  \item\label{item:proof-01cls23opn-squig-024-ii} If
    $\inf s'(p,+\infty)>s'(p)$ then $\inf s'(p,+\infty)=t_{s(p)}$.
  \end{enumerate}
  Similarly to the proof of Lemma~\ref{lem:0123-squig-01cls23opn}, we
  use $s=fs'=\lambda_{f}(s')$, the $\TT$-continuity of $\lambda_{f}$
  and the assumption $\TT\subseteq\TT_{01^{cls}23^{opn}}$ to obtain a
  $\TT_{01^{cls}23^{opn}}$-basic open set
  \begin{equation}\label{eq:proof-01cls23opn-squig-024-iii}
    O'=O_{\bar{x},\bar{y}}^{(0)}\cap\bigcap_{j=1}^{m}O_{(-\infty,p_{j}),(-\infty,q_{j}]}^{(1)}\cap\bigcap_{k=1}^{\tilde{m}}O_{(\tilde{p}_{k},+\infty),[\tilde{q}_{k},+\infty)}^{(1)}\cap
    O_{LU}^{(2)}\cap\bigcap_{\ell=1}^{N}O_{(u_{\ell},v_{\ell})}^{(3)}
  \end{equation}
  such that $s\in\lambda_{f}(O')\subseteq O$. We additionally use
  Lemma~\ref{lem:stratified-repr} and assume that the
  representation~\eqref{eq:proof-01cls23opn-squig-024-iii} is
  stratified. If we have $s'(p_{j})\leq q_{j}$ for some
  $j\in\{1,\dots,m\}$, then
  $s'\in O_{p_{j},s'(p_{j})}^{(0)}\subseteq
  O_{(-\infty,p_{j}),(-\infty,q_{j}]}^{(1)}$ and we replace
  $O_{(-\infty,p_{j}),(-\infty,q_{j}]}^{(1)}$
  in~\eqref{eq:proof-01cls23opn-squig-024-iii} by
  $O_{p_{j},s'(p_{j})}^{(0)}$. We proceed analogously if
  $s'(\tilde{p}_{k})\geq\tilde{q}_{k}$. By rerunning the
  stratification procedure from Lemma~\ref{lem:stratified-repr}, we
  again obtain a strati\-fied representation. In our situation of
  \ref{item:stratified-repr-vii} and \ref{item:stratified-repr-viii}
  already holding, the proof of Lemma~\ref{lem:stratified-repr} never
  adds new sets of type~\ref{item:types-1}. Hence, we can assume that
  \begin{equation}\label{eq:proof-01cls23opn-squig-024-iv}
    s'(p_{j})>q_{j}\quad\text{for all
    }j\qquad\text{and}\qquad
    s'(\tilde{p}_{k})<\tilde{q}_{k}\quad\text{for all }k.
  \end{equation}
  Lemma~\ref{lem:translate-stratified-basic-open} yields
  \begin{multline}\label{eq:proof-01cls23opn-squig-024-v}
    \lambda_{f}(O')=\set{\tilde{s}}{\Img(\tilde{s})\subseteq\Img(f)}\cap
    O_{\bar{x},f(\bar{y})}^{(0)}\cap\\
    \bigcap_{j=1}^{m}O_{(-\infty,p_{j}),(-\infty,f(q_{j})]}^{(1)}\cap\bigcap_{k=1}^{\widetilde{m}}O_{(\tilde{p}_{k},+\infty),[f(\tilde{q}_{k}),+\infty)}^{(1)}\cap
    O_{LU}^{(2)}\cap\bigcap_{\ell=1}^{N}O_{(f(u_{\ell}),f(v_{\ell}))}^{(3)}.
  \end{multline}
  From~\eqref{eq:proof-01cls23opn-squig-024-iv} and $s'\in O'$, we
  obtain $\sup s'(-\infty,p_{j})\leq q_{j}<s'(p_{j})$ for all $j$ as
  well as
  $\inf s'(\tilde{p}_{k},+\infty)\geq \tilde{q}_{k}>s'(\tilde{p}_{k})$
  for all $k$. By \ref{item:proof-01cls23opn-squig-024-i} and
  \ref{item:proof-01cls23opn-squig-024-ii}, we conclude
  $\sup s'(-\infty,p_{j})=r_{s(p_{j})}$ for all $j$ and
  $\inf s'(\tilde{p}_{k},+\infty)=t_{s(\tilde{p}_{k})}$ for all
  $k$. Therefore, $q_{j}\geq r_{s(p_{j})}$ for all $j$ and
  $\tilde{q}_{k}\leq t_{s(\tilde{p}_{k})}$ for all $k$. Since the left
  hand sides of these inequalities are rational numbers while the
  right hand sides are irrational, we even obtain $q_{j}>r_{s(p_{j})}$
  for all $j$ and $\tilde{q}_{k}<t_{s(\tilde{p}_{k})}$ for all $k$. In
  other words, we have $f(q_{j})\geq s(p_{j})$ for all $j$ and
  $f(\tilde{q}_{k})\leq s(\tilde{p}_{k})$ for all $k$. Consequently,
  we can replace the sets of type~\ref{item:types-1}
  in~\eqref{eq:proof-01cls23opn-squig-024-v} by sets of
  type~\ref{item:types-0}, similarly to the above: we set
  \begin{displaymath}
    P:=\set{\tilde{s}}{\Img(\tilde{s})\subseteq\Img(f)}\cap
    O_{\bar{x},f(\bar{y})}^{(0)}\cap O_{\bar{p},s(\bar{p})}^{(0)}\cap O_{\overline{\widetilde{p}},s\left(\overline{\widetilde{p}}\right)}^{(0)}\cap
    O_{LU}^{(2)}\cap\bigcap_{\ell=1}^{N}O_{(f(u_{\ell}),f(v_{\ell}))}^{(3)}
  \end{displaymath}
  where
  $\bar{p}=(p_{1},\dots,p_{m}),\overline{\widetilde{p}}=(\widetilde{p}_{1},\dots,\widetilde{p}_{\widetilde{m}})$
  to obtain $s\in P\subseteq\lambda_{f}(O')\subseteq O$.

  Putting
  \begin{displaymath}
    A:=\Img(f)\cap\Q\setminus\bigcup_{\ell=1}^{N}(f(u_{\ell}),f(v_{\ell})),
  \end{displaymath}
  we see that
  \begin{displaymath}
    P=O_{\bar{x},f(\bar{y})}^{(0)}\cap O_{\bar{p},s(\bar{p})}^{(0)}\cap O_{\overline{\widetilde{p}},s\left(\overline{\widetilde{p}}\right)}^{(0)}\cap
    O_{LU}^{(2)}\cap O_{A}^{(4)}
  \end{displaymath}
  is a $\TT_{024}$-(basic) open set. Hence, $O$ is indeed a
  $\TT_{024}$-neighbourhood of $s$, as claimed.
\end{proof}
\begin{remark}\label{rem:01cls23opn-squig-024}
  We can reformulate the proof of Lemma~\ref{lem:01cls23opn-squig-024}
  as follows: We show that $(\MM_{\Q},\TT_{024})$ has
  Property~\textbf{X} with respect to
  $(\MM_{\Q},\TT_{01^{cls}23^{opn}})$, again using the decomposition
  $s=fs'\id_{\Q}$ -- this time for a fixed map $f$ with
  $\Img(f)=(-\infty,\inf s)\cup\Img(s)\cup (\sup s,+\infty)$ whose
  preimages of single points are irrational intervals, the fixed map
  $\id_{\Q}$ and varying $s'$. As in
  Remark~\ref{rem:0123-squig-01cls23opn}, we apply
  Proposition~\ref{prop:ppxb-auto-cont-lifting}\ref{item:ppxb-auto-cont-lifting-i}
  to the continous map
  $\id\colon (\MM_{\Q},\TT_{01^{cls}23^{opn}})\to (\MM_{\Q},\TT)$ to
  obtain $\TT\subseteq\TT_{024}$.
\end{remark}
\subsection{Reduction
  $\TT_{024}\rightsquigarrow\TT_{023^{opn}}$}\label{sec:024-squig-023opn}
For the next statement, we again aim at showing that a $\TT$-open set
is a $\TT_{023^{opn}}$-neighbourhood of its elements. Instead of
directly doing this for \emph{all} elements, we start by restricting
to \emph{injective} elements -- this special case contains the bulk of
the work. The main observation behind it is an analysis of
``products'' of the form $O_{\bar{z},\bar{z}}^{(0)}\circ O_{A}^{(4)}$
if $\bar{z}=(z_{1},\dots,z_{n})$ is a tuple in $\Q$ and $A$ is densely
ordered (which is connected to $g$ being injective). Clearly, if
$(z_{i},z_{j})\cap A=\emptyset$, then no element of
$O_{\bar{z},\bar{z}}^{(0)}\circ O_{A}^{(4)}$ can hit $(z_{i},z_{j})$
-- yielding a condition of type~\ref{item:types-3opn} instead
of~\ref{item:types-4}. As it turns out, this is the only obstruction
to points in the image.
\begin{lemma}\label{lem:024-squig-023opn-inj}
  Let $\TT$ be a semigroup topology on $\MM_{\Q}$ such that
  $\TT\subseteq\TT_{024}$. Then any injective endomorphism
  $g\in\MM_{\Q}$ has a neighbourhood basis consisting of
  $\TT_{023^{opn}}$-open sets.
\end{lemma}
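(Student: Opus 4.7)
Let $g \in \MM_{\Q}$ be injective, and let $U$ be a $\TT$-open neighborhood of $g$. The plan is to exhibit a $\TT_{023^{opn}}$-open neighborhood $P$ of $g$ with $P \subseteq U$. The starting point is the trivial factorization $g = \id_{\Q} \circ g$ together with continuity of composition in the semigroup topology $\TT$: this yields $\TT$-open sets $V \ni \id_{\Q}$ and $W \ni g$ with $V \circ W \subseteq U$, and since $\TT \subseteq \TT_{024}$ I may replace them by $\TT_{024}$-basic open subsets. Because $\id_{\Q}$ has full image and is unbounded on both sides, its neighborhood simplifies to $V = O_{\bar{z}, \bar{z}}^{(0)} \cap O_{-\infty, +\infty}^{(2)}$ for some tuple $\bar{z}$; meanwhile $W = O_{\bar{x}, \bar{y}}^{(0)} \cap O_{LU}^{(2)} \cap O_A^{(4)}$ for a tuple $(\bar{x}, \bar{y})$ with $g(\bar{x}) = \bar{y}$, the boundedness type $LU$ of $g$, and a set $A \supseteq \Img(g)$. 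I will enlarge $\bar{z}$ to contain $\bar{y}$ and shrink $A$ to $\Img(g)$. Crucially, this $A$ is densely ordered: as the image of an injective endomorphism of $\langle \Q, \leq \rangle$, it has the order type of $\Q$.

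I will then define
\[
  P := O_{\bar{x}, \bar{y}}^{(0)} \cap O_{LU}^{(2)} \cap \bigcap_{K} O_K^{(3opn)},
\]
where $K$ ranges over the maximal open intervals with endpoints in $\bar{z} \cup \{\pm\infty\}$ satisfying $K \cap A = \emptyset$. Then $P$ is $\TT_{023^{opn}}$-open, and $g \in P$ because $g(x_i) = y_i$, $g$ has boundedness $LU$, and $\Img(g) = A$ is disjoint from each chosen $K$.

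The main task will be to verify $P \subseteq V \circ W$. Given $r \in P$, I will construct $t \in W$ and $s \in V$ with $r = s \circ t$ via a Back\&Forth argument that produces $t$ first, and then define $s$ on $\Img(t) \cup \bar{z}$ by $s(t(x)) := r(x)$ and $s(z_j) := z_j$, extended monotonically to an unbounded-unbounded function on $\Q$. The decisive observation is that for $s$ to be consistently definable, $t(x)$ must lie on the same side of each $z_j \in \bar{z}$ as $r(x)$: if $r(x) < z_j$ then $t(x) \leq z_j$, and if $r(x) > z_j$ then $t(x) \geq z_j$. Whenever $r(x)$ lies strictly inside a gap $(z_i, z_{i+1})$ of $\bar{z}$, the type~\ref{item:types-3opn} conditions on $r$ guarantee $(z_i, z_{i+1}) \cap A \neq \emptyset$, leaving room for $t(x) \in A$; whenever $r(x) = z_j$, the freedom to have $s$ be constant on an interval around $z_j$ allows $t(x)$ to be chosen in $A$ slightly off from $z_j$.

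The hard part will be the Back\&Forth construction of $t$ itself, which must simultaneously respect increasing order, factor compatibly through the level-set structure of $r$, land in $A$, fix $\bar{y}$ pointwise, match the boundedness type $LU$, and obey the position constraints imposed by $\bar{z}$. The densely-ordered structure of $A$, which is the concrete manifestation of the injectivity of $g$, is precisely what will render these competing requirements mutually compatible.
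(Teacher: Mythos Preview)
Your overall strategy---factor $g=\id_{\Q}\circ g$, use continuity of composition to get $V\circ W\subseteq U$ with $V=O_{\bar z,\bar z}^{(0)}\cap O_{-\infty,+\infty}^{(2)}$ and $W=O_{\bar x,\bar y}^{(0)}\cap O_{LU}^{(2)}\cap O_A^{(4)}$, then sandwich a $\TT_{023^{opn}}$-open set $P$ between $g$ and $V\circ W$---matches the paper. But your definition of $P$ is too weak. You only exclude those intervals $K$ with endpoints in $\bar z\cup\{\pm\infty\}$ and $K\cap A=\emptyset$; a gap $(z_i,z_{i+1})$ with $\abs{(z_i,z_{i+1})\cap A}=1$ is \emph{not} excluded. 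Such singletons can occur even though $A=\Img(g)$ is densely ordered (density only guarantees that \emph{two} points of $A$ in a gap force infinitely many). Concretely, take $A$ with $A\cap(0,2)=\{1\}$ and $\bar z=(0,2)$: no interval with endpoints in $\{0,2,\pm\infty\}$ is disjoint from $A$, so your $P$ carries no type-\ref{item:types-3opn} constraint there. Any $r\in P$ with two distinct values $r(x_1)<r(x_2)$ in $(0,2)$ would force $t(x_1),t(x_2)\in A\cap(0,2)$ with $t(x_1)<t(x_2)$ (since $s(0)=0$, $s(2)=2$ and $s$ is increasing), which is impossible. Hence $P\not\subseteq V\circ W$, and your sentence ``$(z_i,z_{i+1})\cap A\neq\emptyset$, leaving room for $t(x)\in A$'' is precisely where the argument breaks.

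The paper repairs this by first enlarging $\bar z$ to contain each such singleton, so that afterwards every gap meets $A$ either in $\emptyset$ or in an infinite densely ordered set; it also adds points $z_\pm$ outside $A$ when $A$ is bounded. With this preparation, the paper takes the reverse route to yours: rather than building $t$ and then $s$ separately for each $r$, it runs a single $(A,C)$-Back\&Forth (Definition~\ref{def:back-and-forth-variant}) to produce one fixed $f\in V$ with the property that every point not lying in any excluded gap has an $f$-preimage in $A$; the factor $h\in W$ with $r=fh$ then falls out immediately for every $r\in P$. This is cleaner than your per-$r$ construction and avoids the consistency checks you would otherwise need when defining $s$ on $\Img(t)\cup\bar z$.
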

\begin{proof}
  Given an injective endomorphism $g$, let $O$ be any $\TT$-open
  neighbourhood of $g$.

  We use continuity of the composition map. Since
  $g=\id_{\Q}\circ g\in O$, there exist $\TT$-neighbourhoods $V_{1}$
  of $\id_{\Q}$ and $V_{2}$ of $g$ such that
  $V_{1}\circ V_{2}\subseteq O$. By assumption,
  $\TT\subseteq\TT_{024}$, hence there exist $\TT_{024}$-basic open
  sets $U_{1},U_{2}$ such that $\id_{\Q}\in U_{1}\subseteq V_{1}$ and
  $g\in U_{2}\subseteq V_{2}$. Note that a $\TT_{024}$-basic open set
  containing $\id_{\Q}$ has the form
  $O_{\bar{z},\bar{z}}^{(0)}\cap O_{-\infty,+\infty}^{(2)}$ for a
  tuple $\bar{z}$ in $\Q$ -- sets of type~\ref{item:types-4} cannot
  occur. We can assume that $U_{2}$ has the form
  $U_{2}=O_{\bar{x},\bar{y}}^{(0)}\cap O_{LU}^{(2)}\cap O_{A}^{(4)}$,
  where $A$ is a densely ordered set (for otherwise, replace $A$ by
  $\Img(g)$).

  We obtain
  \begin{equation}\label{eq:proof-024-squig-023opn-inj-i}
    g\in \left(O_{\bar{z},\bar{z}}^{(0)}\cap O_{-\infty,+\infty}^{(2)}\right)\circ
    \left(O_{\bar{x},\bar{y}}^{(0)}\cap O_{LU}^{(2)}\cap
      O_{A}^{(4)}\right)\subseteq V_{1}\circ V_{2}\subseteq O.
  \end{equation}
  The lemma will be proved once we find a $\TT_{023^{opn}}$-open set
  $P$ with
  \begin{displaymath}
    g\in P\subseteq \left(O_{\bar{z},\bar{z}}^{(0)}\cap O_{-\infty,+\infty}^{(2)}\right)\circ
    \left(O_{\bar{x},\bar{y}}^{(0)}\cap O_{LU}^{(2)}\cap
      O_{A}^{(4)}\right).
  \end{displaymath}
  Since~\eqref{eq:proof-024-squig-023opn-inj-i} remains valid if we
  expand the tuple $\bar{z}$, we can assume that the elements listed
  in $\bar{y}$ are contained in $\bar{z}$.  We write
  $\bar{z}=(z_{1},\dots,z_{n})$ where the elements $z_{i}$ shall be
  sorted in ascending order. Adding additional elements $z_{\pm}$ to
  $\bar{z}$ if necessary, we can assume that $z_{1}=z_{-}<\inf A$ if
  $A$ is bounded below and that $z_{n}=z_{+}>\sup A$ if $A$ is bounded
  above. To simplify notation, we set $z_{0}:=-\infty$ as well as
  $z_{n+1}:=+\infty$. Further, we define
  \begin{align*}
    \MM_{0}&:=\set{(i,j)\in\{0,\dots,n+1\}^{2}}{i<j,\, (z_{i},z_{j})\cap A=\emptyset},\\
    \MM_{1}&:=\set{(i,j)\in\{0,\dots,n+1\}^{2}}{i<j,\,\abs{(z_{i},z_{j})\cap A}=1}.
  \end{align*}
  Note that $\set{(z_{i},z_{j})}{(i,j)\in\MM_{0}}$ always contains
  $(-\infty,z_{-})$ if $A$ is bounded below and $(z_{+},+\infty)$ if
  $A$ is bounded above. For $(i,j)\in\MM_{1}$, define $w_{i,j}$ such
  that $(z_{i},z_{j})\cap A=\{w_{i,j}\}$. Expanding the tuple
  $\bar{z}$ once more, we can assume that the elements $w_{i,j}$ are
  also contained in $\bar{z}$. Defining $\MM_{0}$ and $\MM_{1}$ from
  this expanded tuple, we obtain $\MM_{1}=\emptyset$. For each pair
  $(i,j)$, the set $(z_{i},z_{j})\cap A$ is thus either empty or it
  contains at least two elements -- in which case it contains an
  infinite densely ordered set. We claim that
  \begin{equation}\label{eq:proof-024-squig-023opn-inj-ii}
    g\in P:=O_{\bar{x},\bar{y}}^{(0)}\cap
    O_{LU}^{(2)}\cap\bigcap_{(i,j)\in\MM_{0}}O_{(z_{i},z_{j})}^{(3)}\subseteq \left(O_{\bar{z},\bar{z}}^{(0)}\cap O_{-\infty,+\infty}^{(2)}\right)\circ
    \left(O_{\bar{x},\bar{y}}^{(0)}\cap O_{LU}^{(2)}\cap
      O_{A}^{(4)}\right);
  \end{equation}
  note that
  $\Img(g)\cap (z_{i},z_{j})\subseteq A\cap (z_{i},z_{j})=\emptyset$
  for all $(i,j)\in\MM_{0}$.

  To prove the set inclusion
  in~\eqref{eq:proof-024-squig-023opn-inj-ii}, the crucial step is to
  find $f\in O_{\bar{z},\bar{z}}^{(0)}\cap O_{-\infty,+\infty}^{(2)}$
  such that
  \begin{equation}\label{eq:proof-024-squig-023opn-inj-iii}
    \forall q\in\Q\setminus\bigcup_{(i,j)\in\MM_{0}}(z_{i},z_{j})\colon
    f^{-1}\{q\}\cap A\neq\emptyset.
  \end{equation}
  This will be accomplished via a Back\&Forth strategy, distinguishing
  whether $A$ is bounded or unbounded above and below. In the
  following, we will consider the case that $A$ is bounded below and
  unbounded above; the other cases are treated analogously. We will
  first find an increasing map
  $\varphi\colon [z_{-},+\infty)\to [z_{-},+\infty)$ such
  that~\eqref{eq:proof-024-squig-023opn-inj-iii} holds with $\varphi$
  in place of $f$ (note that
  $\Q\setminus\bigcup_{(i,j)\in\MM_{0}}(z_{i},z_{j})=[z_{-},+\infty)\setminus\bigcup_{(i,j)\in\MM_{0}}(z_{i},z_{j})$). To
  this end, we consider the following property of a finite partial
  increasing map $m$ from $[z_{-},+\infty)$ to $[z_{-},+\infty)$:
  \begin{itemize}
  \item[$(+)$] For any
    $q\in \Q\setminus\bigcup_{(i,j)\in\MM_{0}}(z_{i},z_{j})$ and all
    $u,u'\in\Dom(m)$ with $u<u'$, if\\
    $m(u)<q<m(u')$, then $(u,u')\cap A$ is an infinite densely ordered
    set\footnote{equivalently: This intersection contains at least two
      elements.}.
  \end{itemize}
  Setting $C:=\Q\setminus\bigcup_{(i,j)\in\MM_{0}}(z_{i},z_{j})$, we
  claim that the system of all finite partial increasing maps $m$ from
  $[z_{-},+\infty)$ to $[z_{-},+\infty)$ satisfying~$(+)$ is an
  $(A,C)$-Back\&Forth system (see
  Definition~\ref{def:back-and-forth-variant}). In order to simplify
  notation, we formally add the elements $+\infty,-\infty$ to both
  $\Dom(m)$ and $\Img(m)$.

  $\mathbf{(A,C)}$\textbf{-Back:} Given $q\in C$, we set
  $u_{-}:=\max\set{u\in\Dom(m)}{m(u)<q}$ and further
  $u_{+}:=\min\set{u\in\Dom(m)}{q<m(u)}$; since we added $\pm\infty$
  to the domain and image of $m$, these elements are
  welldefined\footnote{Note: There might exist elements $u$ of
    $\Dom(m)$ in between $u_{-}$ and $u_{+}$, necessarily with
    $m(u)=q$. As can be seen from the proof, these elements can be
    neglected since they cannot prevent that an extension of $m$ with
    image $q$ is increasing.}. We claim that $(u_{-},u_{+})\cap A$ is
  an infinite densely ordered set. Note that $u_{-}\neq -\infty$ since
  $q\in C\subseteq [z_{-},+\infty)$. If $u_{+}$ is finite as well, our
  claim follows from condition~$(+)$, and if $u_{+}=+\infty$, it
  follows from $A$ being unbounded above.  Taking
  $p\in (u_{-},u_{+})\cap A$ such that both $(u_{-},p)\cap A$ and
  $(p,u_{+})\cap A$ are infinite densely ordered sets, we obtain that
  the extension $m'$ of $m$ by $m'(p):=q$ is an increasing map which
  still satisfies condition~$(+)$.

  \textbf{Forth:} Given $p\in\Q\setminus\Dom(m)$, we set
  $u_{-}:=\max\set{u\in\Dom(m)}{u<p}$ and\footnote{Here, there can
    never exist elements $u$ of $\Dom(m)$ in between $u_{-}$ and
    $u_{+}$!}  $u_{+}:=\min\set{u\in\Dom(m)}{p<u}$. We distinguish
  cases:

  \textit{Case~1} (both $(u_{-},p)\cap A$ and $(p,u_{+})\cap A$ are
  infinite densely ordered): Pick \emph{any} $q\in\Q$ with
  $m(u_{-})\leq q\leq m(u_{+})$.

  \textit{Case~2} ($(u_{-},p)\cap A$ is infinite densely ordered, but
  $(p,u_{+})\cap A$ is not): Pick $q:=m(u_{+})$.

  \textit{Case~3} ($(u_{-},p)\cap A$ is not infinite densely ordered,
  but $(p,u_{+})\cap A$ is): Pick $q:=m(u_{-})$.

  \textit{Case~4} (neither $(u_{-},p)\cap A$ nor $(p,u_{+})\cap A$ are
  infinite densely ordered): Pick \emph{any} $q\in\Q$ with
  $m(u_{-})\leq q\leq m(u_{+})$.

  \noindent Observe that the extension $m'$ of $m$ by $m'(p):=q$ is an
  increasing map which still satisfies condition~$(+)$; for Case~4, we
  see that $(u_{-},u_{+})\cap A$ is not infinite densely ordered --
  since~$(+)$ holds, $(u_{-},u_{+})\cap A$ is never considered as a
  set of the form $(u,u')\cap A$ in condition~$(+)$, even less so
  $(u_{-},p)\cap A$ and $(p,u_{+})\cap A$.

  \smallskip Since $\MM_{1}=\emptyset$, we know that $m$ defined by
  $\bar{z}\mapsto\bar{z}$ satisfies~$(+)$. Thus,
  Lemma~\ref{lem:back-and-forth-variant} yields
  $\varphi\colon [z_{-},+\infty)\to [z_{-},+\infty)$ with
  $\varphi(\bar{z})=\bar{z}$ and
  \begin{displaymath}
    \forall q\in C=\Q\setminus\bigcup_{(i,j)\in\MM_{0}}(z_{i},z_{j})\colon
    \varphi^{-1}\{q\}\cap A\neq\emptyset.
  \end{displaymath}
  Extending $\varphi$ to a total map $f$ by setting $f(q):=q$ for
  $q\in (-\infty,z_{-})$, this finishes the definition of $f$; by
  design, $f\in O_{\bar{z},\bar{z}}^{(0)}$. Since $C$ is unbounded
  above, $f$ must be as well. Moreover, $f$ is obviously unbounded
  below, yielding
  $f\in O_{\bar{z},\bar{z}}^{(0)}\cap O_{-\infty,+\infty}^{(2)}$ as
  desired.

  Using $f$, we can finally
  prove~\eqref{eq:proof-024-squig-023opn-inj-ii}. Let
  $s\in O_{\bar{x},\bar{y}}^{(0)}\cap
  O_{LU}^{(2)}\cap\bigcap_{(i,j)\in\MM_{0}}O_{(z_{i},z_{j})}^{(3)}$. We
  will prove
  $s\in \left(O_{\bar{z},\bar{z}}^{(0)}\cap
    O_{-\infty,+\infty}^{(2)}\right)\circ
  \left(O_{\bar{x},\bar{y}}^{(0)}\cap O_{LU}^{(2)}\cap
    O_{A}^{(4)}\right)$ by finding
  $h\in O_{\bar{x},\bar{y}}^{(0)}\cap O_{LU}^{(2)}\cap O_{A}^{(4)}$
  such that $s=fh$. The latter equality can be certainly satisfied by
  picking, for each
  \begin{displaymath}
    q\in\Img(s)\subseteq\Q\setminus\bigcup_{(i,j)\in\MM_{0}}(z_{i},z_{j})\subseteq\Img(f),
  \end{displaymath}
  \emph{any} element $p_{q}\in f^{-1}\{q\}\neq\emptyset$ and defining
  $h$ by $h(c):=p_{s(c)}$. Because
  of~\eqref{eq:proof-024-squig-023opn-inj-iii}, the elements $p_{q}$
  can be chosen in $A$, thus yielding $\Img(h)\subseteq A$. For the
  entries $y_{i}$ of $\bar{y}$, we can pick $p_{y_{i}}=y_{i}$ since
  $f(\bar{y})=\bar{y}$ -- note that $\bar{y}$ has been added to
  $\bar{z}$, that the entries are pairwise different since $g$ is
  injective and that $y_{i}\in A$ (by $g(\bar{x})=\bar{y}$). Thus,
  $s(\bar{x})=\bar{y}$ implies $h\in O_{\bar{x},\bar{y}}^{(0)}$. Since
  $f\in O_{-\infty,+\infty}^{(2)}$, the boundedness type of $h$ is the
  same as the boundedness type of $s$ which in turn is the same as the
  boundedness type of $g$. Hence, $h\in O_{LU}^{(2)}$ and we conclude
  $h\in O_{\bar{x},\bar{y}}^{(0)}\cap O_{LU}^{(2)}\cap O_{A}^{(4)}$.
  Therefore
  \begin{displaymath}
    s=fh\in \left(O_{\bar{z},\bar{z}}^{(0)}\cap
      O_{-\infty,+\infty}^{(2)}\right)\circ
    \left(O_{\bar{x},\bar{y}}^{(0)}\cap O_{LU}^{(2)}\cap
      O_{A}^{(4)}\right),
  \end{displaymath}
  thus proving~\eqref{eq:proof-024-squig-023opn-inj-ii} and,
  consequently, the lemma.
\end{proof}
\begin{lemma}\label{lem:024-squig-023opn}
  It holds that $\TT_{024}\rightsquigarrow\TT_{023^{opn}}$.
\end{lemma}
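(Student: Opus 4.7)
The goal is to show that every $\TT$-open set $O\subseteq\MM_{\Q}$ and every $s\in O$ admits a $\TT_{023^{opn}}$-open neighbourhood of $s$ contained in $O$, thereby reducing the general case to the injective case already handled by Lemma~\ref{lem:024-squig-023opn-inj}. The strategy is to decompose $s$ through an injective endomorphism $g$ with $\Img(g)\supseteq\Img(s)$ and then transfer the $\TT_{023^{opn}}$-structure around~$g$ produced by Lemma~\ref{lem:024-squig-023opn-inj} to a neighbourhood of $s$ via continuity of composition in the semigroup topology~$\TT$.

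Concretely, by Lemma~\ref{lem:existence-generic} one picks an injective $g\in\MM_{\Q}$ with $\Img(g)\supseteq\Img(s)$ and the same boundedness type as~$s$; Lemma~\ref{lem:left-translation-preimage}\ref{item:left-translation-preimage-i} then yields $s'\in\MM_{\Q}$ with $s=gs'$. Continuity of composition at $(g,s')$ produces $\TT$-open sets $V_{1}\ni g$ and $V_{2}\ni s'$ with $V_{1}V_{2}\subseteq O$. Since $g$ is injective, Lemma~\ref{lem:024-squig-023opn-inj} provides a $\TT_{023^{opn}}$-basic open $W_{1}\subseteq V_{1}$ with $g\in W_{1}$, say $W_{1}=O_{\bar z,\bar w}^{(0)}\cap O_{LU}^{(2)}\cap\bigcap_{\ell}O_{K_{\ell}}^{(3^{opn})}$ where $g(\bar z)=\bar w$ and $\Img(g)\cap K_{\ell}=\emptyset$. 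Using $\TT\subseteq\TT_{024}$, pick a $\TT_{024}$-basic open $U_{2}=O_{\bar p,\bar q}^{(0)}\cap O_{LU}^{(2)}\cap O_{B}^{(4)}\subseteq V_{2}$ containing~$s'$; by enlarging $\bar z$ (and updating $\bar w$ via~$g$) we may assume $\bar q\subseteq\bar z$, so that the values $g(\bar q)$ are already recorded in~$W_{1}$.

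The candidate $\TT_{023^{opn}}$-open neighbourhood of $s$ is
\begin{displaymath}
P:=O_{\bar p,g(\bar q)}^{(0)}\cap O_{LU}^{(2)}\cap\bigcap_{\ell}O_{K_{\ell}}^{(3^{opn})}.
\end{displaymath}
That $s\in P$ is immediate: $s(\bar p)=g(s'(\bar p))=g(\bar q)$, $s$ has boundedness type $(L,U)$, and $\Img(s)\subseteq\Img(g)\subseteq\Q\setminus\bigcup_{\ell}K_{\ell}$. It remains to verify $P\subseteq W_{1}U_{2}\subseteq O$, which is the nontrivial inclusion and the main obstacle of the proof.

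Given an arbitrary $\sigma\in P$, one must factor $\sigma=g_{1}s_{2}$ with $g_{1}\in W_{1}$ and $s_{2}\in U_{2}$. The plan is to construct $g_{1}$ by a Back\&Forth argument so that $g_{1}\in W_{1}$, $\Img(g_{1})\supseteq\Img(\sigma)$, and every value of $\Img(\sigma)$ is already realised by $g_{1}$ from within~$B$; this is consistent with the $W_{1}$-constraints because $\Img(\sigma)\cap K_{\ell}=\emptyset$ (from $\sigma\in P$) matches $\Img(g_{1})\cap K_{\ell}=\emptyset$. Having such~$g_{1}$, one defines $s_{2}(x)$ by choosing an increasing selection with $s_{2}(x)\in g_{1}^{-1}(\sigma(x))\cap B$, which then automatically satisfies $s_{2}(\bar p)=\bar q$, $\Img(s_{2})\subseteq B$ and the prescribed boundedness type. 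The delicacy lies in simultaneously juggling, inside one Back\&Forth, the pointwise constraints $g_{1}(\bar z)=\bar w$, boundedness type $(L,U)$, avoidance of each $K_{\ell}$, image containing $\Img(\sigma)$, and rich coverage of $\Img(\sigma)$ from $B$; sufficient flexibility comes from $B$ being densely ordered and from $\Q\setminus\bigcup_{\ell}K_{\ell}$ being densely ordered within~$\Q$, so that the construction proceeds in the spirit of the Back\&Forth argument used in the proof of Lemma~\ref{lem:024-squig-023opn-inj}.
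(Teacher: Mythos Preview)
Your approach diverges from the paper's in a way that creates a genuine gap. The paper factors $s=fg$ with a \emph{generic surjection} $f$ on the left and an injective $g$ on the \emph{right}: one pulls back via $\lambda_{f}$, applies Lemma~\ref{lem:024-squig-023opn-inj} to the injective element $g\in\lambda_{f}^{-1}(O)$ to obtain a stratified $\TT_{023^{opn}}$-basic open $U\ni g$, and then invokes Lemma~\ref{lem:translate-stratified-basic-open} to compute $\lambda_{f}(U)$ explicitly as a $\TT_{023^{opn}}$-basic open set containing $s$. No new Back\&Forth is needed.

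You instead factor $s=gs'$ with the injective map $g$ on the \emph{left}, and this forces you to control the right factor $s'$ only through a $\TT_{024}$-basic set $U_{2}=O_{\bar p,\bar q}^{(0)}\cap O_{L'U'}^{(2)}\cap O_{B}^{(4)}$. Your candidate $P$ does not encode the restriction $\Img(s_{2})\subseteq B$, and the inclusion $P\subseteq W_{1}U_{2}$ can fail. Concretely, if $s$ is constant then so is $s'$, and nothing prevents the $\TT_{024}$-basic neighbourhood $U_{2}\subseteq V_{2}$ from having $B$ a singleton (you only know $\TT\subseteq\TT_{024}$, not that $\TT$ avoids such sets). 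Then every $s_{2}\in U_{2}$ is constant, hence every element of $W_{1}U_{2}$ is constant, whereas your $P=O_{\bar p,g(\bar q)}^{(0)}\cap O_{LU}^{(2)}\cap\bigcap_{\ell}O_{K_{\ell}}^{(3)}$ contains many non-constant functions. The assertion that ``$B$ is densely ordered'' is therefore unjustified: the trick from Lemma~\ref{lem:024-squig-023opn-inj} of replacing the restricting set by the image of the base point works there only because the base point is injective, which $s'$ is not.

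The fix is precisely the paper's: put the injective map on the right so that Lemma~\ref{lem:024-squig-023opn-inj} applies after a single left translation, and use Lemma~\ref{lem:translate-stratified-basic-open} to push forward.
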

\begin{proof}
  Let $O\in\TT$. We show that $O$ is a
  $\TT_{023^{open}}$-neighbourhood of every element of $O$.

  Take $s\in O$. We claim that for any generic surjection
  $f\in\MM_{\Q}$ (which exists by Lemma~\ref{lem:existence-generic}),
  there is some injective $g\in\MM_{\Q}$ such that $s=fg$: Since
  $\Img(s)\subseteq\Q=\Img(f)$ and since the preimages $f^{-1}\{w\}$
  are irrational intervals,
  Lemma~\ref{lem:left-translation-preimage}\ref{item:left-translation-preimage-ii}
  applies and yields an injective $g\in\MM_{\Q}$ as desired.

  We use continuity of the translation map $\lambda_{f}$. Since
  $\lambda_{f}(g)=s\in O$, there exists a $\TT$-neighbourhood $V$ of
  $g$ such that $\lambda_{f}(V)\subseteq O$. By
  Lemma~\ref{lem:024-squig-023opn-inj}, there exists a
  $\TT_{023^{opn}}$-basic open set $U$ such that $g\in U\subseteq V$;
  we assume $U$ to be stratified via
  Lemma~\ref{lem:stratified-repr}. Hence,
  $s\in\lambda_{f}(U)\subseteq O$. Using
  Lemma~\ref{lem:translate-stratified-basic-open}, we obtain that
  $\lambda_{f}(U)$ is a $\TT_{023^{opn}}$-basic open set which proves
  the lemma.
\end{proof}
\begin{remark}\label{rem:024-squig-023opn}
  We can combine Lemmas~\ref{lem:024-squig-023opn-inj}
  and~\ref{lem:024-squig-023opn} and reformulate the proof of
  $\TT_{024}\rightsquigarrow\TT_{023^{opn}}$ as follows: We show that
  $(\MM_{\Q},\TT_{023^{opn}})$ has Property~$\mathbf{\overline{X}}$ of
  length~2 with respect to $(\MM_{\Q},\TT_{024})$, using the
  decomposition $s=f\id_{\Q}\id_{\Q}g\id_{\Q}$ where the first, third
  and fifth position are fixed and the second and fourth position are
  varying, subsequently yielding
  $\tilde{s}=f\tilde{f}\id_{\Q}\tilde{h}\id_{\Q}$. As in
  Remarks~\ref{rem:0123-squig-01cls23opn}
  and~\ref{rem:01cls23opn-squig-024}, we apply
  Proposition~\ref{prop:ppxb-auto-cont-lifting}\ref{item:ppxb-auto-cont-lifting-i}
  to the continous map
  $\id\colon (\MM_{\Q},\TT_{024})\to (\MM_{\Q},\TT)$ to obtain
  $\TT\subseteq\TT_{023^{opn}}$.
\end{remark}
\subsection{Reduction $\TT_{023^{opn}}\rightsquigarrow\TT_{03^{opn}}$}
\label{sec:023opn-squig-03opn}
In our next reduction, we eliminate the sets of
type~\ref{item:types-2}, i.e.~the boundedness types, from the upper
bound. Compared to our previous reductions, this requires a different
approach; we use the regularity of the given topology $\TT$ in a
crucial way. The main observation is the following: if $O$ is
$\TT$-open and $s\in O$, there exists a $\TT$-open set $P$ such that
$s\in P\subseteq\cl{P}^{\TT}\subseteq O$, where $\cl{P}^{\TT}$ denotes
the topological closure of $P$ with respect to $\TT$. Our proof
essentially amounts to showing that taking this topological closure
eliminates the sets $O_{LU}^{(2)}$ from $P$ -- this corresponds to
$O_{LU}^{(2)}$ being topologically dense. It is easy to see that
$O_{LU}^{(2)}$ is dense with respect to the pointwise topology;
however, this set is obviously not dense with respect to
$\TT_{023^{opn}}$. Hence, independently of the above sketch, it can
also be seen as an important step in showing $\TT=\TT_{pw}$ that
indeed $O_{LU}^{(2)}$ is dense with respect to $\TT$ as well. This
will depend on the Polishness of $\TT$. We start with a variant of
Lemma~\ref{lem:01cls23opn-squig-024-aux}.
\begin{lemma}\label{lem:023opn-squig-03opn-aux}
  Let $s,f\in\MM_{\Q}$ and $q\in\Q\setminus\Img(s)$ such that
  $\Img(f)=\Img(s)\cupdot\{q\}$ where the preimages $f^{-1}\{w\}$ are
  irrational intervals, i.e.~$f^{-1}\{w\}=(r_{w},t_{w})$ for all
  $w\in\Img(f)$, where $r_{w},t_{w}\in\I\cup\{\pm\infty\}$. Then the
  following hold:
  \begin{enumerate}[label=(\arabic*)]
  \item\label{item:023opn-squig-03opn-aux-i} Suppose there is $p\in\Q$
    such that $\sup s(-\infty,p)=\max s(-\infty,p)<q<s(p)$. \\Then
    there exists $s'\in\MM_{\Q}$ such that $s=fs'$ and
    $\sup s'(-\infty,p)=r_{q}\in\I$.
  \item\label{item:023opn-squig-03opn-aux-ii} Suppose that
    $\sup s=\max s<q$. \\Then there exists $s'\in\MM_{\Q}$ such that
    $s=fs'$ and $\sup s'=r_{q}\in\I$.
  \item\label{item:023opn-squig-03opn-aux-iii} Suppose that
    $q<\min s=\inf s$. \\Then there exists $s'\in\MM_{\Q}$ such that
    $s=fs'$ and $\inf s'=t_{q}\in\I$.
  \end{enumerate}
\end{lemma}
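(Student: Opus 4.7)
The plan is to construct $s'$ by defining it separately on each preimage $s^{-1}\{w\}$ for $w\in\Img(s)$, choosing an increasing map from $s^{-1}\{w\}$ into the rational interval $f^{-1}\{w\}=(r_w,t_w)$ and unioning the pieces, in the same spirit as the construction in the proof of Lemma~\ref{lem:01cls23opn-squig-024-aux}. Since $w_1<w_2$ forces $f^{-1}\{w_1\}$ to lie entirely to the left of $f^{-1}\{w_2\}$, the union is automatically increasing and satisfies $s=fs'$. The slack left in picking these pieces will be used to arrange the required supremum or infimum.

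The key observation common to all three parts is that $\Img(f)=\Img(s)\cupdot\{q\}$ together with the fact that the blocks $f^{-1}\{w\}$ partition $\Q$ forces the irrational endpoints to match between consecutive values of $\Img(f)$: if $w'<w''$ are adjacent in $\Img(f)$, then $t_{w'}=r_{w''}$. Applied to our situation, this locates $r_q$ (respectively $t_q$) as the shared boundary between $f^{-1}\{q\}$ and $f^{-1}\{w\}$ for an appropriate $w\in\Img(s)$ adjacent to $q$ in $\Img(f)$.

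For part~\ref{item:023opn-squig-03opn-aux-i}, set $w_0:=\max s(-\infty,p)$, attained at some $p_0<p$. Monotonicity then forces $s\equiv w_0$ on $[p_0,p)\cap\Q$, and any $s(p')\in(w_0,s(p))$ would contradict monotonicity in one direction or the other, so $\Img(s)\cap(w_0,s(p))=\emptyset$. Hence $w_0,q,s(p)$ are three consecutive elements of $\Img(f)$, and in particular $t_{w_0}=r_q$, so $f^{-1}\{w_0\}=(r_{w_0},r_q)$. Moreover $s^{-1}\{w_0\}\subseteq(-\infty,p)$ accumulates at $p$ from below. The plan is to define $s'$ on $s^{-1}\{w_0\}$ as an increasing map into $(r_{w_0},r_q)$ whose values tend to $r_q$ along the accumulating subsequence — feasible because $(r_{w_0},r_q)\cap\Q$ is a countable densely ordered set without endpoints, order-isomorphic to $\Q$. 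On the remaining preimages $s^{-1}\{w\}$ with $w\neq w_0$, any increasing map into $f^{-1}\{w\}$ will do. The resulting $s'$ satisfies $\sup s'(-\infty,p)=r_q$: contributions from $s^{-1}\{w\}$ with $w<w_0$ lie below $r_{w_0}<r_q$, and the $w_0$-piece approaches $r_q$ by construction. Finally, $r_q\in\I$ because $w_0<q$ implies $q$ is not the minimum of $\Img(f)$, so $r_q\neq-\infty$.

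Parts~\ref{item:023opn-squig-03opn-aux-ii} and~\ref{item:023opn-squig-03opn-aux-iii} follow the same template but at the extremes. In~\ref{item:023opn-squig-03opn-aux-ii}, with $w_0:=\max s$, the value $q$ is the unique element of $\Img(f)$ strictly above $w_0$, so $t_{w_0}=r_q$; since $w_0$ is attained as the maximum of $s$, the set $s^{-1}\{w_0\}$ is cofinal in $\Q$, and defining $s'$ on it as an increasing map into $(r_{w_0},r_q)$ tending to $r_q$ yields $\sup s'=r_q\in\I$ (again $r_q\neq-\infty$ because $w_0$ lies below $q$). Part~\ref{item:023opn-squig-03opn-aux-iii} is the exact dual, using $w_1:=\min s$, the equality $t_q=r_{w_1}$, and the coinitiality of $s^{-1}\{w_1\}$ to arrange $\inf s'=t_q\in\I$. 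I do not foresee any real obstacle; the argument is essentially a bookkeeping refinement of Lemma~\ref{lem:01cls23opn-squig-024-aux} exploiting the single extra ``gap'' inserted into $\Img(f)$ by~$q\notin\Img(s)$.
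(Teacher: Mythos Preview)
Your proposal is correct and follows essentially the same approach as the paper: construct $s'$ piecewise on the fibres $s^{-1}\{w\}$, arranging on the critical fibre $s^{-1}\{w_0\}$ (where $w_0=\max s(-\infty,p)$, resp.\ $\max s$, $\min s$) that $s'$ exhausts $f^{-1}\{w_0\}$ towards the shared irrational endpoint $t_{w_0}=r_q$ (resp.\ $r_{w_1}=t_q$). The paper's proof is terser, simply invoking the construction and Case~1 of Lemma~\ref{lem:01cls23opn-squig-024-aux} with $q$ in place of $s(p)$; you have unpacked exactly that argument, including the adjacency observation $t_{w_0}=r_q$ that the paper leaves implicit in its reference.
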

\begin{proof}[Proof (of Lemma~\ref{lem:023opn-squig-03opn-aux}).]
  One picks $s'\in\MM_{\Q}$ with
  \begin{enumerate}[label=(\alph*)]
  \item\label{item:proof-023opn-squig-03opn-aux-a} $s=fs'$
  \item\label{item:proof-023opn-squig-03opn-aux-b}
    $\forall w\in\Img(s)\colon\left(s^{-1}\{w\}\text{ has no greatest
        element }\Rightarrow \sup s'(s^{-1}\{w\})=t_{w}\right)$ \quad
    and

    \noindent
    $\forall w\in\Img(s)\colon\left(s^{-1}\{w\}\text{ has no least
        element }\Rightarrow \inf s'(s^{-1}\{w\})=r_{w}\right)$
  \item\label{item:proof-023opn-squig-03opn-aux-c}
    $\forall w\in\Img(s)\colon s'\vert_{s^{-1}\{w\}}$ is continuous
  \end{enumerate}
  and argues as in~Case~1 of the proof of
  Lemma~\ref{lem:01cls23opn-squig-024-aux} with $q$ in place of
  $s(p)$. The boundary points $r_{q}$
  (for~\ref{item:023opn-squig-03opn-aux-i},\ref{item:023opn-squig-03opn-aux-ii})
  and $t_{q}$ (for~\ref{item:023opn-squig-03opn-aux-iii}) are finite
  since there exist elements in $\Img(f)$ which are below $q$ and
  above $q$, respectively.
\end{proof}
Next, we show that the set $\Surj(\Q)$ of all surjective elements of
$\MM_{\Q}$ is dense with respect to our given Polish semigroup
topology $\TT$ with $\TT_{pw}\subseteq\TT\subseteq\TT_{023^{opn}}$ --
this uses Polishness in an essential way and is another step in
matching $\TT$ to $\TT_{pw}$.
\begin{lemma}\label{lem:023opn-density-surj}
  Let $\TT$ be a Polish semigroup topology on $\MM_{\Q}$ such that
  $\TT_{pw}\subseteq\TT\subseteq\TT_{023^{opn}}$.

  \begin{enumerate}[label=(\roman*)]
  \item\label{item:023opn-density-surj-i} For each $q\in\Q$, the set
    $M_{q}:=\set{s\in\MM_{\Q}}{q\in\Img(s)}$ is $\TT$-dense.
  \item\label{item:023opn-density-surj-ii} The set $\Surj(\Q)$ of
    surjective endomorphisms on $\Q$ is $\TT$-dense.
  \end{enumerate}
\end{lemma}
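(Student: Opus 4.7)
The plan is to prove (i) first and then deduce (ii) by Baire category. For (i), I first note that $M_{q}=\bigcup_{x\in\Q}O_{x,q}^{(0)}$ is $\TT_{pw}$-open, hence $\TT$-open; the real content is $\TT$-density, where the upper bound $\TT\subseteq\TT_{023^{opn}}$ is used essentially. I argue by contradiction. If $M_{q}$ is not $\TT$-dense, then $M_{q}^{c}$ has nonempty $\TT$-interior $V$. Since $V$ is in particular $\TT_{023^{opn}}$-open, pick some $s_{0}\in V$ and a $\TT_{023^{opn}}$-basic open set
\[
W = O_{\bar x,\bar y}^{(0)} \cap O_{LU}^{(2)} \cap \bigcap_{\ell=1}^{N} O_{(u_{\ell},v_{\ell})}^{(3)} \subseteq V
\]
containing $s_{0}$. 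Since $W\subseteq M_{q}^{c}$ is nonempty and one can perturb any element of $W$ to take the value $q$ somewhere unless this is blocked---either by the pointwise constraints ($q=y_{i}$ for some $i$, impossible as it would force $W\subseteq M_{q}$) or by an avoidance interval---we must have $q\in(u_{\ell_{0}},v_{\ell_{0}})$ for some $\ell_{0}$.

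By case analysis on the boundedness type $LU$ and the distribution of the $y_{i}$ relative to $q$, I construct an explicit step function $s_{0}^{\ast}\in W$ satisfying the hypothesis of one of the cases (1)--(3) of Lemma~\ref{lem:023opn-squig-03opn-aux}. For example, if all $y_{i}<q$ and $U=\R$, letting $s_{0}^{\ast}$ be the step function equal to $y_{i}$ on $[x_{i},x_{i+1})$ and to $y_{n}$ for $x\geq x_{n}$ gives $\max s_{0}^{\ast}=y_{n}<q$ (case~(2)); if $U=\{+\infty\}$ instead, one inserts a plateau at $v_{\ell_{0}}$ and lets $s_{0}^{\ast}$ grow to $+\infty$ thereafter, so $s_{0}^{\ast}$ straddles $q$ with a jump at some rational $p$ (case~(1)). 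Use Lemma~\ref{lem:existence-generic} to pick $f\in\MM_{\Q}$ with $\Img(f)=\Img(s_{0}^{\ast})\cupdot\{q\}$ and irrational preimages $f^{-1}\{w\}=(r_{w},t_{w})$, and apply the auxiliary lemma to obtain $s_{0}'\in\MM_{\Q}$ with $s_{0}^{\ast}=fs_{0}'$. In case~(2) this yields $\sup s_{0}'=r_{q}\in\I$; in cases~(1) and~(3) analogous irrational endpoints appear.

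Since $\TT$ is a semigroup topology, $\lambda_{f}$ is $\TT$-continuous, so $\lambda_{f}^{-1}(V)$ is $\TT$-open and contains $s_{0}'$. Invoking $\TT\subseteq\TT_{023^{opn}}$ once more, pick a $\TT_{023^{opn}}$-basic open set
\[
W' = O_{\bar z,\bar w}^{(0)} \cap O_{L',\R}^{(2)} \cap \bigcap_{\ell'} O_{(u'_{\ell'},v'_{\ell'})}^{(3)} \ni s_{0}'
\]
inside $\lambda_{f}^{-1}(V)$, with boundedness type matching $s_{0}'$. The key geometric observation is that $\Img(s_{0}')$ accumulates at the irrational point $r_{q}$ from below, so each avoidance interval $(u'_{\ell'},v'_{\ell'})$ must lie entirely on one side of $r_{q}$; those meeting $(r_{q},+\infty)$ satisfy $u'_{\ell'}>r_{q}$. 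Rationality of the $u'_{\ell'}$ versus irrationality of $r_{q}$ then yields a rational $p_{0}\in(r_{q},t_{q})\setminus\bigcup_{\ell'}(u'_{\ell'},v'_{\ell'})$. Define $\tilde s'$ by setting $\tilde s'(x)=s_{0}'(x)$ for $x<x^{\ast}$ and $\tilde s'(x)=p_{0}$ for $x\geq x^{\ast}$, where $x^{\ast}>\max\bar z$ is rational; monotonicity holds since $s_{0}'(x)<r_{q}<p_{0}$, and one readily checks the remaining $W'$-conditions, so $\tilde s'\in W'$. Then $f\tilde s'\in V$ while $q=f(p_{0})\in\Img(f\tilde s')$, so $f\tilde s'\in V\cap M_{q}$, contradicting $V\cap M_{q}=\emptyset$. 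Cases~(1) and~(3) are handled analogously, in case~(1) inserting $p_{0}$ as a plateau on $[x^{\ast},p)$ using the jump point $p\in\Q$ supplied by the auxiliary lemma together with the chain $s_{0}'(x)<r_{q}<p_{0}<t_{q}<s_{0}'(p)$.

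For (ii), $\Surj(\Q)=\bigcap_{q\in\Q}M_{q}$ is a countable intersection of $\TT$-open $\TT$-dense sets by~(i); since $(\MM_{\Q},\TT)$ is Polish and hence Baire, this intersection is itself $\TT$-dense. The main obstacle I anticipate is the careful bookkeeping in step~(i) when constructing $s_{0}^{\ast}\in W$, since the interplay between the boundedness type $LU$, the positions of the $y_{i}$ around $q$, and the avoidance intervals $(u_{\ell},v_{\ell})$ forces several sub-cases, each requiring a tailored step function that still respects all constraints defining $W$.
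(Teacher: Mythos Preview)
Your proposal is correct and takes essentially the same approach as the paper: both construct an element of a basic open set satisfying one of the hypotheses of Lemma~\ref{lem:023opn-squig-03opn-aux}, apply that lemma to obtain $s'$ with an irrational accumulation point $r_q$ (or $t_q$), use continuity of $\lambda_f$ together with the irrationality-versus-rationality argument to locate a point of $f^{-1}\{q\}$ compatible with a basic neighbourhood of $s'$, and then build $\tilde s'$ hitting that point; part~(ii) follows from~(i) via Baire in both. The only differences are cosmetic---you argue by contradiction and explicitly isolate $q\in(u_{\ell_0},v_{\ell_0})$, whereas the paper argues directly that every nonempty $\TT$-open set meets $M_q$.
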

\begin{proof}
  $ $\par\nobreak\ignorespaces\textbf{(i).} Let $O\in\TT$ be open and
  nonempty; we have to show $O\cap M_{q}\neq\emptyset$. Since
  $\TT\subseteq\TT_{023^{opn}}$, the set $O$ contains a nonempty
  $\TT_{023^{opn}}$-basic open set; we write
  \begin{displaymath}
    \emptyset\neq O_{\bar{x},\bar{y}}^{(0)}\cap
    O_{LU}^{(2)}\cap\bigcap_{\ell=1}^{N}
    O_{(u_{\ell},v_{\ell})}^{(3)}\subseteq O
  \end{displaymath}
  which we assume to be a stratified representation, see
  Lemma~\ref{lem:stratified-repr}. If $q$ is contained in $\bar{y}$,
  then \emph{any} $s\in O$ has $q$ in its image, so we assume the
  contrary. Distinguishing by the position of $q$ relative to
  $\bar{y}=(y_{1},\dots,y_{n})$ and $(u_{1},v_{1},\dots,u_{N},v_{N})$
  and by the required boundedness type $O_{LU}^{(2)}$, one easily
  constructs (by a piecewise definition) a map $s\in O$ possibly
  together with a rational $p\in\Q$ such that
  $\sup s(-\infty,p)=\max s(-\infty,p)<q<s(p)$ (if
  $\Q\setminus\bigcup_{\ell=1}^{N}(u_{\ell},v_{\ell})$ contains
  elements less and elements greater than $q$) or $\sup s=\max s<q$
  (if $\Q\setminus\bigcup_{\ell=1}^{N}(u_{\ell},v_{\ell})$ contains
  only elements less than $q$) or $q<\min s=\inf s$ (if
  $\Q\setminus\bigcup_{\ell=1}^{N}(u_{\ell},v_{\ell})$ contains only
  elements greater than $q$). We use Lemma~\ref{lem:existence-generic}
  to find $f\in\MM_{\Q}$ with $\Img(f)=\Img(s)\cupdot\{q\}$
  and~$f^{-1}\{w\}=(r_{w},t_{w})$ for all $w\in\Img(f)$, where
  $r_{w},t_{w}\in\I\cup\{\pm\infty\}$. By
  Lemma~\ref{lem:023opn-squig-03opn-aux}, there exists $s'\in\MM_{\Q}$
  such that $s=fs'$ and $\sup s'(-\infty,p)=r_{q}\in\I$ or
  $\sup s'=r_{q}\in\I$ or $\inf s'=t_{q}\in\I$. Applying continuity of
  the translation map $\lambda_{f}$ at $s'$ as well as
  $\TT\subseteq\TT_{023^{opn}}$, we obtain a $\TT_{023^{opn}}$-basic
  open set
  \begin{displaymath}
    O'=O_{\bar{x}',\bar{y}'}^{(0)}\cap O_{L'U'}^{(2)}\cap\bigcap_{\ell=1}^{N'}O_{(u'_{\ell},v'_{\ell})}^{(3)}
  \end{displaymath}
  such that $s'\in O'$ and $s\in\lambda_{f}(O')\subseteq O$. In
  particular,
  $\Img(s')\subseteq\Q\setminus\bigcup_{\ell=1}^{N}(u'_{\ell},v'_{\ell})=:A'$,
  so either $r_{q}$ or $t_{q}$ is a limit point of $A'$. Since $r_{q}$
  and $t_{q}$ are irrational while the boundary points of $A'$ are
  rational, either $r_{q}$ or $t_{q}$ must in fact be contained in the
  interior of $A'$. Thus,
  $A'\cap f^{-1}\{q\}=A'\cap (r_{q},t_{q})\neq\emptyset$; we pick $z'$
  in this intersection.

  Similarly to our construction of $s$, we distinguish by the
  positition of $z'$ relative to $\bar{y}'=(y'_{1},\dots,y'_{n'})$ and
  $(u'_{1},v'_{1},\dots,u'_{N'},v'_{N'})$ and by the required
  boundedness type $O_{L'U'}^{(2)}$ to find a map $\tilde{s}'\in O'$
  with $z'\in\Img(\tilde{s}')$. We obtain
  $\tilde{s}:=\lambda_{f}(\tilde{s}')=f\tilde{s}'\in O$ and
  $q\in\Img(\tilde{s})$, i.e.~$\tilde{s}\in O\cap M_{q}\neq\emptyset$.

  \textbf{(ii).} For each $q\in\Q$, the set
  $M_{q}=\set{s\in\MM_{\Q}}{q\in\Img(s)}$ is $\TT$-open since
  $\TT_{pw}\subseteq\TT$. By~\ref{item:023opn-density-surj-i}, it is
  also $\TT$-dense. Since $\TT$ is a Polish topology, Baire's Category
  Theorem applies and yields the $\TT$-density of
  $\Surj(\Q)=\bigcap_{q\in\Q}M_{q}$.
\end{proof}
By definition, any $\TT_{023^{opn}}$-open set can be represented as a
union of sets of the form
\begin{displaymath}
  O_{\bar{x},\bar{y}}^{(0)}\cap O_{LU}^{(2)}\cap\bigcap_{\ell=1}^{N}
  O_{(u_{\ell},v_{\ell})}^{(3)}.
\end{displaymath}
If we rearrange to separate the $\TT_{02}$-interior from the
``proper'' type~\ref{item:types-3opn} portion, we obtain the following
alternative notation which will prove to be very helpful:
\begin{notation}\label{not:023opn-alternative}
  Setting
  \begin{displaymath}
    A:=O_{-\infty,+\infty}^{(2)}\quad \quad B:=O_{-\infty,\R}^{(2)}\quad \quad
    C:=O_{\R,+\infty}^{(2)}\quad \quad D:=O_{\R,\R}^{(2)},
  \end{displaymath}
  we can rewrite any $\TT_{023^{opn}}$-open set $O$ as
  \begin{displaymath}
    O=(O_{A}\cap A)\cup (O_{B}\cap B)\cup (O_{C}\cap C)\cup
    (O_{D}\cap D)\cup
    \bigcup_{i\in
      I}\left(O_{\bar{x}^{(i)},\bar{y}^{(i)}}^{(0)}\cap
      O_{L^{(i)},U^{(i)}}^{(2)}\cap\bigcap_{\ell=1}^{N^{(i)}}O_{(u_{\ell}^{(i)},v_{\ell}^{(i)})}^{(3)}\right)
  \end{displaymath}
  where $O_{A},O_{B},O_{C},O_{D}\in\TT_{pw}$,
  $\bar{x}^{(i)},\bar{y}^{(i)}$ are tuples in $\Q$, $N^{(i)}\geq 1$
  and $u_{\ell}^{(i)},v_{\ell}^{(i)}\in\Q$.
\end{notation}
Note that the sets $O_{A},O_{B},O_{C},O_{D}$ could in general be empty
even if $O$ is nonempty. However, for
$O\in\TT\subseteq\TT_{023^{opn}}$, one uses the previous lemma to
prove:
\begin{lemma}\label{lem:023opn-open-set-A-part-nonempty}
  Let $\TT$ be a Polish semigroup topology on $\MM_{\Q}$ such that
  $\TT_{pw}\subseteq\TT\subseteq\TT_{023^{opn}}$, and let $O\in\TT$ be
  nonempty. Then $O\subseteq \cl{O_{A}}^{\TT}$. In particular,
  $O_{A}\neq\emptyset$.
\end{lemma}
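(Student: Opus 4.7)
The plan is to combine the alternative decomposition from Notation~\ref{not:023opn-alternative} with the density of $\Surj(\Q)$ established in Lemma~\ref{lem:023opn-density-surj}. Let me fix $s\in O$ and an arbitrary $\TT$-open neighbourhood $V$ of $s$; it suffices to produce an element of $V\cap O_{A}$. The intersection $V\cap O$ is $\TT$-open and nonempty, so Lemma~\ref{lem:023opn-density-surj}\ref{item:023opn-density-surj-ii} supplies some $\tilde{s}\in V\cap O\cap \Surj(\Q)$.

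The key observation is that such a surjective $\tilde{s}$ automatically lands in $O_{A}$, regardless of which particular representation of $O$ as in Notation~\ref{not:023opn-alternative} we chose. Indeed, $\tilde{s}$ is unbounded on both sides, so $\tilde{s}\in A$ and $\tilde{s}\notin B\cup C\cup D$; hence the pieces $O_{B}\cap B$, $O_{C}\cap C$, $O_{D}\cap D$ do not contain $\tilde{s}$. Moreover, for every rational open interval $(u^{(i)}_{\ell},v^{(i)}_{\ell})$ occurring in a mixed piece, surjectivity of $\tilde{s}$ forces $\Img(\tilde{s})\cap (u^{(i)}_{\ell},v^{(i)}_{\ell})=(u^{(i)}_{\ell},v^{(i)}_{\ell})\neq\emptyset$, so $\tilde{s}\notin O_{(u^{(i)}_{\ell},v^{(i)}_{\ell})}^{(3)}$. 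Since $N^{(i)}\geq 1$ for every mixed piece, $\tilde{s}$ is not contained in any of them either. The only remaining possibility is $\tilde{s}\in O_{A}\cap A\subseteq O_{A}$, which gives $\tilde{s}\in V\cap O_{A}$ as required.

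This establishes $O\subseteq \cl{O_{A}}^{\TT}$, and specialising to $V=\MM_{\Q}$ for any $s\in O$ (nonempty by assumption) yields $O_{A}\neq\emptyset$. I do not foresee any real obstacle: the entire argument rests on two facts, namely that $\Surj(\Q)$ meets every nonempty $\TT$-open set (already proved via Baire's theorem in Lemma~\ref{lem:023opn-density-surj}) and that surjective functions are incompatible with the avoiding conditions of type~\ref{item:types-3opn}. The only subtlety worth mentioning is the representation dependence of $O_{A}$, but this is neutralised by the fact that a surjective unbounded-unbounded function can only sit inside the pure $A$-part of any decomposition.
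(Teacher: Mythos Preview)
Your proof is correct and follows essentially the same approach as the paper: both arguments reduce to the observation that $O\cap\Surj(\Q)\subseteq O_{A}\cap A$ (since surjective maps lie in $A$ and cannot satisfy any type~\ref{item:types-3opn} avoiding condition), combined with the $\TT$-density of $\Surj(\Q)$ from Lemma~\ref{lem:023opn-density-surj}. The only cosmetic difference is that the paper phrases this by contradiction (considering the nonempty $\TT$-open set $O\cap\compl{(\cl{O_{A}}^{\TT})}$ and showing it misses $\Surj(\Q)$), whereas you argue directly that every $\TT$-neighbourhood of a point in $O$ meets $O_{A}$.
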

\begin{proof}
  Aiming for a contradiction, we assume
  $O\nsubseteq \cl{O_{A}}^{\TT}$. Thus, denoting the complement of
  $\cl{O_{A}}^{\TT}$ by $\compl{\left(\cl{O_{A}}^{\TT}\right)}$, we
  know that $O\cap \compl{\left(\cl{O_{A}}^{\TT}\right)}$ is a
  nonempty $\TT$-open set. However,
  $O_{(u_{\ell}^{(i)},v_{\ell}^{(i)})}^{(3)}\cap\Surj(\Q)=\emptyset$
  and $(B\cup C\cup D)\cap\Surj(\Q)=\emptyset$ imply
  \begin{displaymath}
    O\cap
    \compl{\left(\cl{O_{A}}^{\TT}\right)}\cap\Surj(\Q)=O_{A}\cap
    A\cap\compl{\left(\cl{O_{A}}^{\TT}\right)}\cap\Surj(\Q)\subseteq O_{A}\cap\compl{\left(\cl{O_{A}}^{\TT}\right)}=\emptyset,
  \end{displaymath}
  which contradicts
  Lemma~\ref{lem:023opn-density-surj}\ref{item:023opn-density-surj-ii}.
\end{proof}
With this result, we can attain an important intermediate step already
hinted at in our proof outline in the introductory remarks to
Subsection~\ref{sec:023opn-squig-03opn}.
\begin{lemma}\label{lem:023opn-open-set-nonempty-pw-interior}
  Let $\TT$ be a Polish semigroup topology on $\MM_{\Q}$ such that
  $\TT_{pw}\subseteq\TT\subseteq\TT_{023^{opn}}$. Then any nonempty
  $O\in\TT$ has nonempty $\TT_{pw}$-interior. Consequently, a subset of $\MM_{\Q}$ is
  $\TT$-dense if and only if it is $\TT_{pw}$-dense; in particular,
  every boundedness type $O_{LU}^{(2)}$ is $\TT$-dense.
\end{lemma}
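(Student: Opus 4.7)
The plan is to combine the preceding Lemma~\ref{lem:023opn-open-set-A-part-nonempty} with the $\TT$-density of $\Surj(\Q)$ established in Lemma~\ref{lem:023opn-density-surj}, using the regularity of $\TT$ (which holds since $\TT$ is Polish). Given a nonempty $\TT$-open set $O$, I would pick any $s \in O$ and, by regularity, find $P \in \TT$ with $s \in P \subseteq \cl{P}^{\TT} \subseteq O$. Applying Lemma~\ref{lem:023opn-open-set-A-part-nonempty} to $P$, written in the canonical form from Notation~\ref{not:023opn-alternative}, yields that the ``A-part'' $P_{A} \in \TT_{pw}$ is nonempty and satisfies $P \subseteq \cl{P_{A}}^{\TT}$.

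The crucial claim is then $P_{A} \subseteq O$. To prove it, I would establish the slightly stronger inclusion $P_{A} \subseteq \cl{P}^{\TT}$. Fix $t \in P_{A}$ and let $U$ be any $\TT$-open neighbourhood of $t$. Since $P_{A} \in \TT_{pw} \subseteq \TT$, the intersection $U \cap P_{A}$ is a nonempty $\TT$-open set, so by Lemma~\ref{lem:023opn-density-surj}\ref{item:023opn-density-surj-ii} one can pick $s' \in U \cap P_{A} \cap \Surj(\Q)$. As $\Img(s') = \Q$, the function $s'$ is unbounded on both sides, i.e., $s' \in A$. Hence $s' \in P_{A} \cap A$, and since $P_{A} \cap A \subseteq P$ directly from the decomposition in Notation~\ref{not:023opn-alternative}, we conclude $s' \in U \cap P \neq \emptyset$. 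Thus $t \in \cl{P}^{\TT} \subseteq O$, which gives $P_{A} \subseteq O$ and exhibits a nonempty $\TT_{pw}$-open subset of $O$.

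The remaining statements are quick corollaries. The equivalence of $\TT$-density and $\TT_{pw}$-density follows from the fact just proved: $\TT$-density implies $\TT_{pw}$-density since $\TT_{pw} \subseteq \TT$, while conversely a $\TT_{pw}$-dense set intersects the nonempty $\TT_{pw}$-interior of every nonempty $\TT$-open set, hence intersects that set itself. Finally, each boundedness type $O_{LU}^{(2)}$ is $\TT_{pw}$-dense by a straightforward construction (given any pointwise constraint $\bar{x} \mapsto \bar{y}$, piece together a function with the prescribed boundedness behaviour meeting that constraint), so it is also $\TT$-dense.

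The main obstacle has effectively been dispatched by the two preceding lemmas; the novelty here is the interplay between the $\TT_{pw}$-openness of $P_{A}$, the inclusion $P_{A} \cap A \subseteq P$ built into the canonical decomposition of $\TT_{023^{opn}}$-open sets, and the $\TT$-density of $\Surj(\Q) \subseteq A$, which together ``thicken'' $P_{A} \cap A$ into all of $P_{A}$ inside $\cl{P}^{\TT}$. It is precisely at this step that we crucially invoke both the regularity of $\TT$ (to separate $P$ from the complement of $O$) and the Polishness of $\TT$ (baked into Lemma~\ref{lem:023opn-density-surj} via Baire's theorem), which is consistent with the earlier remark that Proposition~\ref{lem:023opn-squig-03opn} cannot be phrased as a mere Pseudo-Property~$\mathbf{\overline{X}}$ statement.
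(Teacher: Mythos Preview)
Your proposal is correct and follows essentially the same approach as the paper: both use regularity to pass to $P$ with $\cl{P}^{\TT}\subseteq O$, then combine $P_{A}\cap A\subseteq P$ with the $\TT$-density of $\Surj(\Q)\subseteq A$ and the $\TT$-openness of $P_{A}$ to conclude $P_{A}\subseteq\cl{P}^{\TT}\subseteq O$. The only cosmetic difference is that the paper packages your neighbourhood argument as the elementary topology fact $\cl{P_{A}\cap A}^{\TT}=\cl{P_{A}}^{\TT}$ (open set intersected with a dense set has the same closure), whereas you unwind this fact explicitly.
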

\begin{proof}
  By regularity, there exists a nonempty $P\in\TT$ such that
  $\cl{P}^{\TT}\subseteq O$ and thus
  $\cl{P_{A}\cap A}^{\TT}\subseteq O$. Since $A\supseteq\Surj(\Q)$ is
  $\TT$-dense by
  Lemma~\ref{lem:023opn-density-surj}\ref{item:023opn-density-surj-ii}
  and $P_{A}$ is $\TT$-open, we obtain
  $\cl{P_{A}\cap A}^{\TT}=\cl{P_{A}}^{\TT}$ from elementary
  topology. Therefore,
  $P_{A}\subseteq\cl{P_{A}\cap A}^{\TT}\subseteq O$ and the
  $\TT_{pw}$-interior of $O$ contains the set $P_{A}$ which is
  nonempty by Lemma~\ref{lem:023opn-open-set-A-part-nonempty}.

  That any $\TT$-dense set is $\TT_{pw}$-dense follows from
  $\TT_{pw}\subseteq\TT$. For the converse, assume that $M$ is
  $\TT_{pw}$-dense and let $O$ be nonempty and $\TT$-open. Since the
  $\TT_{pw}$-interior of $O$ is nonempty, it has nonempty intersection
  with $M$, in particular $M\cap O\neq\emptyset$.
\end{proof}
Next, we use the previous results to show that taking the topological
closure with respect to $\TT$ eliminates the boundedness types from
open sets. This is the crucial technical step in the proof of our
reduction.
\begin{lemma}\label{lem:023opn-closure}
  Let $\TT$ be a Polish semigroup topology on $\MM_{\Q}$ such that
  $\TT_{pw}\subseteq\TT\subseteq\TT_{023^{opn}}$. Let further
  $O_{\bar{x},\bar{y}}^{(0)}\cap
  O_{LU}^{(2)}\cap\bigcap_{\ell=1}^{N}O_{(u_{\ell},v_{\ell})}^{(3)}\neq\emptyset$
  be a nonempty $\TT_{023^{opn}}$-basic open set. Then
  \begin{displaymath}
    \cl{O_{\bar{x},\bar{y}}^{(0)}\cap
      O_{LU}^{(2)}\cap\bigcap_{\ell=1}^{N}O_{(u_{\ell},v_{\ell})}^{(3)}}^{\TT}=O_{\bar{x},\bar{y}}^{(0)}\cap\bigcap_{\ell=1}^{N}O_{(u_{\ell},v_{\ell})}^{(3)}. 
  \end{displaymath}
\end{lemma}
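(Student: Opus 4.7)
The inclusion $\subseteq$ is immediate from closedness: each $O_{\bar x,\bar y}^{(0)}$ is $\TT_{pw}$-clopen, and $O_{(u_\ell,v_\ell)}^{(3)}$ is $\TT_{pw}$-closed since its complement $\bigcup_{p\in\Q,\,q\in(u_\ell,v_\ell)\cap\Q}O_{p,q}^{(0)}$ is $\TT_{pw}$-open. Hence the right-hand side is $\TT_{pw}$-closed, thus $\TT$-closed, and the $\TT$-closure of the left-hand side is contained in it.

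For $\supseteq$, the plan is to show that the left-hand side is $\TT$-dense in the right-hand side by a Baire-category argument carried out inside the Polish subspace $\mathcal K:=\bigcap_{\ell=1}^{N}O_{(u_\ell,v_\ell)}^{(3)}$, which is $\TT$-closed in the Polish space $(\MM_\Q,\TT)$ and therefore Polish with the induced topology. Since $O_{\bar x,\bar y}^{(0)}$ is $\TT$-clopen, it suffices, given $s$ in the right-hand side and a $\TT$-neighbourhood $O$ of $s$ contained in $O_{\bar x,\bar y}^{(0)}$, to produce $\tilde s\in O\cap\mathcal K$ of boundedness type $LU$.

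The central technical step is an adaptation of Lemma~\ref{lem:023opn-density-surj}\ref{item:023opn-density-surj-i}: for every rational $q\notin\bigcup_\ell(u_\ell,v_\ell)$, the set $M_q\cap\mathcal K$ is $\TT|_{\mathcal K}$-open and $\TT|_{\mathcal K}$-dense in $\mathcal K$. Given a $\TT$-open $V$ with $V\cap\mathcal K\ne\emptyset$, a witness $s_1\in V\cap\mathcal K$ lies in some $\TT_{023^{opn}}$-basic open $V_0\subseteq V$, and the piecewise construction of the auxiliary map in the proof of Lemma~\ref{lem:023opn-density-surj} can be arranged to avoid the intervals $(u_\ell,v_\ell)$ in addition to the avoidance constraints of $V_0$, since $s_1$ already does so and $q$ is not in any of our intervals; the resulting $\tilde s=fs'$ satisfies $\Img(\tilde s)\subseteq\Img(f)=\Img(s)\cupdot\{q\}$ and hence lies in $\mathcal K$. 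With this, the ``unbounded'' sides of $O_{LU}^{(2)}$ become countable intersections of $\TT|_{\mathcal K}$-open $\TT|_{\mathcal K}$-dense sets: for $L=\{-\infty\}$ one writes
\[
\{s:\inf s=-\infty\}\cap\mathcal K=\bigcap_{n\in\N}\bigcup_{q<-n,\,q\notin\bigcup_\ell}(M_q\cap\mathcal K),
\]
and the existence of a suitable $q$ at each stage is guaranteed by the nonemptiness of the left-hand side (the witness $s_0\in$LHS has $\inf s_0=-\infty$ and avoids all intervals, so $(-\infty,-n)\not\subseteq\bigcup_\ell(u_\ell,v_\ell)$). The analogous statement handles $U=\{+\infty\}$. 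Baire's theorem in $(\mathcal K,\TT|_{\mathcal K})$ then yields $\TT|_{\mathcal K}$-density of the corresponding part of $O_{LU}^{(2)}\cap\mathcal K$. Intersecting with the $\TT$-clopen $O_{\bar x,\bar y}^{(0)}$ finishes the proof.

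The main obstacle I expect is the ``bounded'' side ($L=\R$ or $U=\R$) when the corresponding restriction is not already forced by an interval of the form $(-\infty,v_\ell)$ or $(u_\ell,+\infty)$: in that situation the condition $\{s:\inf s>-\infty\}$ is $F_\sigma$ rather than $G_\delta$ in $\TT_{pw}$, so Baire category does not apply directly. I would handle this case by a separate, direct approximation: given any $s_1\in V\cap\mathcal K$ in a $\TT$-open $V$, I would modify $s_1$ on its unbounded tails using the tail behaviour of the witness $s_0\in$LHS to produce a bounded-below (respectively bounded-above) function that still lies in $V\cap\mathcal K$. This mirrors the piecewise construction in Lemma~\ref{lem:023opn-density-surj}, with $f$ chosen so that $\Img(f)$ supplies the required one-sided bound while avoiding all intervals; the nontrivial part is ensuring that the modification remains within the $\TT_{023^{opn}}$-basic neighbourhood selected from $V$, which requires a careful, but routine, case analysis based on the boundedness constraint of that basic neighbourhood.
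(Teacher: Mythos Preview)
Your approach is genuinely different from the paper's, and while the $\subseteq$ direction and the unbounded-side argument are essentially sound, the bounded-side case is not just a ``careful but routine'' patch --- it is precisely the place where your strategy diverges from a much simpler one.

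The paper does \emph{not} work inside the closed subspace $\mathcal K=\bigcap_\ell O_{(u_\ell,v_\ell)}^{(3)}$ at all. Instead, it uses a single translation trick that handles all boundedness types uniformly. Given $s$ in the right-hand side and a $\TT$-open $O\ni s$, one picks $f\in\MM_\Q$ with $\Img(f)=\Q\setminus\bigcup_\ell(u_\ell,v_\ell)$, writes $s=fs'$ via Lemma~\ref{lem:left-translation-preimage}, and observes that $\lambda_f^{-1}(O)\cap O_{\bar x,s'(\bar x)}^{(0)}$ is nonempty $\TT$-open. Now one invokes Lemma~\ref{lem:023opn-open-set-nonempty-pw-interior}, which has already established that \emph{every} boundedness type $O_{LU}^{(2)}$ is $\TT$-dense in the whole space $\MM_\Q$. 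Picking $\tilde s'$ in the triple intersection, one sets $\tilde s:=f\tilde s'$; membership in $\mathcal K$ is automatic since $\Img(f)$ avoids the intervals, and a two-line case check (depending on whether $\pm\infty$ occur as endpoints $u_\ell,v_\ell$) shows that the boundedness type of $\tilde s$ matches $O_{LU}^{(2)}$.

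What this buys: by funneling through $\lambda_f$ the paper reduces the problem to density of $O_{LU}^{(2)}$ in $\MM_\Q$, which is already available, instead of re-proving a relativised density statement inside $\mathcal K$. Your Baire argument inside $\mathcal K$ works for the $G_\delta$ (unbounded) sides, but for the $F_\sigma$ (bounded) sides you are effectively trying to show that $O_{\R,U}^{(2)}\cap\mathcal K$ is $\TT|_{\mathcal K}$-dense, and your sketch for this --- modifying $s_1$ on its tails while staying inside an arbitrary $\TT$-open $V$ --- runs into the difficulty that the $\TT_{023^{opn}}$-basic neighbourhood of $s_1$ inside $V$ may itself impose the wrong boundedness type, so the modification could leave $V$. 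This is fixable, but only by essentially rediscovering the paper's translation trick or by redoing the chain Lemma~\ref{lem:023opn-open-set-A-part-nonempty}--\ref{lem:023opn-open-set-nonempty-pw-interior} relative to $\mathcal K$.
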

\begin{proof}
  The inclusion ``$\subseteq$'' follows from
  $O_{\bar{x},\bar{y}}^{(0)}\cap\bigcap_{\ell=1}^{N}O_{(u_{\ell},v_{\ell})}^{(3)}$
  being $\TT_{pw}$-closed, in particular $\TT$-closed.

  For the other inclusion ``$\supseteq$'', take
  $s\in
  O_{\bar{x},\bar{y}}^{(0)}\cap\bigcap_{\ell=1}^{N}O_{(u_{\ell},v_{\ell})}^{(3)}$
  and consider a $\TT$-open set $O$ containing $s$. We have to show
  $O_{\bar{x},\bar{y}}^{(0)}\cap
  O_{LU}^{(2)}\cap\bigcap_{\ell=1}^{N}O_{(u_{\ell},v_{\ell})}^{(3)}\cap
  O\neq\emptyset$. Pick $f\in\MM_{\Q}$ such that
  $\Img(f)=\Q\setminus\left(\bigcup_{\ell=1}^{N}(u_{\ell},v_{\ell})\right)\supseteq\Img(s)$. By
  continuity of the translation map $\lambda_{f}$, the preimage
  $\lambda_{f}^{-1}(O)$ is $\TT$-open.
  Lemma~\ref{lem:left-translation-preimage}\ref{item:left-translation-preimage-i}
  yields a map $s'\in\MM_{\Q}$ such that $s=fs'$. We conclude from
  $\TT_{pw}\subseteq\TT$ that the intersection
  $\emptyset\neq\lambda_{f}^{-1}(O)\cap
  O_{\bar{x},s'(\bar{x})}^{(0)}\ni s'$ is $\TT$-open. By
  Lemma~\ref{lem:023opn-open-set-nonempty-pw-interior}, the
  boundedness type $O_{LU}^{(2)}$ is $\TT$-dense, therefore there
  exists
  $\tilde{s}'\in\lambda_{f}^{-1}(O)\cap
  O_{\bar{x},s'(\bar{x})}^{(0)}\cap O_{LU}^{(2)}$. We define
  $\tilde{s}:=f\tilde{s}'=\lambda_{f}(\tilde{s}')$ and claim
  \begin{align*}
    \tilde{s}\in O_{\bar{x},\bar{y}}^{(0)}\cap
    O_{LU}^{(2)}\cap\bigcap_{\ell=1}^{N}O_{(u_{\ell},v_{\ell})}^{(3)}\cap
    O
  \end{align*}
  which will complete the proof. We only argue
  $\tilde{s}\in O_{LU}^{(2)}$, the rest is straightforward. If
  $-\infty$ occurs among the $u_{\ell}$, then $L=\R$ since
  $O_{\bar{x},\bar{y}}^{(0)}\cap
  O_{LU}^{(2)}\cap\bigcap_{\ell=1}^{N}O_{(u_{\ell},v_{\ell})}^{(3)}\neq\emptyset$. Further,
  $\tilde{s}$ is bounded below since
  $\tilde{s}\in\bigcap_{\ell=1}^{N}O_{(u_{\ell},v_{\ell})}^{(3)}$. If
  on the other hand $-\infty$ is not contained among the $u_{\ell}$,
  then $f$ is unbounded below, so $\tilde{s}$ is unbounded below if
  and only if $\tilde{s}'$ is unbounded below which occurs if and only
  if $L=\{-\infty\}$. Arguing analogously for upper bounds, we
  conclude $\tilde{s}\in O_{LU}^{(2)}$.
\end{proof}
Lemmas~\ref{lem:023opn-open-set-nonempty-pw-interior}
and~\ref{lem:023opn-closure} finally enable us to show our reduction:
\begin{lemma}\label{lem:023opn-squig-03opn}
  It holds that $\TT_{023^{opn}}\rightsquigarrow\TT_{03^{opn}}$.
\end{lemma}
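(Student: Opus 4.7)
The plan is to combine the regularity of $\TT$ (which follows from Polishness) with the closure computation provided by Lemma~\ref{lem:023opn-closure}. The point is that while a $\TT_{023^{opn}}$-basic open set carries a boundedness-type constraint $O_{LU}^{(2)}$ that is \emph{not} a subbasic element of $\TT_{03^{opn}}$, taking its $\TT$-closure simply erases that constraint, by Lemma~\ref{lem:023opn-closure}. Regularity then lets us ``swallow'' such a closure inside any given $\TT$-open set.

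More precisely, I would fix $O\in\TT$ and $s\in O$, and aim to produce a $\TT_{03^{opn}}$-open set $Q$ with $s\in Q\subseteq O$. Since $\TT$ is Polish it is in particular regular (and metrisable), so there exists $P\in\TT$ with $s\in P\subseteq\cl{P}^{\TT}\subseteq O$. Using $\TT\subseteq\TT_{023^{opn}}$, the $\TT$-open set $P$ contains a $\TT_{023^{opn}}$-basic open neighbourhood of $s$, say
\[
s\in B:=O_{\bar{x},\bar{y}}^{(0)}\cap O_{LU}^{(2)}\cap\bigcap_{\ell=1}^{N}O_{(u_{\ell},v_{\ell})}^{(3)}\subseteq P;
\]
note that $B$ is nonempty, so Lemma~\ref{lem:023opn-closure} applies to it. Monotonicity of closure gives
\[
\cl{B}^{\TT}\subseteq\cl{P}^{\TT}\subseteq O,
\]
and Lemma~\ref{lem:023opn-closure} identifies
\[
\cl{B}^{\TT}=O_{\bar{x},\bar{y}}^{(0)}\cap\bigcap_{\ell=1}^{N}O_{(u_{\ell},v_{\ell})}^{(3)}=:Q.
\]
By construction $Q$ involves only sets of types~\ref{item:types-0} and~\ref{item:types-3opn}, hence is $\TT_{03^{opn}}$-open; and $s\in B\subseteq Q$. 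This exhibits the required $\TT_{03^{opn}}$-neighbourhood of $s$ inside $O$.

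There is no real obstacle beyond assembling the ingredients already in place: the conceptual work (density of the boundedness type $O_{-\infty,+\infty}^{(2)}$ via Polishness and the Baire-category argument behind Lemma~\ref{lem:023opn-density-surj}, and the closure computation of Lemma~\ref{lem:023opn-closure}) has been done. The only subtlety is that one must genuinely invoke regularity rather than just $\TT\subseteq\TT_{023^{opn}}$: if one tried to write $O$ directly as a union of $\TT_{023^{opn}}$-basic open sets and took closures termwise, the closures could overshoot~$O$. The step $s\in P\subseteq\cl{P}^{\TT}\subseteq O$ is exactly what prevents this, and it is also the place where the proof cannot be recast as a Pseudo-Property-$\mathbf{\overline{X}}$ reduction, as the authors already flagged in Section~\ref{sec:finest-topol-strat}.
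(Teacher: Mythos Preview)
Your proof is correct and follows essentially the same route as the paper: use regularity of $\TT$ to sandwich $s\in P\subseteq\cl{P}^{\TT}\subseteq O$, drop to a $\TT_{023^{opn}}$-basic open set $B\subseteq P$ containing $s$, and apply Lemma~\ref{lem:023opn-closure} to identify $\cl{B}^{\TT}$ as a $\TT_{03^{opn}}$-open set inside $O$. Your closing remarks about the necessity of regularity and the failure of a Property-$\mathbf{\overline{X}}$ reformulation also match the paper's own observations.
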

\begin{proof}
  Let $O\in\TT$. We show that $O$ is a $\TT_{03^{opn}}$-neighbourhood
  of every element of $O$.

  Take $s\in O$. By regularity, there exists $P\in\TT$ such that
  $s\in P\subseteq\cl{P}^{\TT}\subseteq O$. Since
  $\TT\subseteq\TT_{023^{opn}}$, there exists a
  $\TT_{023^{opn}}$-basic open set
  \begin{displaymath}
    U=O_{\bar{x},\bar{y}}^{(0)}\cap
    O_{LU}^{(2)}\cap\bigcap_{\ell=1}^{N}O_{(u_{\ell},v_{\ell})}^{(3)}
  \end{displaymath}
  such that $s\in U\subseteq P$, in particular
  $s\in\cl{U}^{\TT}\subseteq O$. By Lemma~\ref{lem:023opn-closure},
  the $\TT$-closure of $U$ is
  $O_{\bar{x},\bar{y}}^{(0)}\cap\bigcap_{\ell=1}^{N}O_{(u_{\ell}^{(i)},v_{\ell}^{(i)})}^{(3)}$. Hence,
  $O$ is indeed a $\TT_{03^{opn}}$-neighbourhood of $s$.
\end{proof}
\begin{remark}\label{rem:023opn-squig-03opn}
  As already stated in the concluding remarks of
  Section~\ref{sec:finest-topol-strat}, the reduction
  $\TT_{023^{opn}}\rightsquigarrow\TT_{03^{opn}}$ is the only one
  whose proof cannot be reformulated as a
  (Pseudo\nobreakdash-)Property~$\mathbf{\overline{X}}$-type
  statement. Starting from Proposition~\ref{prop:rich-top-ppxb} and
  applying Proposition~\ref{prop:ppxb-auto-cont-lifting} along the
  route
  $\TT_{rich}=\TT_{0123}\rightsquigarrow\TT_{01^{cls}23^{opn}}\rightsquigarrow\TT_{024}\rightsquigarrow\TT_{023^{opn}}$,
  we obtain that $(\MM_{\Q},\TT_{023^{opn}})$ has automatic continuity
  with respect to the class of second countable topological
  semigroups. However, we cannot continue on to
  $\TT_{03^{opn}}$. Thus, the reduction
  $\TT_{023^{opn}}\rightsquigarrow\TT_{03^{opn}}$ is indeed
  fundamentally different.
\end{remark}
\subsection{Reduction
  $\TT_{03^{opn}}\rightsquigarrow\TT_{0}=\TT_{pw}$}
\label{sec:03opn-squig-pw}
In our final reduction, we eliminate the sets of
type~\ref{item:types-3opn}. The technique resembles those of
Subsections~\ref{sec:0123-squig-01cls23opn-squig-024}
and~\ref{sec:024-squig-023opn}, albeit with crucial involvement of the
$\TT$-density of $\Surj(\Q)$ shown in
Subsection~\ref{sec:023opn-squig-03opn}. The following easy
observation gives an idea as to why this is important.
\begin{lemma}\label{lem:03opn-squiq-pw-aux}
  Let $\TT$ be a Polish semigroup topology on $\MM_{\Q}$ such that
  $\TT\subseteq\TT_{03^{opn}}$. Let further $O\in\TT$ and let $f\in O$
  be surjective. Then $O$ is a $\TT_{pw}$-neighbourhood of $f$, in
  other words, there exists $P\in\TT_{pw}$ such that
  $f\in P\subseteq O$.
\end{lemma}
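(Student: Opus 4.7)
The proof I propose is short and relies on a single observation: surjective functions are incompatible with the defining conditions of any nonempty avoiding set $O_{(u,v)}^{(3)}$ of type~\ref{item:types-3opn}. The plan is therefore to reduce the question to a purely combinatorial constraint on the form of a $\TT_{03^{opn}}$-basic open set that contains a surjective $f$.

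First, I would use the assumption $\TT\subseteq\TT_{03^{opn}}$ to pick a $\TT_{03^{opn}}$-basic open set $U$ with $f\in U\subseteq O$. By Lemma~\ref{lem:stratified-repr} applied to $\TT_{03^{opn}}$ (only sets of types~\ref{item:types-0} and~\ref{item:types-3opn} can appear, since the boundedness type can be dropped by intersecting all four options or simply absorbed as in the proof of that lemma), I may write
\begin{displaymath}
  U=O_{\bar{x},\bar{y}}^{(0)}\cap\bigcap_{\ell=1}^{N}O_{(u_{\ell},v_{\ell})}^{(3)},
\end{displaymath}
where $u_{\ell},v_{\ell}\in\Q\cup\{\pm\infty\}$ with $u_{\ell}<v_{\ell}$ for each $\ell$. (If the stratified form also contains a type~\ref{item:types-2} factor then since $f$ is surjective that factor must be $O_{-\infty,+\infty}^{(2)}$, which is not even needed in the pointwise setting; it can be intersected out or simply ignored because $O_{-\infty,+\infty}^{(2)}$ is automatically $\TT_{pw}$-open by being defined via surjectivity-type conditions, but the cleanest route is to invoke the statement of Lemma~\ref{lem:stratified-repr} which keeps only types~\ref{item:types-0} and~\ref{item:types-3opn}.)

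The key step is then immediate: because $u_{\ell}<v_{\ell}$ in $\Q\cup\{\pm\infty\}$, the rational interval $(u_{\ell},v_{\ell})$ contains at least one rational number. Since $f$ is surjective, $\Img(f)=\Q$ hits this interval, so $f\notin O_{(u_{\ell},v_{\ell})}^{(3)}$. This contradicts $f\in U$ unless $N=0$. Hence $U=O_{\bar{x},\bar{y}}^{(0)}$, which is a $\TT_{pw}$-basic open set. Setting $P:=U$ gives $f\in P\subseteq O$ with $P\in\TT_{pw}$ as required.

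The only genuinely subtle point is making sure that the stratification machinery gives us a representation purely in types~\ref{item:types-0} and~\ref{item:types-3opn} (possibly with a boundedness-type factor that is innocuous for a surjective $f$); this is already encoded in the third assertion of Lemma~\ref{lem:stratified-repr}. Everything else is a direct consequence of surjectivity, so no further auxiliary lemmas are needed.
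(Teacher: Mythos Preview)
Your proof is correct and takes essentially the same approach as the paper: both rest on the single observation that a surjective function cannot lie in any nonempty avoiding set $O_{(u,v)}^{(3)}$, so the type~\ref{item:types-3opn} factors in a $\TT_{03^{opn}}$-basic open neighbourhood of $f$ must be absent. Your detour through Lemma~\ref{lem:stratified-repr} and the parenthetical about a possible type~\ref{item:types-2} factor are unnecessary (and the latter slightly off, since $O_{-\infty,+\infty}^{(2)}$ is not $\TT_{pw}$-open), because by definition $\TT_{03^{opn}}$-basic open sets are finite intersections of sets of types~\ref{item:types-0} and~\ref{item:types-3opn} only.
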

\begin{proof}
  With the same spirit as in Notation~\ref{not:023opn-alternative}, we
  can write
  \begin{displaymath}
    O=O_{pw}\cup\bigcup_{i\in I}\left(O_{\bar{x}^{(i)},\bar{y}^{(i)}}^{(0)}\cap\bigcap_{\ell=1}^{N^{(i)}}O_{(u_{\ell}^{(i)},v_{\ell}^{(i)})}^{(3)}\right),
  \end{displaymath}
  where $O_{pw}\in\TT_{pw}$, $\bar{x}^{(i)},\bar{y}^{(i)}$ are tuples
  in $\Q$, $N^{(i)}\geq 1$ and
  $u_{\ell}^{(i)},v_{\ell}^{(i)}\in\Q$. Since none of the sets
  $O_{\bar{x}^{(i)},\bar{y}^{(i)}}^{(0)}\cap\bigcap_{\ell=1}^{N^{(i)}}O_{(u_{\ell}^{(i)},v_{\ell}^{(i)})}^{(3)}$
  can contain surjective functions, $f$ has to be contained in
  $P:=O_{pw}$.
\end{proof}

\begin{lemma}\label{lem:03opn-squig-pw}
  It holds that $\TT_{03^{opn}}\rightsquigarrow\TT_{0}=\TT_{pw}$.
\end{lemma}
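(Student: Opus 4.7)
The strategy is to factor $s$ as $fs'$ with $f$ surjective, apply Lemma~\ref{lem:03opn-squiq-pw-aux} to obtain a $\TT_{pw}$-neighbourhood of $f$ inside the $\TT$-open set $\rho_{s'}^{-1}(O)$, and then transfer this neighbourhood through $\rho_{s'}$ to a $\TT_{pw}$-neighbourhood of $s$ inside $O$. Concretely, given $O \in \TT$ and $s \in O$, I first pick a generic surjection $f \in \MM_\Q$ via Lemma~\ref{lem:existence-generic}\ref{item:existence-generic-i}. Since $\Img(s) \subseteq \Q = \Img(f)$ and the $f$-preimages of singletons are irrational intervals, Lemma~\ref{lem:left-translation-preimage}\ref{item:left-translation-preimage-ii} yields an injective $s' \in \MM_\Q$ with $s = fs'$. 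A small refinement of the Back\&Forth in that lemma, enforcing the kind of dispersion used to construct generic and sparse injections in Lemma~\ref{lem:existence-generic}, additionally gives $\LP(s') \subseteq \I$: at each forth step the value $s'(p) \in f^{-1}\{s(p)\}$ may be picked anywhere within the irrational interval $f^{-1}\{s(p)\}$, and one simply avoids the finitely many rationals needed to keep $\Img(s')$ sparse near each enumerated rational point.

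Next, $\TT$-continuity of $\rho_{s'}$ implies that $\rho_{s'}^{-1}(O)$ is a $\TT$-open set containing $f$. Because $f$ is surjective, Lemma~\ref{lem:03opn-squiq-pw-aux} supplies a tuple $\bar{x}$ in $\Q$ with $O_{\bar{x}, f(\bar{x})}^{(0)} \subseteq \rho_{s'}^{-1}(O)$, i.e.\ every $t \in \MM_\Q$ satisfying $t(\bar{x}) = f(\bar{x})$ satisfies $ts' \in O$.

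I then build a tuple $\bar{z}$ in $\Q$ such that $O_{\bar{z}, s(\bar{z})}^{(0)} \subseteq O$, which is the required $\TT_{pw}$-neighbourhood of $s$. For each $x_i$ in $\bar{x}$: if $x_i \in \Img(s')$, add its unique preimage $(s')^{-1}(x_i)$ to $\bar{z}$; if $x_i \notin \Img(s')$, use $\LP(s') \subseteq \I$ to pick rationals $w_i^- < w_i^+$ such that $s'(w_i^-)$ is the largest element of $\Img(s')$ below $x_i$ and $s'(w_i^+)$ the smallest above (both attained, because no rational is a limit point of $\Img(s')$), and add both to $\bar{z}$. Given $h \in O_{\bar{z}, s(\bar{z})}^{(0)}$, I define a candidate $t$ on $\Img(s') \cup \{x_i : x_i \notin \Img(s')\}$ by $t(s'(z)) := h(z)$ and $t(x_i) := f(x_i)$. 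The essential compatibility at each prescribed $x_i \notin \Img(s')$ reads $h(w_i^-) \leq f(x_i) \leq h(w_i^+)$ and is forced by $h(w_i^\pm) = s(w_i^\pm) = f(s'(w_i^\pm))$ together with $s'(w_i^-) \leq x_i \leq s'(w_i^+)$ and monotonicity of $f$. Finally I extend $t$ to all of $\Q$ by choosing, within each gap of $\Img(s') \cup \{x_i : x_i \notin \Img(s')\}$, rational values in the closed interval spanned by the neighbouring constraints; these intervals are nonempty in $\Q$ precisely because $\LP(s') \subseteq \I$ guarantees that the relevant sup and inf are attained at rational points of the partial domain. Then $t \in \MM_\Q$, $t(\bar{x}) = f(\bar{x})$ and $ts' = h$, so $h \in \rho_{s'}(O_{\bar{x}, f(\bar{x})}^{(0)}) \subseteq O$, as required.

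The main obstacle will be the factorization step, namely establishing that $s'$ can be taken simultaneously injective, with $s = fs'$, and with $\LP(s') \subseteq \I$; this amounts to blending the forth construction of Lemma~\ref{lem:left-translation-preimage} with the sparseness bookkeeping used in Lemma~\ref{lem:existence-generic}. Once this factorization is in hand, the remaining verifications are order-theoretic and parallel in spirit to the constructions carried out in Sections~\ref{sec:prov-prec-lemma} and~\ref{sec:prov-var-lemma-full}.
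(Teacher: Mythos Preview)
Your approach differs from the paper's and contains a genuine gap when $s$ is not unbounded--unbounded. Since $f$ is a generic surjection (hence unbounded on both sides) and $s=fs'$, the map $s'$ inherits the boundedness type of $s$; in particular, if $s$ is bounded above then so is $s'$. Nothing prevents Lemma~\ref{lem:03opn-squiq-pw-aux} from returning an $x_i>\sup s'$. In that case there is no $w_i^{+}$, and worse, for any $t\in O_{\bar x,f(\bar x)}^{(0)}$ and any $z\in\Q$ we have $s'(z)<x_i$, so $t(s'(z))\le t(x_i)=f(x_i)$. Hence every element of $\rho_{s'}\bigl(O_{\bar x,f(\bar x)}^{(0)}\bigr)$ is bounded above by $f(x_i)$, and this set cannot contain any nonempty $\TT_{pw}$-basic open set $O_{\bar z,s(\bar z)}^{(0)}$ (which always contains maps unbounded above). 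Concretely, for $s\equiv 0$ your $s'$ maps $\Q$ into the irrational interval $f^{-1}\{0\}$, and any $x_i$ above that interval breaks the construction. The symmetric problem occurs below.

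The paper avoids this by reversing the order of choices. It first uses continuity of composition at $s=s\circ\id_\Q$ to obtain the finite tuple $\bar x$ (from the $\TT_{pw}$-neighbourhood of $\id_\Q$ given by Lemma~\ref{lem:03opn-squiq-pw-aux}), and only then invokes the $\TT$-density of $\Surj(\Q)$ (Lemma~\ref{lem:023opn-density-surj}\ref{item:023opn-density-surj-ii}, which uses Polishness via Baire) to pick a surjective $f$ in the remaining open set with $f(\bar x)=s(\bar x)$. Surjectivity of $f$ then gives $f\circ O_{\bar x,\bar x}^{(0)}=O_{\bar x,s(\bar x)}^{(0)}$ directly, with no need for a second factor $s'$ or any sparseness condition. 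Your argument never calls on the density of surjections, and that missing ingredient is exactly what handles the bounded case; see also Remark~\ref{rem:03opn-squig-pw}.
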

\begin{proof}
  Let $O\in\TT$. We show that $O$ is a $\TT_{pw}$-neighbourhood of
  every element of $O$.

  Take $s\in O$. By $\TT$-continuity of the composition map $\circ$
  and since $s\circ\id_{\Q}\in O$, there exist $\TT$-open sets $U$ and
  $V$ such that $s\in U$, $\id_{\Q}\in V$ and $U\circ V\subseteq
  O$. Using Lemma~\ref{lem:03opn-squiq-pw-aux}, we can shrink $V$ and
  assume that $V$ is $\TT_{pw}$-open; shrinking further we can even
  take $V$ to be $\TT_{pw}$-basic open, so
  $V=O_{\bar{x},\bar{x}}^{(0)}$. The set
  $U\cap O_{\bar{x},s(\bar{x})}^{(0)}$ is a nonempty $\TT$-open
  set. By
  Lemma~\ref{lem:023opn-density-surj}\ref{item:023opn-density-surj-ii},
  the surjective functions form a $\TT$-dense set, so there exists
  $f\in U\cap O_{\bar{x},s(\bar{x})}^{(0)}\cap\Surj(\Q)$.

  We claim that
  $f\circ O_{\bar{x},\bar{x}}^{(0)}=O_{\bar{x},f(\bar{x})}^{(0)}\
  (=O_{\bar{x},s(\bar{x})}^{(0)})$. The inclusion ``$\subseteq$'' is
  clear; for the converse inclusion ``$\supseteq$'', we argue as
  follows: given $\tilde{s}\in O_{\bar{x},f(\bar{x})}^{(0)}$, the
  finite partial map $m$ defined by $\bar{x}\mapsto\bar{x}$ satisfies
  $\tilde{s}(p)=fm(p)$ for all $p\in\Dom(m)$. Since $f$ is surjective,
  we can apply
  Lemma~\ref{lem:left-translation-preimage}\ref{item:left-translation-preimage-i}
  to find $\tilde{s}'\in\MM_{\Q}$ such that
  $\tilde{s}'(\bar{x})=\bar{x}$ and $\tilde{s}=f\tilde{s}'$, thus
  proving the claim.

  We obtain
  \begin{displaymath}
    s\in O_{\bar{x},s(\bar{x})}^{(0)}=f\circ O_{\bar{x},\bar{x}}^{(0)}\subseteq U\circ
    V\subseteq O,
  \end{displaymath}
  showing that $O$ is indeed a $\TT_{pw}$-neighbourhood of $s$, as
  desired.
\end{proof}
\begin{remark}\label{rem:03opn-squig-pw}
  We can reformulate the proof of Lemma~\ref{lem:03opn-squig-pw} as
  follows: We show that $(\MM_{\Q},\TT_{pw})$ has
  Property~$\mathbf{\overline{X}}$ of length~2 with respect to
  $(\MM_{\Q},\TT_{03^{opn}})$, using the decomposition
  $s=\id_{\Q}s\id_{\Q}\id_{\Q}\id_{\Q}$ where the first, third and
  fifth position are fixed and the second and fourth position are
  varying, subsequently yielding
  $\tilde{s}=\id_{\Q}f\id_{\Q}\tilde{s}'\id_{\Q}$. As in
  Remarks~\ref{rem:0123-squig-01cls23opn},~\ref{rem:01cls23opn-squig-024}
  and~\ref{rem:024-squig-023opn}, we apply
  Proposition~\ref{prop:ppxb-auto-cont-lifting}\ref{item:ppxb-auto-cont-lifting-i}
  to the continous map
  $\id\colon (\MM_{\Q},\TT_{03^{opn}})\to (\MM_{\Q},\TT)$ to obtain
  $\TT\subseteq\TT_{pw}$. Observe that the existence of $f$ requires
  the density statements from the previous reduction (which were shown
  using Polishness).
\end{remark}

\bibliographystyle{alpha}
\bibliography{global.bib}
\end{document}